\documentclass[12pt]{amsart}
\usepackage{amscd,amssymb,amsthm}

\DeclareMathOperator{\inj}{inj}
\DeclareMathOperator{\diam}{diam}
\DeclareMathOperator{\Ric}{Ric}
\DeclareMathOperator{\Rm}{Rm}
\newcommand{\RDT}{\mathrm{RDT}}
\newcommand{\Lie}{\mathcal L}
\newcommand{\eps}{\varepsilon}

\newcommand{\ol}{\overline}
\newcommand{\RR}{\mathbb R}
\newcommand{\ZZ}{\mathbb Z}
\newcommand{\TT}{\mathbb T}

\newcommand{\Nil}{\operatorname{Nil}}
\newcommand{\pt}{\operatorname{pt}}
\newcommand{\SL}{\operatorname{SL}}
\newcommand{\Sol}{\operatorname{Sol}}
\newcommand{\Tr}{\operatorname{Tr}}
\newcommand{\Id}{\operatorname{Id}}
\newcommand{\BG}{\operatorname{BG}}
\newcommand{\PBG}{\operatorname{PBG}}
\newcommand{\proofstep}[1]{%
  \par\medskip
  \noindent\textbf{#1}\par\nobreak\smallskip
}

\numberwithin{equation}{section}
\theoremstyle{plain}
\newtheorem{theorem}{Theorem}[section]
\newtheorem{proposition}[theorem]{Proposition}
\newtheorem{lemma}[theorem]{Lemma}
\newtheorem{corollary}[theorem]{Corollary}

\theoremstyle{definition}
\newtheorem{definition}[theorem]{Definition}

\theoremstyle{remark}
\newtheorem{remark}[theorem]{Remark}
\usepackage{graphicx}
\usepackage{float}
\usepackage{xcolor}
\definecolor{InternalLinkBlue}{RGB}{0,61,130}
\usepackage{tikz}
\usetikzlibrary{arrows.meta,positioning,calc,fit,shapes.geometric,decorations.pathreplacing}
\usepackage{array,longtable}
\usepackage[
  colorlinks=true,
  linkcolor=InternalLinkBlue,
  citecolor=InternalLinkBlue,
  urlcolor=InternalLinkBlue,
  linktoc=all,
  bookmarksopen=true,
  bookmarksnumbered=true
]{hyperref}
\hypersetup{
  pdftitle={Under Ricci flow, a 3-torus goes flat},
  pdfauthor={John Lott},
  pdfsubject={Long-time behavior of Ricci flow in the Euclidean, Nil, and Sol geometries},
  pdfkeywords={Ricci flow on three-manifolds, Type-III Ricci flow, torus bundles, collapsing, Gromov-Hausdorff convergence, homogeneous Ricci solitons}
}
\newsavebox{\dependencytreebox}
\newcounter{dependencylink}
\newcommand{\DependencyTreeLink}[5]{%
  \stepcounter{dependencylink}%
  \pgfmathsetlengthmacro{\deplinkwidth}{#3*\wd\dependencytreebox}%
  \pgfmathsetlengthmacro{\deplinkheight}{#4*\ht\dependencytreebox}%
  \edef\deplinkid{\arabic{dependencylink}}%
  \coordinate (depbottom\deplinkid) at ($(depimage.south west)!#1!(depimage.south east)$);%
  \coordinate (deptop\deplinkid) at ($(depimage.north west)!#1!(depimage.north east)$);%
  \node[inner sep=0pt,outer sep=0pt] at ($(depbottom\deplinkid)!#2!(deptop\deplinkid)$) {%
    \hyperref[#5]{\makebox[\deplinkwidth][c]{\rule{0pt}{\deplinkheight}}}};%
}

\newcommand{\DependencyTreeI}{%
  \begingroup
  \sbox{\dependencytreebox}{\includegraphics[width=\textwidth,height=.88\textheight,keepaspectratio]{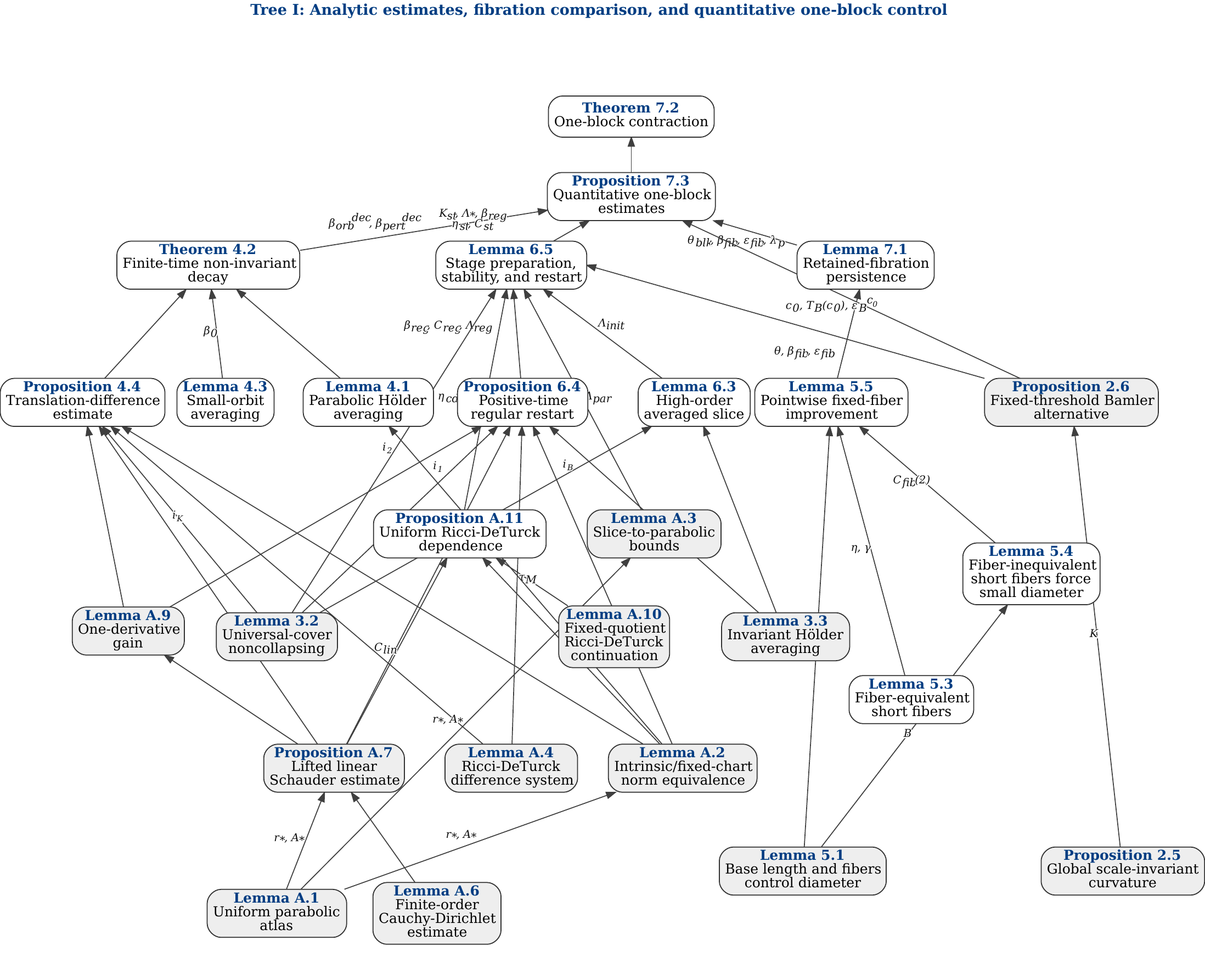}}%
  \setcounter{dependencylink}{0}%
  \begin{tikzpicture}[baseline=(depimage.base)]
    \node[inner sep=0pt,outer sep=0pt] (depimage) {\usebox{\dependencytreebox}};
    \DependencyTreeLink{0.5239565}{0.8809805}{0.1328427}{0.0395589}{thm:one-block-contraction}
    \DependencyTreeLink{0.5239565}{0.7992996}{0.1346639}{0.0460000}{prop:one-block-quantitative}
    \DependencyTreeLink{0.1729131}{0.7292879}{0.1465050}{0.0460000}{thm:noninvariant-decay}
    \DependencyTreeLink{0.3057399}{0.5892646}{0.1046070}{0.0460000}{lem:averaging-parabolic-holder}
    \DependencyTreeLink{0.4243360}{0.7292879}{0.1210017}{0.0460000}{lem:stage-stability}
    \DependencyTreeLink{0.7184534}{0.7292879}{0.1100719}{0.0460000}{lem:retained-fibration-persistence}
    \DependencyTreeLink{0.6899907}{0.5892646}{0.1228235}{0.0460000}{lem:pointwise-fixed-fiber}
    \DependencyTreeLink{0.8892339}{0.5892646}{0.1392182}{0.0460000}{input:bamler-fixed-threshold}
    \DependencyTreeLink{0.0685483}{0.5892646}{0.1319315}{0.0460000}{prop:uniform-translation-difference}
    \DependencyTreeLink{0.1871446}{0.5892646}{0.0781938}{0.0460000}{lem:small-orbit-averaging}
    \DependencyTreeLink{0.4338234}{0.5892646}{0.1046070}{0.0460000}{prop:positive-time-regular-restart}
    \DependencyTreeLink{0.5761388}{0.5892646}{0.0900342}{0.0460000}{lem:high-order-averaged-slice}
    \DependencyTreeLink{0.3816415}{0.4550759}{0.1383076}{0.0460000}{prop:uniform-lifted-rdt-stability}
    \DependencyTreeLink{0.5429313}{0.4550759}{0.1073394}{0.0460000}{lem:slice-to-parabolic-geometry}
    \DependencyTreeLink{0.5097250}{0.3500584}{0.0891236}{0.0588821}{lem:fixed-quotient-rdt-continuation}
    \DependencyTreeLink{0.1064991}{0.3558930}{0.0900342}{0.0460000}{lem:local-rdt-one-derivative-gain}
    \DependencyTreeLink{0.2298391}{0.3500584}{0.0973209}{0.0460000}{lem:universal-cover-inj}
    \DependencyTreeLink{0.6520400}{0.3500584}{0.1027858}{0.0460000}{lem:averaging-invariant-holder}
    \DependencyTreeLink{0.2772769}{0.2158696}{0.1128043}{0.0460000}{prop:uniform-lifted-linear-schauder}
    \DependencyTreeLink{0.4243356}{0.2158696}{0.1064288}{0.0460000}{lem:rdt-difference-system}
    \DependencyTreeLink{0.5666510}{0.2158696}{0.1191805}{0.0460000}{lem:intrinsic-fixed-chart-equivalence}
    \DependencyTreeLink{0.2298387}{0.0676750}{0.1118937}{0.0460000}{lem:uniform-parabolic-atlas}
    \DependencyTreeLink{0.3626659}{0.0676750}{0.1027858}{0.0588821}{lem:finite-order-cauchy-dirichlet}
    \DependencyTreeLink{0.6662711}{0.1108521}{0.1337533}{0.0460000}{lem:base-length-controls-diameter}
    \DependencyTreeLink{0.7564045}{0.2858813}{0.1000534}{0.0460000}{lem:isotopic-fibers-force-fixed}
    \DependencyTreeLink{0.8560222}{0.4142354}{0.1100719}{0.0588821}{lem:different-fibrations-small-diameter}
    \DependencyTreeLink{0.9319284}{0.1108521}{0.1310209}{0.0460000}{input:bamler-typeIII}
  \end{tikzpicture}%
  \endgroup
}

\newcommand{\DependencyTreeII}{%
  \begingroup
  \sbox{\dependencytreebox}{\includegraphics[width=\textwidth,height=.88\textheight,keepaspectratio]{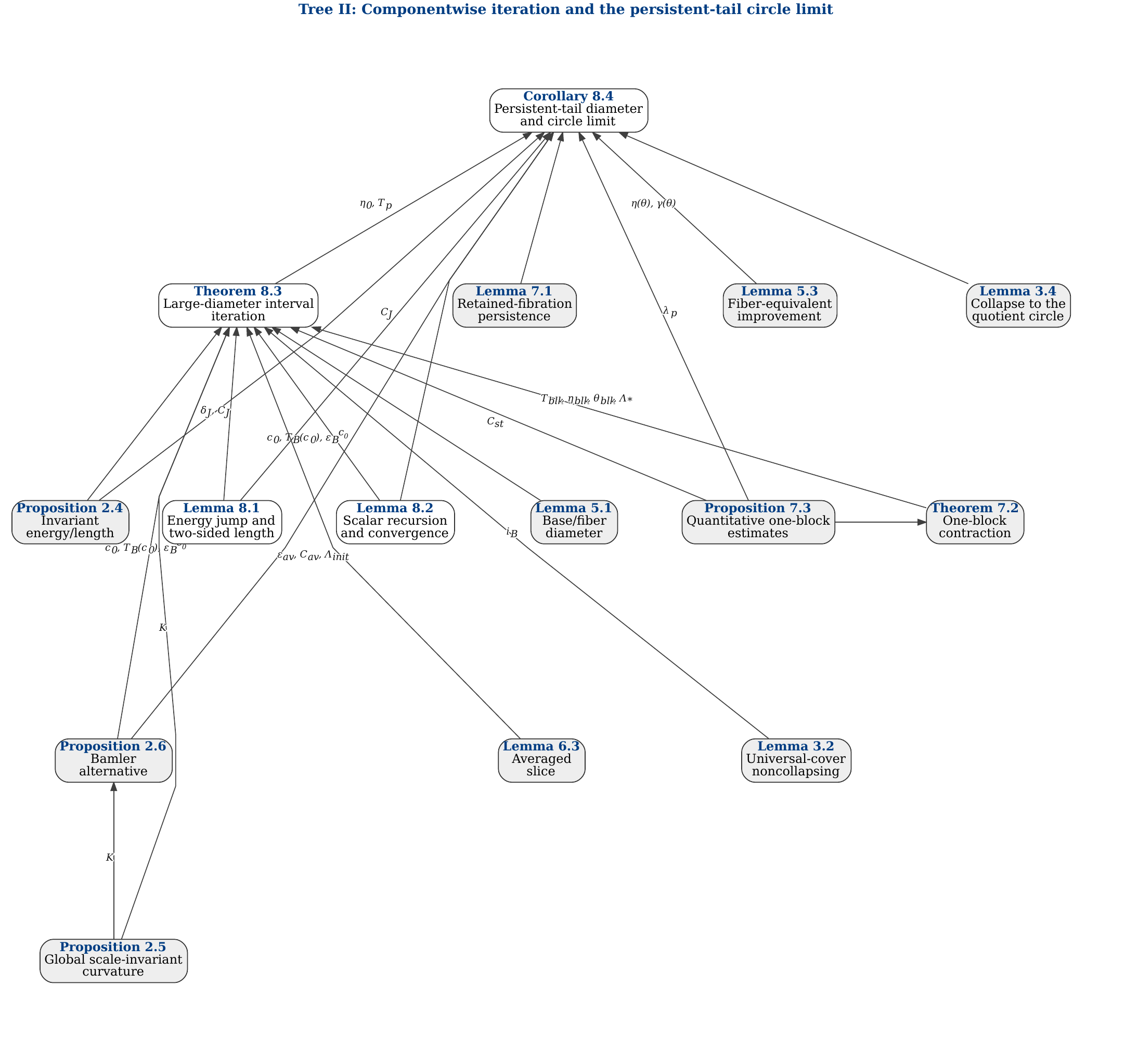}}%
  \setcounter{dependencylink}{0}%
  \begin{tikzpicture}[baseline=(depimage.base)]
    \node[inner sep=0pt,outer sep=0pt] (depimage) {\usebox{\dependencytreebox}};
    \DependencyTreeLink{0.5000000}{0.8961301}{0.1342354}{0.0382428}{cor:persistent-large-tail-diameter}
    \DependencyTreeLink{0.2095239}{0.7128307}{0.1351495}{0.0382428}{thm:large-component-iteration}
    \DependencyTreeLink{0.4523811}{0.7128307}{0.1049781}{0.0382428}{lem:retained-fibration-persistence}
    \DependencyTreeLink{0.0619046}{0.5091652}{0.0994923}{0.0382428}{input:invariant-energy-length}
    \DependencyTreeLink{0.1952379}{0.5091652}{0.1013211}{0.0382428}{lem:transition-jump}
    \DependencyTreeLink{0.3476189}{0.5091652}{0.1004071}{0.0382428}{lem:scalar-quotient-recursion}
    \DependencyTreeLink{0.5047621}{0.5091652}{0.0738926}{0.0382428}{lem:base-length-controls-diameter}
    \DependencyTreeLink{0.6857139}{0.7128307}{0.0967495}{0.0382428}{lem:isotopic-fibers-force-fixed}
    \DependencyTreeLink{0.8952404}{0.7128307}{0.0885209}{0.0382428}{lem:elementary-circle-collapse}
    \DependencyTreeLink{0.0999999}{0.2851321}{0.0994923}{0.0382428}{input:bamler-fixed-threshold}
    \DependencyTreeLink{0.8571404}{0.5091652}{0.0830352}{0.0382428}{thm:one-block-contraction}
    \DependencyTreeLink{0.6666668}{0.5091652}{0.1296638}{0.0382428}{prop:one-block-quantitative}
    \DependencyTreeLink{0.4761907}{0.2851321}{0.0738926}{0.0382428}{lem:high-order-averaged-slice}
    \DependencyTreeLink{0.7000000}{0.2851321}{0.0930926}{0.0382428}{lem:universal-cover-inj}
    \DependencyTreeLink{0.1000001}{0.0967439}{0.1250928}{0.0382428}{input:bamler-typeIII}
  \end{tikzpicture}%
  \endgroup
}

\newcommand{\DependencyTreeIII}{%
  \begingroup
  \sbox{\dependencytreebox}{\includegraphics[width=\textwidth,height=.88\textheight,keepaspectratio]{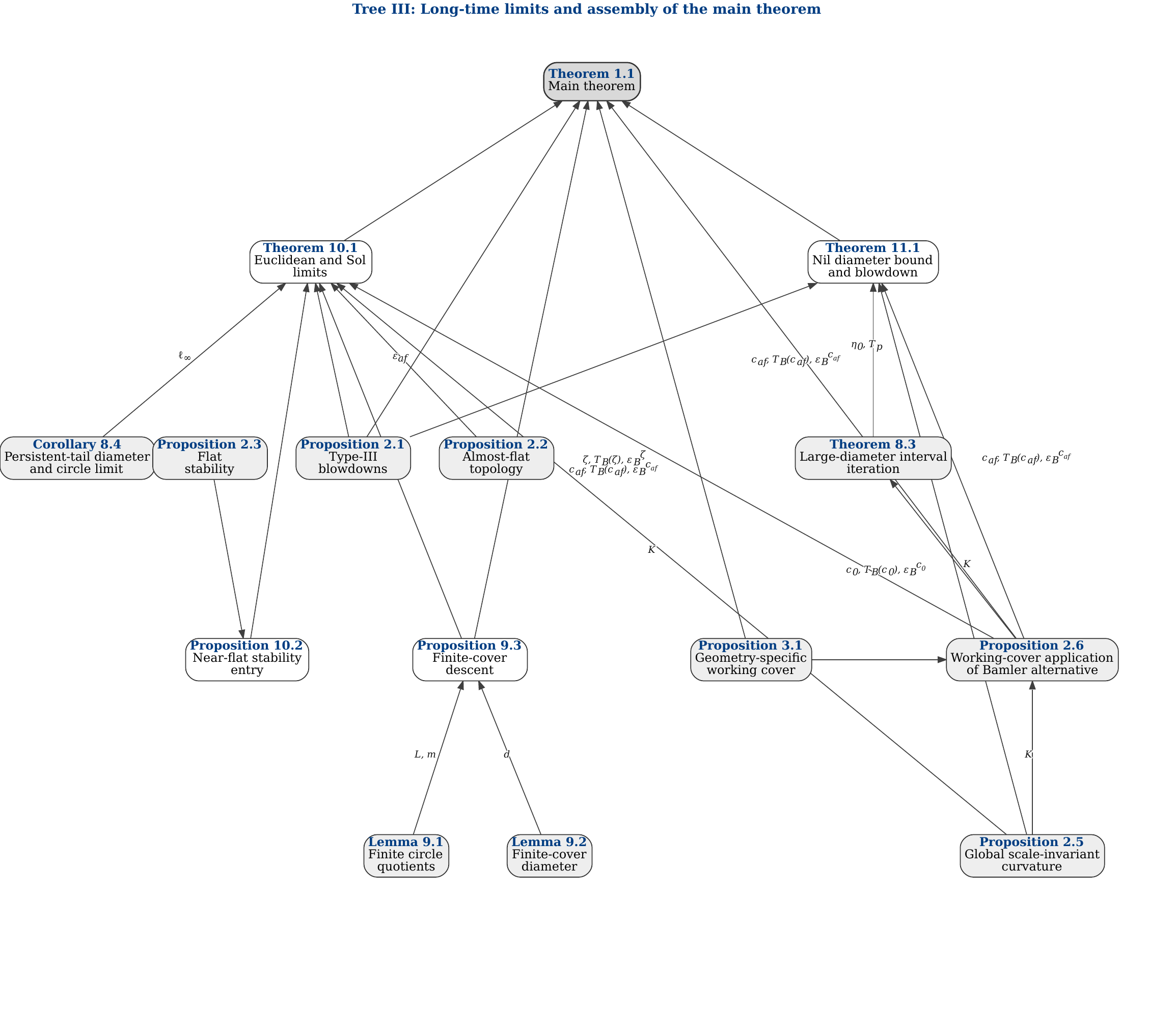}}%
  \setcounter{dependencylink}{0}%
  \begin{tikzpicture}[baseline=(depimage.base)]
    \node[inner sep=0pt,outer sep=0pt] (depimage) {\usebox{\dependencytreebox}};
    \DependencyTreeLink{0.5036098}{0.9191176}{0.0798843}{0.0359496}{thm:main}
    \DependencyTreeLink{0.2644403}{0.7405462}{0.1003497}{0.0394479}{thm:euclidean-sol-long-time-limits}
    \DependencyTreeLink{0.7427819}{0.7405462}{0.1072806}{0.0394479}{thm:nil-diameter-blowdown}
    \DependencyTreeLink{0.0658845}{0.5462182}{0.1272087}{0.0394479}{cor:persistent-large-tail-diameter}
    \DependencyTreeLink{0.1787000}{0.5462182}{0.0942843}{0.0394479}{input:flat-stability}
    \DependencyTreeLink{0.3005412}{0.5462182}{0.0942843}{0.0394479}{input:lott-blowdown}
    \DependencyTreeLink{0.4223823}{0.5462182}{0.0942843}{0.0394479}{input:almost-flat-topology}
    \DependencyTreeLink{0.7427799}{0.5462182}{0.1280749}{0.0394479}{thm:large-component-iteration}
    \DependencyTreeLink{0.2102890}{0.3466386}{0.1012159}{0.0394479}{prop:near-flat-stability-entry}
    \DependencyTreeLink{0.3998194}{0.3466386}{0.0942843}{0.0394479}{prop:finite-cover-descent}
    \DependencyTreeLink{0.6389888}{0.3466386}{0.0994828}{0.0394479}{prop:finite-torus-bundle-covers}
    \DependencyTreeLink{0.8781610}{0.3466386}{0.1410713}{0.0394479}{input:bamler-fixed-threshold}
    \DependencyTreeLink{0.3456681}{0.1523083}{0.0700245}{0.0394479}{lem:finite-quotient-circle-collapse}
    \DependencyTreeLink{0.4675093}{0.1523083}{0.0700245}{0.0394479}{lem:finite-cover-diameter}
    \DependencyTreeLink{0.8781590}{0.1523083}{0.1185446}{0.0394479}{input:bamler-typeIII}
  \end{tikzpicture}%
  \endgroup
}

\begin{document}

\title[]{Under Ricci flow, a 3-torus goes flat}

\author{John Lott}
\address{Department of Mathematics\\
University of California, Berkeley\\
Berkeley, CA  94720-3840\\
USA} \email{lott@berkeley.edu}

\date{September 13, 2026}
\subjclass[2020]{Primary 53E20; Secondary 57K35}
\keywords{Ricci flow on three-manifolds, Type-III Ricci flow, torus bundles, collapsing, Gromov--Hausdorff convergence, homogeneous Ricci solitons}

\begin{abstract}
We determine the long-time behavior of the canonical Ricci flow on a closed
three-manifold of Thurston type \(\RR^3\), \(\Nil\), or \(\Sol\), starting from
an arbitrary initial metric.  In the Euclidean case, the metric \(g(t)\)
converges exponentially fast to a flat metric.  The Gromov--Hausdorff limit of
\((M,t^{-1}g(t))\) is a point in the Euclidean and Nil cases, and a metric circle or a compact interval of positive length in the Sol case.  In all three cases, the blowdown flows lifted to the
universal cover, \(s^{-1}\widetilde g(s\tau)\), converge, in the pointed
Cheeger--Hamilton sense, to an explicit homogeneous expanding Ricci soliton
solution.
\end{abstract}

\maketitle
\vspace{-1.5ex}
\tableofcontents

\section{Introduction}

\subsection{Main result and geometric setting}

Given an initial Riemannian metric on a closed three-manifold, there is a
canonical Ricci flow through singularities
\cite{Bamler-Kleiner,Kleiner-Lott}.  One can ask whether its long-time behavior
realizes Thurston's geometric decomposition.  This is still not completely
understood.

After some finite time, the canonical Ricci flow is known to be either empty or
smooth; if the original manifold is aspherical, then the manifold that remains
is diffeomorphic to it \cite{Bamler,Bamler-Kleiner}.  Hence, to study the
long-time behavior in the cases considered here, it is enough to consider
immortal Ricci flows, meaning smooth solutions defined for all
\(t\in[0,\infty)\).  The title emphasizes the simplest consequence of the main
theorem below: every immortal Ricci flow on a three-dimensional torus converges
to a flat metric as \(t\to\infty\).

\begin{theorem}[Main theorem]\label{thm:main}
Let \(M\) be a closed connected three-manifold which admits a locally
homogeneous metric modeled on one of \(\RR^3\), \(\Nil\), or \(\Sol\), and let
\(g(t)\), \(0\leq t<\infty\), be any immortal Ricci flow on \(M\).  In all
three cases, for every basepoint \(\widetilde m\in\widetilde M\) in the
universal cover, the lifted blowdown flows
\((\widetilde M,\widetilde m,s^{-1}\widetilde g(s\tau))\), \(\tau>0\),
converge as \(s\to\infty\) in the pointed Cheeger--Hamilton sense to a
homogeneous expanding Ricci soliton solution.  The compact
and lifted limits are as follows.
\begin{enumerate}
\item If \(M\) has type \(\RR^3\), then, for every \(m\geq0\), \(g(t)\)
converges exponentially fast in the \(C^m\)-norm of any fixed smooth
background metric, as \(t\to\infty\), to a flat metric on \(M\).
The Gromov--Hausdorff limit of
\((M,t^{-1}g(t))\), as \(t\to\infty\), is a point.
The limit of the lifted blowdown flows is the flat expanding soliton solution
\[
        (\RR^3,0,g_{\mathrm{Euc}}(\tau)),
        \qquad
        g_{\mathrm{Euc}}(\tau)=dx^2+dy^2+dz^2,
        \quad \tau>0.
\]
\item If \(M\) has type \(\Nil\), then the Gromov--Hausdorff limit of
\((M,t^{-1}g(t))\), as \(t\to\infty\), is a point.  The limit of the lifted blowdown flows is the Nil
expanding soliton solution
\[
        \left(\RR^3,0,
        \frac{1}{3\tau^{1/3}}
        \left(dx+\frac12 y\,dz-\frac12 z\,dy\right)^2
        +\tau^{1/3}(dy^2+dz^2)
        \right).
\]
\item If \(M\) has type \(\Sol\), then the Gromov--Hausdorff limit of
\((M,t^{-1}g(t))\), as \(t\to\infty\), is a metric circle or a compact
interval of positive length.  The limit of the lifted blowdown flows is the Sol expanding soliton solution
\[
        \left(\RR^3,0,e^{-2z}dx^2+e^{2z}dy^2+4\tau\,dz^2\right).
\]
\end{enumerate}
\end{theorem}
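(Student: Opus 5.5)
The plan is to reduce everything to the torus-bundle structure and to Bamler's Type-III results, and then handle each geometry with the appropriate blowdown analysis. First I will pass to a finite cover. Any closed manifold of type \(\RR^3\), \(\Nil\) or \(\Sol\) is finitely covered by a \(T^2\)-bundle over \(S^1\) (or \(T^3\)). I then descend: Gromov--Hausdorff limits of the cover map onto those of the quotient, diameters are comparable up to the covering degree, and the lifted flows on \(\widetilde M\) are the same. The Type-III input of Bamler gives \(|\Rm|\le C/t\) and a fixed-threshold structure theorem. By that theorem, at large times \((M,t^{-1}g(t))\) either has bounded geometry with injectivity radius bounded below, or is locally collapsed along an \(F\)-structure. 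Since \(M\) is aspherical and not hyperbolic or Seifert with nonzero Euler class, in the non-collapsed case we get a contradiction with the hyperbolic/thick pieces. So the rescaled flow collapses, with fibers that are tori or circles.

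For the lifted statement, I will invoke the known blowdown result (Lott's earlier work on Type-III flows with bounded curvature and diameter growth \(O(\sqrt t)\)). Once one knows that \(\diam(M,g(t))\le C\sqrt t\), the lifted rescalings \(s^{-1}\widetilde g(s\tau)\) subconverge to a locally homogeneous expanding soliton of the same Thurston type. Rigidity of the homogeneous soliton on \(\RR^3\), \(\Nil\), \(\Sol\) then gives full convergence to the three listed solutions. The diameter bound is therefore the key estimate. I plan to get it from the iteration of the collapsed structure: the retained \(T^2\)-fibration persists along the flow, its fibers are isotopic and hence fixed, and the base circle length controls the diameter. The base length is in turn bounded via the monotone energy/length quantity for the invariant part, combined with a recursion on the scalar quotient flow. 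Once the diameter bound holds, the Gromov--Hausdorff limits follow:
\begin{itemize}
\item in the \(\Nil\) and \(\RR^3\) cases, \(\diam = o(\sqrt t)\) because the fiber directions shrink relative to \(\sqrt t\) and the base circle has bounded length, so the limit is a point;
\item in the \(\Sol\) case, the \(z\)-direction of the soliton grows like \(2\sqrt\tau\), so the base has length of order \(\sqrt t\), and the limit is a circle, or an interval for the one-sided-monodromy quotient.
\end{itemize}

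For the flat case, once the blowdown shows the metric is \(C^\infty\)-close (after rescaling) to flat, I will invoke stability of flat metrics under Ricci flow (the flat-stability input) at some large time. This requires passing from the blowdown to an unrescaled almost-flat metric, which I will do with a contraction argument on the non-invariant modes along the \(T^3\)-fibers. That is the one-block contraction: the non-invariant Fourier modes decay exponentially by the heat-type estimate on the averaged slice, and the invariant part obeys the flat-torus quotient flow, which converges exponentially. The stability theorem then gives exponential \(C^m\) convergence to a flat metric.

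The main obstacle is the \(\RR^3\) case: showing that the unrescaled metric actually becomes close to flat rather than merely collapsing after rescaling. Concretely, one must rule out the scenario where some \(T^2\) or circle fiber keeps shrinking forever, which would force \(\diam\) growth incompatible with flatness, and control the non-invariant modes uniformly across infinitely many time blocks. I would first try to show that each block contracts the non-invariant part by a fixed factor using Schauder estimates on the lifted slice, uniformly in the collapse scale. If that fails, I would fall back on a Perelman/Hamilton-type monotone quantity for the quotient flow to prevent fiber degeneration.
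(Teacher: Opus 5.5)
Your overall skeleton is the paper's: working torus-bundle cover, Bamler's Type-III estimate, a diameter bound from a retained fibration plus the Lott--\v{S}e\v{s}um energy--length recursion, Lott's blowdown theorem for the lifted limits, and flat stability in the Euclidean case. But the step you flag as the main obstacle is not handled, and the mechanism you propose for it cannot work.

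The block contraction and quotient-length recursion are available only at times with \(\diam(M,g(t))\geq\delta_0\sqrt t\). That hypothesis is what gives the quotient-length lower bound needed by the finite-time decay theorem, and what rules out a fiber-inequivalent Bamler fibration. In that regime \(\sup|\Rm|\,\diam^2\) is only bounded, not small, and \(t^{-1}g(t)\) is collapsed along short tori. So nothing proved there places the metric in a fixed-coordinate stability neighborhood of a flat metric. On \(\TT^3\) the regime cannot even persist.

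The missing idea is how to leave that regime. The paper argues by contradiction. Take any cutoff \(\zeta\) with \(K\zeta^2<\eps_{\mathrm{af}}\), and suppose \(\diam(M,g(t))\geq\zeta\sqrt t\) on a tail. The persistent-tail iteration then gives \(\diam=O(\sqrt t)\), and Lott's theorem forces point collapse, contradicting the lower bound. Hence \(\liminf_{t\to\infty}t^{-1/2}\diam(M,g(t))=0\).

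At a late \(t_0\) with \(D=\diam(M,g(t_0))\leq\delta\sqrt{t_0}\), Type-III and Shi's estimates give \(D^{j+2}\sup|\nabla^j\Rm|\leq C_j\delta^{j+2}\). One then rescales by \(D^{-2}\), not by \(t_0^{-1}\). Near-flat compactness (Lott--\v{S}e\v{s}um, Claim~2.16) puts a finite-cover pullback, with the compactness diffeomorphism absorbed into the covering map, into a uniform \(h^{2+\rho}\)-neighborhood of a compact family of flat metrics. There the uniform Guenther--Isenberg--Knopf theorem gives exponential convergence, which descends to \(M\).

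Your plan contains neither the diameter normalization nor the finite cover, which is needed to uncollapse a thin near-flat torus. The contraction of non-invariant modes plays no role in this step. Even via your own route (a global \(O(\sqrt t)\) bound, then Lott's theorem), the exit from the collapsed regime comes from point collapse.

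There are further gaps.
\begin{itemize}
\item \emph{Alternating regimes.} Your diameter argument assumes one retained fibration for all late times. But the flow may pass repeatedly between almost-flat and torus-bundle slices, and nothing excludes this for \(\Nil\). Across an almost-flat stretch there is no Bamler comparison, and on re-entry a new, possibly inequivalent, fibration must be used. The paper restarts the iteration on each component of \(\{t:\diam(M,g(t))\geq c_{\mathrm{af}}\sqrt t\}\). It starts close enough to the left endpoint that the normalized start diameter is at most \(2c_{\mathrm{af}}\), which keeps constants uniform over components. It controls the terminal partial block by Type-III metric distortion.
\item \emph{The \(\Sol\) case.} You never exclude the almost-flat alternative. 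Gromov--Ruh does this: an almost-flat slice would make \(\pi_1(M)\) virtually nilpotent, which is impossible for a lattice of exponential growth. This exclusion yields both the lower bound \(c_{\mathrm{af}}\sqrt t\) and the persistent tail. Lott's theorem only classifies \emph{subsequential} compact limits. The existence of a full limit of positive length comes from the recursion on that tail (\(\ell_k\to\ell_\infty>0\), \(E_k\to2\), using the quotient-length lower bound). It also uses the two-sided restart estimate and the vanishing of the retained fibers by comparison with Bamler's fibers. Finally, the finite-quotient lemma, where continuity in \(t\) forbids switching between circle and interval quotients, descends the limit. The growth of the \(z\)-direction of the soliton gives none of this.
\item \emph{Point collapse for \(\Nil\) and \(\RR^3\).} Your derivation is invalid. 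A base-length bound of order \(\sqrt t\) gives only \(\diam=O(\sqrt t)\), not \(o(\sqrt t)\). The point limit must be taken from Lott's theorem, which you already invoke for the lifted flows.
\end{itemize}
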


Throughout the universal-cover conclusions, convergence as \(s\to\infty\) is
understood in the sense used in \cite{Lott10}: for every sequence
\(s_j\rightarrow\infty\), the flows
\((\widetilde{M},\widetilde{m},s_j^{-1}\widetilde{g}(s_j\tau))\)
converge as \(j\rightarrow\infty\), in the pointed Cheeger--Hamilton sense,
to the displayed pointed Ricci flow.

\subsection{Previous work and the missing diameter estimate}

The analogous result for Ricci flow on \(\TT^2\) was proved by Hamilton
\cite{Hamilton}.  In that case, since Ricci flow preserves the conformal
structure and the area, one knows exactly what flat metric one gets in the
limit.  In contrast, in part~(1) of Theorem~\ref{thm:main}, there is no
\emph{a priori} way to know what the limiting flat metric will be.

Each manifold \(M\) in Theorem~\ref{thm:main} has a finite regular
orientable cover that fibers over a circle with torus fiber and whose
monodromy lies in \(\SL(2,\ZZ)\).  The cover may be chosen to be \(\TT^3\)
with identity monodromy in the Euclidean case, a \(\Nil\) torus bundle with
nontrivial unipotent monodromy in the Nil case, and a \(\Sol\) torus bundle
with hyperbolic monodromy having positive eigenvalues in the Sol case.  We may therefore
reduce the analytic part of the proof to these three cases and then
descend the conclusions
through the fixed finite cover.

There are three main geometric inputs.  First, the compact and universal-cover
blowdown conclusions used here, together with analogous results for the other
aspherical Thurston geometries, were proved by the author \cite{Lott10} under
the Type-III curvature estimate
\(|\sec_{g(t)}|=O(t^{-1})\) and the scale-invariant diameter bound
\(\diam(M,g(t))=O(\sqrt t)\).  Second, the needed estimates for flows with
local \(\TT^2\)-symmetry were established by the author and
Nata\v{s}a \v{S}e\v{s}um \cite{LottSesum}.  Third, Bamler proved the Type-III curvature estimate and
a large-time description by increasingly accurate locally
\(\TT^2\)-invariant time-slice models
\cite{Bamler,BamlerA,BamlerB,BamlerC,BamlerD}.

More precisely, fix \(K\) so that \(|\sec_{g(t)}|\leq K/t\), choose a
geometric cutoff \(c_{\mathrm{af}}=c_{\mathrm{af}}(K)>0\), and divide each
sufficiently late time slice into two cases:
\[
\begin{array}{ll}
\textit{almost-flat case:}&\diam(M,g(t))<c_{\mathrm{af}}\sqrt t,\\[2pt]
\textit{torus-bundle case:}&\diam(M,g(t))\geq c_{\mathrm{af}}\sqrt t.
\end{array}
\]
The cutoff \(c_{\mathrm{af}}\) is chosen only so that the first inequality makes
\[
        \bigl(\sup_M|\sec_{g(t)}|\bigr)\diam(M,g(t))^2
\]
smaller than the three-dimensional almost-flat threshold.  In the second
case, \(t^{-1}g(t)\) is increasingly close in high \(C^k\)-norms to an invariant metric for a possibly time-dependent torus-bundle
structure, and the torus fibers have diameter \(o(1)\) with respect to
\(t^{-1}g(t)\).

The principal missing geometric estimate is
\[
        \diam(M,g(t))=O(\sqrt t).
\]
The almost-flat case already gives this estimate.  If the flow had an exact local
\(\TT^2\)-symmetry in the other case, then the diameter bound would follow
from \cite{LottSesum}.  The difficulty is that the error in Bamler's
approximation may tend to zero arbitrarily slowly.  When the comparison is
restarted on successive time intervals, these errors can accumulate and swamp
the desired diameter bound.  Moreover, Bamler's fibration may change with time.
Thus one cannot simply start a locally \(\TT^2\)-invariant Ricci flow at each time and
compare it with the original flow for an indefinitely long interval.

\subsection{The blockwise argument}

We use three terms consistently.  A \emph{block} is a physical-time interval
\([S,100S]\) on which \(\diam(M,g(t))\geq\delta_0\sqrt t\) for a fixed
threshold \(\delta_0>0\); parabolic rescaling by \(S^{-1}\) identifies it with
the normalized interval \([1,100]\).  A \emph{stage} is the Ricci--DeTurck
comparison on this normalized interval between the rescaled original flow and
an invariant Ricci-flow background.  A \emph{restart} is the
factor-\(100\) endpoint rescaling and averaging that produces the initial
metrics for the next stage.

We first prove a finite-time decay theorem for the non-invariant part of a
metric.  Let \(h(t)\), \(1\leq t\leq100\), be an invariant Ricci flow with bounded curvature and a quotient circle of
uniformly positive length.  If \(g_{\RDT}\) is a Ricci--DeTurck flow relative
to \(h\), sufficiently close to \(h\), and the local \(\TT^2\)-orbits are
sufficiently small, then we show that the size of the non-invariant part of
\(g_{\RDT}\) on
\([10,100]\) is an arbitrarily small multiple of its size at time \(1\).  
Intuitively, at
the linearized level, the non-invariant fluctuations correspond to
torus modes whose Lichnerowicz eigenvalues become very negative if the torus
fibers are small.
These negative eigenvalues then cause a fast decay for the non-invariant part of
the metric.  The actual proof uses an
averaging inequality on short torus orbits and parabolic Schauder estimates.
The Schauder estimates are all carried out on the universal cover, which
is uniformly noncollapsed under the geometric hypotheses. In this way,
the decay estimates are uniform with respect to collapsing fibers.
Using the Ricci--DeTurck gauge is essential for the Schauder estimates.

We iterate this estimate on intervals \([T_k,T_{k+1}]\), where
\(T_{k+1}=100T_k\), 
using one fixed fibration. In order to apply the finite-time
decay theorem, we need to know that the torus fibers remain uniformly small
on blocks. The argument for this uses the time-dependent fibration supplied
by Bamler.

This is incorporated into the central block-contraction theorem, which proves the
conceptual implication
\[
        \text{admissible block start}
        \quad\longmapsto\quad
        \text{admissible restart},
        \qquad
        b_{\mathrm{new}}\leq\tfrac12 b_{\mathrm{old}},
\]
where \(b\) is the non-invariant error.
To apply it at the first stage, we choose a sufficiently late time and a
fibration supplied by Bamler.  We parabolically rescale the original flow,
average the initial metric with respect to the chosen fibration, evolve the
average by Ricci flow, and put the original rescaled flow in Ricci--DeTurck
gauge relative to that invariant solution.  At the end of the stage, we
rescale, average again, and continue with the same fixed fibration.  In physical time this produces a continuous,
piecewise smooth pullback \(g'(t)\) of \(g(t)\) and a piecewise Ricci-flow
comparison family \(h(t)\), which may jump at switching times; see
Figure~\ref{fig:three-metric-families}.

For the invariant comparison flow \(h(t)\), the estimates of
\cite{LottSesum} control the increase of the quotient-circle length on each
smooth stage.  The jumps of \(h\) between stages produce a summable sequence
of length errors.  A scalar recursion then shows that the length of the
quotient circle is \(O(\sqrt t)\).  On a persistent large-diameter tail, the
quantitative lower bound for the quotient length forces the normalized
stage-start lengths to converge and the energies to tend to the self-similar
value \(2\).  A two-sided estimate at each restart and the smooth-stage
inequalities propagate this convergence throughout every block; comparison
with Bamler's shrinking fibers then gives a full metric-circle limit on the
working cover.  The fiber-diameter bounds yield the same diameter estimate for
\(h(t)\), and hence for \(g(t)\).

\subsection{The three geometries and the remaining cases}

The block construction and quotient-length recursion are applied separately
on each maximal interval of sufficiently late times for which the normalized
diameter \(t^{-1/2}\diam(M,g(t))\) stays above the fixed threshold; the
argument also controls a possible terminal partial block.  The same block and
quotient-length machinery is used in the Euclidean, Nil, and Sol cases.  The
Thurston type
matters only in determining whether the normalized diameter eventually stays
above the threshold or sometimes drops below it.  For a \(\Sol\) manifold, the
latter alternative would produce an almost-flat slice and is topologically
impossible.  Hence the iteration applies for all sufficiently large times and
gives both the required diameter estimate and the full metric-circle limit on
the working cover.  The Sol universal-cover blowdown comes from
\cite{Lott10}; finite-cover descent then gives part~(3) of
Theorem~\ref{thm:main}.

In the Euclidean case, if one sufficiently late time slice is
sufficiently almost flat, then the stability results of \cite{GIK,LottSesum}
give exponential
convergence to a flat metric.  Otherwise, the torus-bundle case persists on a
tail.  The resulting diameter bound, combined with \cite{Lott10}, then forces
the flow into the almost-flat case.  This proves part~(1).

For a \(\Nil\) manifold, we apply the same iteration separately on each
connected component of
\[
        \{t:t^{-1/2}\diam(M,g(t))\geq c_{\mathrm{af}}\}.
\]
The complementary times are already almost flat and satisfy the required
diameter bound, while Type-III metric distortion controls the partial blocks
at the ends of each component.  Once \(\diam(M,g(t))=O(\sqrt t)\) is known,
\cite{Lott10} proves part~(2).

In a broader context, one may speculate that an immortal Ricci flow on a
compact three-manifold \(M\) has diameter growth \(O(\sqrt t)\) if and only if
\(M\) admits a locally homogeneous Riemannian metric
\cite[Remarks~1.4--1.5]{Lott10}.  The ``only if'' implication follows by
combining Bamler's Type-III estimate with
\cite[Proposition~3.5]{Lott10}.  The ``if'' implication is known for \(H^3\)
from Perelman's work, and for \(\RR^3\), \(\Nil\), and \(\Sol\) from the
present paper.  The remaining geometries are \(H^2\times\RR\) and
\(\widetilde{\SL(2,\RR)}\).  The methods here do not extend
directly to these cases.  Bamler's results do not provide an approximation by an
\(S^1\)-invariant metric on a circle bundle over a higher-genus surface
\cite[Theorem~1.4(e)]{Bamler}.  Moreover, even for an \(S^1\)-invariant Ricci
flow on such a circle bundle, the long-time behavior is not known in general.
Section~2 of \cite{LottSesum} treats the special case of a warped-product
metric, but curvature of the circle bundle introduces additional
complications.

\subsection{Organization and conventions}

The structure of the paper is the following.
Section~\ref{sec:external-inputs} states the long-time inputs from earlier
work and the fixed-threshold formulation used in the block argument.
Section~\ref{sec:collapse-inputs} selects the working torus-bundle cover and
develops the local bundle geometry.  Section~\ref{sec:finite-time-decay} proves
finite-time decay of the non-invariant part.
Section~\ref{sec:fibration-comparison} compares torus fibrations to bound the
diameters of fibers.  Section~\ref{sec:stage-restart} constructs one Ricci--DeTurck
stage and its regular restart.  Section~\ref{sec:deturck-iteration} proves the
one-block contraction.  Section~\ref{sec:quotient-length} combines that
contraction with the quotient-circle energy and length estimates to obtain a
single iteration theorem for any connected large-diameter interval.
Section~\ref{sec:finite-cover-descent} collects the finite-cover descent facts
used in the final arguments.  Sections~\ref{sec:euclidean-sol} and
\ref{sec:nil-bootstrap} prove the Euclidean, Sol, and Nil conclusions on the
working cover, and Section~\ref{sec:main-theorem-proof} descends them and
completes the proof of Theorem~\ref{thm:main}.
Appendix~\ref{app:parabolic-estimates} contains the uniform lifted parabolic
estimates, and Appendix~\ref{app:source-guide} records the detailed
guide from the external formulations in Section~\ref{sec:external-inputs}
to the cited sources.

For the reader's convenience, Appendix~\ref{app:dependencies} gives
dependency trees of the principal statements in the paper.  Each arrow
runs from a dependency to a result that uses it, and selected edge labels record
the main parameters or quantities carried along that implication.
Appendix~\ref{app:symbols} lists the persistent parameters and notation
alphabetically, first by Greek letter and then by Roman letter.

Unnamed constants denoted by \(C\) may change from line to line.  Named
constants retain the values assigned to them.  All constants depend only on
the parameters shown in the relevant statements, not on the particular
collapsed manifold once the collapse parameter is sufficiently small.

\subsection*{AI Acknowledgment.}
The author developed the proof structure, intermediate results, and proof
sketches.  ChatGPT-5.6 Sol was used to elaborate detailed arguments from this
material and to assist in preparing the figures and the dependency and notation
appendices.  The author assumes responsibility for all mathematical claims and
source attributions in the manuscript.

\section{Long-time inputs from earlier work}\label{sec:external-inputs}
The proof uses several long-time results from the literature as external
inputs.  This section states them in forms that are uniform under passage to
covers and compatible with the later rescaling-and-restart argument.  It also
fixes the thresholds and quantitative formulations that will be used without
further alteration in the later analytic and geometric constructions.  A
detailed guide from these formulations to the cited statements is given in
Appendix~\ref{app:source-guide}.

\subsection{Type-III blowdowns}

The following proposition is the part of \cite[Theorem~1.2]{Lott10} used below.
\begin{proposition}[Blowdowns under Type-III curvature and diameter control]
\label{input:lott-blowdown}
Let \(M\) be a closed connected orientable three-manifold modeled on
\(\RR^3\), \(\Nil\), or \(\Sol\), and let \(g(t)\) be an immortal Ricci
flow.  Suppose that, for all sufficiently large \(t\),
\begin{equation}\label{eq:lott-input-hypotheses}
        |\Rm_{g(t)}|\leq C/t,
        \qquad
        \diam(M,g(t))\leq C\sqrt t.
\end{equation}
Then the compact and universal-cover blowdown conclusions of
\cite[Theorem~1.2]{Lott10} hold in the following form.  The compact normalized
metrics converge to a point in the Euclidean and Nil cases.  In the Sol case,
every Gromov--Hausdorff subsequential limit is a metric circle or a compact
interval.  The lifted blowdown flows
converge in the pointed Cheeger--Hamilton sense to the corresponding
homogeneous expanding soliton solutions.  In the Euclidean case, the limit is
the static Euclidean flow \((\RR^3,g_{\mathrm{Euc}}(\tau))\), where
\(g_{\mathrm{Euc}}(\tau)=dx^2+dy^2+dz^2\) for \(\tau>0\).
\end{proposition}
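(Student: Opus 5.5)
This proposition is a restatement of \cite[Theorem~1.2]{Lott10}, so the proof is a matter of matching hypotheses and translating conclusions rather than new analysis. I would first check that the hypotheses \eqref{eq:lott-input-hypotheses} are exactly the standing assumptions of \cite{Lott10}. These are an immortal Ricci flow on a closed aspherical three-manifold with \(|\Rm_{g(t)}|=O(t^{-1})\) and \(\diam(M,g(t))=O(\sqrt t)\). In dimension three, \(|\Rm|\) and \(|\sec|\) bound each other up to a universal constant, so the curvature hypothesis transfers directly. Since the bounds are only assumed for large \(t\), I would shift time by a fixed \(t_0\). The estimates \(C/t\) and \(C\sqrt t\) become \(C'/(t-t_0)\) and \(C'\sqrt{t-t_0}\) for \(t\geq 2t_0\). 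The blowdown limits are unchanged, because \(s^{-1}g(s\tau+t_0)\) and \(s^{-1}g(s\tau)\) have the same limits as \(s\to\infty\) by Type-III metric distortion on a time interval of length \(t_0\ll s\).

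Next I would extract the conclusions case by case from \cite[Theorem~1.2]{Lott10} and the accompanying case analysis there.

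\emph{Compact limits.} In the Euclidean and Nil cases the Gromov--Hausdorff limit of \((M,t^{-1}g(t))\) is a point. This should follow because the volume of the normalized metric tends to zero while the diameter stays bounded. In the Sol case, subsequential limits have dimension one, and since \(M\) is compact they are a circle or an interval.

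\emph{Universal cover.} The pointed Cheeger--Hamilton limit of the lifted blowdowns is the corresponding homogeneous expanding soliton. I would note that, because \(\widetilde M\) is homogeneous in the limit, the limit does not depend on the basepoint \(\widetilde m\). I would also note that the convergence holds along every sequence \(s_j\to\infty\), which is the sense fixed after Theorem~\ref{thm:main}. In the flat case, the "expanding soliton" is the static flow \(g_{\mathrm{Euc}}\), and this matches what \cite{Lott10} gives for \(\RR^3\).

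The main point to check is orientability and the precise form of the Sol compact limit. \cite{Lott10} may phrase its results for a specific class of locally homogeneous models, for example \(\Sol\) manifolds that are torus bundles, or sometimes torus semi-bundles with interval quotient. I expect this step to need the most care: the "circle or interval" alternative has to cover both the torus-bundle and semi-bundle \(\Sol\) manifolds. My first approach would be to pass to a finite cover as in \cite{Lott10}. The Gromov--Hausdorff limit of the quotient is then the quotient of a circle limit by a finite isometry group acting on it, and such a quotient is a circle or an interval.
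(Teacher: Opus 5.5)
Your route matches the paper's. The paper gives no independent argument for this proposition; it quotes the relevant part of \cite[Theorem~1.2]{Lott10}, restricted to the $\RR^3$, $\Nil$, and $\Sol$ geometries (see Appendix~\ref{app:source-guide}), so matching hypotheses and reading off the conclusions is all that is done. Your time shift and finite-cover detour are harmless but unnecessary. In the paper this proposition is only ever applied on the torus-bundle working cover, and descent is handled afterwards by Proposition~\ref{prop:finite-cover-descent}, so the semi-bundle concern never arises where the proposition is used.

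The one supplementary justification you give that would actually fail is the claim that the point limit in the Euclidean and Nil cases follows because the normalized volume tends to zero while the normalized diameter stays bounded. That only yields collapse, not collapse to a point. Take the locally homogeneous $\Sol$ flow $e^{-2z}dx^2+e^{2z}dy^2+(4t+c)\,dz^2$ on a compact quotient. The torus fibers are static and the base length grows like a constant multiple of $\sqrt t$. So $\operatorname{vol}(M,t^{-1}g(t))\sim t^{-1}\to0$ and $t^{-1/2}\diam(M,g(t))$ stays bounded, yet the limit is a circle. Since $d_{\mathrm{GH}}(X,\pt)=\diam(X)/2$, a point limit is equivalent to $\diam(M,g(t))=o(\sqrt t)$. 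That is exactly the geometry-specific information your heuristic does not provide, and it has to be imported from \cite{Lott10} along with the rest.

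Similarly, ``one-dimensional and compact'' does not by itself give a circle or an interval, since a finite graph is also a compact one-dimensional length space. You would need the Alexandrov structure of the limits, which comes from the curvature bound, or again the cited theorem. Drop these heuristics and let the citation carry the compact conclusions.
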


The new geometric estimate needed in this paper is the second inequality
in \eqref{eq:lott-input-hypotheses}.

\subsection{Almost-flatness and stability near flat metrics}

\phantomsection\label{par:almost-flat-threshold}
Fix a dimensional constant \(\eps_{\mathrm{af}}>0\) below the
three-dimensional Gromov--Ruh threshold.

We use the following almost-flat theorem of Gromov and Ruh
\cite{GromovAlmostFlat,Ruh}.
\begin{proposition}[Almost-flat topological theorem]
\label{input:almost-flat-topology}
If a metric \(g\) on a closed three-manifold satisfies
\begin{equation}
        \bigl(\sup_M|\sec_g|\bigr)\diam(M,g)^2<\eps_{\mathrm{af}},
\end{equation}
then the manifold is an infranilmanifold; in particular, its fundamental group
is virtually nilpotent.
\end{proposition}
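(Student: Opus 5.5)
This is the Gromov--Ruh almost-flat manifolds theorem, cited from \cite{GromovAlmostFlat,Ruh}, so I would quote it rather than reprove it. The plan is to check that the hypothesis matches the classical formulation and then extract the conclusion about the fundamental group.

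\textbf{Matching the hypothesis.} Gromov's theorem (in Ruh's refined form) says there is \(\eps(n)>0\) such that any closed \(n\)-manifold carrying a metric with \(|\sec|\,\diam^2<\eps(n)\) is diffeomorphic to an infranilmanifold \(\Gamma\backslash N\). Here \(N\) is a simply connected nilpotent Lie group, and \(\Gamma\subset N\rtimes \mathrm{Aut}(N)\) is a discrete cocompact subgroup whose projection to the compact factor is finite. Since the hypothesis
\[
\bigl(\sup_M|\sec_g|\bigr)\diam(M,g)^2<\eps_{\mathrm{af}}
\]
is scale invariant, and \(\eps_{\mathrm{af}}\) was fixed below the three-dimensional threshold \(\eps(3)\), the hypothesis of the classical theorem holds with \(n=3\). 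This gives the first assertion directly.

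\textbf{Extracting virtual nilpotence.} For the ``in particular'', note that \(\pi_1(M)\cong\Gamma\). The subgroup \(\Gamma\cap N\) has finite index in \(\Gamma\), because \(\Gamma\) maps to the finite group of rotational parts with kernel \(\Gamma\cap N\). As a subgroup of the nilpotent group \(N\), the kernel \(\Gamma\cap N\) is nilpotent. Hence \(\pi_1(M)\) is virtually nilpotent.

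The only real issue is bookkeeping with constants. The relevant threshold is the dimensional constant from \cite{Ruh}, and one must be sure that \(\eps_{\mathrm{af}}\) is strictly below it. That is exactly how \(\eps_{\mathrm{af}}\) was fixed above, so nothing further needs to be checked.
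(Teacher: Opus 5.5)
Your proposal is correct and matches the paper. The paper also gives no argument of its own here: it cites Gromov--Ruh, with \(\eps_{\mathrm{af}}\) fixed below the three-dimensional threshold, and uses the result only to get virtual nilpotence of \(\pi_1\). Your derivation of that via the finite-index lattice \(\Gamma\cap N\) is fine, with one wording fix: the holonomy lies in a compact subgroup \(C\) of \(\mathrm{Aut}(N)\), with \(\Gamma\subset N\rtimes C\), rather than in a ``compact factor'' of \(N\rtimes\mathrm{Aut}(N)\), and its image is finite by the generalized Bieberbach theorem.
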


The following uniform version of the flat stability theorem of
Guenther--Isenberg--Knopf \cite{GIK} is the external analytic input used in
the Euclidean argument.
\begin{proposition}[Stability near a compact family of flat metrics]
\label{input:flat-stability}
Let \(M\) be a fixed torus and let \(\mathcal F\) be a compact set of
flat metrics on \(M\) (not modulo diffeomorphisms).  For a fixed
\(\rho\in(0,1)\), there is a uniform
\(h^{2+\rho}\)-neighborhood of \(\mathcal F\), measured using one fixed smooth
reference metric on \(M\), such that the Ricci flow starting from any
smooth Riemannian metric in this neighborhood exists for all future time and,
without applying a time-dependent pullback, converges exponentially as a tensor
field on \(M\) to a smooth flat metric.  Here \(h^{2+\rho}\) is the little
H\"older space, namely the closure of smooth metrics in the
\(C^{2,\rho}\)-norm.  The decay rate and the constants in the fixed-background
stability estimates may be chosen uniformly on \(\mathcal F\).  Convergence
holds in every fixed \(C^m\)-norm, with higher-order constants that may also
depend on higher norms of the smooth initial metric.
\end{proposition}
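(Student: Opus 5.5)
The plan is to deduce this uniform statement from the single-metric stability theorem of Guenther--Isenberg--Knopf \cite{GIK}, which we treat as a black box.

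\textbf{Local statement near one flat metric.} Fix a flat metric \(\bar g\in\mathcal F\). The result of \cite{GIK} gives the following in the little H\"older space \(h^{2+\rho}\). There is a neighborhood of \(\bar g\) from which the Ricci flow, written in Ricci--DeTurck gauge relative to \(\bar g\), exists for all time and converges exponentially to a flat metric \(\bar g_\infty\) near \(\bar g\). The proof is a center-manifold argument: the linearized operator is the Lichnerowicz Laplacian of \(\bar g\), its kernel is the finite-dimensional space of parallel symmetric tensors, which is tangent to the manifold of flat metrics, and on the complement its spectrum is bounded away from zero. Since \(\bar g\) is flat, the DeTurck vector field \(W=g\,\bar g^{-1}\cdot(\Gamma(g)-\Gamma(\bar g))\) vanishes on constant-coefficient (parallel) perturbations of \(\bar g\), which are themselves flat. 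It therefore decays exponentially along the flow, together with all its derivatives, by the smoothing estimates.

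\textbf{Removing the gauge.} The diffeomorphisms \(\phi_t\) generated by \(-W\) are then Cauchy as \(t\to\infty\) with an exponential rate, converging to a diffeomorphism \(\phi_\infty\). The actual Ricci flow \(g(t)=\phi_t^*g_{\RDT}(t)\) then converges exponentially to the flat metric \(\phi_\infty^*\bar g_\infty\) as a tensor field on \(M\), with no further time-dependent pullback. Higher \(C^m\) convergence follows from interior parabolic estimates for \(t\geq1\). These estimates depend on higher norms of the initial metric only through the short-time regularization step.

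\textbf{Uniformity over \(\mathcal F\).} This is the main point. Pulling back by a fixed linear map \(A\in GL(3,\RR)\) acting on \(\RR^3\) does not descend to \(M\) unless \(A\in GL(3,\ZZ)\), so we cannot reduce everything to one model metric. Instead I would check that the constants in the argument of \cite{GIK} depend only on a few quantities: the spectral gap of the Lichnerowicz Laplacian on the complement of its kernel, the Schauder and interpolation constants for \(\bar g\), and bounds comparing \(\bar g\) with the fixed reference metric. Each of these depends continuously on \(\bar g\) within the finite-dimensional space of flat metrics. The spectral gap, for instance, is the smallest nonzero value of \(|\xi|^2_{\bar g}\) over the dual lattice, which is continuous and positive.

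Hence every \(\bar g\in\mathcal F\) has a neighborhood \(U_{\bar g}\) of flat metrics on which one radius \(r_{\bar g}\) and one decay rate work for every center \(\bar g'\in U_{\bar g}\). Compactness gives a finite subcover. The uniform neighborhood is then the union of the \(h^{2+\rho}\)-balls of radius \(\tfrac12\min r_{\bar g_i}\) around all points of \(\mathcal F\). The \(h^{2+\rho}\) norms computed with respect to the different \(\bar g\) are uniformly equivalent to the norm of the fixed reference metric, because \(\mathcal F\) is compact.

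The step I expect to be the obstacle is this verification that the center-manifold constants of \cite{GIK} vary continuously with the flat background. Passing to the fixed reference metric is routine by comparison.
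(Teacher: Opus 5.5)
Your local steps match what the paper uses: stability in Ricci--DeTurck gauge from \cite{GIK}, removal of the gauge via exponentially convergent DeTurck diffeomorphisms, and positive-time bootstrapping to every \(C^m\)-norm. The gap is in the uniformity step, which is the only content of the proposition beyond \cite{GIK}. There your argument rests on the claim that the center-manifold constants of \cite{GIK} (spectral gap, Schauder and interpolation constants) depend continuously on the flat background. You flag this as the main obstacle and leave it unverified. As written, that would mean reopening the proof of \cite{GIK}. It is also unnecessary.

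The paper applies \cite{GIK} only at finitely many fixed centers. For each \(h\in\mathcal F\), the stability neighborhood \(U_h\) is open in the fixed \(h^{2+\rho}\)-topology and contains \(h\). Hence finitely many \(U_{h_1},\dots,U_{h_J}\) cover \(\mathcal F\). Since \(\mathcal F\) is compact and \(U=\bigcup_jU_{h_j}\) is open, the distance from \(\mathcal F\) to the complement of \(U\) is positive. This gives a uniform \(\delta\)-neighborhood of \(\mathcal F\) inside \(U\).

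An initial metric near some \(f\in\mathcal F\) is then controlled by the estimate centered at whichever \(h_j\) has \(U_{h_j}\) containing it, not by an estimate centered at \(f\). The rate is the minimum over the finitely many \(j\), and the constants are the maxima, together with finitely many norm-equivalence constants.

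Your own intermediate claim, that one radius and one rate ``work for every center'' near \(\bar g\), also follows from the triangle inequality alone. Choose \(r_{\bar g}\) so that the reference-norm ball of radius \(r_{\bar g}\) about \(\bar g\) lies in the single-background stability neighborhood of \(\bar g\). Then for every \(\bar g'\) within \(r_{\bar g}/2\) of \(\bar g\), the ball of radius \(r_{\bar g}/2\) about \(\bar g'\) lies in that neighborhood, and the decay rate of \(\bar g\) applies. With this replacement your argument closes, and the continuity check becomes superfluous.
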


\begin{proof}
For one flat background, \cite[Theorem~3.7]{GIK} gives exponential stability
for the Ricci flow.  In its proof, the corresponding Ricci--DeTurck
convergence is established first, and \cite[Proposition~3.6]{GIK} is then used
to transfer this convergence to the ungauged Ricci flow.  Fix a smooth
reference metric on
the underlying torus and use its little H\"older norm to metrize the
space of \(h^{2+\rho}\)-metrics.  For every \(h\in\mathcal F\), choose an open
stability neighborhood \(U_h\) and the exponential estimate supplied by
\cite{GIK}.  There are \(h_1,\ldots,h_J\in\mathcal F\) such that
\(\mathcal F\subset U:=\bigcup_{j=1}^J U_{h_j}\).  Since \(\mathcal F\) is
compact and \(U\) is open, there is \(\delta>0\) such that the
\(\delta\)-neighborhood of \(\mathcal F\), in this one fixed
\(h^{2+\rho}\)-metric, is contained in \(U\).  Thus every initial metric in
that uniform neighborhood lies in at least one of the finitely many
\(U_{h_j}\).  The minimum of the corresponding exponential rates and the maximum of the
constants, together with the equivalence of the finitely many little H\"older
norms occurring in these estimates, give uniform stability data.  For smooth
initial data, positive-time parabolic estimates bootstrap the exponentially
decaying Ricci--DeTurck perturbation in every fixed \(C^m\)-norm.  The
resulting estimates for the DeTurck vector field give convergence of the
associated diffeomorphisms in the corresponding orders, transferring the
exponential convergence to the ungauged flow.  The decay rate remains uniform
on \(\mathcal F\), while the higher-order constants may also depend on higher
norms of the smooth initial metric.
\end{proof}

The diameter-normalized near-flat compactness statement that places a metric
in this fixed-coordinate stability neighborhood is specific to the Euclidean
argument.  It is formulated and proved in
Subsection~\ref{subsec:euclidean-case}, immediately before it is used.

\subsection{Estimates for invariant torus-bundle flows}

The next proposition states the two invariant-flow estimates from
\cite[Lemmas~3.3 and~3.7]{LottSesum} used in the iteration.
\begin{proposition}[Invariant energy and quotient-length inequalities]
\label{input:invariant-energy-length}
Let \(h(t)\) be an invariant Ricci flow on a torus bundle,
written in the notation of \cite{LottSesum} and
Subsection~\ref{subsec:one-dimensional-estimates}, and set
\[
        E(t)=\max_{S^1}h^{yy}\Tr\bigl((G^{-1}G_y)^2\bigr),
        \qquad
        L(t)=L(S^1,h_{yy}(t)).
\]
Then \cite[Lemmas~3.3 and~3.7]{LottSesum} give
\begin{equation}\label{eq:external-energy-length}
        \frac{d}{dt}\log L(t)\leq\frac14E(t).
\end{equation}
If \(t_1\geq t_0\) and \(E(t_0)>0\), they also give
\begin{equation}
        \frac1{E(t_1)}-\frac1{E(t_0)}\geq\frac12(t_1-t_0).
\end{equation}
If \(E(t_0)=0\), the maximum-principle inequality underlying the energy
estimate gives \(E(t)=0\) for every \(t\geq t_0\); in that case the
reciprocal inequality is not invoked and \eqref{eq:external-energy-length}
has zero right-hand side.  These inequalities are used only on the smooth invariant stages constructed in
Section~\ref{sec:deturck-iteration}.
\end{proposition}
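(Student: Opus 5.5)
The proposition is essentially a citation, so the plan is to translate the invariant Ricci flow into the notation of \cite{LottSesum} and check that their two lemmas give exactly the displayed inequalities.

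\textbf{Reduction to a one-dimensional system.} On the working torus-bundle cover, a locally \(\TT^2\)-invariant metric can be written as
\[
h=h_{yy}\,dy^2+G_{ij}(y)\,(dx^i+A^i)(dx^j+A^j),
\]
with \(G\) a \(2\times2\) positive-definite matrix depending on the quotient coordinate \(y\in S^1\). The monodromy enters only through a twisted periodicity condition \(G(y+1)=\gamma^TG(y)\gamma\). The quantity \(\Tr((G^{-1}G_y)^2)\) is invariant under this conjugation, so \(E(t)\) is well defined on \(S^1\). I will also check that the invariant Ricci flow stays in this class. The mixed terms \(A\) are either absent for the bundles considered, or are handled exactly as in \cite{LottSesum}, where the Ricci flow equation reduces to the harmonic-map-type flow for \(G\) coupled to the evolution of \(h_{yy}\).

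\textbf{Length inequality.} In the reduced system, \(\partial_t h_{yy}=-2\Ric_{yy}\), and
\[
\Ric_{yy}=-\tfrac12\Tr(G^{-1}G_{yy})+\tfrac14\Tr\bigl((G^{-1}G_y)^2\bigr)+(\text{terms involving }h_{yy,y}).
\]
Differentiating \(L=\int_{S^1}\sqrt{h_{yy}}\,dy\) gives \(dL/dt=-\int \Ric_{yy}h^{yy}\,ds\). After integrating by parts over the closed circle, the second-derivative terms combine with \(\tfrac14 h^{yy}\Tr((G^{-1}G_y)^2)\), and the terms from \(\partial_s\log\det G\) enter with a favorable sign. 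This leaves \(dL/dt\le \tfrac14\int h^{yy}\Tr((G^{-1}G_y)^2)\,ds\le \tfrac14 E\,L\). This is the content of \cite[Lemma~3.7]{LottSesum}. The inequality is for \(L\) itself, so dividing by \(L\) gives \eqref{eq:external-energy-length}. Since \(E\) is a maximum and possibly only Lipschitz in \(t\), I will read the derivative as an upper Dini derivative, or integrate it.

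\textbf{Energy inequality.} Set \(e=h^{yy}\Tr((G^{-1}G_y)^2)\). \cite[Lemma~3.3]{LottSesum} computes \(\partial_te\le \Delta e-\tfrac12 e^2\) (with the sharp constant \(\tfrac12\), using the Cauchy--Schwarz bound \(\Tr(X^2)^2\le 2\Tr(X^4)\) for symmetric \(2\times 2\) \(X\)). By the maximum principle, \(E\) is a viscosity subsolution of \(E'\le -\tfrac12E^2\). Comparing with the ODE solution \(1/(1/E(t_0)+\tfrac12(t-t_0))\) gives \(1/E(t_1)-1/E(t_0)\ge\tfrac12(t_1-t_0)\) when \(E(t_0)>0\). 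If \(E(t_0)=0\), the same comparison with zero data gives \(E\equiv0\) afterwards, so the right side of \eqref{eq:external-energy-length} vanishes.

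\textbf{Main obstacle.} The step I expect to take the most care is verifying that the normalizations match those of \cite{LottSesum}. Their time scaling, the factor \(2\) in \(\partial_t g=-2\Ric\), and the precise definition of \(E\) all affect the constants \(\tfrac14\) and \(\tfrac12\) in the statement. The other point to check is that the twisted periodicity from nontrivial (Nil or Sol) monodromy does not affect their integration-by-parts or maximum-principle arguments. I would handle both by working on the pulled-back \(\RR\)-periodic picture, where all the integrands are invariant.
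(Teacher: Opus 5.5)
Your proposal is correct and follows the paper's treatment. The paper likewise quotes \cite[Lemma~3.7]{LottSesum} for the length bound, integrates \cite[Lemma~3.3]{LottSesum} for the reciprocal energy bound, and reads off the zero-energy case from the same maximum-principle inequality. The one normalization point it makes explicit, which you should note too, is that Lott--\v{S}e\v{s}um work in a gauge differing from ordinary Ricci flow by a horizontal Lie derivative; this is harmless because \(E\) and \(L\) are invariant under reparametrizing the base circle.

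Two of your side remarks are slightly off, though neither is load-bearing. First, the \(\partial_s\log\det G\) contribution to \(dL/dt\) integrates to exactly zero rather than entering with a favorable sign, so in fact \(dL/dt=\tfrac14\int\mathcal E\,ds\). Second, the \(-\tfrac12e^2\) term does not come from \(\Tr(X^2)^2\le2\Tr(X^4)\). It comes from the evolution of \(h^{yy}\), via the \(-\tfrac14\Tr((G^{-1}G_s)^2)\) part of \(\Ric_{ss}\). A direct computation gives \(\partial_te=\Delta e-2\Tr(X_s^2)-\tfrac12e^2\) with \(X=G^{-1}G_s\).
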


\subsection{Bamler's Type-III estimate and fixed-threshold alternative}
\label{subsec:bamler-input}

We use Bamler's Type-III estimate in the following form
\cite[Corollary~1.2]{Bamler}.
\begin{proposition}[Global scale-invariant curvature bound]
\label{input:bamler-typeIII}
Let \(M\) be a closed connected three-manifold of type
\(\RR^3\), \(\Nil\), or \(\Sol\), and let \(g(t)\) be an immortal Ricci
flow.  There is a constant \(K<\infty\) such that, for every \(t>0\),
\begin{equation}\label{eq:bamler-typeIII}
        |\sec_{g(t)}|\leq K/t.
\end{equation}
\end{proposition}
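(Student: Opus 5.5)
This is an external input: the plan is to reduce Proposition~\ref{input:bamler-typeIII} to \cite[Corollary~1.2]{Bamler}, checking the hypotheses and translating the conclusion. Bamler proves that for any immortal Ricci flow on a closed three-manifold (more generally, any long-time existent Ricci flow with surgery whose late-time manifold is aspherical) there is \(C<\infty\) with \(|\Rm_{g(t)}|\leq C/t\) for all large \(t\). So the steps are as follows.

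\textbf{First, verify the hypotheses.} A closed three-manifold modeled on \(\RR^3\), \(\Nil\), or \(\Sol\) is aspherical, since its universal cover is \(\RR^3\). Our flow \(g(t)\) is smooth on \([0,\infty)\), so it is in particular a Ricci flow with surgery in which no surgeries occur, and Bamler's hypotheses apply directly.

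\textbf{Second, convert Bamler's conclusion to the stated form.} Bamler gives \(|\Rm_{g(t)}|\leq C/t\) for \(t\geq T_0\). In dimension three the sectional curvatures are bounded by a dimensional multiple of \(|\Rm|\), so \(|\sec_{g(t)}|\leq C'/t\) for \(t\geq T_0\).

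\textbf{Third, extend the bound to all \(t>0\).} For \(t\in(0,T_0]\) we use that the flow is smooth on the compact set \(M\times[0,T_0]\), so \(\sup|\sec|\leq A\) there. Hence \(t\,|\sec_{g(t)}|\leq AT_0\) on \((0,T_0]\), and we may take \(K=\max(C',AT_0)\).

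The main obstacle is not analytic. It is making sure the cited statement covers nonorientable manifolds and flows starting at \(t=0\) rather than at a late time. If Bamler's corollary is stated only for orientable manifolds, I would pass to the orientation double cover \(\widehat M\) with the lifted flow \(\widehat g(t)\). This lift is again immortal and aspherical, and the sectional curvature bound descends because the covering map is a local isometry.
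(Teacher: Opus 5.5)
Your proposal is correct and is essentially the paper's argument. The paper also quotes Bamler's Corollary~1.2 for all sufficiently large times, enlarges \(K\) over the compact initial time interval, and notes that pulling the flow back to a finite cover leaves the estimate and its constant unchanged; if the cited statement assumes normalized initial conditions, the constant parabolic rescaling \(g\mapsto a\,g(\cdot/a)\) removes that assumption, since it preserves \(t|\sec_{g(t)}|\) (this is how the paper's source guide handles normalization).
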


The next proposition is a fixed-threshold, tail-uniform reformulation of the
topology-specific alternatives in \cite[Theorem~1.4(b)--(d)]{Bamler}.  The
detailed construction of the invariant comparison metrics, together with the
derivative and fiber estimates used here, is given in
\cite[Section~4.4, especially Proposition~4.9]{BamlerD}, using
\cite{BamlerA,BamlerB,BamlerC}.  The initial normalization in the cited
theorem is removed by a constant parabolic rescaling, as explained in
Appendix~\ref{app:source-guide}; no normalization of \(g(0)\) is assumed below.
\begin{proposition}[Fixed-threshold almost-flat/torus-bundle alternative]
\label{input:bamler-fixed-threshold}
Let \(M\) be \(\TT^3\), a compact \(\Nil\) torus bundle with nontrivial
unipotent monodromy, or a compact \(\Sol\) torus bundle with monodromy
\(H\in\SL(2,\ZZ)\) satisfying \(\Tr H>2\), and let \(g(t)\) be an
immortal Ricci flow on \(M\).  Let \(K\) be
a Type-III constant supplied by Proposition~\ref{input:bamler-typeIII}.
Fix any \(c_0\in(0,1)\) such that
\begin{equation}\label{eq:c0-almost-flat-choice}
        Kc_0^2<\eps_{\mathrm{af}}.
\end{equation}
The parameter \(c_0\) is used only as the diameter threshold separating the two
geometric alternatives below; it is not an analytic smallness parameter and will not
be decreased later to meet compactness or stability tolerances.

There exist a finite time \(T_B(c_0)<\infty\) and a nonincreasing function
\[
        \eps_B^{c_0}:[T_B(c_0),\infty)\longrightarrow(0,\tfrac12],
        \qquad \eps_B^{c_0}(t)\longrightarrow0,
\]
with
\begin{equation}
        \eps_B^{c_0}(t)<c_0,
        \qquad t\geq T_B(c_0),
\end{equation}
such that every \(t\geq T_B(c_0)\) belongs to exactly one of the following
diameter-defined alternatives.
\begin{enumerate}
\item\label{item:bamler-small-slice}
If
\begin{equation}
        \diam(M,g(t))<c_0\sqrt t,
\end{equation}
then
\begin{equation}\label{eq:bamler-small-is-almost-flat}
 \bigl(\sup_M|\sec_{g(t)}|\bigr)\diam(M,g(t))^2
 <Kc_0^2<\eps_{\mathrm{af}}.
\end{equation}
Thus the slice is quantitatively almost flat.

\item\label{item:bamler-large-slice}
If
\begin{equation}\label{eq:bamler-large-diameter}
        \diam(M,g(t))\geq c_0\sqrt t,
\end{equation}
then there exist a torus fibration \(\pi_t^B:M\to S_t^1\) and an invariant metric \(g_t^B\), in the sense described in
Subsection~\ref{subsec:torus-bundle-conventions}, such that
\begin{equation}\label{eq:bamler-comparison-closeness}
        (1+\eps_B^{c_0}(t))^{-1}g_t^B\leq g(t)
        \leq(1+\eps_B^{c_0}(t))g_t^B.
\end{equation}
Moreover, for every integer \(m\) with
\(0\leq m<\bigl(\eps_B^{c_0}(t)\bigr)^{-1}\), the metric
\(t^{-1}g_t^B\) is \(\eps_B^{c_0}(t)\)-close to \(t^{-1}g(t)\) in
\(C^m\), relative to \(t^{-1}g(t)\), and every fiber has
\(g(t)\)-diameter at most \(\eps_B^{c_0}(t)\sqrt t\).
\end{enumerate}
Labels~(\ref{item:bamler-small-slice}) and~(\ref{item:bamler-large-slice})
refer only to the diameter inequalities.  A torus comparison
may also exist in the first alternative, but it is not used there.  In the
second alternative the fibration, quotient circle, and comparison metric may
depend on \(t\), with no asserted compatibility or uniqueness.
\end{proposition}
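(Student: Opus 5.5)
The proof is a reduction of Proposition~\ref{input:bamler-fixed-threshold} to Bamler's results, and I would organize it in three steps.

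\textbf{Step 1: normalization.} First remove the initial normalization required in \cite[Theorem~1.4]{Bamler}. A constant parabolic rescaling \(\lambda g(\lambda^{-1}t)\) makes \(g(0)\) satisfy that normalization. The quantities \(\diam(M,g(t))/\sqrt t\), \(t\,|\sec_{g(t)}|\), and the relative \(C^m\)-closeness of \(t^{-1}g_t^B\) to \(t^{-1}g(t)\) are invariant under this rescaling. Hence every conclusion transfers back, with only a change of \(T_B\) and a reparametrization of \(\eps_B^{c_0}\) in time. We take \(K\) from Proposition~\ref{input:bamler-typeIII}, which applies because the three torus bundles are of type \(\RR^3\), \(\Nil\), or \(\Sol\).

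\textbf{Step 2: the small-diameter alternative.} This is elementary. If \(\diam(M,g(t))<c_0\sqrt t\), then \eqref{eq:bamler-typeIII} gives
\[
\bigl(\sup_M|\sec_{g(t)}|\bigr)\diam(M,g(t))^2<(K/t)\,c_0^2\,t=Kc_0^2<\eps_{\mathrm{af}},
\]
which is exactly \eqref{eq:bamler-small-is-almost-flat}. The word ``exactly one'' is automatic, because the two alternatives are defined by complementary diameter inequalities.

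\textbf{Step 3: the large-diameter alternative.} This is the substantive step. Argue by contradiction and compactness to produce a single function \(\eps_B^{c_0}\). For \(\eps>0\), let \(T(\eps)\) be the infimum of times \(T\) such that every \(t\geq T\) with \(\diam(M,g(t))\geq c_0\sqrt t\) admits a fibration \(\pi_t^B\) and invariant metric \(g_t^B\) satisfying all the estimates with \(\eps\). The required input is that \(T(\eps)<\infty\) for each \(\eps\). Given this, define \(\eps_B^{c_0}(t)\) as a nonincreasing step function: equal to \(\min(c_0/2,\tfrac12)\) at first, then \(\eps_j\) on \([T(\eps_j),T(\eps_{j+1}))\) for a sequence \(\eps_j\downarrow0\), with \(T(\eps_j)\) made increasing. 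Then \(\eps_B^{c_0}\to0\) and \(\eps_B^{c_0}<c_0\).

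To see that \(T(\eps)<\infty\), suppose instead there are times \(t_i\to\infty\) with \(\diam\geq c_0\sqrt{t_i}\) at which no \(\eps\)-good structure exists. Rescale to \(t_i^{-1}g(t_i\cdot)\). Bamler's thick--thin analysis \cite[Theorem~1.4(b)--(d)]{Bamler} applies to these rescalings. In the Euclidean, Nil, and Sol torus-bundle cases there is no hyperbolic thick part, so the normalized flow is eventually entirely thin, that is, locally collapsed along \(\TT^2\)-orbits (or \(\TT^3\)-orbits) at scale \(\sqrt t\). The diameter lower bound \(c_0\) rules out total collapse to a point, which would be the almost-flat case. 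Therefore the collapse is along a global \(\TT^2\)-fibration over a circle of normalized length bounded below. \cite[Section~4.4, Proposition~4.9]{BamlerD} then supplies an invariant comparison metric whose closeness in \(C^m\) for \(m<\eps^{-1}\), and whose fiber diameters, tend to \(0\) as the collapse parameter tends to \(0\). This contradicts the choice of \(t_i\).

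I expect the main obstacle to be in Step 3: extracting from the cited sources an estimate that is uniform in \(C^m\) for all \(m<\eps^{-1}\) simultaneously, with a threshold depending only on \(c_0\). That requires the collapse parameter to go to zero uniformly on the large-diameter set of times, rather than only along subsequences. I would handle it by the diagonal argument above, applied one \(m\) at a time, and by using the fact that the fiber diameter \(o(\sqrt t)\) follows from the Type-III bound together with the vanishing of the thick part.
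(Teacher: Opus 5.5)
Your Steps~1 and~2 match the paper's justification in Appendix~\ref{app:source-guide}. The rescaling \(\widehat g(u)=ag(u/a)\) leaves \(t^{-1}g(t)\), the relative comparison norms and the normalized fiber diameters unchanged, and only moves the tail time and reparametrizes the error. The small-diameter branch is the one-line consequence of \(|\sec_{g(t)}|\leq K/t\), and ``exactly one'' is automatic.

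The divergence is in Step~3, where the paper uses no compactness or contradiction. As the paper reads it, \cite[Theorem~1.4(b)--(d)]{Bamler}, together with \cite[Proposition~4.9]{BamlerD}, already supplies a \emph{single} error function \(\varepsilon(t)\to0\). At each late time, either the normalized diameter is below \(\varepsilon(t)\) (the flat \(\TT^3\) and small-\(\Nil\) branches), or there is an invariant torus-bundle comparison that is \(\varepsilon(t)\)-close in \(C^m\) for all \(m<\varepsilon(t)^{-1}\), with fibers of diameter at most \(\varepsilon(t)\sqrt t\).

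The paper then does two things:
\begin{enumerate}
\item It chooses \(T_B(c_0)\) with \(\varepsilon(u)\leq c_0/2\) for \(u\geq T_B(c_0)\), so that \(\diam\geq c_0\sqrt t\) excludes the first branch.
\item It replaces \(\varepsilon\) by the nonincreasing envelope \(\eps_B^{c_0}(t)=\sup_{u\geq t}\varepsilon(u)\). Since \(\eps_B^{c_0}\geq\varepsilon\), the estimates only weaken, and \(m<(\eps_B^{c_0}(t))^{-1}\) implies \(m<\varepsilon(t)^{-1}\).
\end{enumerate}

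Your \(T(\eps)\) and step-function construction does the same job. It is fine once \(T(\eps)<\infty\) is read off from that theorem; use any \(T>T(\eps)\), since the infimum itself need not qualify. But then the contradiction argument is superfluous. The ``main obstacle'' you anticipate is also already part of the cited output: one comparison metric giving \(C^m\)-control for all \(m<\eps^{-1}\), plus the fiber bound. It is not something to assemble by a diagonal argument over \(m\), which would in any case have to produce one metric working for all \(m\) simultaneously.

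Do not treat your thick--thin narrative as an independent proof of \(T(\eps)<\infty\). ``Entirely thin and not collapsed to a point'' does not imply collapse along a global \(\TT^2\)-fibration over a circle. Nothing in that dichotomy excludes collapse along circle fibers to a two-dimensional base: a \(\Nil\) torus bundle is also a nontrivial circle bundle over \(\TT^2\), and \(\TT^3\) can collapse along a single circle. Likewise, the bound \(\diam F\leq\eps\sqrt t\) for the fibers of the chosen fibration does not follow from the Type-III bound and the vanishing of the thick part alone. Ruling these out is exactly the topology-specific content of Bamler's alternatives, so that step must be a citation.

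For the same reason, you should check scope, as the paper does. The condition \(\Tr H>2\) gives \(H\) positive eigenvalues, so \(\ZZ^2\rtimes_H\ZZ\) is a lattice in \(\Sol\) and the bundle lies directly in the scope of \cite[Theorem~1.4(d)]{Bamler}.
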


\medskip
\noindent\textbf{Fixed global application.}
For the global almost-flat/torus-bundle decomposition, fix once and for all a
cutoff \(c_{\mathrm{af}}=c_{\mathrm{af}}(K)\in(0,1)\) satisfying
\[
        Kc_{\mathrm{af}}^2<\eps_{\mathrm{af}},
\]
and write \(T_B(c_{\mathrm{af}})\) and
\(\eps_B^{c_{\mathrm{af}}}\) for the corresponding tail data.  Any
additional application of Proposition~\ref{input:bamler-fixed-threshold} is
labeled by its own cutoff and has its own tail data.  The Euclidean argument
will make one such additional application.

\medskip
\noindent\textbf{Working-cover application.}
All later applications of Proposition~\ref{input:bamler-fixed-threshold}
are made directly on the single working cover fixed after
Proposition~\ref{prop:finite-torus-bundle-covers}; no additional finite cover
is taken.

\section{Finite covers and torus-bundle geometry}\label{sec:collapse-inputs}
This section
establishes preliminary results about the topology, local torus
action, lifted noncollapsing, and averaging formalism.  
Its conclusions allow the later parabolic estimates to be
uniform despite collapse of the compact quotient.

The section is organized as follows.  The first subsection, \emph{Finite
torus-bundle covers and monodromy}, passes to a finite regular orientable
torus-bundle cover that, according to the geometry, is \(\TT^3\) with identity
monodromy, a \(\Nil\) bundle with nontrivial unipotent monodromy, or a \(\Sol\)
bundle with hyperbolic monodromy having positive eigenvalues.  The second subsection,
\emph{Torus-bundle conventions}, interprets the bundle as a twisted principal
\(\TT^2\)-bundle and fixes the local translation actions and the meaning of
local \(\TT^2\)-invariance.  The third subsection, \emph{Universal-cover
noncollapsing}, proves a collapse-independent injectivity-radius lower bound
on the universal cover from an invariant bilipschitz comparison, a uniform
volume lower bound, and the Cheeger--Gromov--Taylor estimate.  The final
subsection, \emph{Averaging and elementary collapse}, defines the global
fiberwise averaging operator and the non-invariant part, proves fixed-time
contraction of averaging in invariant H\"older norms, and quantifies collapse
to the metric quotient circle when the invariant fibers are short.

We use distinct notation for the three covers that occur in the proof.  A
fixed finite regular cover of the original manifold is denoted
\(\pi_{\mathrm{fin}}:M_{\mathrm{fin}}\to M\), with pulled-back metric
\(g_{\mathrm{fin}}\).  If \(M\) is a torus bundle, the infinite cyclic cover
obtained by unwrapping its base circle is denoted
\(\pi_{\mathrm{cyc}}:M_{\mathrm{cyc}}\to M\); a pulled-back tensor is written
with the subscript \({\mathrm{cyc}}\).  The universal cover is denoted
\(\widetilde M\), and universal-cover lifts carry tildes.  In particular, the
compact finite cover and the noncompact cyclic cover are never
represented by the same symbol.

\subsection{Finite torus-bundle covers and monodromy}\label{subsec:topological-preliminaries}

For a matrix \(H\in\SL(2,\ZZ)\), we use the standard terminology that
\(H\) is \emph{elliptic} if it has finite order, \emph{parabolic} if it has
infinite order and \(|\Tr H|=2\), and \emph{hyperbolic} if \(|\Tr H|>2\).
These are the mutually exclusive possibilities in \(\SL(2,\ZZ)\), and
replacing \(H\) by a positive power preserves its type.

\begin{proposition}[Finite torus-bundle covers of the three geometries]
\label{prop:finite-torus-bundle-covers}
Let \(M\) be a closed connected three-manifold modeled on \(\RR^3\),
\(\Nil\), or \(\Sol\).  Then \(M\) has a finite regular orientable cover
\(M_{\mathrm{fin}}\) which is the total space of a \(\TT^2\)-bundle over
\(S^1\), with monodromy \(H\in\SL(2,\ZZ)\).  The cover may be chosen so
that the following hold:
\begin{enumerate}
\item if \(M\) is modeled on \(\RR^3\), then
      \(M_{\mathrm{fin}}\cong\TT^3\) and \(H=I\);
\item if \(M\) is modeled on \(\Nil\), then \(H\) is nontrivial unipotent;
\item if \(M\) is modeled on \(\Sol\), then \(H\) is hyperbolic with
      positive eigenvalues, equivalently \(\Tr H>2\).
\end{enumerate}
In particular, the monodromy is elliptic, parabolic, or hyperbolic according
as the original geometry is \(\RR^3\), \(\Nil\), or \(\Sol\).
\end{proposition}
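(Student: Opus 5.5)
The plan is to use the standard classification of closed geometric three-manifolds of these three types, and then to reach the required form of the monodromy by passing to finite powers and finite covers.

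\emph{Euclidean case.} By Bieberbach's theorem, \(\pi_1(M)\) contains a finite-index normal subgroup isomorphic to \(\ZZ^3\) acting by translations. For example, take the translation subgroup, which is characteristic. The corresponding regular cover is a flat \(\TT^3\), which is orientable. It is the trivial bundle \(\TT^2\times S^1\to S^1\), with \(H=I\).

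\emph{Nil and Sol cases.} I would first use the fact, which follows from Scott's classification (geometries of three-manifolds), that every closed \(\Nil\) or \(\Sol\) manifold is finitely covered by a torus bundle over \(S^1\) with monodromy in \(\SL(2,\ZZ)\). Concretely, \(\pi_1(M)\) is a lattice in \(\mathrm{Isom}(\Nil)\) or \(\mathrm{Isom}(\Sol)\). Intersecting with the identity component, and then with the appropriate solvable subgroup of index dividing a fixed finite number, gives a finite-index subgroup \(\Gamma\) that is a lattice in \(\Nil\) or \(\Sol\) itself.

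For \(\Sol=\RR^2\rtimes\RR\), the lattice \(\Gamma\) meets the normal \(\RR^2\) in a \(\ZZ^2\) and projects to a cocompact \(\ZZ\subset\RR\). Hence \(\Gamma\cong\ZZ^2\rtimes_H\ZZ\), and \(\Gamma\backslash\Sol\) is a \(\TT^2\)-bundle over \(S^1\) with monodromy \(H\).

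For \(\Nil\), Malcev theory gives a central \(\ZZ\) and a quotient \(\ZZ^2\). Choosing a primitive class in \(\ZZ^2\) whose preimage, together with the center, is a normal \(\ZZ^2\), one obtains \(\Gamma\cong\ZZ^2\rtimes_H\ZZ\) with \(H\) unipotent, with \(H\) conjugate to \(\begin{pmatrix}1&k\\0&1\end{pmatrix}\) and \(k\neq0\).

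To arrange orientability, regularity, and the sign conditions, I would proceed as follows.
\begin{enumerate}
\item Replace \(\Gamma\) by a finite-index subgroup that is normal in \(\pi_1(M)\), namely its normal core. This core still contains a finite-index subgroup of \(\ZZ^2\) and of the \(\ZZ\)-direction, so it is again of the form \(\ZZ^2\rtimes_{H^j}\ZZ\) after relabeling. One should check that passing to a sublattice of the fiber \(\ZZ^2\) conjugates \(H^j\) within \(\mathrm{GL}(2,\mathbb Q)\) to an integral matrix, which preserves its type.
\item If \(\det H=-1\), pass to the double cover unwrapping the base once more. This replaces \(H\) by \(H^2\in\SL(2,\ZZ)\), and the total space becomes orientable because the fiber and base are oriented and the monodromy preserves orientation.
\item Pass again to the double cover to replace \(H\) by \(H^2\), so that the trace becomes positive. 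For hyperbolic \(H\), the eigenvalues \(\lambda,\lambda^{-1}\) are real, and those of \(H^2\) are positive, so \(\Tr H^2=\lambda^2+\lambda^{-2}>2\). For parabolic \(H=-U\) with \(U\) unipotent, one gets \(H^2=U^2\), which is unipotent and nontrivial.
\end{enumerate}
Each step composes with a normal-core step to keep the cover regular.

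It remains to check that the type matches the geometry. Elliptic monodromy gives a flat manifold, parabolic gives \(\Nil\), and hyperbolic gives \(\Sol\). This is the standard identification via the growth of \(\pi_1\): polynomial of degree \(3\), polynomial of degree \(4\), and exponential, respectively. Growth is a commensurability invariant, so passing to finite covers cannot change the type.

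The main obstacle is bookkeeping rather than depth. One must keep each modification regular and torus-bundle-shaped simultaneously. I would handle this by always taking the cover corresponding to the kernel of \(\pi_1(M)\to\pi_1(M)/N\), where \(N\) is a characteristic finite-index subgroup of the bundle group, for example the preimage of \(m\ZZ^2\rtimes 2\ZZ\) intersected over the finitely many subgroups of that index. I would then verify that \(N\) is again a semidirect product \(\ZZ^2\rtimes\ZZ\) with fiber \(N\cap\ZZ^2\), since \(\ZZ^2\) is the unique maximal normal abelian subgroup, or the Fitting-type subgroup, in the Sol and Nil cases.
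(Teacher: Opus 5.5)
Your proposal is correct and follows essentially the paper's argument: Scott's classification gives a torus-bundle finite cover, a base double cover squares the monodromy, the normal core gives regularity, and a finite-index subgroup of \(\ZZ^2\rtimes_H\ZZ\) is \(L\rtimes_{H^m}\ZZ\) with \(L\) its intersection with \(\ZZ^2\), so \(\det H=1\), positivity of eigenvalues, and nontrivial unipotence all persist. The only differences are that the paper squares once and then takes a single core, and that your lattice-in-\(\Sol\) description already forces \(\Tr H>2\), which makes your Steps~2--3 redundant.

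There is one slip in your last paragraph. In the \(\Nil\) case \(\ZZ^2\) is not the unique maximal normal abelian (or Fitting) subgroup: a Heisenberg lattice is nilpotent, and the preimage of every primitive cyclic subgroup of \(\Gamma/Z(\Gamma)\) is a normal \(\ZZ^2\). You do not need that justification, however. The intersection argument of your Step~1 applies to any finite-index subgroup, and normal cores already give regularity without the characteristic-subgroup detour.
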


\begin{proof}
For a compact Euclidean three-manifold, the translation subgroup gives a
finite cover by \(\TT^3\).  A compact Nil manifold has a finite cover which
is a torus bundle over a circle with nontrivial unipotent monodromy, while a
compact Sol manifold has a cover of degree at most four which is a torus
bundle with hyperbolic monodromy.  These facts, together with the monodromy
classification, are recorded in \cite[Sections~4--5, especially
Theorems~4.16, 5.3, and~5.5]{Scott}.  Pulling back first by the degree-two
cover of the base changes the monodromy from \(H_0\) to \(H=H_0^2\) and
places it in \(\SL(2,\ZZ)\) without changing the associated geometry.
In the Sol case this also makes both eigenvalues positive and gives
\(\Tr H=(\Tr H_0)^2-2\det H_0>2\).  Finally,
we take the cover corresponding to the core of this subgroup in
\(\pi_1(M)\), meaning the intersection of all its conjugates, or equivalently
the largest normal subgroup of \(\pi_1(M)\) contained in it.  This is a finite
regular refinement.  To see directly that the torus-bundle structure and its
type persist under the refinement, write the bundle group as
\[
        \Gamma_H=\ZZ^2\rtimes_H\ZZ.
\]
For any finite-index subgroup \(\Gamma'\leq\Gamma_H\), the intersection
\(L=\Gamma'\cap\ZZ^2\) is a finite-index lattice in \(\ZZ^2\), and the image
of \(\Gamma'\) under the projection to \(\ZZ\) is \(m\ZZ\) for some
\(m\geq1\).  Choosing an element of \(\Gamma'\) over \(m\) identifies
\(\Gamma'\) with \(L\rtimes_{H^m}\ZZ\); conjugation preserves \(L\) and is
the restriction of \(H^m\).  Hence the corresponding connected finite cover
is again a toroidal mapping torus.  The stronger geometry-specific conclusion is
also preserved.  If \(H=I\), then the refined monodromy is again the identity
and the total space is a three-torus.  If \(H\) is nontrivial unipotent, then
\(H^m-I\) is nonzero on \(\mathbb Q^2\), so its restriction to the finite-index
lattice \(L\) is nonzero; hence the refined monodromy remains nontrivial
unipotent.  In the Sol case, the eigenvalues of \(H^m\) remain positive and
off the unit circle, so the refined monodromy still has trace greater than
\(2\).  In terms of the original monodromy, this power is \(H_0^{2m}\),
regardless of the parity of \(m\).  This applies in particular to the core
refinement.
\end{proof}

\paragraph{\normalfont\itshape Working-cover convention.}
\phantomsection\label{par:working-cover-convention}
Apply Proposition~\ref{prop:finite-torus-bundle-covers} once to the original
manifold, fix the resulting regular orientable torus-bundle cover, and lift the
flow to it.  Take the identity cover only when all the geometry-specific
conditions of that proposition already hold; in the Sol case this requires
\(\Tr H>2\).  The phrase \emph{working cover} always refers to this one fixed
cover and its lifted flow.  It is \(\TT^3\) with identity monodromy in the
Euclidean case, a \(\Nil\) torus bundle with nontrivial unipotent monodromy
in the Nil case, and a \(\Sol\) torus bundle with \(\Tr H>2\) in the Sol
case.  Thus it lies exactly in the scope of
Proposition~\ref{input:bamler-fixed-threshold}.  The base double cover and
regular refinement are part of this initial choice; no further cover is
needed to apply Bamler's theorem.

Apply that proposition with the fixed global choice \(c_0=c_{\mathrm{af}}\)
directly to the flow on this cover, and carry out all block constructions on
the same cover.  The curvature constant is unchanged by pullback, while
\(T_B(c_{\mathrm{af}})\) and \(\eps_B^{c_{\mathrm{af}}}\) are the tail
data obtained for the lifted flow.  Any later auxiliary cutoff is applied to
this same flow and receives separately labeled tail data.  In the local
constructions below, the working cover is denoted simply by \(M\); the
notation \(M_{\mathrm{fin}}\) is restored only when it must be distinguished
from the original manifold in the final assembly.  The cyclic cover used for
torus translations is always \(M_{\mathrm{cyc}}\), and its universal cover is
\(\widetilde M\).  All descent statements are collected in
Section~\ref{sec:finite-cover-descent}.

\subsection{Torus-bundle conventions}
\label{subsec:torus-bundle-conventions}
Under the \hyperref[par:working-cover-convention]{working-cover convention},
fix the bundle projection
\(\pi:M\to S^1\) on the already selected torus-bundle cover.  Its monodromy
is represented by a fixed matrix \(H\in\SL(2,\ZZ)\), with the terminology
fixed in Subsection~\ref{subsec:topological-preliminaries}.  No further
preliminary passage to a cover is made in the local constructions below.

We view such a bundle as a \emph{twisted principal \(\TT^2\)-bundle}.  Choose
fiber orientations and an atlas over intervals in the base in which the fibers
are principal \(\TT^2\)-spaces.  On overlaps, the fiber translations are
conjugated by constant elements of \(\SL(2,\ZZ)\) coming from the monodromy.
Equivalently, after choosing a coordinate on the base, the bundle is the
mapping torus of an affine automorphism of \(\TT^2\) whose linear part is
\(H\).  The local principal translations give local \(\TT^2\)-actions.  A
tensor field, and in particular a metric, is called \emph{invariant} if, in
each such chart, it is invariant under these local fiber translations.  This condition is independent of the chart, because the
transition maps identify the local translation actions by torus automorphisms.

\subsection{Universal-cover noncollapsing}
The next lemma is tailored to the torus-bundle structures that occur throughout
the paper. 

\begin{lemma}[Universal-cover noncollapsing for torus bundles]
\label{lem:universal-cover-inj}
Fix \(B\geq1\).  There is a number \(i_B>0\) with the following property.  Let
\(M\) be the total space of a \(\TT^2\)-bundle over \(S^1\), let \(h\) be an
invariant metric for that bundle structure, and let \(g\) be
another metric satisfying
\[
        B^{-1}h\leq g\leq Bh.
\]
If \(|\sec_g|\leq\Lambda\) for some \(\Lambda>0\), then the universal cover
satisfies
\[
        \inf_{\widetilde x\in\widetilde M}
        \inj_{\widetilde g}(\widetilde x)
        \geq i_B\Lambda^{-1/2}.
\]
\end{lemma}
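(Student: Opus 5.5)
By scaling, it suffices to treat \(\Lambda=1\). Replace \(g\) and \(h\) by \(\Lambda g\) and \(\Lambda h\); the bilipschitz hypothesis and the invariance of \(h\) are unchanged, and \(\inj\) scales by \(\Lambda^{-1/2}\). So assume \(|\sec_g|\leq1\). On the universal cover, \(|\sec_{\widetilde g}|\leq1\), and the Cheeger--Gromov--Taylor estimate bounds \(\inj_{\widetilde g}(\widetilde x)\) from below by a constant depending only on a lower bound for \(\mathrm{vol}_{\widetilde g}(B_{\widetilde g}(\widetilde x,1))\). The task therefore reduces to showing that unit balls in \((\widetilde M,\widetilde g)\) have volume at least some \(v_B>0\). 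Since \(B^{-1}\widetilde h\leq\widetilde g\leq B\widetilde h\), we have \(B_{\widetilde h}(\widetilde x,B^{-1/2})\subset B_{\widetilde g}(\widetilde x,1)\) and volumes are comparable up to \(B^{\pm3/2}\). It is therefore enough to bound the \(\widetilde h\)-volume of \(\widetilde h\)-balls of radius \(r_B=B^{-1/2}\) from below, uniformly over all invariant metrics \(h\) that arise this way.

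The key is the structure of \((\widetilde M,\widetilde h)\). The universal cover is \(\RR^3=\RR^2\times\RR\), where \(\RR^2\) covers the torus fibers and the last factor is the base line. On \(\widetilde M\) the local \(\TT^2\)-translations lift to a global free isometric action of \(\RR^2\) by translations in the first factor, preserving \(\widetilde h\). In coordinates adapted to this action, after choosing a unit-speed \(\widetilde h\)-horizontal parametrization \(s\) of the base line,
\[
\widetilde h = ds^2 + G_{ij}(s)(dx^i+A^i(s)\,ds)(dx^j+A^j(s)\,ds).
\]
Since the orbits are the \(\RR^2\)-translates, the \(\RR^2\)-action identifies \(\widetilde M\) with \(\RR^2\times\RR\) and each orbit is a complete flat plane.

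The volume bound should come from the curvature bound. Here \(g\), not \(h\), has bounded curvature, so I would use the following argument rather than estimate \(G\) directly. The submanifolds \(\{s\}\times\RR^2\) are isometric to flat planes in \(\widetilde h\). Their \(\widetilde g\)-metric is \(B\)-bilipschitz to flat. The \(\widetilde h\)-ball of radius \(r_B\) about \(\widetilde x\) contains a product-like region: for \(|s-s_0|<r_B/2\), it contains the points whose horizontal displacement from the point over \(s\) on the horizontal curve is less than \(r_B/2\). This region is a union over \(s\) of Euclidean \(\widetilde h\)-discs of radius \(r_B/2\) in \((\RR^2,G(s))\). The volume form is \(\sqrt{\det G}\,ds\,dx\), so by Fubini its volume is \(r_B\cdot\pi(r_B/2)^2\), independent of \(G\). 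This yields \(v_B\) depending only on \(B\), and then Cheeger--Gromov--Taylor gives \(i_B\).

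The main obstacle is justifying the product-like region inside the ball. One must check that moving horizontally in \(s\) by \(\sigma\) and then within the fiber plane by Euclidean distance \(\rho\) costs \(\widetilde h\)-length at most \(\sigma+\rho\). This is immediate from the triangle inequality using the horizontal curve followed by a straight segment in the flat orbit. One must also confirm that the lifted \(\RR^2\)-action is global and free on \(\widetilde M\), despite the monodromy twisting. This holds because \(\widetilde M\to\RR\) is a trivial \(\RR^2\)-bundle whose structure group of affine maps lifts, and the monodromy only acts in the deck group, not on \(\widetilde M\) itself. A cleaner alternative, if this becomes fiddly, is to apply the Fubini computation on the cyclic cover's universal cover directly in the chart description of Subsection~\ref{subsec:torus-bundle-conventions}.
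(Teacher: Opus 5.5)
Your proposal is correct and is essentially the paper's proof: scale to \(\Lambda=1\), exhibit inside \(B_{\widetilde h}(\widetilde x,B^{-1/2})\) the region swept out by flat orbit discs over a base interval (the paper's \(\Omega_r\) with \(r=(2\sqrt B)^{-1}\)), whose \(\widetilde h\)-volume is independent of \(G(s)\) by Fubini, then use \(d_{\widetilde g}\leq B^{1/2}d_{\widetilde h}\) and \(d\mu_{\widetilde g}\geq B^{-3/2}d\mu_{\widetilde h}\) to get the unit-ball volume bound \(\pi/(4B^3)\), and conclude with Cheeger--Gromov--Taylor. The only cosmetic difference is that the paper uses horizontally transported fiber coordinates, so the connection term \(A^i(s)\,ds\) you carry is absent; it does not change the volume form or the argument.
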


\begin{proof}
By scaling, it is enough to prove the result for \(\Lambda=1\).  Lift the bundle
and both metrics to the universal cover.  The base becomes \(\RR\), the local
torus translations become a free isometric \(\RR^2\)-action for
\(\widetilde h\), and horizontal transport gives a global identification
\[
        \widetilde M\cong\RR^2\times\RR.
\]
Use \(\widetilde h\)-arclength \(s\) on the base and horizontally transported
fiber coordinates.  Then
\[
        \widetilde h=ds^2+G_{ij}(s)\,dx^i dx^j,
\]
with \(G(s)\) positive definite.  This formula is global; the monodromy affects
only the identification after one period downstairs.

Consider \(\widetilde x=(0,0)\), and for \(r>0\) define
\[
        \Omega_r=
        \left\{(x,s): |s|<r,\; x^TG(s)x<r^2\right\}.
\]
To reach a point of \(\Omega_r\), first move horizontally from \((0,0)\) to
\((0,s)\), then move along a straight segment in the \(\RR^2\)-orbit.  Both
pieces have length less than \(r\), so
\(\Omega_r\subset B_{\widetilde h}(\widetilde x,2r)\).  For each fixed \(s\),
the fiber section is a radius-\(r\) disk for the metric \(G(s)\), hence has
area \(\pi r^2\).  Therefore
\[
        \operatorname{vol}_{\widetilde h}(\Omega_r)=2\pi r^3.
\]

The bilipschitz comparison gives
\(d_{\widetilde g}\leq B^{1/2}d_{\widetilde h}\) and the volume-measure inequality
\(d\mu_{\widetilde g}\geq B^{-3/2}d\mu_{\widetilde h}\).  Taking \(r=(2\sqrt B)^{-1}\) yields
\[
\begin{split}
        \operatorname{vol}_{\widetilde g}
        B_{\widetilde g}(\widetilde x,1)
        &\geq B^{-3/2}\operatorname{vol}_{\widetilde h}(\Omega_r)\\
        &=B^{-3/2}\,2\pi(2\sqrt B)^{-3}
          =\frac{\pi}{4B^3}.
\end{split}
\]
The same lower bound holds at every basepoint.  Since
\(|\sec_{\widetilde g}|\leq1\), the Cheeger--Gromov--Taylor estimate
\cite[Section~4]{CGT} gives
\[
        \inf_{\widetilde x\in\widetilde M}
        \inj_{\widetilde g}(\widetilde x)\geq i_B>0.
\]
Scaling back gives the stated lower bound \(i_B\Lambda^{-1/2}\).
\end{proof}

\subsection{Averaging and elementary collapse}
\label{subsec:torus-averaging-collapse}
We next establish the averaging estimate and the elementary consequence that small invariant
fibers make the metric space close to its quotient circle.

For a tensor field \(U\) on \(M\), write \(\ol U\) for its average over the
chosen local \(\TT^2\)-actions and write \(U^\perp=U-\ol U\).  Normalized Haar
measure is preserved by every automorphism of \(\TT^2\), in particular by the
\(\SL(2,\ZZ)\) changes of fiber coordinates on overlaps.  Hence the local
averages agree on overlaps and define a global tensor.  In particular, for a
Riemannian metric \(g\) we write \(g^\perp=g-\ol g\).  If a tensor \(U\) is pulled back to the cyclic cover
\(M_{\mathrm{cyc}}\) obtained by unwrapping the base circle, we denote the
pullback by \(U_{\mathrm{cyc}}\).  Its further pullback to the universal
cover is denoted by \(\widetilde U\).

\begin{lemma}[Averaging in invariant H\"older norms]
\label{lem:averaging-invariant-holder}
Let \(h\) be an invariant metric for the fixed torus
structure, and let \(\mathcal A\) denote the corresponding averaging operator.
For every integer \(k\geq0\), every \(\sigma\in(0,1)\), and every tensor field
\(U\),
\begin{equation}\label{eq:averaging-holder-contraction}
        \|\mathcal A U\|_{C^{k,\sigma}(M,h)}
        \leq \|U\|_{C^{k,\sigma}(M,h)}.
\end{equation}
Averaging also commutes with constant metric scaling.
\end{lemma}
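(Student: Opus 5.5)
The plan is to write the averaging operator as an average of pullbacks by isometries of \(h\), and then use that every \(C^{k,\sigma}(M,h)\)-seminorm is invariant under isometries and convex. Concretely, in a local chart over an interval \(I\) of the base, the fiber translations \(\tau_\theta\), \(\theta\in\TT^2\), act on \(\pi^{-1}(I)\). For a tensor \(U\) we have
\[
        \mathcal A U=\int_{\TT^2}\tau_\theta^*U\,d\theta ,
\]
with normalized Haar measure \(d\theta\). The paper has already noted that these local averages agree on overlaps, because Haar measure is preserved by \(\SL(2,\ZZ)\). Since \(h\) is invariant, every \(\tau_\theta\) is an \(h\)-isometry of \(\pi^{-1}(I)\).

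\textbf{Pointwise bounds.} For \(j\leq k\), the Levi-Civita connection \(\nabla\) of \(h\) commutes with pullback by isometries. Differentiating under the integral gives \(\nabla^j\mathcal A U=\int\tau_\theta^*\nabla^jU\,d\theta\). The triangle inequality for integrals then yields
\[
        |\nabla^j\mathcal AU|(x)\leq\int|\nabla^jU|(\tau_\theta x)\,d\theta\leq\sup_M|\nabla^jU|.
\]

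\textbf{H\"older seminorm.} I would take the definition of \([\nabla^kU]_{\sigma}\) via parallel transport along minimizing geodesics, restricted to pairs at distance below a fixed scale (for instance the injectivity radius, or on the universal cover). For \(x,y\) joined by such a geodesic \(\gamma\), the isometry \(\tau_\theta\) carries \(\gamma\) to a geodesic of the same length from \(\tau_\theta x\) to \(\tau_\theta y\), and it intertwines parallel transports. Hence
\[
        \bigl|\nabla^k\mathcal AU(x)-P_\gamma\nabla^k\mathcal AU(y)\bigr|
        \leq\int\bigl|\nabla^kU(\tau_\theta x)-P_{\tau_\theta\gamma}\nabla^kU(\tau_\theta y)\bigr|\,d\theta
        \leq[\nabla^kU]_\sigma\, d_h(x,y)^\sigma .
\]
Summing the sup and seminorm pieces gives \eqref{eq:averaging-holder-contraction}.

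The main obstacle is bookkeeping near the chart boundaries. A geodesic from \(x\) to \(y\) may leave a single chart \(\pi^{-1}(I)\), and the translations are only local. I would handle this by lifting to the cyclic cover \(M_{\mathrm{cyc}}\), or directly to \(\widetilde M\). There the \(\RR^2\)-translations act globally and isometrically for \(\widetilde h\), as in the proof of Lemma~\ref{lem:universal-cover-inj}, and \(\widetilde{\mathcal AU}\) is the average of \(\widetilde U\) over a fundamental domain of the fiber lattice. The estimate above then holds verbatim with global isometries, and the norms on \(M\) are computed from the lifted tensors.

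Finally, commuting with constant scaling is immediate. The formula for \(\mathcal A\) involves only the torus structure and Haar measure, not \(h\), so \(\mathcal A(\lambda U)=\lambda\mathcal AU\). Likewise, replacing \(h\) by \(\lambda h\) leaves the set of invariant metrics and the operator unchanged.
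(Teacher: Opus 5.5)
Your proposal is correct and follows essentially the paper's route. The paper likewise passes to the cyclic cover \(M_{\mathrm{cyc}}\), where the fiber translations \(R_c\), \(c\in\TT^2\), are global isometries of \(h_{\mathrm{cyc}}\), so that pullback commutes with \(\nabla\) and intertwines the parallel transports in the intrinsic H\"older seminorms. It then applies Minkowski's inequality against normalized Haar measure and descends using monodromy-invariance of Haar measure; scaling is handled exactly as you do. Your choice to work on \(\widetilde M\), averaging over a fundamental domain of the fiber lattice, is only a cosmetic variant of this.
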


\begin{proof}
Pass to \(M_{\mathrm{cyc}}\), on which the fiber translations are the global
maps \(R_c\), \(c\in\TT^2\).  Each \(R_c\) is an isometry of
\(h_{\mathrm{cyc}}\).  Pullback by \(R_c\) therefore commutes with the
Levi-Civita connection, preserves pointwise tensor norms and spatial distances,
and intertwines the parallel transports used in the intrinsic H\"older
seminorms.  Minkowski's inequality and integration against normalized Haar
measure give \eqref{eq:averaging-holder-contraction} on the cyclic cover.  The
integral is invariant under the deck transformation, because the monodromy acts
on \(\TT^2\) by a Haar-measure-preserving automorphism, and hence the estimate
descends to \(M\).  Constant scaling changes neither the maps \(R_c\) nor
normalized Haar measure.
\end{proof}

For an invariant metric \(h\), let
\[
        d_{\TT^2}(h)=\sup_{y\in S^1}\diam_h(F_y),
        \qquad F_y=\pi^{-1}(y),
\]
where the diameter is computed in the ambient metric space \((M,h)\).  We also
write \(d^{\mathrm{int}}_{\TT^2}(h)\) for the corresponding supremum of the
intrinsic diameters of the flat torus fibers with their induced metrics.  Then
\(d_{\TT^2}(h)\leq d^{\mathrm{int}}_{\TT^2}(h)\).

\begin{lemma}[Collapse to the quotient circle]
\label{lem:elementary-circle-collapse}
Let \(\pi:(M,h)\to(S^1,h_{S^1})\) be the metric quotient of an invariant
torus-bundle metric, and suppose
\[
        \sup_{y\in S^1}\diam_h\bigl(\pi^{-1}(y)\bigr)\leq\delta,
\]
where the fiber diameters are ambient diameters in \((M,h)\).  Then, for all
\(x,x'\in M\),
\begin{equation}\label{eq:elementary-circle-collapse-distance}
 0\leq d_h(x,x')-
 d_{h_{S^1}}\bigl(\pi(x),\pi(x')\bigr)\leq2\delta.
\end{equation}
Consequently, \(\pi\) has distortion at most \(2\delta\) and
\[
        d_{\mathrm{GH}}\bigl((M,h),(S^1,h_{S^1})\bigr)\leq\delta.
\]
\end{lemma}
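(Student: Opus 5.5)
The plan is to first identify the metric \(h_{S^1}\) on the quotient circle: for an invariant metric \(h\), it is the metric induced by \(h\) on the orbit space, i.e.
\[
d_{h_{S^1}}(y,y')=d_h\bigl(\pi^{-1}(y),\pi^{-1}(y')\bigr).
\]
Equivalently, it is the length metric of the quotient line element \(h_{yy}\,dy^2\) after choosing horizontal directions. I will use the following description. Locally the fiber translations act by isometries, and the quotient metric is obtained by measuring horizontal arclength. Because of invariance, the distance from a point \(x\) in \(F_y\) to the fiber \(F_{y'}\) is independent of the choice of \(x\) in \(F_y\). This holds even with the monodromy twist, since translations act transitively on each fiber and are local isometries. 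Passing to the cyclic cover \(M_{\mathrm{cyc}}\) makes the translations global isometries; distances on \(M\) are then minima over deck translates, which commute with the \(\RR^2\)-action up to the automorphism \(H\). So the function \(x\mapsto d_h(x,F_{y'})\) is constant on \(F_y\).

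\textbf{Lower bound.} The projection \(\pi\) is \(1\)-Lipschitz from \((M,h)\) to \((S^1,h_{S^1})\). Indeed, for any curve from \(x\) to \(x'\), its length is at least the length of its horizontal component, which is the \(h_{S^1}\)-length of its projection. Alternatively, this follows directly from the definition of the quotient distance as the distance between fibers. Either way it gives \(d_h(x,x')\geq d_{h_{S^1}}(\pi(x),\pi(x'))\).

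\textbf{Upper bound.} Let \(y=\pi(x)\) and \(y'=\pi(x')\). By compactness, choose \(p\in F_y\) and \(p'\in F_{y'}\) realizing \(d_h(F_y,F_{y'})=d_{h_{S^1}}(y,y')\). Then the triangle inequality gives
\[
d_h(x,x')\leq d_h(x,p)+d_h(p,p')+d_h(p',x')\leq \delta+d_{h_{S^1}}(y,y')+\delta,
\]
using the hypothesis that the ambient fiber diameters are at most \(\delta\). This proves \eqref{eq:elementary-circle-collapse-distance}. Notice that this step does not actually need the fiber-independence fact; only the identification of the quotient distance with the fiber-to-fiber distance is used.

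\textbf{Distortion and Gromov--Hausdorff bound.} The distortion bound \(2\delta\) is immediate, since \(\pi\) is surjective. The Gromov--Hausdorff estimate follows from the standard fact that a surjective map of distortion \(\varepsilon\) gives \(d_{\mathrm{GH}}\leq\varepsilon/2\), here \(\delta\). To see this, take the correspondence \(\{(x,\pi(x))\}\), whose distortion is \(2\delta\), and use \(d_{\mathrm{GH}}=\tfrac12\inf_{\mathcal R}\operatorname{dis}\mathcal R\).

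The main point needing care is confirming that the quotient metric \(h_{S^1}\) referred to in the statement is indeed the fiber-to-fiber distance. In particular, one must check that the metric obtained from \(h_{yy}\) via horizontal arclength agrees with \(d_h(F_y,F_{y'})\), including the case where the shorter way around the circle matters. I would handle this by noting two things. First, a horizontal lift of a minimizing arc in the base realizes the base length. Second, any curve between the two fibers has length at least its projected length, because invariant metrics split orthogonally into horizontal and vertical parts with the horizontal part equal to \(\pi^*h_{S^1}\). Together these make the two descriptions coincide.
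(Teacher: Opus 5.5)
Your proposal is correct and follows essentially the same route as the paper: the first inequality from the quotient map being distance nonincreasing, the second from the triangle inequality through nearly closest points of the two fibers together with the ambient fiber-diameter bound, and the Gromov--Hausdorff bound from surjectivity and distortion \(2\delta\). The only cosmetic differences are that you pick exact closest points by compactness where the paper takes an \(\eta\)-approximate choice and lets \(\eta\downarrow0\), and that you spell out the identification of \(h_{S^1}\) with the fiber-to-fiber distance, which the paper takes as the meaning of ``metric quotient.''
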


\begin{proof}
The quotient map is distance nonincreasing, which gives the first inequality
in \eqref{eq:elementary-circle-collapse-distance}.  Given \(\eta>0\), choose
\(y\in\pi^{-1}(\pi(x))\) and \(y'\in\pi^{-1}(\pi(x'))\) so that
\[
 d_h(y,y')\leq d_{h_{S^1}}\bigl(\pi(x),\pi(x')\bigr)+\eta.
\]
The ambient fiber-diameter bound gives \(d_h(x,y)\leq\delta\) and
\(d_h(x',y')\leq\delta\).  The triangle inequality, followed by
\(\eta\downarrow0\), proves the second inequality.  Since \(\pi\) is onto, a
surjective map of distortion at most \(2\delta\) gives the stated
Gromov--Hausdorff bound.
\end{proof}

\section{Finite-time decay of the non-invariant part}\label{sec:finite-time-decay}
This section develops the finite-time decay estimate behind the
iteration.  It shows that, on a fixed rescaled time interval, the non-invariant
part of a sufficiently small Ricci--DeTurck perturbation of a locally
\(\TT^2\)-invariant Ricci flow background contracts to any prescribed fraction
of its initial
size, provided the local torus orbits are short and the quotient circle does
not collapse.
The estimate is proved on lifted coordinate systems, so its constants are
independent of collapse on the compact manifold and can be reused at every
stage.

Here is the proof mechanism.  Set
\[
        a_0=\bigl\|g_{\RDT}^\perp(1)\bigr\|_{C^{N,\sigma}(M,h(1))}.
\]
The case \(a_0=0\) follows at once from invariance and uniqueness.  When
\(a_0>0\), pass to the cyclic cover and write
\(u=\pi_{\mathrm{cyc}}^*g_{\RDT}\).  For every torus translation \(R_a\),
the normalized difference
\[
        y_a=a_0^{-1}(u-R_a^*u)
\]
satisfies a homogeneous linear parabolic system with scalar principal part.
A maximum principle argument gives a collapse-independent \(C^0\)-bound
for \(y_a\).  Lifting to the universal cover and applying interior Schauder
estimates then gives a uniform \(C^1\)-bound on the time interval
\([10,100]\).  Averaging the translation differences recovers
\(a_0^{-1}g_{\RDT}^\perp\).  Since this tensor has zero orbit average, the
short-orbit estimate makes its \(C^0\)-norm small, proving the \(C^0\)
contraction.  Higher derivative contraction follows from interpolation between
the small \(C^0\)-norm and higher derivative bounds.

The section is organized as follows.  First, we define the
lifted bounded-geometry conditions \(\BG\) and \(\PBG\), give the
parabolic averaging estimate, fix the Ricci--DeTurck equation and the
intrinsic parabolic spaces \(X_s^m\) and \(Y_s^m\), and then state the
finite-time decay theorem.  The first subsection, \emph{Short-orbit
averaging}, controls the geometry of the torus orbits, compares their
ambient and intrinsic diameters, and proves that a zero-average tensor has
small \(C^0\)-norm when the orbits are short.  The second subsection,
\emph{Translation differences and the Ricci--DeTurck system}, normalizes the
differences between a solution and its torus translates, derives a linear
tensor system with scalar principal part, obtains a uniform \(C^0\)-bound on
a compact associated bundle, and upgrades it by lifted Schauder estimates.
The final subsection, \emph{The lifted estimate and completion of the proof},
averages the translation differences to recover the non-invariant part and
combines the short-orbit \(C^0\)-estimate with parabolic interpolation to
obtain any prescribed contraction on \([10,100]\).  The detailed
collapse-independent Schauder estimates used in this argument are
proved in Appendix~\ref{app:parabolic-estimates}.

\paragraph{\normalfont\itshape Bounded lifted geometry notation.}
Let \(m\geq2\) and \(\Lambda\geq1\).  For a metric \(s\) on a compact manifold, the
notation
\[
        \BG_m(s;\Lambda)
\]
means that, on the universal cover,
\begin{equation}
        \inj_{\widetilde s}\geq \Lambda^{-1},
        \qquad
        |\widetilde\nabla^j\Rm_{\widetilde s}|\leq \Lambda
        \quad(0\leq j\leq m+3).
\end{equation}
In applications, the condition is imposed after the parabolic rescaling that makes the stage
start time equal to one.  For a Ricci flow \(s(t)\), \(a\leq t\leq b\), the
notation
\[
        \PBG_m(s;\Lambda;[a,b])
\]
means that, on the universal cover,
\begin{equation}\label{eq:bounded-lifted-geometry}
        \inj_{\widetilde s(t)}\geq \Lambda^{-1},
        \qquad
        |\widetilde\nabla^j\Rm_{\widetilde s(t)}|\leq \Lambda
        \quad(0\leq j\leq m+3),\qquad a\leq t\leq b,
\end{equation}
and that the fixed-coordinate, uniformly locally finite parabolic atlas
constructed in Lemma~\ref{lem:uniform-parabolic-atlas} may be chosen with chart
radius at least \(\Lambda^{-1}\) and all coefficient, overlap, and cutoff bounds at
most \(\Lambda\).  Enlarging \(\Lambda\) by a function of the curvature and injectivity
bounds provides this atlas automatically.  Thus \(\BG\) encodes the finite-order
geometry at one stage start, while \(\PBG\) encodes the corresponding geometry
and coordinate control on a whole parabolic interval.  The tensor H\"older
norms below are the intrinsic lifted norms associated with the reference
metric, as specified in Paragraph~\ref{par:lifted-norm-convention}; in
particular, they are defined for every smooth reference flow, without a
\(\PBG\) hypothesis.  Whenever \(\PBG_m\) holds, these intrinsic norms are
uniformly equivalent to the fixed-coordinate uniformly local norms furnished
by Lemma~\ref{lem:uniform-parabolic-atlas}.  Thus the coordinate estimates used
below have constants independent of collapse of the compact quotient.

Let \(h(t)\) be an invariant Ricci flow.  A family
\(g_{\RDT}(t)\) satisfies the Ricci--DeTurck equation relative to \(h(t)\) if
\begin{equation}\label{eq:rdt}
        \frac{\partial}{\partial t}g_{\RDT}(t)
        =-2\Ric(g_{\RDT}(t))+\Lie_{W(t)}g_{\RDT}(t),
\end{equation}
where
\begin{equation}
        W^k(t)=g_{\RDT}(t)^{ij}
        \bigl(\Gamma^k_{ij}(g_{\RDT}(t))-\Gamma^k_{ij}(h(t))\bigr).
\end{equation}
The background flow \(h(t)\) itself satisfies \eqref{eq:rdt} relative to
\(h(t)\).

For a reference Ricci flow \(s(t)\) on \([a,b]\), an integer \(m\geq2\),
and \(\sigma\in(0,1)\), set
\begin{equation}\label{eq:XY-spaces}
\begin{split}
 X_s^m[a,b]&=C^{m+\sigma,(m+\sigma)/2}(M\times[a,b],s),\\
 Y_s^m[a,b]&=C^{m-2+\sigma,(m-2+\sigma)/2}(M\times[a,b],s),
\end{split}
\end{equation}
where these symbols denote the intrinsic lifted parabolic norms of
Paragraph~\ref{par:lifted-norm-convention}.

\begin{lemma}[Averaging in parabolic H\"older norms]
\label{lem:averaging-parabolic-holder}
Let \(h(t)\), \(a\leq t\leq b\), be a smooth family of invariant metrics for the fixed torus structure, and let
\(\mathcal A\) be the corresponding averaging operator.  For every integer
\(m\geq2\) and every time-dependent tensor field \(U\),
\begin{equation}\label{eq:averaging-parabolic-holder-contraction}
 \|\mathcal A U\|_{X_h^m[a,b]}\leq\|U\|_{X_h^m[a,b]},
 \qquad
 \|\mathcal A U\|_{Y_h^m[a,b]}\leq\|U\|_{Y_h^m[a,b]}.
\end{equation}
If \(\PBG_m(h;\Lambda;[a,b])\) holds, the analogous estimates in the
fixed-chart uniformly local norms hold with a constant depending only on
\(m,\sigma,\Lambda\), and \(b-a\); in those norms the constant need not equal
one.  Averaging commutes with constant metric scaling.
\end{lemma}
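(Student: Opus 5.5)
The plan follows the proof of Lemma~\ref{lem:averaging-invariant-holder}, now carried out in space-time.

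First pass to the cyclic cover \(M_{\mathrm{cyc}}\). There the local fiber translations become global maps \(R_c\), \(c\in\TT^2\). Since each \(h(t)\) is invariant, every \(R_c\) is an isometry of \(h_{\mathrm{cyc}}(t)\) for every \(t\in[a,b]\), and \(R_c\) does not depend on \(t\). Hence \(R_c\times\Id\) is an isometry of the space-time data entering the intrinsic parabolic norms of Paragraph~\ref{par:lifted-norm-convention}. It preserves the parabolic distance, the pointwise norms computed with \(h(t)\), the Levi-Civita connection of \(h(t)\) (and thus all spatial covariant derivatives), and time derivatives. It also intertwines the parallel transports used to compare tensors at different points. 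Consequently
\[
        \|R_c^*U\|_{X_h^m[a,b]}=\|U\|_{X_h^m[a,b]},
\]
and the same holds for \(Y_h^m\). These norms are supremum-type quantities: sups of pointwise norms of derivatives, together with sups of H\"older difference quotients in space and time.

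Next, write \(\mathcal AU=\int_{\TT^2}R_c^*U\,dc\) with normalized Haar measure. Derivatives commute with the integral, and this includes time derivatives of order up to \((m+\sigma)/2\) in the parabolic sense. Each difference quotient of \(\mathcal AU\) is then the Haar average of the corresponding difference quotients of \(R_c^*U\). Minkowski's inequality bounds every such term by the average of the norms, and that average equals \(\|U\|\). This gives \eqref{eq:averaging-parabolic-holder-contraction} on the cyclic cover. Because the monodromy acts by a Haar-preserving automorphism, the estimate descends to \(M\) and to the lifted norms on \(\widetilde M\) exactly as in Lemma~\ref{lem:averaging-invariant-holder}. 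Invariance of the maps and of Haar measure under constant scaling gives the last assertion.

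For the fixed-chart uniformly local norms under \(\PBG_m(h;\Lambda;[a,b])\), I would not reprove contraction directly, since the charts are not translation-equivariant. Instead, the paper states that under \(\PBG_m\) these norms are uniformly equivalent to the intrinsic ones, with constants depending only on \(m,\sigma,\Lambda,b-a\) (Lemma~\ref{lem:uniform-parabolic-atlas}). Composing the equivalence in both directions with the intrinsic contraction yields the bound with constant \(C(m,\sigma,\Lambda,b-a)\).

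The main obstacle is the bookkeeping in the parabolic H\"older seminorm. Space-time difference quotients compare tensors at \((x,t)\) and \((x',t')\) by parallel transport for different metrics \(h(t)\neq h(t')\). One must check that this comparison is natural under maps that are simultaneous isometries of all the \(h(t)\). It is, because the convention's transport is built functorially from the family \(h(t)\), and \(R_c\) preserves that family.
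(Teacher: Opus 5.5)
Your proposal is correct and follows essentially the same route as the paper: the time-independent translations \(R_c\) on \(M_{\mathrm{cyc}}\) preserve every \(h(t)\), so they commute with \(\nabla^{h(t)}\) and with the metric-compatible time derivative \(\mathcal D_t^h\) and intertwine the transports in the intrinsic seminorms; Minkowski's inequality with Haar averaging then gives the contraction, deck invariance gives descent, and the fixed-chart statement follows by composing with the norm equivalence, which the paper cites as Lemma~\ref{lem:intrinsic-fixed-chart-equivalence}. One small correction: the intrinsic parabolic seminorm has no mixed \((x,t)\)-to-\((x',t')\) comparisons. It splits into a spatial part, using parallel transport for a single \(h(t)\), and a temporal part at a fixed point, using the transport \(\mathsf T^h_{u\to t}\). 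Your naturality argument covers both pieces.
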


\begin{proof}
On \(M_{\mathrm{cyc}}\), the translations \(R_c\) are time-independent and
satisfy \(R_c^*h_{\mathrm{cyc}}(t)=h_{\mathrm{cyc}}(t)\).  Hence their pullbacks
commute with the spatial Levi-Civita connection and the metric-compatible time
derivative \(\mathcal D_t^{h_{\mathrm{cyc}}}\), and preserve the parallel
transports in the intrinsic H\"older seminorms of
Paragraph~\ref{par:lifted-norm-convention}.  Minkowski's inequality and
integration against normalized Haar measure therefore give
\eqref{eq:averaging-parabolic-holder-contraction}.  Deck invariance lets the
estimates descend to \(M\).  Under \(\PBG_m\), the fixed-chart estimates follow
from Lemma~\ref{lem:intrinsic-fixed-chart-equivalence}.  The scaling assertion
is immediate.
\end{proof}

We will prove the following theorem in this section.  For the
Ricci--DeTurck metric appearing in its statement, set
\[
        g_\RDT^\perp=g_\RDT-\overline{g_\RDT},
\]
where the average is taken in the fixed local \(\TT^2\)-structure.

\medskip
\noindent\emph{Informal content.}
On the normalized interval \([1,100]\), sufficiently short torus orbits
suppress the non-invariant part by any prescribed factor, uniformly in the
amount of collapse of the compact quotient.
\medskip

\begin{theorem}[Finite-time decay of the non-invariant part]\label{thm:noninvariant-decay}
Given \(0<\alpha<1\), \(K\geq0\), \(\lambda>0\), an integer \(N\geq2\), and
\(\sigma\in(0,1)\), there are constants
\(\beta_{\mathrm{orb}}>0\) and \(\beta_{\mathrm{pert}}>0\) with the following
property.  Let \(h(t)\) be an invariant Ricci-flow solution on \(M\), defined
for \(t\in[1,100]\), such that \(|\sec_{h(t)}|\leq K\) and such that, for every
\(t\in[1,100]\), the quotient circle for the chosen local \(\TT^2\)-structure
has length at least \(\lambda\), while every local \(\TT^2\)-orbit has
\(h(t)\)-diameter less than \(\beta_{\mathrm{orb}}\).  Let \(g_\RDT(t)\) be a
solution of the Ricci--DeTurck equation relative to \(h(t)\), defined for
\(t\in[1,100]\), and assume
\[
        \|g_\RDT-h\|_{X_h^N[1,100]}<\beta_{\mathrm{pert}}.
\]
Then
\begin{equation}\label{eq:main-estimate}
        \|g_\RDT^\perp\|_{X_h^N[10,100]}
        \leq \alpha\,\bigl\|g_\RDT^\perp(1)\bigr\|_{C^{N,\sigma}(M,h(1))}.
\end{equation}
\end{theorem}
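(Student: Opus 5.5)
The plan follows the mechanism sketched at the start of this section.

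\emph{Reduction and normalization.} If \(a_0:=\|g_\RDT^\perp(1)\|_{C^{N,\sigma}(M,h(1))}=0\), then \(g_\RDT(1)\) is invariant. Since \(h\) is invariant, the translates \(R_c^*g_\RDT\) on \(M_{\mathrm{cyc}}\) solve the same Ricci--DeTurck equation relative to \(h\) with the same initial data. Uniqueness in the small \(X_h^N\)-ball (which is a short-time Schauder fixed point argument) then gives \(g_\RDT^\perp\equiv0\). So assume \(a_0>0\). On \(M_{\mathrm{cyc}}\) set \(u=\pi_{\mathrm{cyc}}^*g_\RDT\) and, for \(a\in\TT^2\), \(y_a=a_0^{-1}(u-R_a^*u)\). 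Because \(R_a\) is an \(h\)-isometry, \(R_a^*u\) is again an RDT solution relative to \(h\). Subtracting the two equations and writing the difference of the nonlinear operators as an integral of the linearization along the segment from \(R_a^*u\) to \(u\) gives
\[
\partial_t y_a=\tilde g^{ij}\nabla_i\nabla_j y_a+B*\nabla y_a+C*y_a .
\]
Its principal part is scalar, and its coefficients are bounded in \(Y^N\)-type norms by constants depending on \(K,N,\sigma\). This uses \(\|g_\RDT-h\|_{X_h^N}<\beta_{\mathrm{pert}}\le\beta_0\). The initial datum satisfies \(|y_a(1)|_{C^{N,\sigma}}\le 2\), since \(u-R_a^*u=u^\perp-R_a^*u^\perp\) and translation preserves intrinsic norms.

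\emph{Uniform bounds.} Next I will get a \(C^0\) bound \(|y_a|\le C_0\) on \([1,100]\), uniformly in \(a\) and in the collapse. The natural route is a maximum principle for \(|y_a|^2_{h}\). Because the principal symbol is scalar, the gradient term is absorbed via Kato/Cauchy--Schwarz once \(\beta_{\mathrm{pert}}\) is small, which gives \(\partial_t|y|^2\le\Delta|y|^2+C|y|^2\). Compactness of \(M\) (working on the compact associated bundle \(M\times\TT^2\), or noting \(\sup_a\) is a quotient quantity) makes the maximum principle legitimate. Then I lift to \(\widetilde M\). There Lemma~\ref{lem:universal-cover-inj}, applied with \(B\) close to \(1\) and with the hypothesis \(|\sec_h|\le K\), gives a uniform injectivity radius. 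Hence \(\PBG_N(h;\Lambda;[1,100])\) holds with \(\Lambda=\Lambda(K,N)\), after bootstrapping higher curvature derivatives by Shi-type interior estimates on \([1,100]\). Note this loses a little near \(t=1\), which is why the conclusion is stated on \([10,100]\). The interior parabolic Schauder estimates of Appendix~\ref{app:parabolic-estimates} then give \(\|y_a\|_{X^{N+1}[10,100]}\le C_1\), with \(C_1=C_1(K,N,\sigma)\) independent of \(a\) and of \(\lambda\) and \(\beta_{\mathrm{orb}}\). Here I use the initial \(C^{N,\sigma}\) bound as well as the \(C^0\) bound, or interior estimates on \([5,100]\).

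\emph{Averaging and conclusion.} Averaging over \(a\) with Haar measure gives \(\int y_a\,da=a_0^{-1}u^\perp\), so \(\|a_0^{-1}g_\RDT^\perp\|_{X^{N+1}[10,100]}\le C_1\) by Lemma~\ref{lem:averaging-parabolic-holder}. The key collapse step is a short-orbit averaging estimate. For a tensor \(V\) with zero orbit average, \(|V|_{C^0}\le C\,d_{\TT^2}^{\mathrm{int}}\,|\nabla V|_{C^0}\), because \(V(x)=\int (V(x)-R_c^*V(x))\,dc\) and each difference is bounded by the orbit length times \(|\nabla V|\). The intrinsic orbit diameter must be compared to the ambient bound \(\beta_{\mathrm{orb}}\) using the curvature bound and \(\lambda\), so that orbits cannot wrap around the circle. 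This yields \(\|a_0^{-1}g_\RDT^\perp\|_{C^0}\le C\beta_{\mathrm{orb}}C_1\). Parabolic interpolation between \(C^0\) and \(X^{N+1}\) then gives \(\|a_0^{-1}g_\RDT^\perp\|_{X^N[10,100]}\le C(\beta_{\mathrm{orb}}C_1)^{\theta}C_1^{1-\theta}\), and choosing \(\beta_{\mathrm{orb}}\) small makes this at most \(\alpha\).

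\emph{Main obstacle.} I expect the hardest step to be making the Schauder and \(C^0\) estimates genuinely collapse-independent while gaining one derivative over the hypothesis norm. The pragmatic choice is to use the \(X^{N}\) bound on coefficients to get \(X^{N+1-\epsilon}\) control of the linear solution \(y_a\); equivalently, interpolate between \(C^0\) and \(X^{N,\sigma'}\) with \(\sigma'>\sigma\). The lifted fixed-chart atlas of Lemma~\ref{lem:uniform-parabolic-atlas} is what makes these constants uniform.
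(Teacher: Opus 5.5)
Your argument is sound through the \(C^0\)-smallness of \(w=a_0^{-1}g_\RDT^\perp\), provided each \(X^{N+1}\) in it is replaced by \(X^N\). The genuine gap is the step where interior Schauder estimates are said to give \(\|y_a\|_{X^{N+1}[10,100]}\le C_1\); your final interpolation rests entirely on it.

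By Lemma~\ref{lem:rdt-difference-system}, the coefficient \(B_a\) of the \(y_a\)-system contains \(\nabla u\) and \(C_a\) contains \(\nabla^2u\). The hypothesis \(\|g_\RDT-h\|_{X_h^N}<\beta_{\mathrm{pert}}\) therefore places them only in \(C^{N-1+\sigma,(N-1+\sigma)/2}\) and \(C^{N-2+\sigma,(N-2+\sigma)/2}\). So Proposition~\ref{prop:uniform-lifted-linear-schauder} can be applied only with \(m=N\), and it returns \(y_a\) in \(X^N\), the same order as the hypothesis.

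Your fallback (an \(X^{N+1-\epsilon}\) bound, or \(X^{N,\sigma'}\) with \(\sigma'>\sigma\)) fails for the same reason. A linear estimate cannot give \(y_a\) more H\"older regularity than its right-hand side \(C_ay_a\) has, and \(C_a\) is only \(C^{N-2+\sigma}\). Interpolating a small \(C^0\)-norm against a merely bounded \(X^N\)-norm gives smallness only in strictly weaker norms. So as written you do not reach \eqref{eq:main-estimate}, whose left side is in the same \(X^N\) norm as the hypothesis.

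The paper supplies the missing derivative by a quasilinear step on \(u\) itself. Shi's estimates, using the time gap from \(1\) to \(7\), give \(\PBG_{N+1}(h;\Lambda_h;[7,100])\), so \(\widetilde h\) is \(C^{N+1+\sigma}\) in the lifted charts. Lemma~\ref{lem:local-rdt-one-derivative-gain} then differentiates the Ricci--DeTurck equation \(N-1\) times and applies the linear estimate to the resulting scalar-principal system. This gives \(\widetilde u,\widetilde u_a\in C^{N+1+\sigma,(N+1+\sigma)/2}\) on \([8,100]\), which raises \(P_a,B_a,C_a\) by one order. The linear estimate with \(m=N+1\) then gives the uniform \(X^{N+1}\) bound on \([9,100]\), and your interpolation closes the argument.

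Alternatively, you can avoid the extra derivative entirely. Since \(u-R_a^*u=u^\perp-R_a^*u^\perp\), you have \(y_a=w_{\mathrm{cyc}}-R_a^*w_{\mathrm{cyc}}\). Your order-\(N\) estimates already give \(\|w\|_{C^0}\le Cd\) on, say, \([3,100]\), so \(\sup_a\|y_a\|_{C^0}\le 2Cd\) there. The order-\(N\) interior estimate for the homogeneous \(y_a\)-system then gives \(\sup_a\|\widetilde y_a\|_{C^{N+\sigma,(N+\sigma)/2}_{\mathrm{uloc}}(\widetilde M\times[10,100])}\le Cd\). Averaging over \(a\) yields \eqref{eq:main-estimate} with no interpolation.

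A smaller point: in your short-orbit estimate, \(V-R_c^*V\) is not bounded by \(d\,|\nabla V|\) alone, because \(dR_c\) differs from parallel transport by \(O(d)\). This adds a term \(Cd\,|V|\), which must be absorbed using the smallness of \(d\).
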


The orbit-diameter hypothesis refers to the same fixed local \(\TT^2\)-structure
used in the averaging; Lemma~\ref{lem:pointwise-fixed-fiber} provides it in the
geometric applications.

\subsection{Short-orbit averaging}

Once \(K\) is fixed, put
\[
        i_K=i_1\max\{K,1\}^{-1/2},
\]
where \(i_1\) is the constant from
Lemma~\ref{lem:universal-cover-inj} for \(B=1\).  In the applications below,
Lemma~\ref{lem:universal-cover-inj}, Shi's estimates, and
Lemma~\ref{lem:uniform-parabolic-atlas} produce the lifted charts used in
Appendix~\ref{app:parabolic-estimates}.  The cyclic cover used in the proof
of the decay theorem serves only to make the local torus translations global.

The next fixed-time estimate says that a tensor with zero orbit average is controlled by its derivatives when the orbits are short.

\begin{lemma}[Small-orbit averaging estimate]
\label{lem:small-orbit-averaging}
Fix \(K<\infty\) and \(\lambda>0\).  There are constants \(\beta_0>0\) and
\(C<\infty\), depending only on \(K\) and \(\lambda\), with the following
property.  Let \(h\) be an invariant metric satisfying
\(|\sec_h|\leq K\), whose quotient circle has length at least \(\lambda\), and
whose local \(\TT^2\)-orbits have diameter at most \(d\leq\beta_0\).  If \(U\)
is a smooth symmetric two-tensor with zero average over the local
\(\TT^2\)-orbits, then
\begin{equation}\label{eq:small-orbit-C0}
        \|U\|_{C^0(M,h)}
        \leq C d\,\|U\|_{C^1(M,h)}.
\end{equation}
\end{lemma}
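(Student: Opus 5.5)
The plan is to use the zero average over each orbit together with a mean-value argument along the orbit. Two points need care. First, the relevant distances must be intrinsic, along the orbit, rather than ambient. Second, the tensor values at different points of an orbit must be compared in a way compatible with the averaging.

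Work on the cyclic cover \(M_{\mathrm{cyc}}\). There the translations \(R_c\), \(c\in\TT^2\), are global isometries of \(h_{\mathrm{cyc}}\). The average is
\[
\ol U(x)=\int_{\TT^2}(R_c^*U)(x)\,dc,
\]
so at each point \(x\) the hypothesis \(\ol U=0\) gives
\[
U(x)=\int_{\TT^2}\bigl(U(x)-(R_c^*U)(x)\bigr)\,dc .
\]
It therefore suffices to prove \(|U(x)-(R_c^*U)(x)|\leq Cd\,\|U\|_{C^1}\) for every \(c\).

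Join \(0\) to \(c\) by a straight segment \(c(\theta)\), \(\theta\in[0,1]\), in a fundamental domain of the flat torus orbit. Its length in the orbit's induced flat metric is at most the intrinsic orbit diameter. Then
\[
\frac{d}{d\theta}R_{c(\theta)}^*U=R_{c(\theta)}^*\Lie_{V}U,
\]
where \(V\) is the Killing field generating the direction \(c'\). Since \(R_c\) is an isometry, \(|R_c^*\Lie_VU|\) at \(x\) equals \(|\Lie_VU|\) at \(R_c x\). Moreover
\[
\Lie_VU=\nabla_VU+U\circ\nabla V+(\nabla V)^{T}\circ U,
\]
so \(|\Lie_VU|\leq|V|\,|\nabla U|+2|\nabla V|\,|U|\). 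Here \(|V|\) is the speed along the orbit, whose integral is the intrinsic length. The term \(|\nabla V|\) is a potential problem, and I would handle it in one of two ways. One way is to use the hypotheses \(|\sec_h|\leq K\) and the orbit-diameter bound to show that \(|\nabla V|\) is controlled by \(|V|\) along short orbits. For an invariant metric \(ds^2+G(s)\) on the universal cover this reduces to \(\nabla V\sim G^{-1}G_s\), and the second fundamental form of the fibers is bounded by a constant depending on \(K\) and \(\lambda\). The other way is to absorb that term: \(|U|\cdot Cd\) is at most \(Cd\|U\|_{C^0}\), and for \(d\) below \(\beta_0\) this is at most \(\tfrac12\|U\|_{C^0}\). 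Either way the estimate becomes \(\|U\|_{C^0}\leq Cd^{\mathrm{int}}\,\|U\|_{C^1}\), up to an absorbed term.

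The main obstacle is converting the ambient diameter \(d\) in the hypothesis into the intrinsic diameter \(d^{\mathrm{int}}_{\TT^2}\). Only the bound \(d_{\TT^2}\leq d^{\mathrm{int}}_{\TT^2}\) is free, and the reverse comparison is what is needed. I would prove \(d^{\mathrm{int}}\leq C(K,\lambda)\,d\) once \(d\leq\beta_0\). The approach is to lift to the universal cover, where \(h=ds^2+G_{ij}(s)dx^idx^j\) as in the proof of Lemma~\ref{lem:universal-cover-inj}, and consider a fiber \(\{s=s_0\}\). Suppose two points of the fiber are ambient-close in \(M\), joined by a path of length at most \(d\). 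That path stays within \(|s-s_0|\leq d\) in the base. Because the quotient circle has length at least \(\lambda\) and \(d\) is small, the path does not wrap around the base, so there is no monodromy ambiguity. Along the path, the fiber metrics \(G(s)\) are comparable to \(G(s_0)\) up to \(e^{C|s-s_0|}\), since \(|G^{-1}G_s|\) is bounded by the second fundamental form. I would bound that second fundamental form by a Gauss-equation and Riccati argument: the fibers are flat, \(|\sec|\leq K\), and the quotient circle has length at least \(\lambda\), which gives a bound on \(G^{-1}G_s\) depending on \(K\) and \(\lambda\) for flows of this type. Projecting the path horizontally onto the fiber then yields a fiber path of comparable length. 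This gives \(d^{\mathrm{int}}\leq Cd\), and the lemma follows with \(C\) depending only on \(K\) and \(\lambda\).
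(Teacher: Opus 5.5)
Your proposal is correct and follows the paper's proof in substance. The ingredients are the same: a bound on the shape operator $A=\tfrac12G^{-1}G_s$, the non-wrapping argument plus horizontal projection giving $d^{\mathrm{int}}_{\TT^2}\le C_Kd$, a pointwise bound on $U-R_c^*U$, and averaging. Your Killing-field formula $\Lie_VU=\nabla_VU+U\circ\nabla V+(\nabla V)^T\circ U$, with $|\nabla V|\le C\|A\|\,|V|$, is just the infinitesimal form of the paper's comparison of $dR_a$ with parallel transport.

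The one step you leave unproved is the bound on $A$, and it needs the right mechanism. The Gauss equation only controls $\det A$, so it cannot bound $A$ by itself. What works is the Riccati equation $A_s+A^2+R_{\partial_s}=0$ along the complete lifted base $\RR$ of the cyclic cover: an eigenvalue below $-\sqrt K$ or above $\sqrt K$ would blow up in finite forward or backward $s$, contradicting smoothness of $G$ on all of $\RR$. This gives $\|A\|\le\sqrt K$ with no dependence on $\lambda$. Your ``absorb'' alternative is not an independent route, since absorbing still requires $\int|\nabla V|\le Cd$, which is this same bound.
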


\begin{proof}
\proofstep{Step 1. Control the variation of the orbit metrics.}

Unwrap the base circle, obtaining
\(\pi_{\mathrm{cyc}}:M_{\mathrm{cyc}}\to M\).  The local torus translations
become a global isometric \(\TT^2\)-action \(R_a\), \(a\in\TT^2\).  In a
global horizontal gauge, with arclength \(s\) on the lifted base,
\[
        h_{\mathrm{cyc}}=ds^2+G_{ij}(s)\,dx^i dx^j.
\]
The orbit shape operator is
\(A=\frac12G^{-1}G_s\).  Along a horizontal geodesic it satisfies
\[
        A_s+A^2+R_{\partial_s}=0,
        \qquad
        R_{\partial_s}(V)=R(V,\partial_s)\partial_s.
\]
Let \(\lambda_{\min}\) be the smallest eigenvalue of \(A\).  At points of
differentiability, and in the barrier sense elsewhere,
\[
        \lambda_{\min}'\leq K-\lambda_{\min}^2.
\]
If \(\lambda_{\min}(s_0)<-\sqrt K\), comparison with
\(y'=K-y^2\) forces blow-up to \(-\infty\) in finite forward time.  That is
impossible because the vertical Jacobi tensor is nonsingular, equivalently
because \(G(s)\) stays smooth and positive definite for all \(s\in\RR\).
Thus \(\lambda_{\min}\geq-\sqrt K\).  Reversing \(s\) replaces \(A\) by
\(-A\), so \(\lambda_{\max}\leq\sqrt K\).  Hence
\begin{equation}\label{eq:fiber-shape-bound}
        \|A(s)\|\leq C_K
        \qquad\text{for every }s\in\RR.
\end{equation}
Consequently, horizontal transport through base distance \(r\) changes
vertical lengths by at most the factor \(e^{C_Kr}\).

\proofstep{Step 2. Compare intrinsic and ambient orbit diameter.}

Decrease \(\beta_0\) so that \(\beta_0<\lambda/10\).  Since
\(d\leq\beta_0\), we then have \(d<\lambda/10\).  Take two points in one orbit
and choose \(0<\varepsilon<\lambda-d\).  Join them by an ambient path of length
at most \(d+\varepsilon\).  Lift the path to the cyclic cover after choosing a
lift of its initial point.  Its endpoint must
lie in the same lifted orbit as its initial point.  Indeed, an endpoint in
a nontrivial deck translate of that orbit would make the projected path wind
nontrivially around the quotient circle, whose shortest noncontractible loop
has length at least \(\lambda\); this contradicts \(d+\varepsilon<\lambda\).  The base
coordinate of the lifted path therefore stays within \(d+\varepsilon\) of that
lifted orbit.  Project the path horizontally back to this orbit.  By
\eqref{eq:fiber-shape-bound}, the projected path has length at most
\(e^{C_K(d+\varepsilon)}(d+\varepsilon)\).  Letting
\(\varepsilon\downarrow0\) gives
\begin{equation}
        d^{\mathrm{int}}_{\TT^2}(h)\leq C_Kd.
\end{equation}

\proofstep{Step 3. Control the differential of a translation.}

Fix \(a\in\TT^2\) and \(x\in M_{\mathrm{cyc}}\), and join \(x\) to \(R_ax\) by an intrinsic
minimizing geodesic in the flat orbit.  Its length is at most \(C_Kd\).
Translation-invariant orthonormal vertical frames are parallel for the
intrinsic flat metric.  Their ambient covariant derivatives and the derivative of the
invariant horizontal unit normal are bounded by the second fundamental form,
hence by \eqref{eq:fiber-shape-bound}.  Let \(E_1,E_2\) be an invariant orthonormal vertical frame, and let
\(N\) be the
invariant horizontal unit normal along the orbit geodesic.  The differential
\(dR_a\) carries the frame \((E_1,E_2,N)\) at \(x\) to the same invariant
frame at \(R_ax\).  Integrating the bounds for
\(\nabla_{\dot\gamma}E_i\) and \(\nabla_{\dot\gamma}N\) along the geodesic and
comparing with ambient parallel transport \(\mathcal P_{x,R_ax}\) gives
\begin{equation}\label{eq:isometry-differential-control}
        \bigl\|dR_a-\mathcal P_{x,R_ax}\bigr\|\leq C_Kd.
\end{equation}
Here and below, \(U\) denotes the pullback to \(M_{\mathrm{cyc}}\).
Comparing tensor values first by parallel transport and then by
\eqref{eq:isometry-differential-control}, we obtain
\[
 \bigl|U(x)-R_a^*U(x)\bigr|
 \leq C_Kd\left(\|\nabla U\|_{C^0}+\|U\|_{C^0}\right).
\]

\proofstep{Step 4. Average.}

Because the orbit average of \(U\) is zero,
\[
        U=\int_{\TT^2}(U-R_a^*U)\,da.
\]
Integrating the previous estimate with normalized Haar measure and taking the
supremum gives
\[
        \|U\|_{C^0}
        \leq C_Kd\left(\|\nabla U\|_{C^0}+\|U\|_{C^0}\right).
\]
Decrease \(\beta_0\) once more so that \(C_K\beta_0\leq1/2\).  Since
\(d\leq\beta_0\), we have \(C_Kd\leq1/2\), and the last term can be absorbed.
This is \eqref{eq:small-orbit-C0}.
\end{proof}

\subsection{Translation differences and the Ricci--DeTurck system}

We give a preview of the rest of the argument.  The two covers in the proof
have distinct roles:
\[
        \widetilde M
        \xrightarrow{\text{universal cover}}
        M_{\mathrm{cyc}}
        \xrightarrow{\text{cyclic cover}}
        M.
\]
\begin{center}
\small
\begin{tabular}{@{}>{$}l<{$}@{\quad}p{0.76\textwidth}@{}}
M_{\mathrm{cyc}} & Global torus translations and the compact
associated-bundle maximum-principle argument.\\[2pt]
\widetilde M & Bounded geometry, fixed-radius harmonic-coordinate cylinders,
and all parabolic Schauder estimates.
\end{tabular}
\end{center}

\paragraph{\normalfont\itshape Outline of the proof.}
The following table summarizes the translation-difference argument and the
final proof of Theorem~\ref{thm:noninvariant-decay}.
\begin{center}
\small
\renewcommand{\arraystretch}{1.12}
\begin{tabular}{@{}>{\raggedright\arraybackslash}p{0.10\textwidth}>{\raggedright\arraybackslash}p{0.39\textwidth}>{\raggedright\arraybackslash}p{0.43\textwidth}@{}}
\textbf{Step} & \textbf{Hypothesis or operation} & \textbf{Conclusion}\\ \hline
1 & Compare \(u\) with each translate \(R_a^*u\). & A homogeneous tensor system with scalar principal part.\\
2 & Apply the maximum principle on the compact associated bundle. & A collapse-independent \(C^0\)-bound for the normalized differences.\\
3 & Lift to \(\widetilde M\) and regularize the coefficients. & Uniform finite-order coefficient bounds in fixed-radius charts.\\
4 & Apply the lifted Schauder estimate. & A uniform \(C^{N+1+\sigma,(N+1+\sigma)/2}\)-bound on \([9,100]\).\\
5 & Average over translations and interpolate on \([10,100]\). & The prescribed contraction of the non-invariant part.\\
\end{tabular}
\end{center}
The proposition below proves Steps~1--4; Step~5 is the concluding proof of the
decay theorem.

\begin{proposition}[Uniform lifted estimate for translation differences]
\label{prop:uniform-translation-difference}
Under the hypotheses of Theorem~\ref{thm:noninvariant-decay}, put
\[
        a_0=\bigl\|g_\RDT^\perp(1)\bigr\|_{C^{N,\sigma}(M,h(1))},
\]
and assume that \(a_0>0\).  Let
\(\pi_{\mathrm{cyc}}:M_{\mathrm{cyc}}\to M\) be the cyclic cover and let
\(R_a\), \(a\in\TT^2\), denote its global torus action.  Write
\(h_{\mathrm{cyc}}\) and \((g_\RDT)_{\mathrm{cyc}}\) for the lifted metrics,
and set
\[
        u=(g_\RDT)_{\mathrm{cyc}},\qquad
        u_a=R_a^*(g_\RDT)_{\mathrm{cyc}},\qquad
        z_a=u-u_a,\qquad
        y_a=a_0^{-1}z_a.
\]
If tildes denote the further lifts to the universal cover, then
\[
 \sup_{a\in\TT^2}
 \|\widetilde y_a\|_{C^{N+1+\sigma,(N+1+\sigma)/2}_{\mathrm{uloc}}
       (\widetilde M\times[9,100],\widetilde h)}
 \leq C.
\]
Here the subscript \(\mathrm{uloc}\) denotes the uniformly local fixed-chart
norm defined in Paragraph~\ref{par:lifted-norm-convention}.  The constant
\(C\) depends only on the fixed data in
Theorem~\ref{thm:noninvariant-decay}, and not on the collapsed quotient.
\end{proposition}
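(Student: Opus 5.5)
The plan follows Steps~1--4 of the outline table. Throughout, work on \(M_{\mathrm{cyc}}\), where each \(R_a\) is a global, time-independent isometry of \(h_{\mathrm{cyc}}(t)\). Because the Ricci--DeTurck equation \eqref{eq:rdt} is natural under diffeomorphisms preserving the background, \(u_a=R_a^*u\) solves the same equation relative to \(h_{\mathrm{cyc}}\).

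\textbf{Step 1 (linear system).} Subtract the two equations and write the difference of the quasilinear operators as an integral along the segment \(u_a+\theta z_a\), \(\theta\in[0,1\)]. This gives a linear system
\[
\partial_t z_a=\bar g^{ij}\nabla_i\nabla_j z_a+B*\nabla z_a+D*z_a .
\]
Here \(\nabla\) is the \(h\)-connection and the principal coefficient \(\bar g^{ij}\) is scalar (the same for all tensor components). The coefficients \(\bar g,B,D\) depend on \(u,u_a\) and up to one derivative of each. By the hypothesis \(\|g_\RDT-h\|_{X^N_h}<\beta_{\mathrm{pert}}\) and the isometric invariance of the intrinsic norms (as in Lemma~\ref{lem:averaging-parabolic-holder}), these coefficients are \(C^{N-1+\sigma}\)-close to their values at \(u=h\), uniformly in \(a\). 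In particular, \(\bar g\) is uniformly elliptic and \(|B|+|D|\le C\). Dividing by \(a_0\), \(y_a\) satisfies the same homogeneous system.

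\textbf{Step 2 (\(C^0\) bound).} The initial bound comes from the identity \(u-R_a^*u=u^\perp-R_a^*u^\perp\), which holds because averages are translation invariant. Hence \(|y_a(1)|_{C^N}\le 2\). To get a \(C^0\) bound for all \(t\in[1,100]\), apply the maximum principle to \(|y_a|_h^2\). This needs a compact setting: \(z_a\) is not deck-invariant on \(M_{\mathrm{cyc}}\), but the family \((z_a)_{a\in\TT^2}\) is equivariant. The monodromy sends \(z_a\) to \(z_{Ha}\) up to pullback. So view \((x,a)\mapsto z_a(x)\) as a section over the compact quotient of \(M_{\mathrm{cyc}}\times\TT^2\) by the diagonal deck action; this is the compact associated bundle. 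On it, the evolution inequality
\[
\partial_t|y|^2\le \bar g^{ij}\nabla_i\nabla_j|y|^2+C|y|^2
\]
holds after absorbing the gradient term via Cauchy--Schwarz against \(-2\bar g(\nabla y,\nabla y)\); the \(\partial_t h\) term is bounded since \(|\Ric_h|\le 2K\). The maximum principle then gives \(\sup|y_a|\le 2e^{C\cdot 99}\), independently of the collapse.

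\textbf{Steps 3--4 (Schauder).} Lift to \(\widetilde M\). There Lemma~\ref{lem:universal-cover-inj} with \(B=1\), together with \(|\sec_h|\le K\) and Shi's estimates (which apply on \([1,100]\) after the start time, or from the \(\PBG\) hypothesis in the applications), gives injectivity radius at least \(i_K\) and derivative bounds, uniformly. Lemma~\ref{lem:uniform-parabolic-atlas} then supplies fixed-radius parabolic charts with uniform bounds. In these charts \(\widetilde y_a\) solves a linear parabolic system whose coefficients have uniform \(C^{N-1+\sigma}\) norms. Interior Schauder estimates (Appendix~\ref{app:parabolic-estimates}) on nested time intervals shrinking from \([1,100]\) to \([9,100]\) bootstrap the \(C^0\) bound to \(C^{N+1+\sigma,(N+1+\sigma)/2}\). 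This is where the main obstacle lies: gaining one derivative beyond the regularity \(N\) of \(u\) while the coefficients are only \(C^{N-1+\sigma}\). I would first apply interior estimates to the nonlinear Ricci--DeTurck equation for \(\widetilde u\) itself, using the smoothing from time \(1\) to time \(9\) (the local one-derivative gain), to obtain \(\widetilde u\in C^{N+1+\sigma}\) on \([5,100]\) with uniform bounds. This upgrades the coefficients to \(C^{N+\sigma}\), and a final Schauder step on \([9,100]\) then yields the stated uniform bound, uniformly in \(a\).
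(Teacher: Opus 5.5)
Your proposal is correct and is essentially the paper's proof. The shared steps are the mean-value difference system with scalar principal part (Lemma~\ref{lem:rdt-difference-system}), the maximum principle for \(|y_a|^2\) on the compact associated \(\TT^2\)-bundle over \(M\), and then, on \(\widetilde M\), the local one-derivative gain (Lemma~\ref{lem:local-rdt-one-derivative-gain}) for \(\widetilde u\) and \(\widetilde u_a\), followed by the interior estimate of Proposition~\ref{prop:uniform-lifted-linear-schauder} with \(m=N+1\).

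One bookkeeping correction: the zeroth-order coefficient contains \(\nabla^2u\) (see \eqref{eq:linearized-coefficient-structure}), so before the gain it lies only in \(C^{N-2+\sigma,(N-2+\sigma)/2}\), not \(C^{N-1+\sigma}\). That, rather than any loss in the Schauder step itself, is why the gain for \(\widetilde u\) is needed. Afterwards the three coefficients lie in \(C^{N+1+\sigma}\), \(C^{N+\sigma}\), and \(C^{N-1+\sigma}\) respectively, which is exactly what the Schauder estimate at order \(m=N+1\) requires.
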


\begin{proof}
\proofstep{Step 1. Apply the general Ricci--DeTurck difference lemma.}

Let
\[
        u_s=u_a+s(u-u_a),\qquad 0\leq s\leq1.
\]
Choose \(\beta_{\mathrm{pert}}\) below a constant depending only on the
fixed data so that each \(u_s\) is a metric and
\(2^{-1}h_{\mathrm{cyc}}\leq u_s\leq2h_{\mathrm{cyc}}\).  Apply
Lemma~\ref{lem:rdt-difference-system} to \(u_a\) and \(u\), with background
\(h_{\mathrm{cyc}}\), and denote its coefficients by
\[
        P_a^{pq}=\int_0^1u_s^{pq}\,ds,\qquad
        B_a^p=\int_0^1\mathcal B_s^p\,ds,\qquad
        C_a=\int_0^1\mathcal C_s\,ds.
\]
Dividing the resulting equation for \(z_a\) by \(a_0\) gives the homogeneous
local system
\begin{equation}\label{eq:finite-translate-equation}
        \partial_t y_a-P_a^{pq}\nabla_p\nabla_qy_a
        =B_a^p\nabla_p y_a+C_a y_a.
\end{equation}
The second-order part acts separately on every tensor component by the same
operator \(P_a^{pq}\nabla_p\nabla_q\); \(B_a^p\) and \(C_a\) are
endomorphism-valued and contain all coupling between components.  Uniformly in \(a\),
\[
        c\,h_{\mathrm{cyc}}^{pq}\xi_p\xi_q
        \leq P_a^{pq}\xi_p\xi_q
        \leq C\,h_{\mathrm{cyc}}^{pq}\xi_p\xi_q,
        \qquad
        |P_a-h_{\mathrm{cyc}}^{-1}|\leq C\beta_{\mathrm{pert}}.
\]
Since \(N\geq2\), \eqref{eq:linearized-coefficient-structure} also gives, on
the full cylinder,
\begin{equation}\label{eq:coefficient-zero-order-bounds}
        |B_a|\leq C\beta_{\mathrm{pert}},
        \qquad |C_a|\leq C(K+\beta_{\mathrm{pert}}).
\end{equation}
No derivatives of \(\Rm(h_{\mathrm{cyc}})\) are needed for these bounds.
Indeed, writing \(u_s=h_{\mathrm{cyc}}+e_s\), we have
\(\nabla^{h_{\mathrm{cyc}}}u_s=\nabla^{h_{\mathrm{cyc}}}e_s\) and
\((\nabla^{h_{\mathrm{cyc}}})^2u_s=(\nabla^{h_{\mathrm{cyc}}})^2e_s\), because
\(\nabla^{h_{\mathrm{cyc}}}h_{\mathrm{cyc}}=0\).  Because
\(R_a^*h_{\mathrm{cyc}}=h_{\mathrm{cyc}}\), the tensor \(e_s\) inherits the
assumed \(X_h^N\)-bound uniformly in \(a\) and \(s\).  Thus the derivative
terms in \(B_a\) and \(C_a\) are controlled by that bound, while the remaining
background term is algebraic in \(\Rm(h_{\mathrm{cyc}})\) and is controlled by
\(K\).

The cyclic cover may be collapsed, and no bounded-geometry chart on
\(M_{\mathrm{cyc}}\) is used at this stage.  Equation
\eqref{eq:finite-translate-equation}, the pointwise ellipticity bound, and
\eqref{eq:coefficient-zero-order-bounds} are the only coefficient information
needed before we lift the system to the universal cover in Step~3.  All
higher coefficient estimates will be proved there.

At \(t=1\), the invariant parts cancel:
\[
        z_a(1)=(g_\RDT)_{\mathrm{cyc}}^\perp(1)
                 -R_a^*(g_\RDT)_{\mathrm{cyc}}^\perp(1).
\]
Therefore
\begin{equation}\label{eq:finite-translate-initial}
        \|y_a(1)\|_{C^{N,\sigma}(M_{\mathrm{cyc}},h_{\mathrm{cyc}}(1))}
        \leq2.
\end{equation}
By the norm convention stated immediately after the parabolic H\"older norm
definition in Subsection~\ref{subsec:lifted-parabolic-estimates}, the norm in
\eqref{eq:finite-translate-initial} is computed after lifting to the universal
cover; it does not presuppose a fixed-radius atlas on \(M_{\mathrm{cyc}}\).

\proofstep{Step 2. Obtain a uniform \(C^0\)-bound.}

Let \(\phi_a=|y_a|_{h_{\mathrm{cyc}}(t)}^2\).  Since
\(\partial_t h_{\mathrm{cyc}}=-2\Ric(h_{\mathrm{cyc}})\), differentiating the norm and
using \eqref{eq:finite-translate-equation} gives
\[
\begin{split}
        (\partial_t-P_a^{pq}\nabla_p\nabla_q)\phi_a
        &\leq-c|\nabla y_a|_{h_{\mathrm{cyc}}}^2
              +C|y_a|_{h_{\mathrm{cyc}}}|\nabla y_a|_{h_{\mathrm{cyc}}}
              +C|y_a|_{h_{\mathrm{cyc}}}^2\\
        &\leq C\phi_a.
\end{split}
\]
The cyclic cover is noncompact, but the maximum principle can be applied on a
compact associated bundle.  Let \(\gamma\) be its deck generator.  If \(H\) is
the monodromy, then
\(\gamma R_a\gamma^{-1}=R_{Ha}\), and the lifted metrics are deck-invariant.
Hence
\[
        \gamma^*y_{Ha}=y_a,
        \qquad
        \phi_{Ha}(\gamma x_{\mathrm{cyc}},t)=\phi_a(x_{\mathrm{cyc}},t).
\]
Thus \((x_{\mathrm{cyc}},a)\mapsto\phi_a(x_{\mathrm{cyc}},t)\) descends to
\[
        \mathcal Z=(M_{\mathrm{cyc}}\times\TT^2)/
        \bigl((x_{\mathrm{cyc}},a)\sim(\gamma x_{\mathrm{cyc}},Ha)\bigr),
\]
which is compact.  It is the \(\TT^2\)-bundle over
\(M_{\mathrm{cyc}}/\langle\gamma\rangle=M\) associated to the
monodromy action \(a\mapsto Ha\).  At a spatial maximum on
\(\mathcal Z\), with \(a\) fixed,
the \(M_{\mathrm{cyc}}\)-gradient vanishes and the \(P_a\)-trace of the
\(M_{\mathrm{cyc}}\)-Hessian is nonpositive.  Applying the scalar maximum principle
to \(e^{-Ct}\phi\) and using \eqref{eq:finite-translate-initial} gives
\begin{equation}\label{eq:finite-C0-bound}
        \sup_{a\in\TT^2}
        \|y_a\|_{C^0(M_{\mathrm{cyc}}\times[1,100],h_{\mathrm{cyc}})}\leq C.
\end{equation}

\proofstep{Step 3. Lift to the universal cover and prove the finite-order Schauder estimate.}

The goal is the uniform estimate \eqref{eq:finite-high-bound} for the
translation differences on \(\widetilde M\times[9,100]\).  There are two
parts.  First we obtain exactly the finite amount of interior regularity of
\(\widetilde u\) needed to control the coefficients of the equation for
\(\widetilde y_a\).  We then estimate that linear tensor system through order
\(N+1+\sigma\).

The scalar estimates and their reduction to finite tensor systems with scalar
principal part are recorded in
Remark~\ref{rem:lieberman-scalar-versus-system} and
Proposition~\ref{prop:uniform-lifted-linear-schauder}.  We apply that
proposition after obtaining the finite coefficient regularity required for the
translation-difference equation.

Let
\[
        \pi_\infty:\widetilde M\longrightarrow M_{\mathrm{cyc}}
\]
be the universal covering map.  We decorate all lifted quantities by tildes.
In particular, \(\widetilde y_a\) satisfies
\begin{equation}\label{eq:finite-translate-equation-universal}
 \partial_t\widetilde y_a
 -\widetilde P_a^{pq}\widetilde\nabla_p\widetilde\nabla_q\widetilde y_a
 =\widetilde B_a^p\widetilde\nabla_p\widetilde y_a
  +\widetilde C_a\widetilde y_a
\end{equation}
on \(\widetilde M\times[1,100]\), where \(\widetilde\nabla\) is the
connection of \(\widetilde h(t)\).  The \(C^0\)-estimate
\eqref{eq:finite-C0-bound} lifts unchanged to this equation.

Lemma~\ref{lem:universal-cover-inj}, with \(B=1\) and
\(\Lambda=\max\{K,1\}\), gives a time-uniform injectivity-radius lower bound
for \(\widetilde h(t)\).  No curvature-derivative bound at the initial time is
assumed here.  Since \(\widetilde h(t)\) is a complete Ricci flow with
uniformly bounded curvature on \([1,100]\), Shi's derivative estimates
\cite{Shi}, applied with the fixed positive time gap from \(1\) to \(7\), give
\[
        \sup_{\widetilde M\times[7,100]}
        |\widetilde\nabla^j\Rm_{\widetilde h}|
        \leq C_j(K,N),
        \qquad 0\leq j\leq N+4.
\]
Together with the preceding injectivity-radius estimate and
Lemma~\ref{lem:uniform-parabolic-atlas}, these bounds give, for a constant
\(\Lambda_h\) depending only on the fixed data,
\[
        \PBG_{N+1}(h;\Lambda_h;[7,100]).
\]
By Lemma~\ref{lem:intrinsic-fixed-chart-equivalence}, the assumed intrinsic
\(X_h^N[1,100]\)-bound controls, with a constant depending only on the fixed
data, the order-\(N\) fixed-chart norm on \([7,100]\) used below.  Let \(r_0\)
be the corresponding atlas radius from
Lemma~\ref{lem:uniform-parabolic-atlas}, reduced so that
\((4r_0)^2\leq1\).  For a center
\((\widetilde x_0,t_0)\in\widetilde M\times[8,100]\), write
\[
 Q_\rho(\widetilde x_0,t_0)
 =B_{\widetilde h(t_0)}(\widetilde x_0,\rho)
   \times[t_0-\rho^2,t_0]
\]
for the corresponding one-sided backward cylinder in the fixed spatial
coordinates of that lemma.  The choice \((4r_0)^2\leq1\) ensures that every
\(Q_{4r_0}(\widetilde x_0,t_0)\) with \(t_0\in[8,100]\) is contained in
\(\widetilde M\times[7,100]\).

\medskip
\noindent\emph{Claim 1 (finite coefficient regularity).}
Uniformly in \(a\), the coefficients of
\eqref{eq:finite-translate-equation-universal} satisfy
\begin{equation}\label{eq:coefficient-interior-regularity}
\begin{split}
        &\|\widetilde P_a\|_{C^{N+1+\sigma,(N+1+\sigma)/2}
          (\widetilde M\times[8,100],\widetilde h)}\\
        &\quad+\|\widetilde B_a\|_{C^{N+\sigma,(N+\sigma)/2}
          (\widetilde M\times[8,100],\widetilde h)}\\
        &\quad+\|\widetilde C_a\|_{C^{N-1+\sigma,(N-1+\sigma)/2}
          (\widetilde M\times[8,100],\widetilde h)}
        \leq C.
\end{split}
\end{equation}

To prove the claim, use a fixed local frame for symmetric two-tensors in one
of the preceding charts.  The lifted Ricci--DeTurck equation has the form
\begin{equation}
 \partial_t\widetilde u^A
 -\widetilde u^{pq}\partial_p\partial_q\widetilde u^A
 =F^A\bigl(x,t,\widetilde u,\widetilde u^{-1},\partial\widetilde u;
     \widetilde h,\widetilde h^{-1},
     \partial\widetilde h,\partial^2\widetilde h\bigr),
\end{equation}
where the right-hand side contains no second derivatives of
\(\widetilde u\).  The same equation holds for \(\widetilde u_a\).  The assumed
\(X_h^N[1,100]\)-closeness, lifted to the universal cover, gives a uniform
\(C^{N+\sigma,(N+\sigma)/2}\)-bound for both metrics and makes their principal
matrices uniformly elliptic.  The \(\PBG_{N+1}\)-bound gives the required
\(C^{N+1+\sigma,(N+1+\sigma)/2}\)-control of the reference metric in the
fixed coordinates.  The same atlas, supplied by
Lemma~\ref{lem:uniform-parabolic-atlas}, bounds
\(\|\widetilde h^{-1}\|_{C^0}\) uniformly there.  Thus the reference
nondegeneracy hypothesis in the derivative-gain lemma is also satisfied.

Apply Lemma~\ref{lem:local-rdt-one-derivative-gain} with \(k=N\), outer
cylinder \(Q_{4r_0}\), and inner cylinder \(Q_{2r_0}\).  It gives
\begin{equation}
 \|\widetilde u\|_{C^{N+1+\sigma,(N+1+\sigma)/2}(Q_{2r_0})}
 \leq C.
\end{equation}
The same bound holds for \(\widetilde u_a\), with the same constant, because
\(R_a^*h_{\mathrm{cyc}}=h_{\mathrm{cyc}}\).  The derivative count in that
lemma is exactly the one needed here: differentiating \(N-1\) times uses
\(\widetilde u\) through order \(N\) and \(\widetilde h\) through order
\(N+1\).  The formulas \eqref{eq:linearized-coefficient-structure} now give
\eqref{eq:coefficient-interior-regularity}: \(P_a\) is algebraic in the
metrics and their inverses, \(B_a\) uses one derivative, and \(C_a\) uses at
most two derivatives.  The norms on the noncompact universal cover are the
suprema of the corresponding local norms in this fixed-radius atlas.

\medskip
\noindent\emph{Claim 2 (translation-difference estimate).}
By Claim~1, the lifted system
\eqref{eq:finite-translate-equation-universal} satisfies the hypotheses of
Proposition~\ref{prop:uniform-lifted-linear-schauder} with \(m=N+1\) on
\([8,100]\), uniformly in \(a\).  Its forcing term is zero.  The uniformly
local interior estimate \eqref{eq:uniform-interior-linear-schauder}, with one
unit of room at the initial side, and the lift of the \(C^0\)-bound
\eqref{eq:finite-C0-bound} therefore give
\begin{equation}\label{eq:finite-high-bound}
 \sup_{a\in\TT^2}
 \|\widetilde y_a\|_{C^{N+1+\sigma,(N+1+\sigma)/2}_{\mathrm{uloc}}
       (\widetilde M\times[9,100],\widetilde h)}
 \leq C.
\end{equation}
The estimate is one-sided backward at the top time and is uniform in the
translation parameter.  This is the required universal-cover H\"older bound.
\end{proof}

\subsection{The lifted estimate and completion of the proof}

\begin{proof}[Proof of Theorem~\ref{thm:noninvariant-decay}]
Set
\[
        a_0=\bigl\|g_\RDT^\perp(1)\bigr\|_{C^{N,\sigma}(M,h(1))}.
\]
If \(a_0=0\), the initial metric is invariant.  On the cyclic
cover that unwraps the base, the local translations form a global
\(\TT^2\)-action, and the Ricci--DeTurck equation with invariant background is
equivariant under that action.  Uniqueness, or equivalently the zero-data
maximum principle used below, preserves the symmetry.  Hence
\(g_\RDT^\perp\equiv0\), and the estimate is immediate.  Assume from now on
that \(a_0>0\).
Use the notation of Proposition~\ref{prop:uniform-translation-difference}.  That proposition gives the uniform estimate \eqref{eq:finite-high-bound}.

Averaging the translation differences recovers the non-invariant part:
\[
\begin{aligned}
        a_0^{-1}(g_\RDT)_{\mathrm{cyc}}^\perp
        &=a_0^{-1}\left((g_\RDT)_{\mathrm{cyc}}
          -\int_{\TT^2}R_a^*(g_\RDT)_{\mathrm{cyc}}\,da\right)  \\
        &=\int_{\TT^2}y_a\,da.
\end{aligned}
\]
Because \(\gamma\) conjugates \(R_a\) to \(R_{Ha}\), and the automorphism
\(a\mapsto Ha\) preserves normalized Haar measure, this integral is
deck-invariant.  It descends to
\[
        w=a_0^{-1}g_\RDT^\perp.
\]
After lifting this identity to \(\widetilde M\), we have
\[
        \widetilde w=\int_{\TT^2}\widetilde y_a\,da.
\]
By the same Minkowski argument as in
Lemma~\ref{lem:averaging-parabolic-holder}, integration is bounded in each
local H\"older norm of the fixed universal-cover atlas.  Hence
\eqref{eq:finite-high-bound} gives
\begin{equation}\label{eq:finite-averaged-high-bound}
        \|w\|_{X_h^{N+1}[9,100]}\leq C.
\end{equation}
The tensor \(w\) has zero orbit average.

Let
\[
        d=\sup_{t\in[1,100]}d_{\TT^2}(h(t))
        \leq\beta_{\mathrm{orb}}.
\]
Applying Lemma~\ref{lem:small-orbit-averaging} to \(w(t)\) for each
\(t\in[9,100]\) and then using
\eqref{eq:finite-averaged-high-bound} gives
\begin{equation}
        \|w\|_{C^0(M\times[9,100],h)}
        \leq Cd\leq C\beta_{\mathrm{orb}}.
\end{equation}
Apply the parabolic interpolation inequality
\cite[Proposition~4.2, p.~49]{Lieberman} in the fixed-radius universal-cover
charts, take the uniformly local supremum, and then use the
intrinsic/fixed-chart norm equivalence.  With one unit of room at the initial
side and backward cylinders at the final side, this gives some
\(\mu=\mu(N,\sigma)\in(0,1)\) such that
\begin{equation}
\begin{aligned}
        \|w\|_{X_h^N[10,100]}
        &\leq C\|w\|_{C^0(M\times[9,100],h)}^\mu
        \|w\|_{X_h^{N+1}[9,100]}^{1-\mu}  \\
        &\leq C\beta_{\mathrm{orb}}^\mu.
\end{aligned}
\end{equation}
First choose \(\beta_{\mathrm{pert}}\) below all perturbative thresholds
used in Proposition~\ref{prop:uniform-translation-difference}.  With that
choice fixed, choose \(\beta_{\mathrm{orb}}\) below the threshold in
Lemma~\ref{lem:small-orbit-averaging} and so that
\(C\beta_{\mathrm{orb}}^\mu\leq\alpha\).  Since
\(g_\RDT^\perp=a_0w\), this is precisely \eqref{eq:main-estimate}.
\end{proof}

Theorem~\ref{thm:noninvariant-decay} is the analytic contraction estimate used
below: once a fixed fibration has uniformly short fibers and its quotient
circle has a uniform positive lower bound, the non-invariant part can be made
to decay by any prescribed factor on the latter part of a finite rescaled
stage.

\section{Comparison of torus fibrations}
\label{sec:fibration-comparison}

The preceding decay estimate for the non-invariant part is predicated on the
assumption that the fibers of the background invariant metric are short.
In order to apply this in the subsequent iterative process, we will need
to know that this property is preserved.  We do this by comparing the
fixed torus fibration retained by the
iteration with the possibly different fibration supplied by Bamler at each
time.  This section proves the geometric dichotomy that achieves the
comparison: equivalent short-fiber fibrations transfer smallness to one
another, whereas two inequivalent short-fiber fibrations force the entire
manifold to have small diameter.  On a large-diameter block, the second
alternative is excluded, so the retained fibration persists and its quotient
circle remains quantitatively nondegenerate.

The section is organized as follows.  The first subsection, \emph{Fiber
equivalence and diameter}, bounds the total diameter in terms of a
quotient-circle length and a fiber diameter, and defines fiber equivalence
through the integral cohomology class, or equivalently the common kernel of
the induced maps to \(\ZZ\).  The second subsection, \emph{Fiber-equivalent
small-fiber fibrations}, uses that common kernel to show that horizontal
projection of a short comparison fiber onto a fixed fiber has degree
\(\pm1\), thereby transferring the small-fiber estimate to the fixed
fibration.  The final subsection, \emph{Fiber-inequivalent fibrations and
fixed-fiber improvement}, first proves that two inequivalent short-fiber
fibrations force the whole manifold to have small diameter, and then combines
the two alternatives into a pointwise bootstrap: at definite total diameter,
the comparison fibration is fiber-equivalent to the retained one, the fixed
fibers become strictly shorter, and the fixed quotient-circle length acquires a
positive lower bound.  These conclusions are later inserted into a
first-hitting argument on each large block.

\subsection{Fiber equivalence and diameter}

We begin with the fibration-compatibility facts used later in the DeTurck
iteration.  These statements are purely topological and metric; they do not use
the Ricci flow.  Throughout this section, when a torus fibration
\(\pi:M\to S^1\) has been fixed, a \emph{fixed fiber} means a fiber of this
chosen map \(\pi\), as opposed to a different time-slice fibration.  If \(r\)
is a locally \(\TT^2\)-invariant metric for this fixed fibration, its
\emph{fixed quotient length} means the length of the metric quotient circle of
\(\pi\) with respect to \(r\).

\begin{lemma}[Base length and fiber diameter control diameter]
\label{lem:base-length-controls-diameter}
Let \(h\) be an invariant metric for a torus
fibration \(\pi:M\to S^1\), with quotient metric \(h_{S^1}\).  Let \(g\) be
another metric satisfying \(g\leq B h\); no invariance is assumed for \(g\).  If every \(\pi\)-fiber has
\(g\)-diameter at most \(d_f\), then
\[
        \diam(M,g)\leq B^{1/2}L(S^1,h_{S^1})/2+d_f.
\]
In particular, if \(d_{\TT^2}(h)\leq d\), then
\[
        \diam(M,g)\leq B^{1/2}\bigl(L(S^1,h_{S^1})/2+d\bigr).
\]
\end{lemma}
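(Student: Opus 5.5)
Given two points $x,x'\in M$, the plan is to reach $x'$ from $x$ in two moves. First travel along a horizontal curve for $h$ until reaching the fiber of $x'$. Then move inside the $g$-metric space to $x'$, at cost at most $d_f$ by the fiber-diameter hypothesis.

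For the first move, let $y=\pi(x)$ and $y'=\pi(x')$. The quotient circle $(S^1,h_{S^1})$ has length $L=L(S^1,h_{S^1})$, so one of the two arcs joining $y$ to $y'$ has $h_{S^1}$-length at most $L/2$. Since $h$ is invariant, the orbits are equidistant, and the quotient metric is realized by horizontal curves. Concretely, take the $h$-horizontal lift $\gamma$ starting at $x$ of this arc, parametrized by $h_{S^1}$-arclength, using the horizontal gauge $h=ds^2+G_{ij}(s)\,dx^idx^j$ from Lemma~\ref{lem:universal-cover-inj} on a local chart or on $M_{\mathrm{cyc}}$. Its $h$-length equals the $h_{S^1}$-length of the arc, hence is at most $L/2$. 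Its endpoint $z$ lies in $\pi^{-1}(y')$.

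The comparison $g\leq Bh$ gives $|\dot\gamma|_g\leq B^{1/2}|\dot\gamma|_h$, so $d_g(x,z)\leq B^{1/2}L/2$. Since $z$ and $x'$ lie in the same $\pi$-fiber, $d_g(z,x')\leq d_f$. The triangle inequality then gives $d_g(x,x')\leq B^{1/2}L/2+d_f$, which is the first assertion.

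For the second assertion, take $d_f$ to be the $g$-diameter of the fibers. Fix two points $z,x'$ in one fiber and any $\eta>0$. By the definition of $d_{\TT^2}(h)$ as an ambient diameter, there is a curve in $M$ from $z$ to $x'$ of $h$-length at most $d+\eta$. That curve has $g$-length at most $B^{1/2}(d+\eta)$, so letting $\eta\downarrow0$ gives $d_f\leq B^{1/2}d$. Substituting into the first inequality gives the second.

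The only step needing care is that horizontal lifts realize the quotient distance. This follows from the local product form of an invariant metric in the horizontal gauge, where $s$ is $h_{S^1}$-arclength.
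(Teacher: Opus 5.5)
Your proposal is correct and follows essentially the same argument as the paper. Both proofs lift a shortest base arc horizontally from \(x\), bound its \(g\)-length by \(B^{1/2}L(S^1,h_{S^1})/2\) using \(g\leq Bh\), and conclude with the ambient fiber-diameter bound and the triangle inequality; the second estimate then follows from \(d_f\leq B^{1/2}d\).
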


\begin{proof}
Take two points \(x,x'\in M\).  Join their projections by a shortest base
arc and lift that arc horizontally for \(h\), starting at \(x\).  Because
\(g\leq Bh\), the lift has \(g\)-length at most
\(B^{1/2}L(S^1,h_{S^1})/2\).  Its endpoint \(z\) lies in the fiber containing
\(x'\), so the ambient fiber-diameter bound gives \(d_g(z,x')\leq d_f\).
The triangle inequality therefore gives
\[
        d_g(x,x')\leq d_g(x,z)+d_g(z,x')
        \leq B^{1/2}L(S^1,h_{S^1})/2+d_f.
\]
The path used to estimate \(d_g(z,x')\) need not stay in the fiber.  Taking
the supremum over \(x,x'\) proves the first estimate.  If every fiber has
\(h\)-diameter at most \(d\), then it has \(g\)-diameter at most \(B^{1/2}d\);
substituting this into the first estimate gives the last one.
\end{proof}

\noindent We next fix the convention for comparing torus fibrations and prove the
two compatibility lemmas used in the fixed-fiber bootstrap.  The first says that
a fiber-equivalent small-fiber fibration makes the fixed fibers small.  The
second says that a fiber-inequivalent small-fiber fibration forces the whole
diameter to be small.

We shall compare fibrations by their induced cohomology classes rather than by identifying their target circles.

\begin{definition}[Fiber-equivalent torus fibrations]
\label{def:fiber-equivalent}
Let
\[
        \pi_i:M\to S^1_i,\qquad i=0,1,
\]
be torus fibrations.  The target circles are not part of the fixed data and may
be different.  Choose a basepoint \(x\in M\) and, for each \(i=0,1\), an identification
\(\pi_1(S_i^1,\pi_i(x))\cong\ZZ\), and let
\[
        \alpha_{\pi_i,x}:\pi_1(M,x)\longrightarrow\ZZ,
\]
be the surjective homomorphism induced by \(\pi_i\).  Changing the
identification changes \(\alpha_{\pi_i,x}\) by a sign.  Equivalently, \(\pi_i\)
determines a cohomology class \([\pi_i]\in H^1(M;\ZZ)\), well-defined up to
sign.  We say that \(\pi_0\) and \(\pi_1\) are \emph{fiber-equivalent}
if
\[
        [\pi_0]=\pm[\pi_1]\in H^1(M;\ZZ).
\]
Equivalently, after choosing a common basepoint,
\[
        \ker\alpha_{\pi_0,x}=\ker\alpha_{\pi_1,x}\subset\pi_1(M,x).
\]
For a different basepoint, the two kernels are changed by the same conjugacy
convention.  Thus, in fiber-subgroup language, the definition says that the
images of the two fiber fundamental groups agree, up to this usual conjugacy
ambiguity.
\end{definition}

If a bundle equivalence is given by a diffeomorphism of \(M\) isotopic to the
identity and a diffeomorphism of the target circles, then the fibrations are
fiber-equivalent; the sign records the orientation of the base map.  An
arbitrary self-diffeomorphism of \(M\) need not preserve the cohomology class.
Conversely, fiber-equivalence specifies only the underlying cohomology class, or
equivalently the common fiber kernel, and not a chosen bundle equivalence.
Because maps to \(S^1\) classify integral cohomology classes in degree one, the
relation
\[
        [\pi_0]=\pm[\pi_1]\in H^1(M;\ZZ),
\]
means that, after choosing a degree \(\pm1\) map
\(\varphi:S^1_0\to S^1_1\), the two fibrations represent the same homotopy
class of maps from \(M\) to a circle.  In other words, there is a homotopy-commutative diagram of spaces
\[
\begin{CD}
M @>{\pi_0}>> S^1_0\\
@V{\Id_M}VV @VV{\varphi}V\\
M @>{\pi_1}>> S^1_1.
\end{CD}
\]
Passing to fundamental groups gives the agreement of the kernels of the two
maps to \(\ZZ\), and that common kernel is the only topological input used in
the next lemma.

\subsection{Fiber-equivalent small-fiber fibrations}

\begin{lemma}[Fiber-equivalent small fibers force small fixed fibers]
\label{lem:isotopic-fibers-force-fixed}
Fix \(\lambda>0\).  For every \(\theta>0\) there are \(\eta>0\) and \(\gamma>0\)
with the following property.  Let \(r\) be an invariant metric
for a torus fibration \(\pi:M\to S^1\), and suppose that the quotient circle has
length at least \(\lambda\).  Let \(q\) be an invariant metric for a
possibly different torus fibration \(\pi_B:M\to S^1_B\).  Assume that
\[
        \|q-r\|_{C^1(M,r)}<\gamma,
\]
that \(\pi_B\) is fiber-equivalent to \(\pi\), and that every \(\pi_B\)-fiber
has \(q\)-diameter at most \(\eta\).  Then
\[
        d_{\TT^2}(r)<\theta.
\]
\end{lemma}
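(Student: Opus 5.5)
The plan is to show that every fixed \(\pi\)-fiber lies within a small ambient \(r\)-distance of a single \(\pi_B\)-fiber. That bounds the ambient diameter \(d_{\TT^2}(r)\) by a multiple of \(\eta\). Only the \(C^0\) part of the hypothesis \(\|q-r\|_{C^1(M,r)}<\gamma\) will be used, namely \((1+C\gamma)^{-1}r\leq q\leq(1+C\gamma)r\). I will take \(\gamma\leq\gamma_0\) small, so that \(q\)-distances and \(r\)-distances agree up to a factor \(2\). Then every \(\pi_B\)-fiber \(F_B\) has \(r\)-diameter at most \(2\eta\).

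\emph{Step 1: locating a comparison fiber.} Fix \(y\in S^1\) and the fixed fiber \(F_y=\pi^{-1}(y)\). Choose a point \(x_0\in F_y\) and let \(F_B\) be the \(\pi_B\)-fiber through \(x_0\). Because \(\pi\) is distance nonincreasing onto the quotient circle, \(\pi(F_B)\) lies in the closed arc \(I\) of half-length \(2\eta\) about \(y\). I will require \(4\eta<\lambda/10\), so \(I\) is a genuine embedded arc in a circle of length at least \(\lambda\). Then \(U=\pi^{-1}(I)\) is identified with \(F_y\times I\) by \(r\)-horizontal transport. Let \(P:U\to F_y\) be the horizontal projection. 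A point \(z\in U\) is joined to \(P(z)\) by a horizontal curve whose \(r\)-length equals the base distance from \(\pi(z)\) to \(y\). Hence \(d_r(z,P(z))\leq2\eta\), and no curvature or shape-operator bound is needed.

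\emph{Step 2: the degree argument, which I expect to be the main obstacle.} I claim \(P|_{F_B}:F_B\to F_y\) has degree \(\pm1\). Both spaces are tori, hence \(K(\pi,1)\)'s, and \(P\) is a homotopy equivalence \(U\simeq F_y\). So it suffices to show that the inclusion \(F_B\hookrightarrow U\) induces an isomorphism on \(\pi_1\cong\ZZ^2\).

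For a torus bundle over \(S^1\), the long exact homotopy sequence shows that each fiber inclusion is \(\pi_1\)-injective with image exactly \(\ker\alpha_{\pi,x_0}\); the same holds for \(\pi_B\). Likewise, \(\pi_1(U)\to\pi_1(M)\) is injective with image \(\ker\alpha_{\pi,x_0}\). By fiber-equivalence (Definition~\ref{def:fiber-equivalent}), \(\ker\alpha_{\pi_B,x_0}=\ker\alpha_{\pi,x_0}\). Therefore \(\pi_1(F_B,x_0)\to\pi_1(U,x_0)\) becomes, after composing with the injection into \(\pi_1(M,x_0)\), a bijection onto the common kernel. This gives the isomorphism, hence degree \(\pm1\), and in particular \(P(F_B)=F_y\). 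The point needing care is that basepoints are chosen consistently, which is why \(x_0\in F_y\cap F_B\) is used as the common basepoint.

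\emph{Step 3: conclusion.} Given \(w,w'\in F_y\), surjectivity supplies \(z,z'\in F_B\) with \(P(z)=w\) and \(P(z')=w'\). Then
\[
d_r(w,w')\leq d_r(w,z)+d_r(z,z')+d_r(z',w')\leq2\eta+2\eta+2\eta=6\eta .
\]
Since \(y\) was arbitrary, \(d_{\TT^2}(r)\leq6\eta\). Choosing \(\eta<\min\{\theta/6,\lambda/40\}\) and \(\gamma=\gamma_0\) gives \(d_{\TT^2}(r)<\theta\).
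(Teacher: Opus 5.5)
Your proposal is correct and follows the paper's proof essentially step for step. Both arguments take a short base arc containing \(\pi(F_B)\), project horizontally onto the fixed fiber, use the common kernel to show that \(P|_{F_B}\) is a \(\pi_1\)-isomorphism and hence of degree \(\pm1\), and finish with the triangle inequality; the only cosmetic difference is that you center the arc at \(y\) and obtain the explicit bound \(d_{\TT^2}(r)\leq6\eta\), where the paper writes \(2\operatorname{length}(J)+2\eta\).
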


\begin{proof}
\proofstep{Step 1. Place a comparison fiber over a short base arc.}
See Figure~\ref{fig:same-fibration}. 
Choose \(\eta\) and \(\gamma\) so small that every \(\pi_B\)-fiber has
\(r\)-diameter less than \(2\eta\) and
\[
        4\eta<\min\{\lambda/10,\theta/10\}.
\]
This is possible because \(q\) and \(r\) are bilipschitz when \(\gamma\) is
small.  Since \(\pi:(M,r)\to S^1\) is distance nonincreasing, the image of a
connected \(\pi_B\)-fiber has diameter less than \(2\eta\).  The quotient
circle has length at least \(\lambda\), so that image lies in a base arc of
length less than
\(4\eta<\min\{\lambda/10,\theta/10\}\).

Fix a fiber \(F=\pi^{-1}(y_0)\), choose \(x\in F\), and let
\(F_B=\pi_B^{-1}(\pi_B(x))\).  Choose an arc \(J\subset S^1\) containing
\(\pi(F_B)\), with
\[
        \operatorname{length}(J)<\min\{\lambda/10,\theta/10\}.
\]
Because \(x\in F_B\), the point \(y_0=\pi(x)\) lies in \(J\).  Horizontal
transport over \(J\) identifies \(\pi^{-1}(J)\) with \(F\times J\).  Let
\(P:\pi^{-1}(J)\to F\) be the resulting projection; it fixes \(x\).

\proofstep{Step 2. Prove that \(P|_{F_B}\) is onto.}

Use \(x\) as basepoint.  Since \(\pi_B\) and \(\pi\) are fiber-equivalent, their kernels in
\(\pi_1(M,x)\) agree by Definition~\ref{def:fiber-equivalent}.
The inclusions of \(F\) and \(F_B\) identify their fundamental groups with this
common kernel.  Because \(J\) is an arc, \(P\) is the endpoint of a deformation
retraction of \(\pi^{-1}(J)\) onto \(F\) that fixes \(x\).  Therefore, with
\(i_F:F\hookrightarrow M\) and \(i_B:F_B\hookrightarrow M\),
\[
        (i_F)_*\circ (P|_{F_B})_*=(i_B)_*:
        \pi_1(F_B,x)\longrightarrow\pi_1(M,x).
\]
The map \((i_F)_*\) is injective, and the two inclusion maps have the same
image.  Hence \((P|_{F_B})_*\) is an isomorphism.  Thus
\(P|_{F_B}:F_B\to F\) has degree \(\pm1\), so it is onto.

\proofstep{Step 3. Bound the fixed fiber.}

Given \(p,q\in F\), choose \(p_B,q_B\in F_B\) with
\(P(p_B)=p\) and \(P(q_B)=q\).  The horizontal segments from \(p_B,q_B\) to
\(p,q\) each have \(r\)-length at most \(\operatorname{length}(J)\), while
\(d_r(p_B,q_B)\leq2\eta\).  Hence
\[
        d_r(p,q)\leq2\operatorname{length}(J)+2\eta.
\]
After decreasing \(\eta\) and \(\gamma\), the right side is less than
\(\theta\).  Taking the supremum over \(p,q\) and then over all fibers \(F\) proves
the lemma.
\end{proof}

\begin{figure}[t]
\centering
\includegraphics[width=0.9\textwidth]{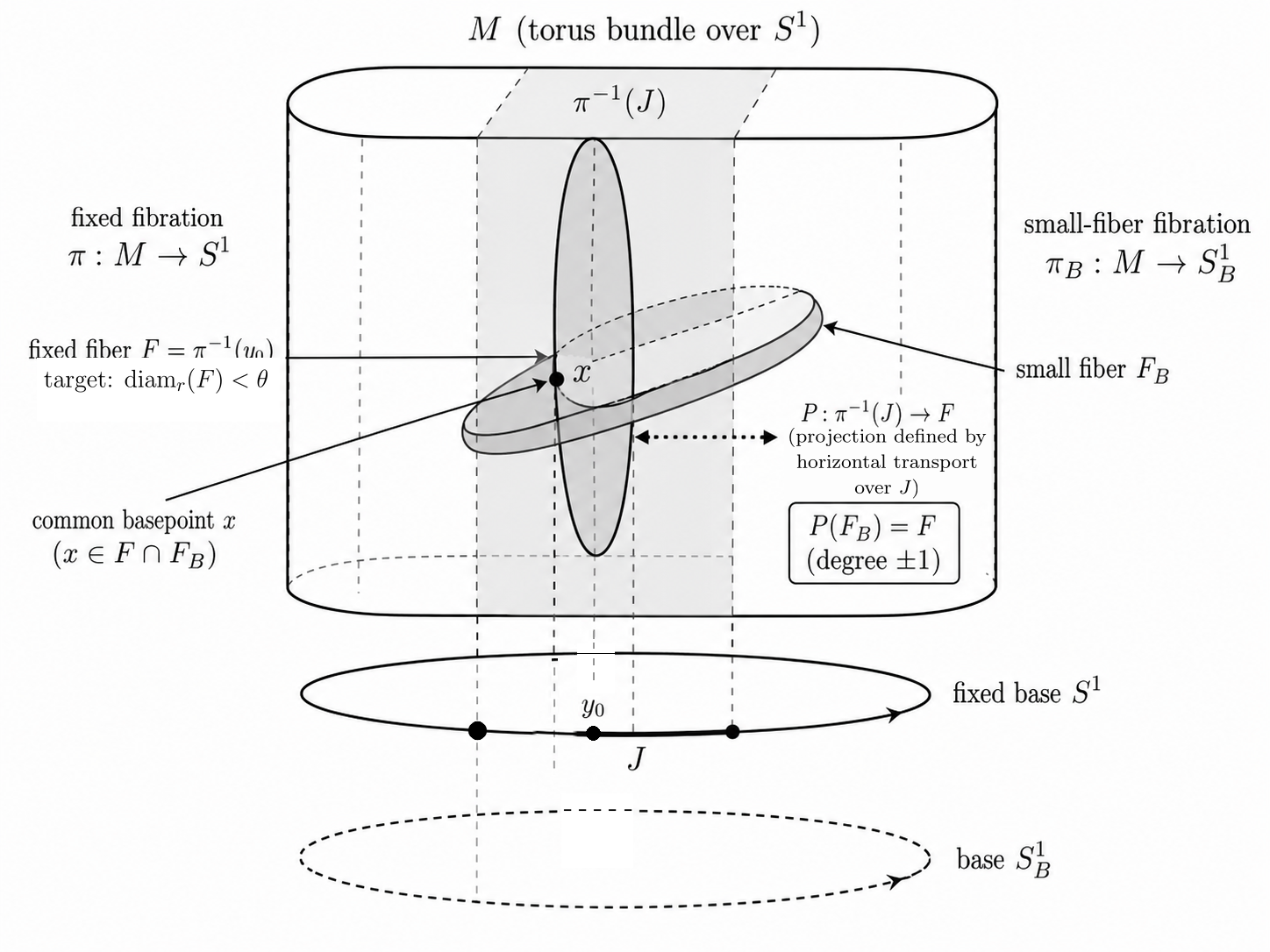}
\caption{Fiber-equivalent short fibers.  The comparison fiber lies over the
short fixed-base arc \(J\).  Horizontal projection in
\(\pi^{-1}(J)\cong F\times J\) maps it onto the fixed fiber with degree
\(\pm1\), transferring the comparison-fiber and base-arc bounds to the
target estimate \(\diam_r(F)<\theta\).}
\label{fig:same-fibration}
\end{figure}

The complementary lemma says that two fiber-inequivalent small-fiber fibrations can occur only when the whole manifold has small diameter.

\subsection{Fiber-inequivalent fibrations and fixed-fiber improvement}

\begin{lemma}[Fiber-inequivalent small fibers force small diameter]
\label{lem:different-fibrations-small-diameter}
Fix \(B\geq1\).  There is a constant \(C_{\mathrm{fib}}=C_{\mathrm{fib}}(B)\) with the
following property.  Let \(h\) be a metric on a closed three-manifold \(M\).
Suppose that \(M\) has two torus fibrations
\[
        \pi_i:M\to S^1_i,\qquad i=0,1,
\]
and invariant comparison metrics \(h_i\), with quotient metrics
\(h_{S^1,i}\), such that
\[
        B^{-1}h_i\leq h\leq B h_i,
        \qquad i=0,1.
\]
Suppose also that, for some \(d_f>0\), every fiber of each \(\pi_i\) has
\(h\)-diameter at most \(d_f\).  If \(\pi_0\) and \(\pi_1\) are fiber-inequivalent, then
\[
        \diam(M,h)\leq C_{\mathrm{fib}}d_f.
\]
\end{lemma}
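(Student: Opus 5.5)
The plan is to exploit the fact that a fiber of one fibration must wrap around the base circle of the other, so every \(\pi_1\)-fiber meets a single \(\pi_0\)-fiber. The diameter bound then comes from the triangle inequality alone, and the comparison metrics \(h_i\) and the constant \(B\) play no essential role. I expect \(C_{\mathrm{fib}}=3\) to suffice.

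\emph{Step 1 (algebra).} Fix a fiber \(F_0=\pi_0^{-1}(y_0)\) and a basepoint \(x\in F_0\). From the homotopy exact sequence of the fibration \(F_0\to M\to S^1_0\), with connected fiber and aspherical base, the image of \(\pi_1(F_0,x)\to\pi_1(M,x)\) is exactly \(\ker\alpha_{\pi_0,x}\). I claim that \(\alpha_{\pi_1,x}\) does not vanish on this kernel. Suppose it did. Then \(\alpha_{\pi_1,x}\) would factor through the quotient \(\pi_1(M,x)/\ker\alpha_{\pi_0,x}\cong\ZZ\), so \(\alpha_{\pi_1,x}=k\,\alpha_{\pi_0,x}\) for some integer \(k\). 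Surjectivity of \(\alpha_{\pi_1,x}\) forces \(k=\pm1\), so the two kernels agree. That is fiber-equivalence in the sense of Definition~\ref{def:fiber-equivalent}, contradicting the hypothesis.

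\emph{Step 2 (topology).} By Step 1, the map \(\pi_1|_{F_0}:F_0\to S^1_1\) induces a nonzero homomorphism on fundamental groups, so it is not null-homotopic. It must therefore be surjective: otherwise its image would lie in an arc, which is contractible, and the map would be null-homotopic. Hence every fiber of \(\pi_1\) meets \(F_0\).

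\emph{Step 3 (metric).} Let \(x,x'\in M\) be arbitrary. By Step 2, the \(\pi_1\)-fibers through \(x\) and through \(x'\) contain points \(z,z'\in F_0\). The ambient \(h\)-diameter bounds for the fibers of \(\pi_1\) and of \(\pi_0\) then give
\[
        d_h(x,x')\leq d_h(x,z)+d_h(z,z')+d_h(z',x')\leq 3d_f .
\]
Taking the supremum over \(x,x'\) yields the lemma with \(C_{\mathrm{fib}}=3\), independently of \(B\).

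The only delicate point is Step 1. It needs both the identification of the fiber group with \(\ker\alpha_{\pi_0,x}\) and the surjectivity of \(\alpha_{\pi_1,x}\), which is what pins \(k\) to \(\pm1\) rather than allowing a multiple. Both facts are standard for torus bundles over a circle.
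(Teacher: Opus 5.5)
Your proof is correct, and it departs from the paper's only in the metric step.

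\textbf{Algebraic step.} This is the same as in the paper. The paper shows that \(\alpha_{\pi_0,x}\) is nonzero on the image of \(\pi_1(F_1,x)\), which is your argument with the two fibrations exchanged. Both versions conclude that a fiber of one fibration maps onto the other base circle. Asphericity of the base is not needed for ``image \(=\) kernel''; exactness at \(\pi_1(M)\) holds for any fibration.

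\textbf{Metric step.} Here the routes differ.
\begin{itemize}
\item The paper uses the comparison metric \(h_0\). The quotient map \(\pi_0\) is distance nonincreasing for \(h_0\), and \(h_0\leq Bh\), so \(\pi_0\) is \(B^{1/2}\)-Lipschitz on \((M,h)\). Since \(\pi_0(F_1)=S^1_0\), this gives \(L(S^1_0,h_{S^1,0})\leq 2B^{1/2}d_f\). It then invokes Lemma~\ref{lem:base-length-controls-diameter} (horizontal lifts for \(h_0\)) to get \(\diam(M,h)\leq(B+1)d_f\).
\item You observe that every \(\pi_1\)-fiber meets the single \(\pi_0\)-fiber \(F_0\), and chain three fiber hops. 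This gives \(3d_f\) without using \(h_0\), \(h_1\), the quotient metrics, or \(B\).
\end{itemize}

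\textbf{Comparison.} Your route is shorter and gives a sharper, \(B\)-independent constant. It also shows that the comparison-metric hypothesis is superfluous for this lemma. Your argument works whether fiber diameters are ambient or intrinsic, since ambient distance is at most intrinsic distance. The paper's route yields the intermediate quotient-length bound and reuses a lemma it needs elsewhere, but neither is required here. Either constant suffices downstream: in Lemma~\ref{lem:pointwise-fixed-fiber}, \(C_{\mathrm{fib}}\) enters only through the smallness choices for \(\theta\) and \(\eps_{\mathrm{fib}}\).
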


\begin{proof}
Fix \(x\in M\), choose representatives of the two induced maps to \(\ZZ\), and set
\(K_i=\ker\alpha_{\pi_i,x}\subset\pi_1(M,x)\).  Let \(F_1\) be the
\(\pi_1\)-fiber through \(x\), with inclusion \(i_1:F_1\hookrightarrow M\).
Suppose first that
\[
        \alpha_{\pi_0,x}\circ(i_1)_*:
        \pi_1(F_1,x)\longrightarrow\ZZ,
\]
were zero.  Then \(K_1=(i_1)_*\pi_1(F_1,x)\subset K_0\), and this inclusion
would induce a surjection
\[
        \pi_1(M,x)/K_1\cong\ZZ
        \longrightarrow
        \pi_1(M,x)/K_0\cong\ZZ.
\]
After identifying the two quotient groups with \(\ZZ\), this is a surjective
homomorphism \(\ZZ\to\ZZ\), and hence an isomorphism.  Since its kernel is
\(K_0/K_1\), we would have \(K_0=K_1\), contrary to the assumption that the
fibrations are fiber-inequivalent.  Therefore
\(\alpha_{\pi_0,x}\circ(i_1)_*\neq0\).  Equivalently,
\(\pi_0|_{F_1}:F_1\to S^1_0\) induces a nonzero map on \(H_1\), and is thus
onto.

The quotient map \(\pi_0:(M,h)\to(S^1_0,h_{S^1,0})\) is
\(B^{1/2}\)-Lipschitz: it is distance nonincreasing for \(h_0\), and
\(h_0\leq Bh\).  Since \(\pi_0(F_1)=S^1_0\),
\[
        \diam(S^1_0,h_{S^1,0})
        \leq B^{1/2}\diam_h(F_1)
        \leq B^{1/2}d_f.
\]
Hence \(L(S^1_0,h_{S^1,0})\leq2B^{1/2}d_f\).  Applying
Lemma~\ref{lem:base-length-controls-diameter} to \(\pi_0\) gives
\[
        \diam(M,h)
        \leq B^{1/2}L(S^1_0,h_{S^1,0})/2+d_f
        \leq (B+1)d_f.
\]
Thus one may take \(C_{\mathrm{fib}}=B+1\).
\end{proof}

The next lemma isolates the pointwise geometric input used in the block
argument.  If a metric of definite diameter is close to invariant metrics for
two torus fibrations and both fibrations have sufficiently small fibers, then
the fibrations are fiber-equivalent.  The lemma also improves the
fixed-fiber bound and gives a definite lower bound for the fixed quotient
circle.

\begin{lemma}[Pointwise fixed-fiber improvement]
\label{lem:pointwise-fixed-fiber}
Fix \(\delta_0>0\), \(0<\lambda_{\mathrm{p}}<\delta_0/10\), and
\(\theta_{\max}>0\).  There are constants
\[
        0<\theta<\theta_{\max},\qquad
        \beta_{\mathrm{fib}}>0,\qquad
        \eps_{\mathrm{fib}}>0,
\]
with the following property.

Let \(q\) be a metric on a closed three-manifold \(M\) such that
\begin{equation}\label{eq:pointwise-total-diameter}
        \diam(M,q)\geq\delta_0.
\end{equation}
Let \(\pi:M\to S^1\) be a torus fibration, and let \(r\) be an invariant metric for \(\pi\).  Suppose that
\begin{equation}
        \|q-r\|_{C^1(M,r)}<\beta_{\mathrm{fib}},
        \qquad
        d_{\TT^2}(r)\leq\theta.
\end{equation}
Suppose also that \(M\) has a second torus fibration
\(\pi_B:M\to S_B^1\) and an invariant metric \(h_B\)
for \(\pi_B\) such that
\begin{equation}
        \|h_B-q\|_{C^1(M,q)}<\eps_{\mathrm{fib}},
        \qquad
        \diam_q(F_B)\leq\eps_{\mathrm{fib}},
\end{equation}
for every \(\pi_B\)-fiber \(F_B\).  Then \(\pi_B\) is fiber-equivalent to \(\pi\), and
\begin{equation}
        d_{\TT^2}(r)<\frac{\theta}{2},
        \qquad
        L(S^1,r_{S^1})\geq\lambda_{\mathrm{p}}.
\end{equation}
\end{lemma}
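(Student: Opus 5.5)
The plan has three steps, each using one earlier lemma: rule out inequivalence with Lemma~\ref{lem:different-fibrations-small-diameter}, sharpen the fiber bound with Lemma~\ref{lem:isotopic-fibers-force-fixed}, and get the quotient-length bound from Lemma~\ref{lem:base-length-controls-diameter}. The constants are chosen in this order: \(\theta\) first (depending only on \(\delta_0\), \(\lambda_{\mathrm p}\), \(\theta_{\max}\)), then \(\beta_{\mathrm{fib}}\le 1/10\), and finally \(\eps_{\mathrm{fib}}\). All smallness conditions are imposed so that \(q\), \(r\) and \(h_B\) are mutually \(2\)-bilipschitz, which lets us take \(B=2\) throughout.

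\textbf{Step 1: quotient length.} Apply Lemma~\ref{lem:base-length-controls-diameter} with \(g=q\le 2r\) and \(d=\theta\). This gives
\[
\delta_0\le \diam(M,q)\le \sqrt2\bigl(L(S^1,r_{S^1})/2+\theta\bigr).
\]
If \(\theta\le\delta_0/10\), then \(L(S^1,r_{S^1})\ge \sqrt2\,\delta_0-2\theta\ge\delta_0/2>\lambda_{\mathrm p}\). So the length bound holds regardless of \(\pi_B\), and we may use \(\lambda:=\lambda_{\mathrm p}\) as the uniform lower bound on the quotient length below.

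\textbf{Step 2: fiber-equivalence.} Suppose \(\pi_B\) were fiber-inequivalent to \(\pi\). Apply Lemma~\ref{lem:different-fibrations-small-diameter} with \(h=q\), \(h_0=r\), \(h_1=h_B\) and \(B=2\). The \(\pi\)-fibers have \(q\)-diameter at most \(\sqrt2\,\theta\), and the \(\pi_B\)-fibers have \(q\)-diameter at most \(\eps_{\mathrm{fib}}\). The lemma then gives
\[
\diam(M,q)\le 3\max\{\sqrt2\,\theta,\eps_{\mathrm{fib}}\}.
\]
Choosing \(\theta<\delta_0/10\) and \(\eps_{\mathrm{fib}}<\delta_0/10\) contradicts \(\diam(M,q)\ge\delta_0\). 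Hence the two fibrations are fiber-equivalent.

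\textbf{Step 3: improved fiber bound.} Apply Lemma~\ref{lem:isotopic-fibers-force-fixed} with target \(\theta/2\) and the \(\lambda\) from Step 1. It supplies \(\eta\) and \(\gamma\), and we need \(\|h_B-r\|_{C^1(M,r)}<\gamma\) together with \(h_B\)-diameters of the \(\pi_B\)-fibers at most \(\eta\). The second condition holds once \(\eps_{\mathrm{fib}}\) is small, since \(h_B\)-diameters are at most \(\sqrt2\,\eps_{\mathrm{fib}}\).

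This is the main obstacle. The first condition requires \(\|q-r\|_{C^1}\) small relative to \(\gamma\), but \(\gamma\) depends on \(\theta\), and \(\theta\) must already be fixed when \(\beta_{\mathrm{fib}}\) is chosen. This ordering is allowed, because \(\theta\) depends only on \(\delta_0,\lambda_{\mathrm p},\theta_{\max}\). So we set \(\theta=\min\{\theta_{\max}/2,\delta_0/20\}\), and then take \(\beta_{\mathrm{fib}}<\gamma/4\) and \(\eps_{\mathrm{fib}}<\gamma/4\). The triangle inequality then needs a \(C^1\) comparison between the norm computed relative to \(q\) and the norm relative to \(r\). This comparison holds with a universal factor when \(\|q-r\|_{C^1(M,r)}\) is small, since the difference of Levi-Civita connections is bounded by \(|\nabla^r q|\). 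It yields \(\|h_B-r\|_{C^1(M,r)}<\gamma\).

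With these choices Lemma~\ref{lem:isotopic-fibers-force-fixed} gives \(d_{\TT^2}(r)<\theta/2\), which together with Steps 1 and 2 completes the statement.
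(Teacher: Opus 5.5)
Your argument is correct and is essentially the paper's proof. It uses the same three lemmas in the same roles, with \(\theta\) fixed first, then \((\eta,\gamma)\) taken from Lemma~\ref{lem:isotopic-fibers-force-fixed} with \(\lambda=\lambda_{\mathrm p}\) and target \(\theta/2\), and then \(\beta_{\mathrm{fib}},\eps_{\mathrm{fib}}\); you derive the quotient-length bound directly where the paper argues by contradiction, which is equivalent.

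The one thing to tighten is the final triangle inequality. The choice \(\beta_{\mathrm{fib}},\eps_{\mathrm{fib}}<\gamma/4\) closes it only if the factor comparing \(C^1(M,q)\) with \(C^1(M,r)\) is below \(3\). That holds once \(\beta_{\mathrm{fib}}\) is small, since the factor tends to \(1\); alternatively, divide by that factor as the paper does with \(C_{\mathrm{cmp}}\).
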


\begin{proof}
Choose preliminary upper bounds for \(\beta_{\mathrm{fib}}\) and
\(\eps_{\mathrm{fib}}\) so small that \(q\), \(r\), and \(h_B\) are pairwise
\(2\)-bilipschitz.  On this neighborhood there is a universal comparison
constant \(C_{\mathrm{cmp}}\) such that
\begin{equation}\label{eq:pointwise-norm-comparison}
\begin{split}
        \|h_B-r\|_{C^1(M,r)}
        &\leq C_{\mathrm{cmp}}\bigl(
        \|h_B-q\|_{C^1(M,q)}+\|q-r\|_{C^1(M,r)}\bigr),\\
        \diam_{h_B}(F_B)&\leq2^{1/2}\diam_q(F_B).
\end{split}
\end{equation}
The first estimate follows from the standard formula for the difference of two
Levi-Civita connections; the second is the bilipschitz comparison.

Let \(C_{\mathrm{fib}}=C_{\mathrm{fib}}(2)\) be the constant in
Lemma~\ref{lem:different-fibrations-small-diameter}.  Choose
\(0<\theta<\theta_{\max}\) so that
\begin{equation}\label{eq:pointwise-theta-choice}
        2^{1/2}\left(\frac{\lambda_{\mathrm{p}}}2+\theta\right)<\delta_0,
        \qquad
        C_{\mathrm{fib}}2^{1/2}\theta<\frac{\delta_0}{2}.
\end{equation}
Apply Lemma~\ref{lem:isotopic-fibers-force-fixed}, with base-length lower
bound \(\lambda_{\mathrm{p}}\) and target fiber diameter \(\theta/2\), to obtain
constants \(\eta>0\) and \(\gamma>0\).  Now choose
\(\eps_{\mathrm{fib}}\) and \(\beta_{\mathrm{fib}}\), still inside the preliminary
bilipschitz neighborhood, so that
\begin{equation}\label{eq:pointwise-error-choice}
\begin{gathered}
        C_{\mathrm{fib}}\eps_{\mathrm{fib}}<\frac{\delta_0}{2},
        \qquad
        2^{1/2}\eps_{\mathrm{fib}}<\eta,\\
        C_{\mathrm{cmp}}(\eps_{\mathrm{fib}}+\beta_{\mathrm{fib}})<\gamma.
\end{gathered}
\end{equation}

We first prove the quotient-length bound.  If
\(L(S^1,r_{S^1})<\lambda_{\mathrm{p}}\), then
Lemma~\ref{lem:base-length-controls-diameter}, applied with \(h=r\),
\(g=q\), and \(B=2\), gives
\[
        \diam(M,q)
        \leq2^{1/2}\left(\frac{\lambda_{\mathrm{p}}}2+\theta\right)
        <\delta_0,
\]
contrary to \eqref{eq:pointwise-total-diameter}.  Hence
\begin{equation}\label{eq:pointwise-quotient-long}
        L(S^1,r_{S^1})\geq\lambda_{\mathrm{p}}.
\end{equation}

The \(\pi\)-fibers have \(q\)-diameter at most \(2^{1/2}\theta\), while
the \(\pi_B\)-fibers have \(q\)-diameter at most \(\eps_{\mathrm{fib}}\).  If
the two fibrations were fiber-inequivalent, set
\[
        d_*:=\max\{2^{1/2}\theta,\eps_{\mathrm{fib}}\}.
\]
Lemma~\ref{lem:different-fibrations-small-diameter}, applied with the common
fiber-diameter bound \(d_f=d_*\), would give
\[
        \diam(M,q)\leq C_{\mathrm{fib}}d_*<\delta_0,
\]
by \eqref{eq:pointwise-theta-choice} and
\eqref{eq:pointwise-error-choice}, again contradicting
\eqref{eq:pointwise-total-diameter}.  Thus \(\pi_B\) is fiber-equivalent to \(\pi\).

Finally, \eqref{eq:pointwise-norm-comparison} and
\eqref{eq:pointwise-error-choice} give
\[
        \|h_B-r\|_{C^1(M,r)}<\gamma,
        \qquad
        \diam_{h_B}(F_B)
        \leq2^{1/2}\eps_{\mathrm{fib}}<\eta.
\]
Together with \eqref{eq:pointwise-quotient-long},
Lemma~\ref{lem:isotopic-fibers-force-fixed} yields
\(d_{\TT^2}(r)<\theta/2\), completing the proof.
\end{proof}

\section{One DeTurck stage and regular restart}
\label{sec:stage-restart}
This section turns the geometric data at the beginning of a large block into
a controlled Ricci--DeTurck evolution and supplies the regularity needed to
recover the analytic stage-start conditions at the next scale.  Starting from
a metric close to its fiberwise average, it constructs a locally
\(\TT^2\)-invariant Ricci-flow background, compares the original flow to it
throughout one stage, and uses positive-time smoothing,
averaging, and parabolic rescaling at the endpoint.  The resulting stage
estimate can be iterated with constants independent of collapse.

The section is organized as follows.  It begins by fixing the metric and gauge
notation and, for Sections~\ref{sec:stage-restart}--\ref{sec:quotient-length},
the derivative order \(N\) and H\"older exponent \(\sigma\).  The first
subsection, \emph{Rescaled blocks and admissible stage starts}, defines a
parabolically rescaled large block, states the analytic and geometric
conditions at an admissible stage start, and gives Bamler's time-slice
comparison after pulling it back by the same diffeomorphism used to define
the rescaled block.  The second subsection,
\emph{Averaged endpoints and positive-time regularization}, explains the
finite regularity needed for a restart, proves that high-order bounded
geometry survives averaging, and states the positive-time smoothing result
that reproduces the bounded-geometry part of the analytic start conditions at
the rescaled endpoint.  The final subsection, \emph{Stage preparation and
stability}, evolves the averaged initial metric by an invariant Ricci flow,
places the rescaled original flow in Ricci--DeTurck gauge relative to that
background, proves uniform continuous dependence through the whole stage, and
verifies that rescaling and averaging at time \(100\) preserve bounded geometry
and control the endpoint error.  Finite-time decay in
Section~\ref{sec:deturck-iteration} contracts that error, while fibration
comparison reproduces the short-fiber condition.  Thus this section proves the
analytic estimate used on every block of the iteration.

\paragraph{\normalfont\itshape Standing regularity convention.}
\phantomsection\label{par:standing-regularity-convention}
Throughout Sections~\ref{sec:stage-restart}--\ref{sec:quotient-length}, fix
once and for all an integer \(N\geq2\) and a H\"older exponent
\(\sigma\in(0,1)\).  Constants in these sections may depend on \(N\) and
\(\sigma\) unless stated otherwise.

\paragraph{\normalfont\itshape Standing fixed-threshold application.}
\phantomsection\label{par:standing-fixed-threshold-application}
Throughout Sections~\ref{sec:stage-restart}--\ref{sec:quotient-length}, fix a
particular application of
Proposition~\ref{input:bamler-fixed-threshold} to the working-cover flow.  Its
cutoff is denoted by \(c_0\), its tail time by \(T_B(c_0)\), and its
nonincreasing comparison error by \(\eps_B^{c_0}\).  Thus
\[
        Kc_0^2<\eps_{\mathrm{af}},
        \qquad \eps_B^{c_0}(t)\longrightarrow0.
\]
Every block, stage, and component statement in these sections is relative to
this one fixed application, and every large-diameter threshold
\(\delta_0\) satisfies \(\delta_0\geq c_0\).  The analytic constants depend
only on the parameters displayed in their statements; only the late start
times may additionally depend on \(T_B(c_0)\) and \(\eps_B^{c_0}\).  Later
arguments use this machinery with \(c_0=c_{\mathrm{af}}\), or, in
the auxiliary Euclidean argument, with \(c_0=\zeta\).

\paragraph{\normalfont\itshape Metric and gauge notation.}
\phantomsection\label{par:metric-gauge-legend}
The following notation is used throughout the block construction.
\begin{center}
\small
\renewcommand{\arraystretch}{1.12}
\begin{tabular}{@{}>{\raggedright\arraybackslash}p{0.16\textwidth}>{\raggedright\arraybackslash}p{0.20\textwidth}>{\raggedright\arraybackslash}p{0.54\textwidth}@{}}
\textbf{Level} & \textbf{Notation} & \textbf{Role} \\ \hline
physical time & \(g(t),g'(t),h(t)\) & the original Ricci flow, its continuous stagewise DeTurck pullback, and the right-continuous piecewise invariant comparison family;\\
one rescaled stage & \(p(\tau),q(\tau),r(\tau)\) & the parabolically rescaled original flow, its Ricci--DeTurck representative, and the invariant Ricci-flow background;\\
auxiliary construction & \(r_{\mathrm D}(\tau)\) & the Ricci--DeTurck solution with background \(p\) and initial data \(r(1)\), used only to construct and control the invariant Ricci flow \(r\);\\
time-slice auxiliary & \(h_B(\tau),\pi_B(\tau)\) & Bamler's invariant comparison metric and its possibly time-dependent torus fibration, pulled back to the coordinates of the current rescaled stage;\\
restart & \(\rho^+,q^+(1),r^+(1)\) & the rescaled invariant endpoint, the rescaled DeTurck endpoint, and the averaged metric used to restart the next stage;\\
fixed structure & \(\pi\) & the torus fibration retained throughout the block iteration.
\end{tabular}
\end{center}
Unsubscripted \(p,q,r\) denote a generic stage, while
\(p_k,q_k,r_k\) denote the corresponding objects on stage \(k\).

Figure~\ref{fig:three-metric-families} shows the relation among \(g\), \(g'\),
and \(h\) after the stages are assembled.

\begin{figure}[htbp]
\centering
\resizebox{0.98\textwidth}{!}{%
\begin{tikzpicture}[x=1cm,y=0.78cm,font=\small]
  \foreach \x/\lab in {0/$T_0$,3.10/$T_1$,6.20/$T_2$,9.30/$T_3$}{
    \draw[dashed,gray] (\x,-.72)--(\x,2.48);
    \node[below] at (\x,-.72) {\lab};
  }
  \node[anchor=east,font=\normalsize] at (-.18,1.92) {$g$};
  \node[anchor=east,font=\normalsize] at (-.18,.92) {$g'$};
  \node[anchor=east,font=\normalsize] at (-.18,-.05) {$h$};

 \draw[line width=2.4pt]
    (0,1.92) .. controls (1.10,2.23) and (2.05,1.63) .. (3.10,1.92)
             .. controls (4.10,2.20) and (5.10,1.65) .. (6.20,1.92)
             .. controls (7.20,2.20) and (8.20,1.66) .. (9.30,1.92);

  \draw[line width=2.4pt]
    (0,.92)--(1.55,1.16)--(3.10,.92)--(4.65,1.14)--(6.20,.92)
             --(7.75,1.14)--(9.30,.92);

  \draw[line width=2.4pt]
    (0,-.04) .. controls (1.05,.25) and (2.15,-.28) .. (3.10,.03);
  \draw[line width=2.4pt]
    (3.10,-.22) .. controls (4.18,.10) and (5.25,-.30) .. (6.20,.02);
  \draw[line width=2.4pt]
    (6.20,-.18) .. controls (7.25,.12) and (8.30,-.28) .. (9.30,.00);
  \draw[-{Latex[length=2.2mm]},line width=1.2pt]
    (3.10,.03)--(3.10,-.22);
  \draw[-{Latex[length=2.2mm]},line width=1.2pt]
    (6.20,.02)--(6.20,-.18);
\end{tikzpicture}%
}
\caption{The original flow \(g\), its continuous stagewise DeTurck pullback
\(g'\), and the invariant comparison family \(h\).  The family \(h\) may
jump at a switching time because the parabolically rescaled endpoint is
averaged before the next stage.  The drawing is schematic and not to scale;
vertical placement only separates the three families and has no metric
meaning.}
\label{fig:three-metric-families}
\end{figure}

\subsection{Rescaled blocks and admissible stage starts}
All unqualified H\"older norms in this section are the intrinsic lifted
norms defined in Paragraph~\ref{par:lifted-norm-convention}, with the displayed
reference metric.  Fixed-coordinate uniformly local norms are used only inside
the Schauder and bootstrapping arguments, after a \(\PBG\) bound has been
established; Lemma~\ref{lem:intrinsic-fixed-chart-equivalence} then converts
between the two conventions with constants independent of collapse.  When the
reference metric is not invariant,
Lemma~\ref{lem:universal-cover-inj} provides the lifted injectivity-radius bound
through the invariant bilipschitz comparison metric in the hypotheses.  Let
\(\{g(t)\}_{t\geq0}\) be the immortal Ricci flow under consideration.

\begin{definition}[Parabolically rescaled large block]
\label{def:parabolically-rescaled-large-block}
Fix \(S>0\), a diffeomorphism \(\Phi\), and \(\delta_0\geq c_0\), where
\(c_0\) is the cutoff in the standing fixed-threshold application.  Set
\begin{equation}
        p(\tau)=\Phi^*\bigl(S^{-1}g(S\tau)\bigr),
        \qquad \frac12\leq\tau\leq101.
\end{equation}
We call \(p(\tau)\) a \emph{parabolically rescaled
\(\delta_0\)-large block starting at \(S\)} if \(S/2\geq T_B(c_0)\) and
\[
        \diam(M,g(S\tau))\geq\delta_0\sqrt{S\tau},
        \qquad 1\leq\tau\leq100.
\]
\end{definition}

\begin{definition}[Admissible stage start]
\label{def:admissible-stage-start}
Fix a torus structure and constants \(\Lambda,\eta,\theta>0\).  We call the
pair \((q_0,r_0)\), where \(r_0=\overline{q_0}\), an \emph{admissible stage
start with constants \(\Lambda,\eta,\theta\)} if \(r_0\) is Riemannian and
\begin{equation}\label{eq:admissible-stage-start}
 \BG_N(r_0;\Lambda),\qquad
 \|q_0-r_0\|_{C^{N,\sigma}(M,r_0)}<\eta,\qquad
 d_{\TT^2}(r_0)<\theta.
\end{equation}
The Riemannian condition and the first two displayed conditions are the
\emph{analytic start conditions}; the last displayed condition is the
\emph{geometric start condition}.  Stage preparation and positive-time
smoothing reproduce bounded geometry and control the endpoint error;
finite-time decay and averaging contract that error, while fibration comparison
reproduces the geometric condition.  No quotient-length condition is part of
an admissible stage start; the required lower bound is reconstructed from the
large-diameter hypothesis inside each block.
\end{definition}

\phantomsection\label{par:rescaled-bamler-comparison}
On a block in the sense of
Definition~\ref{def:parabolically-rescaled-large-block},
\begin{equation}
        |\sec_{p(\tau)}|\leq K/\tau,
        \qquad \frac12\leq\tau\leq101.
\end{equation}
For each \(\tau\in[1,100]\), Bamler's large-diameter case provides, after
pullback by a diffeomorphism, a torus fibration
\(\pi_B(\tau):M\to S_B^1(\tau)\) and an invariant comparison metric
\(h_B(\tau)\) satisfying
\[
 (1+\eps_B^{c_0}(S\tau))^{-1}h_B(\tau)\leq p(\tau)
 \leq(1+\eps_B^{c_0}(S\tau))h_B(\tau),
\]
whose fibers have \(p(\tau)\)-diameter at most
\(\eps_B^{c_0}(S\tau)\sqrt\tau\).  After increasing the starting time, the same
comparison is as small as required in every fixed \(C^k\)-norm used below.
The pair \((\pi_B(\tau),h_B(\tau))\) may vary arbitrarily with \(\tau\), and
all these statements remain true after any further time-dependent pullback of
the block.

\subsection{Averaged endpoints and positive-time regularization}

The next lemma uses the following finite-regularity scheme for the high-order restart:
\begin{equation}
\begin{gathered}
 \vphantom{C^{N+5,\sigma}}\BG_N(q;\Lambda),\qquad m_0\ \text{invariant},\qquad
 \|q-m_0\|_{C^{N+5,\sigma}(M,q)}<\varepsilon_{\mathrm{av}},\\
 \Downarrow\\
 |\widetilde\nabla^j\Rm_{\widetilde m_0}|,
 |\widetilde\nabla^j\Rm_{\widetilde{\overline q}}|\leq C
 \quad(0\leq j\leq N+3),\\
 \Downarrow\\
 \BG_N(m_0;\Lambda'),\qquad \BG_N(\overline q;\Lambda'),\\
 \Downarrow\\
 \text{the bounded-geometry input for the next initial-face estimate is recovered.}
\end{gathered}
\end{equation}
Here \(\varepsilon_{\mathrm{av}}\) is from
Lemma~\ref{lem:high-order-averaged-slice} with \(m=N\).
The loss of two derivatives from the metric to curvature explains the passage
from \(N+5\) metric derivatives to curvature derivatives through order
\(N+3\).  The \(C^{N+5,\sigma}\)-control is not assumed at the old initial
face; it is recovered at positive time by the smoothing proposition below.

\begin{lemma}[High-order regularity of an averaged slice]
\label{lem:high-order-averaged-slice}
Fix an integer \(m\geq2\), \(\sigma\in(0,1)\), and \(\Lambda_0<\infty\).  There
are constants \(\varepsilon_{\mathrm{av}}>0\), \(C_{\mathrm{av}}<\infty\), and
\(\Lambda_1<\infty\), depending only on these data, with the following property.
Let \(q\) and \(m_0\) be metrics on a torus bundle, suppose that \(m_0\) is
invariant, and let \(\overline q\) denote the average of \(q\) with respect to this
torus structure.  If \(\BG_m(q;\Lambda_0)\) holds and
\begin{equation}
 \|q-m_0\|_{C^{m+5,\sigma}(M,q)}<\varepsilon_{\mathrm{av}},
\end{equation}
then \(m_0\) and \(\overline q\) are Riemannian and satisfy
\(\BG_m(m_0;\Lambda_1)\) and \(\BG_m(\overline q;\Lambda_1)\).  Moreover,
\begin{equation}\label{eq:high-order-average-output}
 \|\overline q-m_0\|_{C^{m+5,\sigma}(M,m_0)}
 \leq C_{\mathrm{av}}\|q-m_0\|_{C^{m+5,\sigma}(M,q)}.
\end{equation}
\end{lemma}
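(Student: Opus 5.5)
The plan is to first control \(m_0\) from \(q\), and then control \(\overline q\) from \(m_0\) by averaging \(q-m_0\), which works because \(m_0\) is invariant.

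\textbf{Step 1: estimates for \(m_0\).} Choose \(\varepsilon_{\mathrm{av}}\) so small that \(2^{-1}q\leq m_0\leq 2q\) pointwise; in particular \(m_0\) is Riemannian. Write \(e=m_0-q\). The Christoffel difference \(\Gamma(m_0)-\Gamma(q)\) is a smooth algebraic expression in \(q^{-1}\), \(m_0^{-1}\) and \(\nabla^q e\). Hence, on the universal cover, \(\Rm_{\widetilde m_0}-\Rm_{\widetilde q}\) is a polynomial expression in \(\widetilde\nabla^{q}e\), \((\widetilde\nabla^q)^2e\), the inverse metrics, and \(\Rm_{\widetilde q}\). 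Differentiating \(j\leq m+3\) times uses \(e\) through order \(m+5\), which the \(C^{m+5,\sigma}\) hypothesis controls, and \(\Rm_{\widetilde q}\) through order \(m+3\), which \(\BG_m(q;\Lambda_0)\) controls. Converting \(\widetilde\nabla^q\)-derivatives into \(\widetilde\nabla^{m_0}\)-derivatives by induction gives \(|\widetilde\nabla^j\Rm_{\widetilde m_0}|\leq C\) for \(j\leq m+3\). For the injectivity radius of \(\widetilde m_0\), I would not compare injectivity radii directly. Instead I would use Lemma~\ref{lem:universal-cover-inj} with \(h=m_0\), \(g=m_0\) and \(B=1\), together with the curvature bound just obtained. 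This is legitimate because \(m_0\) is invariant. Together these give \(\BG_m(m_0;\Lambda_1)\).

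\textbf{Step 2: the averaged estimate.} Since \(m_0\) is invariant, \(\mathcal A m_0=m_0\), and so \(\overline q-m_0=\mathcal A(q-m_0)\). Lemma~\ref{lem:averaging-invariant-holder}, with invariant reference \(m_0\), then gives
\[
\|\overline q-m_0\|_{C^{m+5,\sigma}(M,m_0)}\leq\|q-m_0\|_{C^{m+5,\sigma}(M,m_0)}.
\]
It remains to compare the \(C^{m+5,\sigma}(M,m_0)\) and \(C^{m+5,\sigma}(M,q)\) norms of a tensor. These norms are intrinsic lifted norms defined by different connections and parallel transports. Their ratio is bounded by a constant depending on \(\|q-m_0\|_{C^{m+5,\sigma}(M,q)}\leq\varepsilon_{\mathrm{av}}\) and on the bounded geometry of both metrics. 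To see this, expand \(\nabla^{m_0}=\nabla^q+(\Gamma(m_0)-\Gamma(q))\) in uniform harmonic charts on \(\widetilde M\); the bounded geometry of Step~1 and of \(\BG_m(q;\Lambda_0)\), via Lemma~\ref{lem:intrinsic-fixed-chart-equivalence}, supplies these charts. This yields \eqref{eq:high-order-average-output} with \(C_{\mathrm{av}}\) depending only on the data.

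\textbf{Step 3: estimates for \(\overline q\).} By Step~2, \(\overline q\) is \(C\varepsilon_{\mathrm{av}}\)-close to \(m_0\) in \(C^{m+5,\sigma}(M,m_0)\). After shrinking \(\varepsilon_{\mathrm{av}}\), \(\overline q\) is Riemannian and \(2\)-bilipschitz to \(m_0\). I now rerun the curvature computation of Step~1 with \(m_0\) as the reference metric, using the bounds already established for \(m_0\). This gives \(|\widetilde\nabla^j\Rm_{\widetilde{\overline q}}|\leq C\) for \(j\leq m+3\). Since \(\overline q\) is itself invariant, Lemma~\ref{lem:universal-cover-inj} with \(B=1\) again gives the injectivity bound. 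Hence \(\BG_m(\overline q;\Lambda_1)\) holds.

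The main obstacle is the norm-equivalence bookkeeping in Step~2: comparing the two intrinsic H\"older norms up to order \(m+5\) when the reference metrics are only close at order \(m+5\). The concern is that differentiating the Christoffel difference costs one derivative. I would handle this by observing that the reference-metric derivatives in the \(k\)-th order norm involve \(\Gamma\)-differences only through order \(k-1\). These involve \(e\) through order \(k\), so order \(m+5\) suffices with no derivative loss. The H\"older seminorm at top order needs parallel-transport comparison, which follows from the \(C^0\) closeness of the connections.
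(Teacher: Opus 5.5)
Your proposal is correct and follows the paper's proof essentially step for step. The paper also transfers the \(\BG_m\) curvature-derivative bounds from \(q\) to \(m_0\), and then to \(\overline q\), by connection and curvature comparison, and uses Lemma~\ref{lem:universal-cover-inj} for the lifted injectivity radius of these invariant metrics. It likewise writes \(\overline q-m_0=\overline{q-m_0}\) and applies Lemma~\ref{lem:averaging-invariant-holder} together with equivalence of the \(q\)- and \(m_0\)-based norms.

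One quibble concerns Step~2. The detour through harmonic charts and Lemma~\ref{lem:intrinsic-fixed-chart-equivalence} is unnecessary, and that lemma only covers orders up to \(m\), not \(m+5\). The purely tensorial expansion \(\nabla^{m_0}=\nabla^q+(\Gamma(m_0)-\Gamma(q))\) that you describe at the end already gives the norm comparison without any charts.
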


\begin{proof}
For sufficiently small \(\varepsilon_{\mathrm{av}}\), the two metrics are
uniformly equivalent and their finite-order tensor H\"older norms are
uniformly equivalent.  The standard formulas comparing two Levi-Civita
connections, followed by the formulas for curvature and its covariant
derivatives, transfer the bounds for
\(\widetilde\nabla^j\Rm_{\widetilde q}\), \(0\leq j\leq m+3\), to
\(\widetilde m_0\).  Since \(m_0\) is invariant,
Lemma~\ref{lem:universal-cover-inj} then gives a uniform injectivity-radius
lower bound for \(\widetilde m_0\).

Because \(m_0\) is fixed by averaging,
\[
        \overline q-m_0=\overline{q-m_0}.
\]
Lemma~\ref{lem:averaging-invariant-holder}, together with equivalence of the
\(q\)- and \(m_0\)-based norms, proves
\eqref{eq:high-order-average-output}.  Decreasing
\(\varepsilon_{\mathrm{av}}\) preserves positive definiteness.  The same
connection and curvature comparison formulas give the required curvature-derivative bounds for \(\overline q\); this metric is invariant, so
Lemma~\ref{lem:universal-cover-inj} gives its lifted injectivity-radius bound.
Enlarging a single constant gives \(\Lambda_1\).
\end{proof}

The next proposition states the positive-time smoothing used at every
switching time.  Informally, its hypotheses, smoothing step, and conclusion are as follows:
\begin{center}
\small
\renewcommand{\arraystretch}{1.12}
\begin{tabular}{@{}>{\raggedright\arraybackslash}p{0.18\textwidth}>{\raggedright\arraybackslash}p{0.72\textwidth}@{}}
\textbf{Hypotheses} & A curvature-controlled invariant flow \(r\) and a Ricci--DeTurck flow \(q\) that is close to \(r\) in \(X_r^N[1,100]\).\\
\textbf{Smoothing} & Interior regularization raises the difference estimate from order \(N\) to order \(N+5\) near \(\tau=100\).\\
\textbf{Conclusion} & Rescale by \(100^{-1}\), average, and obtain a \(C^{N+5,\sigma}\)-small restart with a uniform lifted-geometry bound.\\
\end{tabular}
\end{center}
This argument propagates the high-order initial-face regularity
needed for the next Ricci--DeTurck stage.

\begin{proposition}[Positive-time smoothing and regular restart]
\label{prop:positive-time-regular-restart}
Fix \(K_0<\infty\).  There are constants
\[
        \beta_{\mathrm{reg}}>0,\qquad C_{\mathrm{reg}}<\infty,
        \qquad \Lambda_{\mathrm{reg}}<\infty,
\]
depending only on \(K_0,N,\sigma\), with the following property.

\medskip
\noindent\textbf{Hypotheses.}
Let \(r(\tau)\), \(1\leq\tau\leq100\), be an invariant Ricci flow satisfying
\begin{equation}\label{eq:regular-restart-curvature}
        |\sec_{r(\tau)}|\leq K_0/\tau.
\end{equation}
Let \(q(\tau)\) be a Ricci--DeTurck flow relative to \(r(\tau)\) and assume
\begin{equation}\label{eq:regular-restart-low-order-closeness}
        \|q-r\|_{X_r^N[1,100]}\leq\beta_{\mathrm{reg}}.
\end{equation}

\medskip
\noindent\textbf{Rescale and average.}
Put
\[
        q^+(1)=100^{-1}q(100),\qquad
        \rho^+=100^{-1}r(100),\qquad
        r^+(1)=\overline{q^+(1)}.
\]

\medskip
\noindent\textbf{Conclusion.}
Then \(r^+(1)\) is Riemannian,
\[
        \BG_N(\rho^+;\Lambda_{\mathrm{reg}}),\qquad
        \BG_N(r^+(1);\Lambda_{\mathrm{reg}}),
\]
and
\begin{equation}\label{eq:regular-restart-high-order-estimate}
\begin{split}
 &\|q^+(1)-\rho^+\|_{C^{N+5,\sigma}(M,\rho^+)}
 +\|r^+(1)-\rho^+\|_{C^{N+5,\sigma}(M,\rho^+)}\\
 &\qquad\leq C_{\mathrm{reg}}
 \|q-r\|_{C^0(M\times[1,100],r)}.
\end{split}
\end{equation}
\end{proposition}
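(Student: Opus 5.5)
The proof has three parts: geometry of the background, interior regularization of the difference $e=q-r$, and restart by rescaling and averaging.

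\textbf{Background geometry.} From \eqref{eq:regular-restart-curvature}, $|\sec_{r(\tau)}|\leq K_0$ on $[1,100]$. Lemma~\ref{lem:universal-cover-inj}, with $B=1$ and applied to the invariant metrics $r(\tau)$, gives a time-uniform lower bound for the injectivity radius of $\widetilde r(\tau)$. Shi's estimates on the complete lifted flow, with time gap from $1$ to $50$, bound $|\widetilde\nabla^j\Rm_{\widetilde r}|$ for $0\leq j\leq N+10$ on $[50,100]$. Together with Lemma~\ref{lem:uniform-parabolic-atlas}, this gives $\PBG_{N+6}(r;\Lambda;[50,100])$ with $\Lambda=\Lambda(K_0,N,\sigma)$. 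Rescaling by $100^{-1}$ only improves the curvature bounds, so the bound at $\tau=100$ gives $\BG_N(\rho^+;\Lambda_{\mathrm{reg}})$ directly.

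\textbf{High-order smoothing of $e=q-r$.} On $\widetilde M$, write the Ricci--DeTurck equation for $q$ relative to $r$ in the fixed-radius charts; it is quasilinear with principal part $q^{pq}\partial_p\partial_q$. Starting from the $X_r^N[1,100]$ bound and applying Lemma~\ref{lem:local-rdt-one-derivative-gain} repeatedly on a nested family of backward cylinders inside $[50,100]$, I would raise $\widetilde q$ to uniform $C^{N+6+\sigma,(N+6+\sigma)/2}$ control on $[90,100]$. Each application gains one derivative, using the background through one order higher; this is why $\PBG_{N+6}$ is needed. The top time stays at the top face because the cylinders are backward.

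Next, $e$ satisfies the homogeneous linear system of Lemma~\ref{lem:rdt-difference-system}, with $u=q$ and $u_a=r$. Its principal part is scalar, and the coefficients are now controlled to order $N+5+\sigma$. The interior estimate \eqref{eq:uniform-interior-linear-schauder} of Proposition~\ref{prop:uniform-lifted-linear-schauder}, with zero forcing, then gives
\[
\|e\|_{C^{N+5+\sigma,(N+5+\sigma)/2}_{\mathrm{uloc}}(\widetilde M\times[95,100])}\leq C\|e\|_{C^0(M\times[1,100],r)} .
\]
Lemma~\ref{lem:intrinsic-fixed-chart-equivalence} converts this to intrinsic norms. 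The $C^0$ norm appears on the right because the system is linear in $e$ with coefficients bounded independently of $e$.

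\textbf{Restart.} Rescaling preserves the scale-invariant part of the H\"older norms and alters the lower-order parts by bounded factors. Hence $\|q^+(1)-\rho^+\|_{C^{N+5,\sigma}(M,\rho^+)}\leq C\|e\|_{C^0}$. Since $\rho^+$ is invariant, $\overline{q^+(1)}-\rho^+=\overline{q^+(1)-\rho^+}$. Lemma~\ref{lem:averaging-invariant-holder}, applied with reference metric $\rho^+$, then bounds the second term of \eqref{eq:regular-restart-high-order-estimate} by the first. Choosing $\beta_{\mathrm{reg}}$ small makes $C\|e\|_{C^0}$ less than $\varepsilon_{\mathrm{av}}$ of Lemma~\ref{lem:high-order-averaged-slice}. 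Applying that lemma with $q=q^+(1)$, $m_0=\rho^+$ and $m=N$ shows that $r^+(1)$ is Riemannian and satisfies $\BG_N$; $\BG_N(q^+(1))$ holds by the same connection comparison. Enlarging $\Lambda_{\mathrm{reg}}$ completes the argument.

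The main obstacle is the derivative bookkeeping in the smoothing step. The hypothesis controls $q$ only to order $N$, while the output needs order $N+5$ uniformly on the collapsed quotient. The bootstrap must alternate between the quasilinear gain for $q$ and the linear estimate for $e$, keeping track of how many background derivatives each step consumes. The constants stay collapse-independent only because every estimate is done in the lifted $\PBG$ atlas.
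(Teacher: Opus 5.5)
Your proposal is correct and follows essentially the same route as the paper. The steps match: lifted $\PBG$ for $r$ from Lemma~\ref{lem:universal-cover-inj} and Shi; repeated use of Lemma~\ref{lem:local-rdt-one-derivative-gain} on nested backward cylinders; the homogeneous difference system with the interior estimate of Proposition~\ref{prop:uniform-lifted-linear-schauder}, so that the $C^0$-norm appears on the right; and then rescaling plus Lemma~\ref{lem:averaging-invariant-holder}.

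Your sixth derivative gain for $q$ and the detour through Lemma~\ref{lem:high-order-averaged-slice} are harmless. Five gains suffice, since $m=N+5$ needs coefficients only in $C^{N+3+\sigma}$. The hypothesis $\BG_N(q^+(1))$ of that lemma is available, but its injectivity-radius part comes from Lemma~\ref{lem:universal-cover-inj} via bilipschitz comparison with the invariant $\rho^+$, not from connection comparison.

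One slip: rescaling by $100^{-1}$ does not improve the bounds. It multiplies $|\nabla^j\Rm|$ by $100^{1+j/2}$ and divides the injectivity radius by $10$. These are fixed factors depending only on $N$, so $\BG_N(\rho^+;\Lambda_{\mathrm{reg}})$ still holds after enlarging the constant.
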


The proof of Proposition~\ref{prop:positive-time-regular-restart} is given in
Subsection~\ref{subsec:appendix-continuation-restart}.

\subsection{Stage preparation and stability}

The next lemma prepares one analytic stage from the analytic part of an
admissible stage start.  It averages the initial metric, evolves the average by
Ricci flow, and places the rescaled original flow in Ricci--DeTurck gauge
relative to that invariant flow.  Its additional high-order input is reproduced
at the rescaled endpoint by
Proposition~\ref{prop:positive-time-regular-restart}; no short-fiber hypothesis
is used in this analytic lemma.

\begin{lemma}[Stage preparation, stability, and regular restart]
\label{lem:stage-stability}
There are constants
\[
        K_{\mathrm{st}}<\infty,\qquad \Lambda_*<\infty,
        \qquad \beta_{\mathrm{reg}}>0,\qquad C_{\mathrm{st}}<\infty,
\]
depending only on \(K,N,\sigma\), with the following property.  For every
\(0<\beta_*\leq\beta_{\mathrm{reg}}\), there is a constant
\(\eta_{\mathrm{st}}>0\), depending only on \(K,N,\sigma,\beta_*\), such that the following holds.
Let \(p(\tau)\) be any parabolically rescaled \(\delta_0\)-large block, for
some \(\delta_0\geq c_0\), under the standing fixed-threshold application.
Fix the torus structure used during this
stage and put
\[
        q(1)=p(1),\qquad r(1)=\overline{q(1)}.
\]
Put
\[
        b:=\|q(1)-r(1)\|_{C^{N,\sigma}(M,r(1))}.
\]
Assume that \(r(1)\) is Riemannian,
\(\BG_N(r(1);\Lambda_*)\) holds, and \(b<\eta_{\mathrm{st}}\).  These are
exactly the analytic start conditions in
Definition~\ref{def:admissible-stage-start}, with
\(\Lambda=\Lambda_*\) and \(\eta=\eta_{\mathrm{st}}\).
Then the invariant Ricci flow \(r(\tau)\) and the Ricci--DeTurck flow
\(q(\tau)\) relative to it exist on \([1,100]\), and
\begin{align}
        |\sec_{r(\tau)}|&\leq K_{\mathrm{st}}/\tau,
                &&1\leq\tau\leq100,                               \\
        \|q-r\|_{X_r^N[1,100]}
                &<\beta_*,                                         \label{eq:stage-hypotheses}\\
        \|q-r\|_{X_r^N[1,100]}
                &\leq C_{\mathrm{st}}b.                                 \label{eq:full-stability-linear}
\end{align}
If
\[
        q^+(1)=100^{-1}q(100),\qquad
        \rho^+=100^{-1}r(100),\qquad
        r^+(1)=\overline{q^+(1)},
\]
then \(r^+(1)\) is Riemannian, \(\BG_N(r^+(1);\Lambda_*)\) holds,
\(|\sec_{r^+(1)}|\leq K_{\mathrm{st}}\), and
\begin{equation}\label{eq:stage-high-order-restart}
\begin{split}
 &\|q^+(1)-\rho^+\|_{C^{N+5,\sigma}(M,\rho^+)}
 +\|r^+(1)-\rho^+\|_{C^{N+5,\sigma}(M,\rho^+)}\\
 &\qquad\leq C_{\mathrm{st}}b.
\end{split}
\end{equation}
In particular,
\begin{equation}\label{eq:averaged-endpoint-close}
        \|r^+(1)-\rho^+\|_{C^{N,\sigma}(M,\rho^+)}
        \leq C_{\mathrm{st}}b.
\end{equation}
\end{lemma}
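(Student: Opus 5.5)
The proof has three parts: build the invariant background $r$, build the DeTurck representative $q$ and prove stability, then invoke Proposition~\ref{prop:positive-time-regular-restart} at the endpoint.

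\textbf{Step 1: the background $r$.} Since $r(1)$ is invariant and satisfies $\BG_N(r(1);\Lambda_*)$, its Ricci flow exists on a definite interval. On the cyclic cover, uniqueness and equivariance under the global $\TT^2$-action keep it invariant. Existence up to $\tau=100$, together with the curvature bound $|\sec_{r(\tau)}|\le K_{\mathrm{st}}/\tau$, does not follow from $\BG_N$ alone, because $100$ is a long time. Instead I would compare with the original rescaled flow $p$, whose curvature satisfies $|\sec_{p(\tau)}|\le K/\tau$ on $[\tfrac12,101]$.

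This is the role of the auxiliary $r_{\mathrm D}$: it is the Ricci--DeTurck flow with background $p$ and initial data $r(1)$, which is $C^{N,\sigma}$-close to $p(1)=q(1)$ with error $b$. The comparison runs as follows.
\begin{itemize}
\item Lemma~\ref{lem:universal-cover-inj} gives lifted noncollapsing for $p$.
\item Shi's estimates then give $\PBG$ for $p$ on $[1,100]$. The initial face is handled by the $\BG_N$ hypothesis and the $C^{N,\sigma}$ data, with no positive-time gap needed.
\item Proposition~\ref{prop:uniform-lifted-linear-schauder} and a contraction/continuation argument (Lemma~\ref{lem:fixed-quotient-rdt-continuation} in spirit) give a solution with $\|r_{\mathrm D}-p\|_{X_p^N[1,100]}\le Cb$, provided $b<\eta_{\mathrm{st}}$.
\end{itemize}
Integrating the DeTurck vector field then recovers the Ricci flow $r=\phi^*r_{\mathrm D}$ with diffeomorphisms $C^{N}$-close to the identity. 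This transfers the curvature bound $K_{\mathrm{st}}/\tau$ and lifted bounded geometry to $r$. By uniqueness, $r$ coincides with the invariant flow started at $r(1)$. I expect this step to be the main obstacle: it needs collapse-independent Schauder estimates on the universal cover over a long interval, and careful bookkeeping of derivative loss in the DeTurck diffeomorphisms.

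\textbf{Step 2: the representative $q$ and linear stability.} Solve the harmonic-map heat flow $\partial_\tau\psi=\Delta_{p,r}\psi$ with $\psi(1)=\mathrm{id}$, and set $q=(\psi^{-1})^*p$. Then $q$ is a Ricci--DeTurck flow relative to $r$ with $q(1)=p(1)$. Equivalently, one solves the Ricci--DeTurck equation directly with data $q(1)$.

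The difference $q-r$ satisfies the difference system of Lemma~\ref{lem:rdt-difference-system} with initial size $b$. The linear Schauder estimate on the universal cover under $\PBG$ for $r$, used with a continuity argument, gives $\|q-r\|_{X_r^N[1,100]}\le C_{\mathrm{st}}b$. Choosing $\eta_{\mathrm{st}}$ with $C_{\mathrm{st}}\eta_{\mathrm{st}}<\beta_*$ gives \eqref{eq:stage-hypotheses}. All constants are collapse-independent because they are computed on $\widetilde M$.

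\textbf{Step 3: the restart.} Apply Proposition~\ref{prop:positive-time-regular-restart} with $K_0=K_{\mathrm{st}}$ and $\beta_*\le\beta_{\mathrm{reg}}$. Its estimate \eqref{eq:regular-restart-high-order-estimate}, combined with $\|q-r\|_{C^0}\le C_{\mathrm{st}}b$, yields \eqref{eq:stage-high-order-restart}. The same proposition gives $\BG_N(r^+(1);\Lambda_{\mathrm{reg}})$.

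The constant $\Lambda_*$ is then fixed as $\max\{\Lambda_{\mathrm{reg}},\dots\}$. There is no circularity: $\Lambda_{\mathrm{reg}}$ depends only on $K_{\mathrm{st}}$, and $K_{\mathrm{st}}$ depends only on $K$, because the curvature bound for $r$ comes from $p$ rather than from $\Lambda_*$.

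The bound $|\sec_{r^+(1)}|\le K_{\mathrm{st}}$ follows from $|\sec_{\rho^+}|\le K_{\mathrm{st}}/100\cdot100$ together with the small $C^2$ difference, after enlarging $K_{\mathrm{st}}$ slightly. Finally, \eqref{eq:averaged-endpoint-close} is the weaker-norm consequence of \eqref{eq:stage-high-order-restart}.
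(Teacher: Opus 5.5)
Your three steps follow the paper's proof. You use the auxiliary Ricci--DeTurck flow \(r_{\mathrm D}\) relative to \(p\) to construct \(r\) and bound its curvature, and you get invariance of \(r\) by uniqueness on the cyclic cover. You then build the second gauge \(q\) relative to \(r\) with a linear \(X_r^N\) estimate, apply Proposition~\ref{prop:positive-time-regular-restart} at the endpoint, and fix \(K_{\mathrm{st}}\) before \(\Lambda_*\) because the curvature of \(r\) is controlled through \(p\).

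However, the step you flag as the main obstacle is not one. Since \(r(\tau)\) is a diffeomorphism pullback of \(r_{\mathrm D}(\tau)\), one has \(\sup_M|\sec_{r(\tau)}|=\sup_M|\sec_{r_{\mathrm D}(\tau)}|\le K/\tau+Cb\), with no estimate on the DeTurck diffeomorphisms at all. Likewise, \(\PBG_N\) for \(p\) on \([1,100]\) comes from Shi's estimates, using the curvature bound on \([\tfrac12,101]\), together with Lemma~\ref{lem:universal-cover-inj} applied with Bamler's invariant comparison metric \(h_B(\tau)\). The hypothesis \(\BG_N(r(1);\Lambda_*)\) concerns \(r(1)\), not \(p(1)\), and plays no role there.

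The actual gap is the \(\PBG_N\) bound for \(r\) that your Step~2 Schauder argument needs. You obtain it by ``transferring'' bounded geometry from \(r_{\mathrm D}\), but this does not work. The estimate \(\|r_{\mathrm D}-p\|_{X_p^N}\le Cb\) controls curvature derivatives of \(r_{\mathrm D}\) only through order \(N-2\). \(\PBG_N\) requires order \(N+3\) uniformly on \([1,100]\), including near \(\tau=1\), and pullback by diffeomorphisms cannot add derivatives. The paper closes this with Lemma~\ref{lem:slice-to-parabolic-geometry}, using only the sectional-curvature bound just proved:
\begin{itemize}
\item the initial bound \(\BG_N(r(1);\Lambda_*)\) is propagated by the maximum principle on a short initial interval;
\item Shi's estimates take over afterwards;
\item the lifted injectivity bound persists.
\end{itemize}

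A smaller bookkeeping point concerns \(|\sec_{r^+(1)}|\le K_{\mathrm{st}}\). You should not enlarge \(K_{\mathrm{st}}\) after invoking the restart proposition, since that would change \(\beta_{\mathrm{reg}}\) and \(\Lambda_{\mathrm{reg}}\), and hence \(\Lambda_*\) and \(C_{\mathrm{st}}\). Instead, record the bound for \(r\) as \(K_0/\tau\), with \(K_0\) depending only on \(K\) (absorbing \(Cb\) via \(b<1\) and \(\tau\le100\)). Set \(K_{\mathrm{st}}=2K_0+1\) first. Then take \(\eta_{\mathrm{st}}\) so small that the \(C^2\) perturbation of size \(CC_{\mathrm{st}}b\) fits into the gap \(K_{\mathrm{st}}-K_0\).
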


Before the proof, it is useful to display the two distinct gauges used in the
construction:
\begin{equation}\label{eq:two-deturck-gauges}
\begin{array}{ccccc}
 r(1)&\xrightarrow[\text{background }p]{\mathrm{Ricci\text{--}DeTurck}}
 &r_{\mathrm D}&\xrightarrow{\text{DeTurck diffeomorphisms}}&r,\\[3pt]
 p(1)&\xrightarrow[\text{background }r]{\mathrm{Ricci\text{--}DeTurck}}
 &q&\xrightarrow{\text{DeTurck diffeomorphisms}}&p.
\end{array}
\end{equation}
The first line of \eqref{eq:two-deturck-gauges} constructs the comparison
Ricci flow \(r\), proves its curvature control, and verifies that it remains
invariant.  Once that background is available, the second line gauges the
original rescaled flow relative to \(r\) and produces the stage metric \(q\).
The auxiliary metric \(r_{\mathrm D}\) is not part of the inductive state.

\begin{proof}
We first choose constants that do not depend on the requested stage closeness
\(\beta_*\).  The geometry of every rescaled block \(p\) is bounded uniformly
through order \(N\) by the argument in Step~1 below.  Let
\(\eta_1^{(0)},C_1^{(0)}\) be the uniform constants in
Proposition~\ref{prop:uniform-lifted-rdt-stability} for the reference flow \(p\),
relabeled from \(\eta_{\mathrm{cd}}\) and \(C_{\mathrm{cd}}\).  The standard
comparison formulas for nearby metrics and their Levi-Civita connections give
uniform constants \(b_{\mathrm{eq}}>0\) and \(C_{\mathrm{eq}}\geq1\) such that
\(b<b_{\mathrm{eq}}\) implies
\[
 \|r(1)-p(1)\|_{C^{N,\sigma}(M,p(1))}\leq C_{\mathrm{eq}}b.
\]
Set
\[
 \eta_1:=\min\{b_{\mathrm{eq}},\eta_1^{(0)}/C_{\mathrm{eq}}\},
 \qquad C_1:=C_1^{(0)}C_{\mathrm{eq}}.
\]
Restrict the curvature comparison to the neighborhood
\[
 \|w-p(\tau)\|_{C^2(M,p(\tau))}\leq\tfrac12.
\]
The \(C^0\)-part then gives
\(\tfrac12p(\tau)\leq w\leq\tfrac32p(\tau)\), so the inverse of \(w\)
is uniformly controlled relative to \(p(\tau)\).  The curvature comparison
formula and the uniformly bounded reference geometry give a constant
\(C_{\mathrm{curv}}^{(0)}\) such that, on this neighborhood,
\[
 \sup_M|\sec_w|\leq \sup_M|\sec_{p(\tau)}|
       +C_{\mathrm{curv}}^{(0)}\|w-p(\tau)\|_{C^2(M,p(\tau))}.
\]
Set \(C_{\mathrm{curv}}=C_{\mathrm{curv}}^{(0)}C_1\).  Thus
\(\|w-p(\tau)\|_{C^2(M,p(\tau))}\leq C_1b\leq\tfrac12\) implies
\[
 \sup_M|\sec_w|\leq \sup_M|\sec_{p(\tau)}|+C_{\mathrm{curv}}b.
\]
These constants depend only on the fixed reference bounds, not on the stage
threshold to be chosen below.  Put
\[
        K_0:=K+100C_{\mathrm{curv}},
        \qquad K_{\mathrm{st}}:=2K_0+1.
\]
Apply the positive-time smoothing result,
Proposition~\ref{prop:positive-time-regular-restart}, with curvature constant
\(K_{\mathrm{st}}\), and denote its constants by
\(\beta_{\mathrm{reg}},C_{\mathrm{reg}},\Lambda_{\mathrm{reg}}\).

We next fix the geometry constant needed at the first stage.  There is a
constant \(\Lambda_{\mathrm{III}}=\Lambda_{\mathrm{III}}(K,N,\sigma)\) such that every sufficiently
late rescaled slice
\(q_0=T^{-1}g(T)\), written in a Bamler torus-bundle gauge, satisfies
\(\BG_N(q_0;\Lambda_{\mathrm{III}})\).  Indeed, the
Type-III estimate on \([T/2,T]\) and Shi's estimates give the required
curvature-derivative bounds after rescaling.  The Bamler metric is invariant
and uniformly bilipschitz to \(q_0\), so
Lemma~\ref{lem:universal-cover-inj} gives the lifted injectivity-radius bound.
Apply Lemma~\ref{lem:high-order-averaged-slice} with
\(\Lambda_0=\Lambda_{\mathrm{III}}\), and call its output geometry constant
\(\Lambda_{\mathrm{init}}\).  Set
\[
        \Lambda_*:=\max\{\Lambda_{\mathrm{reg}},\Lambda_{\mathrm{init}}\}.
\]

Let \(s(\tau)\), \(1\leq\tau\leq100\), be any reference Ricci flow
whose initial metric satisfies \(\BG_N(s(1);\Lambda_*)\) and whose sectional
curvature is bounded by \(K_{\mathrm{st}}\).  Applying
Lemma~\ref{lem:slice-to-parabolic-geometry} with \(m=N\), \(\Lambda_0=\Lambda_*\),
\(K_0=K_{\mathrm{st}}\), and \(L=99\) gives a constant
\(\Lambda_{\mathrm{par}}=\Lambda_{\mathrm{par}}(K_{\mathrm{st}},\Lambda_*,N,\sigma)\) such that
\[
        \PBG_N(s;\Lambda_{\mathrm{par}};[1,100]).
\]
In particular, the initial curvature-derivative bounds, their propagation near
\(\tau=1\), the later Shi estimates, the lifted injectivity-radius bound, and
the resulting parabolic coordinate control are all uniform at every stage.
Let \(\eta_2,C_2\) be the corresponding uniform constants in
Proposition~\ref{prop:uniform-lifted-rdt-stability}.
Choose
\[
        C_{\mathrm{st}}\geq\max\{C_2,C_{\mathrm{reg}}C_2,1\}.
\]
After fixing \(0<\beta_*\leq\beta_{\mathrm{reg}}\), choose
\(\eta_{\mathrm{st}}>0\) no larger than \(\eta_1,\eta_2\), and sufficiently small
that
\begin{equation}\label{eq:stage-eta-choice}
 C_{\mathrm{st}}\eta_{\mathrm{st}}<\beta_*,
 \qquad C_1\eta_{\mathrm{st}}\leq\tfrac12,
 \qquad \eta_{\mathrm{st}}<1,
\end{equation}
and that the endpoint positivity and curvature perturbations used in Step~3
below are valid.  All constants in the statement have now been fixed.

\proofstep{Step 1. First gauge: construct and control the invariant flow.}

For every \(\tau\in[1,100]\), Lemma~\ref{lem:universal-cover-inj} applies to
\(g=p(\tau)\) and \(h=h_B(\tau)\), with \(B=2\) and
\(\Lambda=\max\{K,1\}/\tau\).  Hence
\[
        \inj_{\widetilde p(\tau)}
        \geq i_2\sqrt{\frac{\tau}{\max\{K,1\}}}
        \geq \frac{i_2}{\sqrt{\max\{K,1\}}}.
\]
The curvature bound on \([1/2,101]\) and Shi's estimates \cite{Shi} give the
curvature-derivative bounds through order \(N+3\) on \([1,100]\).  Thus
\(\PBG_N(p;\Lambda_p;[1,100])\) holds for a uniform constant \(\Lambda_p\).

Let \(r_{\mathrm{D}}\) be the Ricci--DeTurck solution with background \(p\) and
initial metric \(r(1)\).  By the definitions of \(\eta_1\) and \(C_1\), the
bound \(b<\eta_{\mathrm{st}}\leq\eta_1\) implies the initial-data hypothesis of
Proposition~\ref{prop:uniform-lifted-rdt-stability}, with the norm measured
relative to \(p(1)\), and gives
\begin{equation}\label{eq:first-stage-continuous-dependence}
        \|r_{\mathrm{D}}-p\|_{X_p^N[1,100]}\leq C_1b.
\end{equation}
The DeTurck diffeomorphisms convert \(r_{\mathrm{D}}\) into the ordinary Ricci
flow \(r\) starting from \(r(1)\).  By
\eqref{eq:stage-eta-choice}, \(C_1b<C_1\eta_{\mathrm{st}}\leq\tfrac12\).
Thus the \(C^2\)-part of \eqref{eq:first-stage-continuous-dependence}
places \(r_{\mathrm D}(\tau)\) in the curvature-comparison neighborhood above,
and the curvature perturbation formula gives
\[
\begin{aligned}
        \sup_M|\sec_{r(\tau)}|
        &=\sup_M|\sec_{r_{\mathrm{D}}(\tau)}|\\
        &\leq \frac{K}{\tau}+C_{\mathrm{curv}}b
        \leq \frac{K_0}{\tau},
        \qquad 1\leq\tau\leq100.
\end{aligned}
\]
Lift \(r\) to the cyclic cover \(M_{\mathrm{cyc}}\).  There the local fiber
translations form a global \(\TT^2\)-action, say \(R_a\), and
\(R_a^*r_{\mathrm{cyc}}(1)=r_{\mathrm{cyc}}(1)\).  For each \(a\in\TT^2\),
\(R_a^*r_{\mathrm{cyc}}(\tau)\) and \(r_{\mathrm{cyc}}(\tau)\) are complete
bounded-curvature Ricci flows with the same initial metric; completeness and
the curvature bound follow by lifting the smooth flow on the compact quotient.
The uniqueness theorem for complete noncompact bounded-curvature Ricci flows
\cite[Theorem~1.1]{ChenZhu} therefore gives
\(R_a^*r_{\mathrm{cyc}}(\tau)=r_{\mathrm{cyc}}(\tau)\) for
\(1\leq\tau\leq100\).  Hence \(r(\tau)\) remains invariant.

\proofstep{Step 2. Second gauge: compare the original block with the invariant flow.}

The analytic start conditions include \(\BG_N(r(1);\Lambda_*)\).  Together
with the curvature estimate just proved,
Lemma~\ref{lem:slice-to-parabolic-geometry} gives
\[
        \PBG_N(r;\Lambda_{\mathrm{par}};[1,100]).
\]
This is the high-order inductive input that was not supplied by the low-order
start error alone.

Apply Proposition~\ref{prop:uniform-lifted-rdt-stability} with reference flow
\(s=r\), and let \(q\) be the resulting Ricci--DeTurck flow with
\(q(1)=p(1)\).  Since \(b<\eta_{\mathrm{st}}\leq\eta_2\),
\begin{equation}\label{eq:second-stage-continuous-dependence}
        \|q-r\|_{X_r^N[1,100]}\leq C_2b\leq C_{\mathrm{st}}b.
\end{equation}
By \eqref{eq:stage-eta-choice}, this is smaller than \(\beta_*\), proving
\eqref{eq:stage-hypotheses} and \eqref{eq:full-stability-linear}.  The
ordinary Ricci flow obtained from \(q\) by the DeTurck diffeomorphisms starts
from \(p(1)\), so uniqueness identifies it with \(p\).

\proofstep{Step 3. Smooth, rescale, and average the endpoint.}

Since \(\|q-r\|_{X_r^N}\leq\beta_*\leq\beta_{\mathrm{reg}}\),
Proposition~\ref{prop:positive-time-regular-restart} applies to \(q\) and
\(r\).  Combining \eqref{eq:regular-restart-high-order-estimate} with
\eqref{eq:second-stage-continuous-dependence} gives
\eqref{eq:stage-high-order-restart}.  The same proposition shows that \(r^+(1)\) is Riemannian and satisfies
\(\BG_N(r^+(1);\Lambda_{\mathrm{reg}})\), hence also \(\BG_N(r^+(1);\Lambda_*)\).
Equation \eqref{eq:averaged-endpoint-close} is the lower-order part of
\eqref{eq:stage-high-order-restart}.

Finally,
\[
        |\sec_{\rho^+}|=100|\sec_{r(100)}|\leq K_0.
\]
The \(C^2\)-part of \eqref{eq:stage-high-order-restart} changes sectional
curvature by at most \(CC_{\mathrm{st}}b\).  The last smallness requirement in the
choice of \(\eta_{\mathrm{st}}\), together with
\(K_{\mathrm{st}}>2K_0\), gives
\(|\sec_{r^+(1)}|\leq K_{\mathrm{st}}\).  Thus the endpoint reproduces the
high-order initial-face hypothesis needed for the next stage.
\end{proof}

Thus one analytic stage can be constructed with constants that are uniform
under collapse: averaging produces an invariant reference flow, the original
block remains controlled in Ricci--DeTurck gauge, and the averaged rescaled
endpoint stays in the same stability neighborhood.

\section{One-block contraction and stagewise families}
\label{sec:deturck-iteration}

This is the central inductive section of the paper.  It combines the analytic
stage estimate with the fixed-fibration geometry to produce a block map that
reproduces its hypotheses while reducing the non-invariant error by a definite
factor.  The quotient-circle lower bound is not carried as an inductive
coordinate: the large-diameter hypothesis and fibration comparison reconstruct
it inside every block.

The first subsection, \emph{Admissible-start data}, records the unified
start conditions from Definition~\ref{def:admissible-stage-start} and the two
numerical state variables used in the block map.  The second,
\emph{Persistence of the retained fibration},
uses the pointwise fixed-fiber improvement in a first-hitting argument to keep
the retained fibers uniformly short and to recover a positive quotient-length
lower bound throughout the block.  The third, \emph{Contraction on one
large-diameter block}, combines that persistence with stage stability and
finite-time decay, first in a concise block contraction theorem and then in a
quantitative proposition.  The final subsection, \emph{Stagewise families and
switching times}, records the stagewise metric families and switching-time
notation used for iteration on a connected large-diameter interval in
Section~\ref{sec:quotient-length}.

\subsection{Admissible-start data}
For a stage start \((q_k(1),r_k(1))\), with
\(r_k(1)=\overline{q_k(1)}\), write
\begin{equation}\label{eq:stage-start-data}
 b_k=\|q_k(1)-r_k(1)\|_{C^{N,\sigma}(M,r_k(1))},
 \qquad
 d_k=d_{\TT^2}(r_k(1)).
\end{equation}
Thus the numerical start data consist only of the non-invariant error and the
retained-fiber diameter, while lifted bounded geometry is recorded separately.
By Definition~\ref{def:admissible-stage-start}, the pair is an admissible
stage start with constants
\(\Lambda_*,\eta_{\mathrm{blk}},\theta_{\mathrm{blk}}\) precisely when
\[
 b_k<\eta_{\mathrm{blk}},\qquad
 d_k<\theta_{\mathrm{blk}},\qquad
 \BG_N(r_k(1);\Lambda_*).
\]
The block theorem below proves the direct implication
\begin{equation}\label{eq:one-block-conceptual-form}
 \left.
 \begin{gathered}
  b_k<\eta_{\mathrm{blk}},\quad d_k<\theta_{\mathrm{blk}},\\
  \BG_N(r_k(1);\Lambda_*)
 \end{gathered}
 \right\}
 \quad\Longrightarrow\quad
 \left\{
 \begin{gathered}
  b_{k+1}\leq\tfrac12 b_k,\quad d_{k+1}<\theta_{\mathrm{blk}},\\
  \BG_N(r_{k+1}(1);\Lambda_*).
 \end{gathered}
 \right.
\end{equation}
No quotient-length condition is required at any stage start.

\subsection{Persistence of the retained fibration}

\begin{lemma}[Persistence of the retained fibration on a large block]
\label{lem:retained-fibration-persistence}
Fix \(\delta_0>0\), \(0<\lambda_{\mathrm{p}}<\delta_0/10\), and
\(\theta_{\max}>0\).  Let
\[
 \theta,\quad \beta_{\mathrm{fib}},\quad \eps_{\mathrm{fib}}
\]
be any fixed choice of constants for which
Lemma~\ref{lem:pointwise-fixed-fiber} holds with these parameters.

Let \(p(\tau)\), \(1\leq\tau\leq100\), be a parabolically rescaled
\(\delta_0\)-large block, and let \(q(\tau)=\Psi_\tau^*p(\tau)\) for a smooth
family of diffeomorphisms.  Let \(r(\tau)\) be a smooth family of invariant metrics for one fixed fibration
\(\pi:M\to S^1\).  Suppose
\begin{equation}
 d_{\TT^2}(r(1))<\theta
\end{equation}
and, for every \(\tau\in[1,100]\),
\begin{equation}\label{eq:persistence-stage-closeness}
 \|q(\tau)-r(\tau)\|_{C^1(M,r(\tau))}<\beta_{\mathrm{fib}}.
\end{equation}
Suppose also that at every such \(\tau\) there exist an invariant metric \(h_B(\tau)\) and a torus fibration
\(\pi_B(\tau):M\to S_B^1(\tau)\) such that, for every
\(\pi_B(\tau)\)-fiber \(F_B\),
\begin{equation}\label{eq:persistence-bamler-comparison}
 \|h_B(\tau)-q(\tau)\|_{C^1(M,q(\tau))}<\eps_{\mathrm{fib}},
 \qquad
 \diam_{q(\tau)}F_B\leq\eps_{\mathrm{fib}}.
\end{equation}
Then, throughout the block,
\begin{align}
 d_{\TT^2}(r(\tau))&<\theta,                                    \label{eq:persistence-fiber-bound}\\
 L(S^1,r_{S^1}(\tau))&\geq\lambda_{\mathrm{p}}.                     \label{eq:persistence-quotient-bound}
\end{align}
Moreover, every \(\pi_B(\tau)\) is fiber-equivalent to \(\pi\).
\end{lemma}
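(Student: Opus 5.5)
The plan is a first-hitting (continuity) argument in \(\tau\), with Lemma~\ref{lem:pointwise-fixed-fiber} supplying the improvement step at every time. The first point is to check that the hypotheses of that lemma hold at each \(\tau\in[1,100]\), with \(q=q(\tau)\), \(r=r(\tau)\), \(\pi_B=\pi_B(\tau)\) and \(h_B=h_B(\tau)\). The total-diameter hypothesis is the only one needing a remark. The block is \(\delta_0\)-large, so \(\diam(M,g(S\tau))\geq\delta_0\sqrt{S\tau}\). After rescaling, \(\diam(M,p(\tau))\geq\delta_0\sqrt\tau\geq\delta_0\) for \(\tau\geq1\). Diameter is invariant under pullback by \(\Phi\) and \(\Psi_\tau\), so \(\diam(M,q(\tau))\geq\delta_0\). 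The closeness hypotheses \(\|q-r\|_{C^1(M,r)}<\beta_{\mathrm{fib}}\) and \eqref{eq:persistence-bamler-comparison} are assumed at every \(\tau\). Hence the only hypothesis of Lemma~\ref{lem:pointwise-fixed-fiber} not automatically available is \(d_{\TT^2}(r(\tau))\leq\theta\).

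Define
\[
 \tau^*=\sup\{\tau'\in[1,100]: d_{\TT^2}(r(\tau))<\theta \text{ for all }\tau\in[1,\tau']\}.
\]
The set is nonempty: \(d_{\TT^2}(r(1))<\theta\), and I will argue below that \(\tau\mapsto d_{\TT^2}(r(\tau))\) is continuous, so the condition persists for a short time after \(\tau=1\). On \([1,\tau^*)\) we have \(d_{\TT^2}(r(\tau))<\theta\). By continuity, \(d_{\TT^2}(r(\tau^*))\leq\theta\). Applying Lemma~\ref{lem:pointwise-fixed-fiber} at \(\tau^*\) gives the strict improvement \(d_{\TT^2}(r(\tau^*))<\theta/2\). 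Continuity then gives \(d_{\TT^2}(r(\tau))<\theta\) on a neighborhood of \(\tau^*\). If \(\tau^*<100\), this contradicts the definition of \(\tau^*\). Therefore \(\tau^*=100\), and \(d_{\TT^2}(r(\tau))\leq\theta\) holds on the closed interval.

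Once \(d_{\TT^2}(r(\tau))\leq\theta\) holds at every \(\tau\), Lemma~\ref{lem:pointwise-fixed-fiber} applies at every \(\tau\). It yields \(d_{\TT^2}(r(\tau))<\theta/2<\theta\), which is \eqref{eq:persistence-fiber-bound}. It also yields \(L(S^1,r_{S^1}(\tau))\geq\lambda_{\mathrm p}\), which is \eqref{eq:persistence-quotient-bound}. Finally, it shows that \(\pi_B(\tau)\) is fiber-equivalent to \(\pi\).

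The main technical point is the continuity of \(\tau\mapsto d_{\TT^2}(r(\tau))\). The fibration \(\pi\) is fixed and \(r(\tau)\) is a smooth family on the compact manifold \(M\). On a short interval \(|\tau-\tau_0|<\epsilon\), the metrics \(r(\tau)\) and \(r(\tau_0)\) are therefore \((1+o(1))\)-bilipschitz. Ambient distances change by a factor \(1+o(1)\), so each fiber diameter, and hence the supremum over fibers, changes by at most a factor \(1+o(1)\). This proves continuity. It is important that continuity is needed only for the fixed fibration \(\pi\): the Bamler data \(\pi_B(\tau)\) and \(h_B(\tau)\) may vary arbitrarily with \(\tau\). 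The argument uses them only pointwise, through the hypotheses of Lemma~\ref{lem:pointwise-fixed-fiber}, and never as a continuous family.
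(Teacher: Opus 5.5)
Your proposal is correct and follows the paper's proof: a continuity/first-hitting argument in \(\tau\) that invokes Lemma~\ref{lem:pointwise-fixed-fiber} at the critical time to get the contradiction \(d_{\TT^2}(r(\tau))<\theta/2\), followed by a pointwise application of that lemma at every \(\tau\) for the quotient-length bound and fiber-equivalence. The differences are cosmetic: you use a supremum where the paper uses the first time at which \(d_{\TT^2}(r(\tau))=\theta\), and you justify the continuity of \(\tau\mapsto d_{\TT^2}(r(\tau))\) by a bilipschitz argument that the paper simply asserts.
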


\begin{proof}
The function \(\tau\mapsto d_{\TT^2}(r(\tau))\) is continuous.  If it
reached \(\theta\), let \(\tau_*\) be the first such parameter.  Since
\(q(\tau)=\Psi_\tau^*p(\tau)\) and the block is \(\delta_0\)-large,
\[
 \diam(M,q(\tau_*))=\diam(M,p(\tau_*))
 \geq\delta_0\sqrt{\tau_*}\geq\delta_0.
\]
At \(\tau_*\), equations \eqref{eq:persistence-stage-closeness} and
\eqref{eq:persistence-bamler-comparison}, together with
\(d_{\TT^2}(r(\tau_*))=\theta\), verify all hypotheses of
Lemma~\ref{lem:pointwise-fixed-fiber}.  That lemma gives
\[
 d_{\TT^2}(r(\tau_*))<\theta/2,
\]
a contradiction.  This proves \eqref{eq:persistence-fiber-bound}.

Now fix any \(\tau\in[1,100]\).  The same diameter estimate,
\eqref{eq:persistence-fiber-bound}, and the two comparison assumptions again
allow Lemma~\ref{lem:pointwise-fixed-fiber} to be applied at time \(\tau\).
Its quotient conclusion gives \eqref{eq:persistence-quotient-bound}, and its
fibration conclusion says that \(\pi_B(\tau)\) is fiber-equivalent to
\(\pi\).
\end{proof}

\subsection{Contraction on one large-diameter block}

\begin{theorem}[Block contraction theorem]
\label{thm:one-block-contraction}
Under the standing fixed-threshold application, fix \(\delta_0\geq c_0\).
There are constants
\[
        T_{\mathrm{blk}}<\infty,\qquad
        \eta_{\mathrm{blk}}>0,\qquad
        \theta_{\mathrm{blk}}>0,\qquad
        \Lambda_*<\infty
\]
with the following property.  The constants
\(\eta_{\mathrm{blk}},\theta_{\mathrm{blk}},\Lambda_*\) depend only on
\(K,\delta_0,N,\sigma\).  The start-time threshold \(T_{\mathrm{blk}}\) may
additionally depend on the fixed Bamler tail data \(T_B(c_0)\) and
\(\eps_B^{c_0}\) for the given flow.

Let \(p(\tau)\) be a parabolically rescaled \(\delta_0\)-large block starting
at \(S\geq T_{\mathrm{blk}}\), fix a torus fibration \(\pi:M\to S^1\), and put
\[
        q(1)=p(1),\qquad r(1)=\overline{q(1)}.
\]
Suppose that \(r(1)\) is Riemannian and, with
\[
 b=\|q(1)-r(1)\|_{C^{N,\sigma}(M,r(1))},
 \qquad d=d_{\TT^2}(r(1)),
\]
one has
\[
        b<\eta_{\mathrm{blk}},\qquad
        d<\theta_{\mathrm{blk}},\qquad
        \BG_N(r(1);\Lambda_*).
\]
Equivalently, \((q(1),r(1))\) is an admissible stage start in the sense of
Definition~\ref{def:admissible-stage-start}, with constants
\(\Lambda_*,\eta_{\mathrm{blk}},\theta_{\mathrm{blk}}\).
Then the invariant Ricci flow \(r\) and the Ricci--DeTurck flow \(q\) relative
to it exist on \([1,100]\).  At the rescaled endpoint, put
\[
        q^+(1)=100^{-1}q(100),\qquad
        \rho^+=100^{-1}r(100),\qquad
        r^+(1)=\overline{q^+(1)},
\]
and set
\[
        b^+=\|q^+(1)-r^+(1)\|_{C^{N,\sigma}(M,r^+(1))}.
\]
Then \(r^+(1)\) is Riemannian and
\begin{equation}\label{eq:one-block-theorem-conclusion}
        b^+\leq\tfrac12 b,\qquad
        d_{\TT^2}(r^+(1))<\theta_{\mathrm{blk}},\qquad
        \BG_N(r^+(1);\Lambda_*).
\end{equation}
Thus the same admissible-start conditions hold at the restart.  The ordinary
Ricci flow corresponding to \(q\) is \(p\).
\end{theorem}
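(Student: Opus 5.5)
The plan is to chain the three ingredients already in hand: Lemma~\ref{lem:stage-stability} for existence and stability of one stage, Lemma~\ref{lem:retained-fibration-persistence} to keep the retained fibers short throughout the block and give a quotient-length lower bound, and Theorem~\ref{thm:noninvariant-decay} to contract the non-invariant part. The constants are fixed in the following order. Set \(\lambda_{\mathrm p}=\delta_0/20\) and take \(\theta_{\max}\) below the orbit threshold needed later. Lemma~\ref{lem:pointwise-fixed-fiber} then supplies \(\theta,\beta_{\mathrm{fib}},\eps_{\mathrm{fib}}\), and we put \(\theta_{\mathrm{blk}}=\theta\). Next take \(K_{\mathrm{st}},\Lambda_*,\beta_{\mathrm{reg}},C_{\mathrm{st}}\) from Lemma~\ref{lem:stage-stability}. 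Apply Theorem~\ref{thm:noninvariant-decay} with curvature bound \(K_{\mathrm{st}}\), length bound \(\lambda_{\mathrm p}\), and a small factor \(\alpha\) (chosen below) to get \(\beta_{\mathrm{orb}},\beta_{\mathrm{pert}}\). Shrink \(\theta_{\max}\) so that \(\theta<\beta_{\mathrm{orb}}\). Finally set \(\beta_*=\min\{\beta_{\mathrm{reg}},\beta_{\mathrm{pert}},\beta_{\mathrm{fib}}/2\}\), which gives \(\eta_{\mathrm{st}}\), and put \(\eta_{\mathrm{blk}}=\eta_{\mathrm{st}}\). Take \(T_{\mathrm{blk}}\) so large that \(\eps_B^{c_0}(S\tau)\) is below \(\eps_{\mathrm{fib}}\), and below the corresponding \(C^1\)-closeness, for \(S\geq T_{\mathrm{blk}}\). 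This also makes the Bamler comparison metrics uniformly bilipschitz, as Lemma~\ref{lem:stage-stability} requires.

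Lemma~\ref{lem:stage-stability} then produces \(r,q\) on \([1,100]\) together with \(|\sec_r|\le K_{\mathrm{st}}\), \(\|q-r\|_{X^N_r}\le C_{\mathrm{st}}b<\beta_*\), and \(\BG_N(r^+(1);\Lambda_*)\). It also shows that the ordinary flow associated with \(q\) is \(p\), so \(q(\tau)=\Psi_\tau^*p(\tau)\). Pull Bamler's comparisons back by \(\Psi_\tau\); they stay \(\eps_{\mathrm{fib}}\)-close. Then Lemma~\ref{lem:retained-fibration-persistence} applies, using \(d<\theta\) at \(\tau=1\). It gives \(d_{\TT^2}(r(\tau))<\theta<\beta_{\mathrm{orb}}\) and \(L\ge\lambda_{\mathrm p}\) on all of \([1,100]\). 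These are exactly the orbit and quotient hypotheses of Theorem~\ref{thm:noninvariant-decay}. That theorem gives \(\|q^\perp\|_{X^N_r[10,100]}\le\alpha\|q^\perp(1)\|_{C^{N,\sigma}}\). Since \(r(1)=\overline{q(1)}\), the right side equals \(\alpha b\).

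At the restart, averaging commutes with scaling, so \(q^+(1)-r^+(1)=100^{-1}q^\perp(100)\). Its \(C^{N,\sigma}(\rho^+)\)-norm is at most \(C\alpha b\), with \(C\) a fixed scaling factor (powers of \(100\) acting on Hölder seminorms). Passing from the reference metric \(\rho^+\) to \(r^+(1)\) costs only a bounded factor, because \(\|r^+(1)-\rho^+\|_{C^{N,\sigma}}\le C_{\mathrm{st}}b\) is small by \eqref{eq:averaged-endpoint-close}. Choosing \(\alpha\) small then gives \(b^+\le b/2\). This fixes \(\alpha\), and there is no circularity, since \(\alpha\) only feeds \(\beta_{\mathrm{orb}},\beta_{\mathrm{pert}}\), which are chosen downstream of the constants of Lemma~\ref{lem:stage-stability}.

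The fiber bound at the restart is the delicate point. Parabolic rescaling by \(100^{-1}\) shrinks \(d_{\TT^2}(r(100))<\theta\) by a factor of \(10\), so \(d_{\TT^2}(\rho^+)<\theta/10\). The issue is comparing \(\rho^+\) with \(r^+(1)\): they are \(C_{\mathrm{st}}b\)-close in \(C^0\), and hence bilipschitz with constant \(1+Cb\). Ambient fiber diameters therefore change by at most that factor, giving \(d_{\TT^2}(r^+(1))<\theta\). An alternative route is to apply Lemma~\ref{lem:pointwise-fixed-fiber} directly at the endpoint, to \(q^+(1)\) and \(r^+(1)\), using the rescaled Bamler data at \(S\cdot100\). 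The main obstacle I expect is the bookkeeping of constant dependencies. The smallness \(\beta_{\mathrm{fib}}\) needed for persistence and \(\beta_{\mathrm{pert}}\) needed for decay must both be imposed through \(\beta_*\) before \(\eta_{\mathrm{st}}\) is chosen, while \(\theta\) must be chosen before them and independently of \(\eta\). I would verify this ordering carefully and check that only \(T_{\mathrm{blk}}\) depends on the Bamler tail data.
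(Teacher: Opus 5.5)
Your proposal is correct and follows essentially the same route as the paper: stage stability, then retained-fibration persistence (via the pointwise fixed-fiber lemma) for short fibers and a quotient-length lower bound, then finite-time decay with $\alpha$ chosen against the fixed scaling constant, and finally the $\theta/10$ fiber bound for $\rho^+$ transferred to $r^+(1)$ by a small perturbation. Three bookkeeping points need tidying. First, fix the decay thresholds before invoking the pointwise lemma; the paper takes $\theta_{\max}=\beta_{\mathrm{orb}}^{\mathrm{dec}}/4$ directly rather than shrinking $\theta_{\max}$ afterwards. Second, the rescaled Bamler fiber bound is $\eps_B^{c_0}(S\tau)\sqrt\tau$, so $T_{\mathrm{blk}}$ needs $10\eps_B^{c_0}(T_{\mathrm{blk}})<\eps_{\mathrm{fib}}$. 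Third, $\eta_{\mathrm{blk}}$ should be taken below $\eta_{\mathrm{st}}$ (not equal to it) so that the endpoint norm comparison holds.
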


\begin{proposition}[Quantitative estimates for one block]
\label{prop:one-block-quantitative}
The constants in Theorem~\ref{thm:one-block-contraction} can be chosen together
with \(C_{\mathrm{st}}<\infty\), depending only on
\(K,\delta_0,N,\sigma\), so that, under the hypotheses and in the notation of
that theorem,
\begin{align}
 \|q-r\|_{X_r^N[1,100]}
        &\leq C_{\mathrm{st}}b,                                      \label{eq:block-stage-closeness}\\
 d_{\TT^2}(r(\tau))&<\theta_{\mathrm{blk}},                          \label{eq:block-fiber-bound}\\
 L(S^1,r_{S^1}(\tau))&\geq\delta_0/100,
        \qquad 1\leq\tau\leq100.                              \label{eq:block-quotient-lower}
\end{align}
At the restart, \(r^+(1)\) is Riemannian,
\(\BG_N(r^+(1);\Lambda_*)\) holds, and
\begin{align}
        b^+&\leq\tfrac12 b,                                      \label{eq:block-contraction}\\
 \|r^+(1)-\rho^+\|_{C^{N,\sigma}(M,\rho^+)}
        &\leq C_{\mathrm{st}}b,                                      \label{eq:block-endpoint-jump}\\
        d_{\TT^2}(r^+(1))&<\theta_{\mathrm{blk}}.                    \label{eq:block-next-fibers}
\end{align}
In particular, these estimates imply the qualitative conclusion
\eqref{eq:one-block-theorem-conclusion}.
\end{proposition}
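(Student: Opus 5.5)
The plan is to fix the constants in a definite order and then chain the three earlier ingredients: Lemma~\ref{lem:stage-stability} for the analytic stage, Lemma~\ref{lem:retained-fibration-persistence} for geometry along the block, and Theorem~\ref{thm:noninvariant-decay} for contraction.

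\textbf{Order of constants.}
\begin{enumerate}
\item Put \(\lambda_{\mathrm p}=\delta_0/100\), which satisfies \(\lambda_{\mathrm p}<\delta_0/10\).
\item Take \(K_{\mathrm{st}},\Lambda_*,\beta_{\mathrm{reg}},C_{\mathrm{st}}\) from Lemma~\ref{lem:stage-stability}.
\item Apply Theorem~\ref{thm:noninvariant-decay} on \([1,100]\) with curvature bound \(K_{\mathrm{st}}\), length \(\lambda=\lambda_{\mathrm p}\), and a small contraction factor \(\alpha\) fixed below. This gives \(\beta_{\mathrm{orb}}\) and \(\beta_{\mathrm{pert}}\).
\item Apply Lemma~\ref{lem:pointwise-fixed-fiber} with \(\theta_{\max}=\beta_{\mathrm{orb}}\). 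This gives \(\theta,\beta_{\mathrm{fib}},\eps_{\mathrm{fib}}\). Set \(\theta_{\mathrm{blk}}=\theta\).
\item Choose \(\beta_*\le\min\{\beta_{\mathrm{reg}},\beta_{\mathrm{pert}},\beta_{\mathrm{fib}}/C\}\), where \(C\) converts \(X^N_r\)-closeness into the \(C^1(M,r(\tau))\)-closeness of \eqref{eq:persistence-stage-closeness}.
\item Take \(\eta_{\mathrm{blk}}=\eta_{\mathrm{st}}(\beta_*)\).
\item Choose \(T_{\mathrm{blk}}\) so large that \(\eps_B^{c_0}(S\tau)\sqrt\tau\) and the \(C^1\) Bamler comparison errors are below \(\eps_{\mathrm{fib}}/2\) for \(\tau\in[1,100]\).
\end{enumerate}

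\textbf{Stage bounds \eqref{eq:block-stage-closeness}--\eqref{eq:block-quotient-lower}.} Lemma~\ref{lem:stage-stability} gives existence of \(r\) and \(q\), the sectional bound \(K_{\mathrm{st}}/\tau\), the estimate \eqref{eq:block-stage-closeness}, \(\BG_N(r^+(1);\Lambda_*)\), and the endpoint bound \eqref{eq:block-endpoint-jump}. It also identifies the ordinary flow of \(q\) with \(p\), so \(q(\tau)=\Psi_\tau^*p(\tau)\). Bamler's comparison for \(p(\tau)\), pulled back by \(\Psi_\tau\), then supplies \eqref{eq:persistence-bamler-comparison} once \(S\ge T_{\mathrm{blk}}\). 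One point needs care: Bamler's \(C^1\) closeness is measured relative to \(p\), and after pullback it is relative to \(q\), so it is unchanged. Lemma~\ref{lem:retained-fibration-persistence} now yields \eqref{eq:block-fiber-bound} and \eqref{eq:block-quotient-lower}. Hence every orbit of \(r(\tau)\) has diameter \(<\theta<\beta_{\mathrm{orb}}\), and the quotient circle has length \(\ge\delta_0/100\).

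\textbf{Contraction \eqref{eq:block-contraction}.} Theorem~\ref{thm:noninvariant-decay} applies and gives \(\|q^\perp\|_{X^N_r[10,100]}\le\alpha\|q(1)^\perp\|_{C^{N,\sigma}(M,r(1))}=\alpha b\), using \(q(1)^\perp=q(1)-r(1)\). Since \(r(\tau)\) is invariant, \(q^+(1)-r^+(1)=100^{-1}q(100)^\perp\). The \(C^{N,\sigma}\) norm of this tensor relative to \(r^+(1)\) is bounded by a constant \(C'\) times its norm relative to \(\rho^+=100^{-1}r(100)\). Here two facts are used: the rescaling changes the tensor and the metric in the same way, and \(r^+(1)\) is \(C_{\mathrm{st}}b\)-close to \(\rho^+\), so the two norms are equivalent with a uniform constant. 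The scaling by \(100\) itself costs only a fixed factor depending on \(N,\sigma\). This gives \(b^+\le C'\alpha b\), and choosing \(\alpha=1/(2C')\) yields \eqref{eq:block-contraction}. There is no circularity: \(C'\) depends only on \(K,N,\sigma\) and the norm-equivalence radius, so it can be fixed before \(\alpha\).

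\textbf{Restart fibers \eqref{eq:block-next-fibers}.} This is the step I expect to be the main obstacle. We know \(d_{\TT^2}(\rho^+)=100^{-1/2}d_{\TT^2}(r(100))<\theta/10\). Also \(r^+(1)\) is \(C_{\mathrm{st}}b\)-close to \(\rho^+\) in \(C^{N,\sigma}\), hence \((1+C\eta_{\mathrm{blk}})\)-bilipschitz to it. Shrinking \(\eta_{\mathrm{blk}}\) if necessary then gives \(d_{\TT^2}(r^+(1))<\theta\). The ordering must allow this: \(\eta_{\mathrm{blk}}\) may be shrunk freely at the end, because it only feeds \(\beta_*\) from below. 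If instead one must use \(r(100)\) directly without the slack factor, then the first-hitting bound \(\theta\) in Lemma~\ref{lem:retained-fibration-persistence} at \(\tau=100\) together with the \(\theta/2\) improvement from Lemma~\ref{lem:pointwise-fixed-fiber} gives the same margin.
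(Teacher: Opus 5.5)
Your proposal is correct and follows the paper's proof essentially step for step. The constants are fixed in the same order: first the stage-stability constants, then the scaling/norm-comparison constant (chosen before the decay factor $\alpha$), then the decay thresholds, then the pointwise fiber lemma with $\lambda_{\mathrm p}=\delta_0/100$, and finally $\beta_*$, $\eta_{\mathrm{blk}}$, $T_{\mathrm{blk}}$. The argument then runs in the same sequence: stage stability, retained-fibration persistence, finite-time decay, and the bilipschitz transfer of the $\theta/10$ restart fiber bound.

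The only slip is cosmetic. The identity $q^+(1)-r^+(1)=100^{-1}q(100)^\perp$ holds because averaging commutes with constant scaling, not because $r$ is invariant.
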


Figure~\ref{fig:one-block-contraction} summarizes the logical order of the
one-block construction used in the proof.

\begin{figure}[t]
\centering
\includegraphics[width=0.78\textwidth]{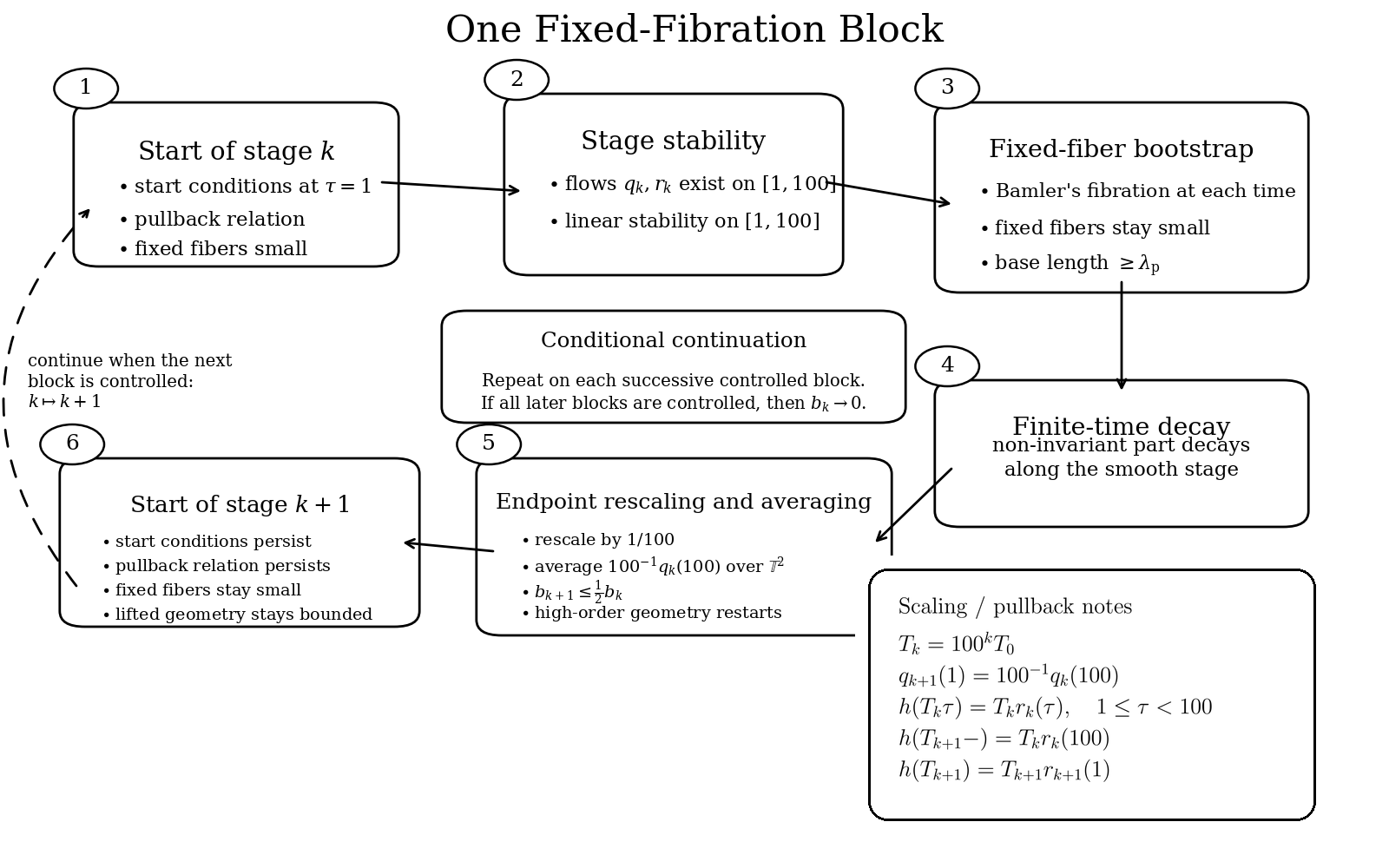}
\caption{One fixed-fibration block.  Stability and fibration comparison make
the finite-time decay estimate applicable; rescaling and averaging then
produce a contracted restart with the same hypotheses.}
\label{fig:one-block-contraction}
\end{figure}

\begin{proof}[Proof of Theorem~\ref{thm:one-block-contraction} and
Proposition~\ref{prop:one-block-quantitative}]
The proof has two steps.  We first choose the constants in their dependency
order.  We then apply stage stability, the retained-fibration lemma,
finite-time decay, and the restart estimate in that order.

\proofstep{Step 1. Choose the constants.}

The constants are selected in the order
\[
\begin{gathered}
 (K,\delta_0,N,\sigma)
 \longrightarrow
 (K_{\mathrm{st}},\Lambda_*,\beta_{\mathrm{reg}},C_{\mathrm{sc}})
 \longrightarrow \alpha_{\mathrm{blk}}
 \longrightarrow
 (\beta_{\mathrm{orb}}^{\mathrm{dec}},\beta_{\mathrm{pert}}^{\mathrm{dec}})\\[-1pt]
 \longrightarrow
 (\theta_{\mathrm{blk}},\beta_{\mathrm{fib}},\eps_{\mathrm{fib}})
 \longrightarrow \beta_*
 \longrightarrow
 (\eta_{\mathrm{blk}},T_{\mathrm{blk}}).
\end{gathered}
\]
The stage-stability constant \(C_{\mathrm{st}}\), which depends only on
\(K,N,\sigma\), is fixed together with
\(K_{\mathrm{st}},\Lambda_*\), and \(\beta_{\mathrm{reg}}\); it is omitted
from the chain because it is not itself a later smallness target.  The principal
dependency diagrams appear in Appendix~\ref{app:dependencies}.

Take \(K_{\mathrm{st}}\), \(\Lambda_*\), \(\beta_{\mathrm{reg}}\), and
\(C_{\mathrm{st}}\) from Lemma~\ref{lem:stage-stability}.  Choose
\(C_{\mathrm{sc}}\) so that, whenever \(q\) is sufficiently close to an
invariant metric \(r\),
\begin{equation}\label{eq:fixed-scaling-comparison}
 \|100^{-1}u\|_{C^{N,\sigma}(M,\overline{100^{-1}q})}
 \leq C_{\mathrm{sc}}\|u\|_{C^{N,\sigma}(M,r)},
\end{equation}
for every symmetric two-tensor \(u\).  This is the scaling rule for tensor
H\"older norms followed by equivalence of nearby reference metrics.  Choose
\[
        0<\alpha_{\mathrm{blk}}<(2C_{\mathrm{sc}})^{-1},
\]
and let
\[
 \beta_{\mathrm{orb}}^{\mathrm{dec}},\qquad
 \beta_{\mathrm{pert}}^{\mathrm{dec}}
\]
be the two thresholds supplied by
Theorem~\ref{thm:noninvariant-decay} for
\((\alpha_{\mathrm{blk}},K_{\mathrm{st}},\delta_0/100,N,\sigma)\).

Apply Lemma~\ref{lem:pointwise-fixed-fiber} with diameter lower bound
\(\delta_0\) and
\[
        \lambda_{\mathrm{p}}=\delta_0/100,
        \qquad
        \theta_{\max}=\beta_{\mathrm{orb}}^{\mathrm{dec}}/4.
\]
Denote its outputs by
\(\theta_{\mathrm{blk}},\beta_{\mathrm{fib}}\), and
\(\eps_{\mathrm{fib}}\), and choose
\[
        0<\beta_*\leq
        \min\{\beta_{\mathrm{pert}}^{\mathrm{dec}},
                    \beta_{\mathrm{fib}},\beta_{\mathrm{reg}}\}.
\]
Let \(\eta_{\mathrm{st}}\) be the stage threshold corresponding to this
\(\beta_*\) in Lemma~\ref{lem:stage-stability}.  Choose
\(\eta_{\mathrm{blk}}\leq\eta_{\mathrm{st}}\) below the finitely many
smallness thresholds needed for the endpoint norm comparison and restart:
\eqref{eq:fixed-scaling-comparison} must apply when the endpoint perturbation
has \(C^{N,\sigma}\)-size at most
\(C_{\mathrm{st}}\eta_{\mathrm{blk}}\), and a perturbation of this size must
carry fibers of diameter \(<\theta_{\mathrm{blk}}/10\) to fibers of diameter
\(<\theta_{\mathrm{blk}}\).

Finally, since \(\eps_B^{c_0}(t)\to0\) and Bamler's comparison converges in every
fixed order used below, choose \(T_{\mathrm{blk}}\geq2T_B(c_0)\) so large that
\[
        10\eps_B^{c_0}(T_{\mathrm{blk}})<\eps_{\mathrm{fib}},
\]
and so that on every block starting at \(S\geq T_{\mathrm{blk}}\), after any
simultaneous stage pullback, the Bamler metric comparison is within
\(\eps_{\mathrm{fib}}\) in \(C^1\) and lies in all the already fixed H\"older
and bilipschitz neighborhoods.

\proofstep{Step 2. Verify the stage hypotheses, retain the fibration, and contract.}

The parabolically rescaled block hypotheses supply the Type-III curvature bound
and an invariant small-fiber comparison at every slice.  Hence
Lemma~\ref{lem:stage-stability} applies and yields \(q\), \(r\),
\eqref{eq:block-stage-closeness}, and
\begin{equation}\label{eq:block-stage-beta-star}
        \|q(\tau)-r(\tau)\|_{C^1(M,r(\tau))}<\beta_*,
        \qquad 1\leq\tau\leq100.
\end{equation}

The DeTurck diffeomorphisms make \(q(\tau)\) a time-dependent pullback of
\(p(\tau)\).  Pull back Bamler's comparison metric and fibration by the same
diffeomorphisms, and denote the pulled-back objects again by
\(h_B(\tau)\), \(\pi_B(\tau)\), and \(F_B(\tau)\).  By the choice of
\(T_{\mathrm{blk}}\), for every \(\tau\in[1,100]\) and every
\(\pi_B(\tau)\)-fiber \(F_B(\tau)\),
\begin{equation}\label{eq:block-bamler-smallness}
        \|h_B(\tau)-q(\tau)\|_{C^1(M,q(\tau))}<\eps_{\mathrm{fib}},
        \qquad
        \diam_{q(\tau)}(F_B(\tau))
        \leq\eps_{\mathrm{fib}}.
\end{equation}
Indeed, since \(\eps_B^{c_0}\) is nonincreasing and \(1\leq\tau\leq100\),
Bamler's parabolically rescaled fiber estimate gives
\[
        \diam_{q(\tau)}(F_B(\tau))
        \leq\eps_B^{c_0}(S\tau)\sqrt\tau
        \leq10\eps_B^{c_0}(T_{\mathrm{blk}})<\eps_{\mathrm{fib}}.
\]

Apply Lemma~\ref{lem:retained-fibration-persistence} with
\[
 \lambda_{\mathrm{p}}=\delta_0/100,
 \qquad
 \theta_{\max}=\beta_{\mathrm{orb}}^{\mathrm{dec}}/4.
\]
Its output \(\theta\) is \(\theta_{\mathrm{blk}}\) by the choice in Step~1.
Its initial-fiber hypothesis is the assumed inequality
\(d_{\TT^2}(r(1))<\theta_{\mathrm{blk}}\); its stage closeness follows from \eqref{eq:block-stage-beta-star} and
\(\beta_*\leq\beta_{\mathrm{fib}}\); and its comparison hypotheses are
\eqref{eq:block-bamler-smallness}.  It therefore gives
\eqref{eq:block-fiber-bound} and \eqref{eq:block-quotient-lower} throughout
the block.  The curvature bound and the quotient-length estimate
\eqref{eq:block-quotient-lower}, together with
\eqref{eq:block-stage-closeness} and
\(\beta_*\leq\beta_{\mathrm{pert}}^{\mathrm{dec}}\), give the perturbative
hypothesis of Theorem~\ref{thm:noninvariant-decay}.  Moreover,
\eqref{eq:block-fiber-bound} and
\(\theta_{\mathrm{blk}}<\beta_{\mathrm{orb}}^{\mathrm{dec}}/4\) give its
short-orbit hypothesis.  Hence
\begin{equation}\label{eq:block-noninvariant-decay}
 \|q^\perp\|_{X_r^N[10,100]}
        \leq \alpha_{\mathrm{blk}} b.
\end{equation}

Averaging commutes with constant scaling, so
\[
        q^+(1)-r^+(1)=100^{-1}q(100)^\perp.
\]
Using \eqref{eq:fixed-scaling-comparison} and
\eqref{eq:block-noninvariant-decay} at \(\tau=100\), we obtain
\[
        b^+\leq C_{\mathrm{sc}}\alpha_{\mathrm{blk}} b\leq\tfrac12b,
\]
which is \eqref{eq:block-contraction}.  The endpoint clause of
Lemma~\ref{lem:stage-stability} gives \eqref{eq:block-endpoint-jump} and
\(\BG_N(r^+(1);\Lambda_*)\).  By
\eqref{eq:block-fiber-bound}, the fibers of \(\rho^+\) have diameter
\(<\theta_{\mathrm{blk}}/10\); the choice of \(\eta_{\mathrm{blk}}\) and
\eqref{eq:block-endpoint-jump} give \eqref{eq:block-next-fibers}.  Since \(b^+\leq b/2<\eta_{\mathrm{blk}}\),
\eqref{eq:block-contraction}, \eqref{eq:block-next-fibers}, and
\(\BG_N(r^+(1);\Lambda_*)\) reproduce the three admissible-start conditions
at the restart.  The last assertion follows from the DeTurck correspondence
and uniqueness for Ricci flow.
\end{proof}

Thus every large block reconstructs the quotient-length lower bound needed for
finite-time decay, while only the error, the retained-fiber diameter, and the
lifted regularity are passed to the next block.

\subsection{Stagewise families and switching times}
The block contraction theorem is now assembled over the geometric sequence of
switching times.  This subsection constructs the global gauge, the continuous physical-scale
family \(g'\), and the piecewise invariant comparison family \(h\).

We now iterate the block contraction theorem.  Once \(T_0\) and the fixed
fibration are chosen, put
\[
        T_k=100^kT_0,\qquad k=0,1,2,\ldots.
\]
On stage \(k\), write
\[
        q_k(1)=T_k^{-1}g'(T_k),\qquad
        r_k(1)=\overline{q_k(1)},
\]
let \(q_k(\tau)\), \(r_k(\tau)\) be the corresponding flows on
\([1,100]\), and define \(b_k,d_k\) by
\eqref{eq:stage-start-data}.  Define
\[
        g'(t)=T_kq_k(t/T_k),\qquad T_k\leq t\leq T_{k+1},
\]
and use the right-continuous convention
\[
        h(t)=T_kr_k(t/T_k),\qquad T_k\leq t<T_{k+1}.
\]
At a switching time,
\begin{equation}
        h(T_{k+1}-)=T_kr_k(100),
        \qquad
        h(T_{k+1})=T_{k+1}r_{k+1}(1).
\end{equation}
Thus \(g'\) is continuous and piecewise smooth, while \(h\) is an invariant
Ricci flow on each stage and may jump when the endpoint is averaged.

\section{Quotient-circle energy, length, and componentwise iteration}
\label{sec:quotient-length}
This section converts the blockwise invariant comparison into the geometric
diameter estimate.  For a locally \(\TT^2\)-invariant torus-bundle flow, the
relevant base geometry is governed by a one-dimensional vertical energy and
the length of the quotient circle; smooth evolution, fixed rescaling, and the
small restart jumps therefore reduce to a scalar recursion.  The resulting
bounds are inserted into one iteration theorem for an arbitrary connected
large-diameter time interval.

The first subsection, \emph{One-dimensional estimates on a smooth stage},
recalls the one-dimensional estimates on a smooth invariant stage.  The
second, \emph{Transition jump estimates}, controls the scalar changes at a
restart, and the third, \emph{The scalar energy-length recursion}, solves the
resulting energy--length recursion.  The final subsection, \emph{Iteration on
a connected large-diameter interval}, combines these estimates with the block
contraction theorem, including the initial full block and a possible terminal
partial block, and obtains quotient-length and total-diameter control on the
connected interval.  On a persistent tail, the concluding corollary upgrades
the scalar bounds to convergence of the normalized quotient length and then to
the full metric-circle limit.

\subsection{One-dimensional estimates on a smooth stage}
\label{subsec:one-dimensional-estimates}
We recall the one-dimensional estimates for a smooth invariant Ricci flow.  In an invariant local trivialization, write the metric in
Kaluza--Klein form
\[
\begin{aligned}
 g(t)={}&h_{yy}(y,t)\,dy^2\\
 &+G_{ij}(y,t)(dx^i+A^i(y,t)dy)(dx^j+A^j(y,t)dy),\\
 G(y+1,t)={}&H^T G(y,t)H,
\end{aligned}
\]
where \(H\in\SL(2,\ZZ)\) by the convention of
Subsection~\ref{subsec:torus-bundle-conventions}.  Here
\(h_{yy}(y,t)\,dy^2\) is the quotient metric on the base circle and
\(G_{ij}(y,t)\) is the metric on the torus fibers; the connection term
\(A^i dy\) does not enter the scalar quantities below.  Since the base is
one-dimensional, the local evolution formulas for the quotient coefficient and
vertical block may be computed in horizontal gauges with no mixed term.  Changes
of such gauges only reparametrize the base and apply fixed linear changes to the
vertical coordinates, so the scalar quantities \(E\) and \(L\) are unaffected.
Put
\[
        \mathcal E=h^{yy}\Tr\left((G^{-1}G_y)^2\right),
        \qquad
        E(t)=\mathcal E_{\max}(t),
\]
and let
\[
        L(t)=L(S^1,h_{yy}(t))=
        \int_{S^1}\sqrt{h_{yy}(y,t)}\,dy,
\]
be the length of the quotient circle.  The inequalities in Proposition~\ref{input:invariant-energy-length} are
written in \cite{LottSesum} in the modified
one-dimensional gauge obtained from the ordinary Ricci flow by a horizontal Lie
derivative.  Since \(E(t)\) and \(L(t)\) are invariant under reparametrizing the base
circle, the same estimates hold for the ordinary invariant Ricci flow.  Thus,
whenever the denominators are nonzero,
\begin{equation}\label{eq:LS-energy-decay}
        \frac1{E(t_1)}-\frac1{E(t_0)}\geq \frac12(t_1-t_0),
        \qquad t_1\geq t_0,
\end{equation}
and
\begin{equation}\label{eq:LS-length-differential}
        \frac{d}{dt}\log L(t)\leq \frac14 E(t).
\end{equation}
If \(E(t_0)=0\), then the maximum-principle inequality underlying
\eqref{eq:LS-energy-decay} gives \(E(t)=0\) for every \(t\geq t_0\).  In
this case, the reciprocal estimate is not invoked, and
\eqref{eq:LS-length-differential} has zero right-hand side.

\subsection{Transition jump estimates}

\begin{lemma}[Transition jump estimate]\label{lem:transition-jump}
Fix \(0\leq E_0<\infty\).  There are constants
\[
        \delta_{\mathrm{J}}>0,
        \qquad C_{\mathrm{J}}<\infty,
\]
depending only on \(E_0\), with the following property.  Let
\(\rho\) and \(\tilde\rho\) be smooth invariant
metrics for any fixed torus structure.  Suppose that
\[
        E(\rho)\leq E_0,
        \qquad
        \delta:=\|\tilde\rho-\rho\|_{C^1(M,\rho)}<\delta_{\mathrm{J}}.
\]
Then
\begin{equation}\label{eq:transition-jump-estimate}
        E(\tilde\rho)\leq E(\rho)+C_{\mathrm{J}}\delta,
        \qquad
        \left|\log\frac{L(\tilde\rho)}{L(\rho)}\right|
        \leq C_{\mathrm{J}}\delta.
\end{equation}
\end{lemma}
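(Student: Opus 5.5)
The plan is to reduce both inequalities to pointwise comparisons of the Kaluza--Klein coefficients of \(\rho\) and \(\tilde\rho\), using a common invariant trivialization. Both metrics are invariant for the same fixed torus structure. In an invariant local trivialization over the base coordinate \(y\), write each metric in the Kaluza--Klein form of Subsection~\ref{subsec:one-dimensional-estimates}, with coefficients \((h_{yy},G,A)\) and \((\tilde h_{yy},\tilde G,\tilde A)\). The quotient coefficient is \(h_{yy}=\rho(\partial_y^{\mathrm{hor}},\partial_y^{\mathrm{hor}})\), where \(\partial_y^{\mathrm{hor}}\) is the \(\rho\)-horizontal lift, and \(G\) is the restriction to vertical vectors. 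Hence
\[
        h_{yy}^{-1}=\rho^{-1}(dy,dy),
\]
and likewise for \(\tilde\rho\). Since \(dy\) and the vertical fields \(\partial_{x^i}\) are invariant objects, the \(C^0\)-part of \(\delta\) gives pointwise bounds
\[
        |\tilde h^{yy}/h^{yy}-1|\leq C\delta,
        \qquad
        |G^{-1}\tilde G-I|\leq C\delta,
\]
once \(\delta_{\mathrm J}\) is small enough that \(\tilde\rho\) is uniformly equivalent to \(\rho\).

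\emph{Length.} The length estimate follows directly from these pointwise bounds:
\[
        L(\tilde\rho)=\int\sqrt{\tilde h_{yy}}\,dy,
        \qquad
        \sqrt{\tilde h_{yy}/h_{yy}}\in\bigl[e^{-C\delta},e^{C\delta}\bigr].
\]
This gives \(|\log(L(\tilde\rho)/L(\rho))|\leq C\delta\), with a constant that is even independent of \(E_0\).

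\emph{Energy.} For the energy I would work in \(\rho\)-arclength \(s\) on the base, since \(E\) is invariant under base reparametrization. Then
\[
        \mathcal E(\rho)=\Tr\bigl((G^{-1}G_s)^2\bigr).
\]
The quantity \(G^{-1}G_s\) is, up to the factor \(2\), the shape operator of the fibers for \(\rho\). Its invariant description is
\[
        G_s(X,Y)=(\mathcal L_{N}\rho)(X,Y)
\]
for invariant vertical fields \(X,Y\) and the unit horizontal normal \(N\); this equals \(\rho(\nabla_X N,Y)+\rho(X,\nabla_Y N)\). For \(\tilde\rho\) one uses the corresponding \(\tilde N\) and the \(\tilde\rho\)-arclength \(\tilde s\). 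The relevant changes are then bounded as follows:
\[
        |\tilde N-N|_\rho\leq C\delta,
        \qquad
        |d\tilde s/ds-1|\leq C\delta,
        \qquad
        |\nabla^{\tilde\rho}-\nabla^{\rho}|_\rho\leq C\delta .
\]
The first two follow from the \(C^0\)-part of \(\delta\). The third is the standard difference-of-connections formula, which uses the full \(C^1\)-norm. Together these give the pointwise bound
\[
        |\tilde G^{-1}\tilde G_{\tilde s}-G^{-1}G_s|\leq C\delta\bigl(1+|G^{-1}G_s|\bigr).
\]
Squaring and tracing, with \(|G^{-1}G_s|^2\leq C\,\mathcal E\leq C E_0\), yields
\[
        \mathcal E(\tilde\rho)\leq \mathcal E(\rho)+C(E_0)\,\delta
\]
pointwise, for \(\delta\) small. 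Taking the maximum over \(S^1\) gives the energy inequality in \eqref{eq:transition-jump-estimate}.

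\emph{Main obstacle.} The delicate point is checking that the Kaluza--Klein quantities \(h^{yy}\) and \(G^{-1}G_y\) are genuinely tensorial. They must be controlled by \(\|\tilde\rho-\rho\|_{C^1(M,\rho)}\) without constants that degenerate as the fibers collapse. The shape-operator description handles this: all quantities involved are pointwise norms of invariant tensors measured in \(\rho\), so no lower bound on fiber size enters. The monodromy \(G(y+1)=H^TG(y)H\) is harmless, because the traces are conjugation-invariant. Finally, the bound \(\mathcal E\le E(\rho)\le E_0\) is what makes the additive error linear in \(\delta\), which is why \(C_{\mathrm J}\) depends only on \(E_0\).
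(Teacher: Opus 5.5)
Your proposal is correct and follows essentially the same route as the paper: compare the Kaluza--Klein data of \(\rho\) and \(\tilde\rho\) pointwise in a common invariant trivialization, get the length bound from the \(C^0\)-closeness of the quotient coefficient, and get the energy bound from the \(C^1\)-closeness together with \(|G^{-1}G_s|\leq C(E_0)\), which makes the error additive and linear in \(\delta\). The difference is only in presentation: the paper normalizes \(G=I\) at a basepoint in a \(\rho\)-horizontal arclength gauge and compares \(\partial_s\tilde G\) with \(\partial_sG\) coordinatewise, whereas you write \(G^{-1}G_s\) invariantly as twice the fiber shape operator and control its change through the difference of the Levi-Civita connections.
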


\begin{proof}
\proofstep{Step 1. Choose invariant coordinates and control the coefficients.}

Choose \(\rho\)-arclength \(s\) on a base arc and fiber coordinates for which
the connection one-form of \(\rho\) vanishes.  In this horizontal gauge for
\(\rho\),
\[
        \rho=ds^2+G_{ij}(s)\,dx^i dx^j.
\]
Any invariant metric \(\gamma\), written in the same coordinates, has the form
\[
        \gamma=a_\gamma(y)\,dy^2+
        G_{\gamma,ij}(y)(dx^i+A^i_\gamma(y)dy)
                              (dx^j+A^j_\gamma(y)dy),
\]
where \(a_\gamma(y)dy^2\) is the quotient metric and \(G_\gamma\) is the
metric on the torus fibers.  Therefore
\[
        E(\gamma)=\max_{S^1}a_\gamma^{-1}
        \Tr\left((G_\gamma^{-1}\partial_yG_\gamma)^2\right),
        \qquad
        L(\gamma)=\int_{S^1}\sqrt{a_\gamma}\,dy.
\]
The connection form \(A_\gamma\) does not enter either expression.

Write
\[
        a=a_\rho,\quad G=G_\rho,\quad
        \tilde a=a_{\tilde\rho},\quad \tilde G=G_{\tilde\rho}.
\]
The energy bound implies
\begin{equation}\label{eq:reference-vertical-derivative}
        \bigl|G^{-1/2}G_sG^{-1/2}\bigr|\leq C(E_0).
\end{equation}
The vertical block is the restriction of the metric to the fiber directions.
For \(x\in\pi^{-1}(y)\), let \(\mathcal V_x=\ker d\pi_x\) be the vertical
subspace.  The quotient coefficient can be characterized by
\[
 a_\gamma(y)=\min_{V\in\mathcal V_x}
 \gamma_x(\partial_y+V,\partial_y+V).
\]
Local \(\TT^2\)-invariance makes the minimum independent of the choice of
\(x\) in the fiber.  It also shows directly that \(a_\gamma\) depends smoothly
on the metric coefficients and on the inverse of the vertical block.  At a
fixed basepoint, make a constant linear change of vertical frame so that
\(G=I\).  In this
\(\rho\)-orthonormal adapted frame, the relative \(C^1(M,\rho)\)-bound and
\eqref{eq:reference-vertical-derivative} give
\begin{equation}\label{eq:local-jump-coefficient-control}
        |\widehat a-1|+|\tilde G-I|
        +|\partial_s\tilde G-\partial_sG|
        \leq C(E_0)\delta,
\end{equation}
where \(\widehat a=\tilde a/a\) is the quotient coefficient of
\(\tilde\rho\) in the reference arclength coordinate.  For the derivative
term, the first covariant derivative of \(\tilde\rho-\rho\) controls the
ordinary \(s\)-derivative of the vertical block; the conversion terms involve
only reference connection coefficients, bounded by
\eqref{eq:reference-vertical-derivative}.  Because the frame was normalized
before applying matrix estimates, these constants do not depend on the
condition number of the original \(G\), the invariant atlas, the monodromy, or
the fibration class.

\proofstep{Step 2. Compare quotient lengths.}

After decreasing \(\delta_{\mathrm J}\), the first term in
\eqref{eq:local-jump-coefficient-control} gives
\[
        \left|\log\sqrt{\widehat a}\right|
        \leq C(E_0)\delta.
\]
Since \(dL(\rho)=ds\) and
\(dL(\tilde\rho)=\sqrt{\widehat a}\,ds\), integration around the quotient
circle yields
\[
 e^{-C(E_0)\delta}L(\rho)
 \leq L(\tilde\rho)
 \leq e^{C(E_0)\delta}L(\rho),
\]
which is the length estimate in \eqref{eq:transition-jump-estimate}.

\proofstep{Step 3. Compare energies.}

At the chosen basepoint set
\[
        B_\rho=\partial_sG,
        \qquad
        \widetilde B=\widehat a^{-1/2}\tilde G^{-1/2}
                     (\partial_s\tilde G)\tilde G^{-1/2}.
\]
These are the normalized vertical derivatives in the reference-orthonormal
frame.  The traces of their squares are the pointwise energy densities of \(\rho\) and
\(\tilde\rho\).
For \(\delta\) small, \(\tilde G\) stays in a fixed neighborhood of the
identity, where \(S\mapsto S^{-1}\) and \(S\mapsto S^{-1/2}\) have uniformly
bounded derivatives.  Thus \eqref{eq:local-jump-coefficient-control} and
\(|B_\rho|\leq C(E_0)\) imply
\[
        |\widetilde B-B_\rho|\leq C(E_0)\delta,
        \qquad |\widetilde B|\leq C(E_0).
\]
Consequently,
\[
\begin{aligned}
        |\Tr(\widetilde B^2)-\Tr(B_\rho^2)|
        &=|\Tr((\widetilde B-B_\rho)\widetilde B)
          +\Tr(B_\rho(\widetilde B-B_\rho))| \\
        &\leq C(E_0)\delta.
\end{aligned}
\]
The basepoint was arbitrary.  Taking the maximum over \(S^1\) and enlarging
\(C_{\mathrm{J}}\) to cover both estimates proves
\eqref{eq:transition-jump-estimate}.
\end{proof}

\subsection{The scalar energy-length recursion}

\begin{lemma}[Scalar quotient recursion and convergence]
\label{lem:scalar-quotient-recursion}
Fix \(C_0\geq0\), and let \(b_k\geq0\), \(k\geq0\), be a summable
sequence.  Suppose that the sequences \(E_k\geq0\) and \(\ell_k>0\),
\(k\geq0\), satisfy
\begin{equation}\label{eq:abstract-scalar-recursion}
        E_{k+1}\leq \frac{100E_k}{1+99E_k/2}+C_0b_k,
        \qquad
        \ell_{k+1}\leq(1+C_0b_k)
        \left(\frac{1+99E_k/2}{100}\right)^{1/2}\ell_k.
\end{equation}
Then \(\sup_kE_k<\infty\) and \(\sup_k\ell_k<\infty\), with a bound depending
only on \(E_0\), \(\ell_0\), \(C_0\), and \(\sum_k b_k\).  If, in addition,
\begin{equation}\label{eq:abstract-length-lower-bound}
        \inf_k\ell_k>0,
\end{equation}
then there is a number \(\ell_\infty>0\) such that
\begin{equation}\label{eq:abstract-scalar-convergence}
        \ell_k\longrightarrow\ell_\infty,
        \qquad E_k\longrightarrow2.
\end{equation}
\end{lemma}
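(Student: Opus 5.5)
The plan is to rewrite the first recursion in terms of the deviation \(e_k=E_k-2\) from the self-similar value. Set
\[
 f(E)=\frac{100E}{1+99E/2}.
\]
A direct computation gives
\[
 f(E)-2=\frac{E-2}{1+99E/2}.
\]
Since \(f\) is increasing with \(f(E)<200/99\) for all \(E\geq0\), we get \(E_{k+1}\leq 200/99+C_0b_k\). Hence \(\sup_kE_k\leq\max\{E_0,200/99+C_0\sup_jb_j\}\), which gives the energy bound.

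For the positive part, note that \(1+99E/2\geq100\) when \(E\geq2\), and \(f(E)-2\leq0\) when \(E<2\). Together these give
\[
 (e_{k+1})_+\leq\tfrac1{100}(e_k)_++C_0b_k.
\]
Summing this linear recursion yields
\[
 \sum_k(e_k)_+\leq\tfrac{100}{99}\Bigl((E_0-2)_++C_0\sum_kb_k\Bigr).
\]

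For the length, take logarithms in the second recursion. This gives
\[
 \log\ell_{k+1}-\log\ell_k\leq C_0b_k+\tfrac12\log\Bigl(1+\tfrac{99}{200}e_k\Bigr).
\]
Bounding the right side by \(a_k:=C_0b_k+\tfrac{99}{400}(e_k)_+\) gives a summable sequence with \(\sum_k a_k\) controlled by \(E_0\), \(C_0\), and \(\sum_k b_k\). Telescoping then yields \(\ell_k\leq\ell_0\exp(\sum_ja_j)\), which is the stated uniform bound on \(\ell_k\).

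Now assume \eqref{eq:abstract-length-lower-bound}. The sequence \(\log\ell_k-\sum_{j<k}a_j\) is nonincreasing, and it is bounded below by \(\log\inf_k\ell_k-\sum_ja_j\). It therefore converges, and since \(\sum_ja_j<\infty\), \(\log\ell_k\) converges as well. This gives \(\ell_k\to\ell_\infty\geq\inf_k\ell_k>0\).

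For the energy limit, rearrange the length inequality as
\[
 \tfrac12\log\frac{100}{1+99E_k/2}\leq\log\ell_k-\log\ell_{k+1}+C_0b_k.
\]
Sum over those \(k\) with \(E_k<2\), where the left side is nonnegative. On the remaining indices the left side is bounded below by \(-\tfrac{99}{400}(e_k)_+\), which is summable. Adding these two contributions, the telescoping sum is bounded because \(\ell_k\) is bounded above and below, and \(b_k\) is summable. Hence
\[
 \sum_{k:\,E_k<2}\log\frac{100}{1+99E_k/2}<\infty.
\]
On \([0,2]\) this logarithm is at least \(c(2-E_k)\) for a uniform \(c>0\); at \(E_k=0\) it is just \(\log100\), so \(E_k=0\) causes no singularity. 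Thus \(\sum_k(2-E_k)_+<\infty\). Together with \(\sum_k(e_k)_+<\infty\), this gives \(E_k\to2\).

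The only delicate point is this last step: extracting \(E_k\to2\) from the lower length bound, which requires the telescoping argument to control the negative deviations. Everything else reduces to the identity for \(f(E)-2\) and elementary summation.
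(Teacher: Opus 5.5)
Your proof is correct, and most of it matches the paper's proof. The shared steps are the identity \(f(E)-2=(E-2)/(1+99E/2)\) giving \((E_{k+1}-2)_+\le\frac1{100}(E_k-2)_++C_0b_k\), the logarithmic length bound with a summable majorant, and the monotone-sequence argument for convergence of \(\log\ell_k\). There you subtract head sums \(\sum_{j<k}a_j\) where the paper adds tail sums \(\sum_{j\ge k}(a_j)_+\), which is the same device. You also get the energy bound more directly from \(f<200/99\), whereas the paper reads it off the summability of \((E_k-2)_+\).

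The genuine difference is the final step.
\begin{itemize}
\item The paper uses the already-established convergence \(\ell_k\to\ell_\infty\). Since \(\ell_{k+1}/\ell_k\to1\) and \(b_k\to0\), a subsequence with \(E_k\le2-\varepsilon\) would force the ratio \(\ell_{k+1}/\ell_k\) eventually below a fixed constant less than one. Hence \(\liminf_kE_k\ge2\).
\item You instead sum the rearranged length inequality and telescope. This needs only \(\ell_0\) and \(\inf_k\ell_k>0\); it uses neither the convergence of \(\ell_k\) nor the upper bound on \(\ell_k\) that you mention.
\end{itemize}

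Your route yields the quantitative conclusion \(\sum_k|E_k-2|<\infty\), which is stronger than \(E_k\to2\). It also gives an explicit bound on \(\sum_k(2-E_k)_+\) in terms of \(\log(\ell_0/\inf_k\ell_k)\), \(C_0\sum_kb_k\), and \((E_0-2)_+\). The paper's argument is shorter once convergence of \(\ell_k\) is available, but it is purely qualitative.
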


\begin{proof}
For a real number \(x\), write \(x_+=\max\{x,0\}\).  Set
\(y_k=(E_k-2)_+\).  Since
\[
        \frac{100x}{1+99x/2}-2=\frac{x-2}{1+99x/2},
\]
and \((x+y-2)_+\leq(x-2)_++y\) for \(y\geq0\), the energy recurrence gives
\[
        y_{k+1}\leq \frac1{100}y_k+C_0b_k.
\]
Iterating this inequality and using \(\sum_k b_k<\infty\) shows that
\begin{equation}\label{eq:energy-excess-summable}
        \sum_k y_k<\infty.
\end{equation}
In particular, \((E_k)\) is uniformly bounded.

On that bounded range, put
\[
        Q(x)=\left(\frac{1+99x/2}{100}\right)^{1/2}.
\]
Then \(Q(x)\leq1\) for \(x\leq2\), while
\(\log Q(x)\leq C(x-2)_+\) for \(x\geq2\).  Taking logarithms in the length
recurrence and iterating gives
\[
        \log\frac{\ell_k}{\ell_0}
        \leq C\sum_{j<k}b_j+C\sum_{j<k}y_j.
\]
Both sums are uniformly bounded, so \((\ell_k)\) is uniformly bounded as well.

Assume now \eqref{eq:abstract-length-lower-bound}.  Define
\[
 a_k=\log(1+C_0b_k)+\log Q(E_k).
\]
The positive parts are summable:
\begin{equation}\label{eq:positive-log-increments-summable}
 \sum_k(a_k)_+
 \leq C_0\sum_kb_k+C\sum_k(E_k-2)_+<\infty.
\end{equation}
Writing \(u_k=\log\ell_k\), the second inequality in
\eqref{eq:abstract-scalar-recursion} gives
\(u_{k+1}\leq u_k+a_k\).  Hence
\[
 z_k=u_k+\sum_{j=k}^\infty(a_j)_+
\]
is nonincreasing.  It is bounded below by
\(\log(\inf_j\ell_j)\), so it converges.  The tail in
\eqref{eq:positive-log-increments-summable} tends to zero, and therefore
\(u_k\), and hence \(\ell_k\), converges to a positive limit
\(\ell_\infty\).

It remains to identify the limiting energy.  Since
\(\ell_{k+1}/\ell_k\to1\) and \(b_k\to0\), the second inequality in
\eqref{eq:abstract-scalar-recursion} rules out
\(E_k\leq2-\varepsilon\) along any subsequence: on such a subsequence its
right-hand multiplicative factor is eventually bounded above by a number
strictly smaller than one.  Thus \(\liminf_kE_k\geq2\).  On the other hand,
\eqref{eq:energy-excess-summable} gives
\((E_k-2)_+\to0\), and hence \(\limsup_kE_k\leq2\).  This proves
\eqref{eq:abstract-scalar-convergence}.
\end{proof}

\subsection{Iteration on a connected large-diameter interval}

\begin{theorem}[Iteration on a connected large-diameter interval]
\label{thm:large-component-iteration}
Under the standing fixed-threshold application with cutoff \(c_0\), fix
\(\delta_0\geq c_0\), an integer \(N\geq2\), and \(\sigma\in(0,1)\).  There are constants \(\eta_0>0\) and
\(T_{\mathrm{p}}<\infty\) with the
following properties.  The constant \(\eta_0\) depends only on
\(K,\delta_0,N,\sigma\); the threshold \(T_{\mathrm{p}}\) may additionally
depend on the fixed Bamler tail data \(T_B(c_0)\) and
\(\eps_B^{c_0}\).  Neither depends
on the number \(D_1\) below.

Let \(I\) be a connected time interval on which
\begin{equation}\label{eq:component-large-diameter}
        \diam(M,g(t))\geq\delta_0\sqrt t.
\end{equation}
Suppose that \(S\in I\), \(S\geq T_{\mathrm{p}}\), and
\begin{equation}\label{eq:component-start-diameter}
        \diam(M,g(S))\leq D_1\sqrt S
\end{equation}
for some \(0<D_1<\infty\).  At time \(S\), choose the torus fibration from
case~\ref{item:bamler-large-slice} of
Proposition~\ref{input:bamler-fixed-threshold}, put
\[
        q_0(1)=S^{-1}g(S),\qquad r_0(1)=\overline{q_0(1)},
\]
where the average is taken in that fibration, and suppose that \(r_0(1)\) is
Riemannian and
\begin{equation}\label{eq:component-start-error}
 b_0=\|q_0(1)-r_0(1)\|_{C^{N,\sigma}(M,r_0(1))}<\eta_0.
\end{equation}

Set \(T_k=100^kS\).  Call the \(k\)th block \emph{controlled} if
\([T_k,T_{k+1}]\subset I\).  On every controlled block, the same retained
torus fibration supports stagewise families \(q_k(\tau)\), \(r_k(\tau)\),
\(1\leq\tau\leq100\), and the estimates
\begin{align}
 b_{k+1}&\leq\tfrac12b_k,                                      \label{eq:iteration-contraction}\\
 \|q_k-r_k\|_{X_{r_k}^N[1,100]}
        &\leq C_{\mathrm{st}}b_k,                              \label{eq:iteration-stage-stability}\\
 \|r_{k+1}(1)-100^{-1}r_k(100)\|_{C^{N,\sigma}}
        &\leq C_{\mathrm{st}}b_k.                              \label{eq:component-restart-jump}
\end{align}
Here and below the norm in the last line is taken relative to
\(100^{-1}r_k(100)\).  On the union of the controlled blocks these stages
assemble into a continuous piecewise smooth pullback
\(g'(t)=\Psi(t)^*g(t)\) and a right-continuous piecewise invariant family
\(h(t)\), as in Section~\ref{sec:deturck-iteration}.  There is a constant
\(C<\infty\), depending only on \(K,\delta_0,N,\sigma\) and \(D_1\), such that
\begin{equation}\label{eq:component-length-bound}
        L(S^1,h_{yy}(t))\leq C\sqrt t
\end{equation}
on every controlled block, for either one-sided value at a switching time,
and
\begin{equation}\label{eq:component-diameter-bound}
        \diam(M,g(t))\leq C\sqrt t
\end{equation}
for every \(t\in I\cap[S,\infty)\).  Thus the same theorem covers both an
unbounded persistent tail and a finite component with a terminal partial
block.
\end{theorem}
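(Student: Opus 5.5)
The plan is to choose the constants first, then run the block contraction theorem inductively over the controlled blocks, and finally convert its output into the scalar recursion for energy and quotient length.

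\emph{Constants.} Take \(\eta_0\le\eta_{\mathrm{blk}}\) and \(T_{\mathrm p}\ge T_{\mathrm{blk}}\) from Theorem~\ref{thm:one-block-contraction}. Enlarge \(T_{\mathrm p}\) so that at time \(S\) the conditions \(d_0<\theta_{\mathrm{blk}}\) and \(\BG_N(r_0(1);\Lambda_*)\) hold automatically. For the fiber bound, Bamler's fibers at \(S\) have normalized diameter at most \(\eps_B^{c_0}(S)\), and averaging in that same fibration keeps \(r_0(1)\) uniformly close to \(q_0(1)\). Bounded geometry follows from the argument fixing \(\Lambda_{\mathrm{init}}\) in Lemma~\ref{lem:stage-stability}, via Lemma~\ref{lem:high-order-averaged-slice}.

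\emph{Induction over controlled blocks.} Suppose the \(k\)th block is controlled. Then it is a \(\delta_0\)-large block starting at \(T_k\ge S\), and the restart \(q_k(1)=T_k^{-1}g'(T_k)\) is an admissible stage start. Proposition~\ref{prop:one-block-quantitative} then gives \eqref{eq:iteration-contraction}--\eqref{eq:component-restart-jump} and reproduces admissibility at the restart, so we can pass to block \(k+1\). The stagewise families assemble into \(g'\) and \(h\) as in Section~\ref{sec:deturck-iteration}. Since \(b_k\le 2^{-k}b_0\), the errors are summable.

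\emph{Scalar recursion.} Set \(E_k=E(r_k(1))\) and \(\ell_k=L(r_k(1))\), both in normalized units. On stage \(k\), apply Proposition~\ref{input:invariant-energy-length} to the normalized flow \(r_k\) over \(\tau\in[1,100]\). The reciprocal estimate gives
\[E(r_k(100))\le \frac{E_k}{1+99E_k/2}.\]
Integrating \(\frac{d}{d\tau}\log L\le\frac14E(\tau)\) with \(E(\tau)\le E_k/(1+(\tau-1)E_k/2)\) gives
\[L(r_k(100))\le \Bigl(1+\tfrac{99}{2}E_k\Bigr)^{1/2}\ell_k.\]
Rescaling by \(100^{-1}\) multiplies \(E\) by \(100\) and \(L\) by \(100^{-1/2}\). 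The jump from \(\rho^+\) to \(r_{k+1}(1)\) is controlled by Lemma~\ref{lem:transition-jump} together with \eqref{eq:component-restart-jump}. To apply that lemma we need an a priori bound \(E(\rho^+)\le E_0\). Since \(100x/(1+99x/2)<200/99\), a fixed \(E_0\) such as \(3\) works after possibly shrinking \(\eta_0\); for the first block, \(E_0\) is controlled by the bounded geometry of \(r_0(1)\) together with the quotient-length lower bound \eqref{eq:block-quotient-lower}. This yields exactly \eqref{eq:abstract-scalar-recursion}, and Lemma~\ref{lem:scalar-quotient-recursion} bounds \(\ell_k\). Here \(\ell_0\le C(D_1)\) because \(\ell_0\le 2\diam(q_0(1))\cdot(1+o(1))\) and \(\diam(q_0(1))\le D_1\). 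Within each stage, \(L\) is bounded by \(\ell_k\) times \(e^{C}\), which gives \eqref{eq:component-length-bound} for both one-sided values. Then Lemma~\ref{lem:base-length-controls-diameter}, using \(g'\) bilipschitz to \(h\) (from \eqref{eq:iteration-stage-stability}) and \(d_{\TT^2}<\theta_{\mathrm{blk}}\), gives \eqref{eq:component-diameter-bound} on controlled blocks, since \(\diam g=\diam g'\).

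\emph{Main obstacle: the terminal partial block.} Let \(t\in I\) lie beyond the last controlled block, so \(t\in[T_k,T_{k+1}]\) with \([T_k,t]\subset I\). There the diameter bound must come from Type-III distortion rather than from a stage. The plan is to use \(|\Ric|\le 2K/t'\) on \([T_k,t]\), so that metrics change by at most the factor \(\exp(\int_{T_k}^{t}2K/t'\,dt')\le 100^{2K}\). This transfers the bound \(\diam g(T_k)\le C\sqrt{T_k}\) (from the start of the last controlled block, or from \(D_1\) if \(k=0\)) to \(\diam g(t)\le 100^{K}C\sqrt{t}\). The constant depends on \(K\) and the previous bound but not on the number of blocks, which is what is needed.
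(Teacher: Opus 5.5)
Your proposal is correct, and most of it follows the paper: the constant choices, the block induction, the per-stage integration of Proposition~\ref{input:invariant-energy-length}, the use of Lemma~\ref{lem:transition-jump} with reference energy \(3\) (justified by \(100x/(1+99x/2)<200/99\)), and the Type-III distortion argument for the terminal partial block. The one genuine difference is how you handle the first block.

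\textbf{Your route.} You run Lemma~\ref{lem:scalar-quotient-recursion} from \(k=0\). This forces you to bound the initial energy \(E(r_0(1))\), because the first length step multiplies \(\ell_0\) by \(\bigl((1+99E_0/2)/100\bigr)^{1/2}\).

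\textbf{The paper's route.} The paper never looks at \(E(r_0(1))\). It bounds the first controlled block directly: the diameter by Type-III distortion from \(D_1\), and the quotient length by \(L\leq 2\diam\), using the distance-nonincreasing quotient map and bilipschitz comparison. It then observes that \(E(\rho_0^+)<200/99\) holds for any initial energy. The recursion therefore starts at \(k=1\) with \(E_1\leq4\) and \(\ell_1\leq CD_1\), and needs no hypothesis on the initial vertical energy.

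\textbf{What your route needs.} Your route also works, but the reason you give for bounding \(E_0\) is not the right one. The quotient-length lower bound plays no role. A curvature bound by itself does not obviously control the second fundamental form of the fibers. What does work is the Riccati argument in Step~1 of the proof of Lemma~\ref{lem:small-orbit-averaging}:
\begin{itemize}
\item On the cyclic cover, with complete base line, the fiber shape operator \(A=\tfrac12G^{-1}G_s\) is bounded by a constant depending only on \(\sup|\sec|\).
\item Hence \(E=4\max\Tr(A^2)\) is bounded by the same kind of constant.
\item \(\BG_N(r_0(1);\Lambda_*)\) bounds \(|\sec_{r_0(1)}|\) by \(C\Lambda_*\), so \(E(r_0(1))\leq C(K,N,\sigma)\).
\end{itemize}
With this inserted, your uniform treatment of all blocks goes through with the stated dependence of \(C\) on \(K,\delta_0,N,\sigma,D_1\). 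The trade-off is that your version avoids a separate first-block step but needs this extra geometric input, which the paper's ordering sidesteps.
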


Figure~\ref{fig:theorem4-recursion} summarizes the energy--length recursion
used in the proof below.

\begin{figure}[H]
\centering
\includegraphics[width=0.98\textwidth]{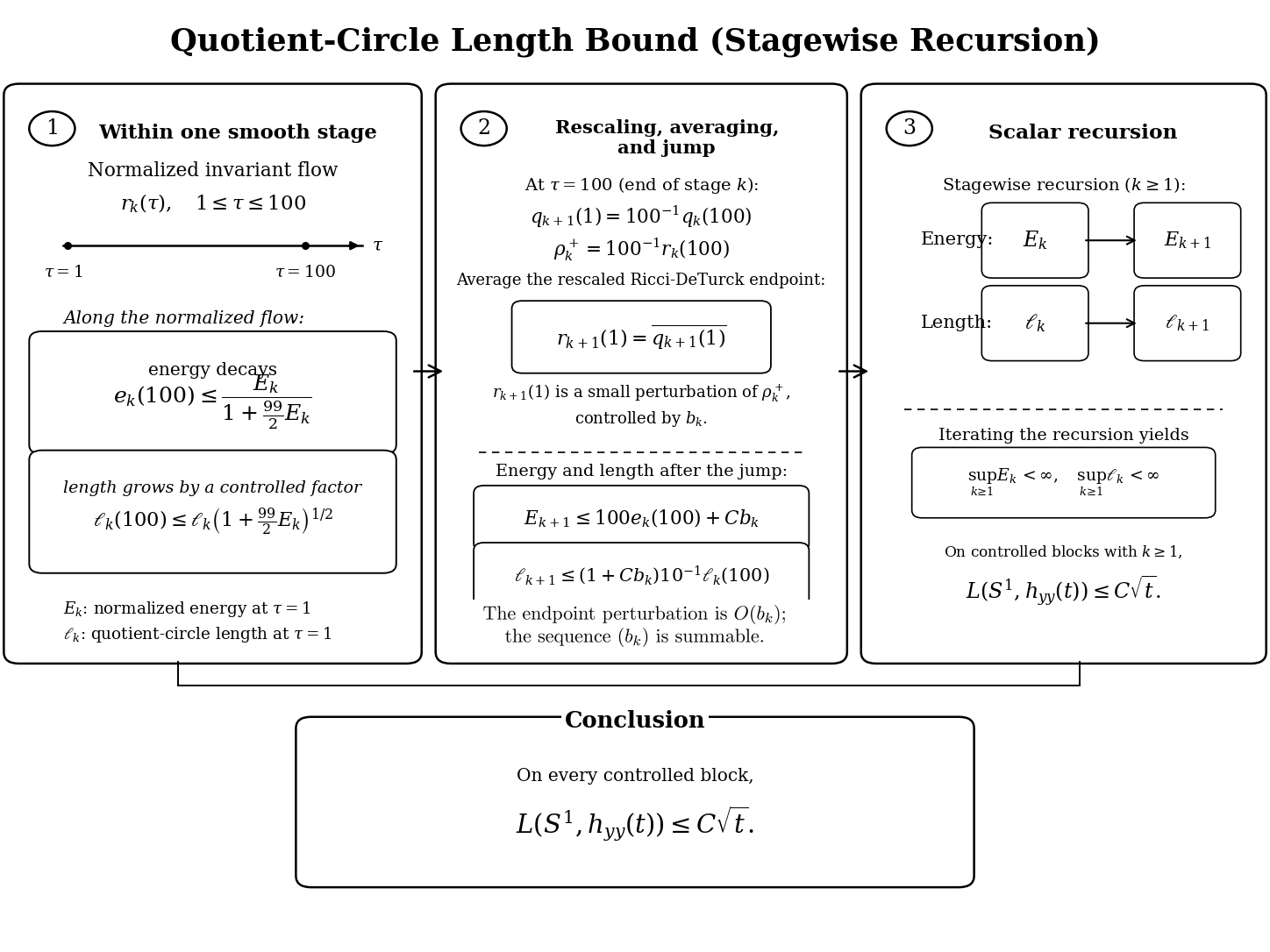}
\caption{Energy--length recursion.  The smooth-stage inequalities, the fixed
rescaling laws, and the summable jump errors reduce the geometric iteration to
a scalar contraction about the energy level \(2\).  In general this gives the
uniform length bound shown in the figure; on a persistent tail, the positive
lower length bound upgrades the same recursion to convergence.}
\label{fig:theorem4-recursion}
\end{figure}

\begin{proof}
\proofstep{Step 1. Choose the constants and prepare the initial slice.}

Apply Theorem~\ref{thm:one-block-contraction} and
Proposition~\ref{prop:one-block-quantitative} for the present
\(\delta_0,N,\sigma\).  Denote their constants by
\[
 T_{\mathrm{blk}},\quad \eta_{\mathrm{blk}},\quad
 \theta_{\mathrm{blk}},\quad C_{\mathrm{st}},\quad \Lambda_*.
\]
Let \(\delta_{\mathrm{J}}\) and \(C_{\mathrm{J}}\) be the constants in
Lemma~\ref{lem:transition-jump} for reference energy bound \(3\).  Choose
\begin{equation}\label{eq:component-eta-choice}
 0<\eta_0\leq\eta_{\mathrm{blk}},\qquad
 C_{\mathrm{st}}\eta_0<\delta_{\mathrm{J}},\qquad
 C_{\mathrm{J}}C_{\mathrm{st}}\eta_0\leq1.
\end{equation}
Choose \(T_{\mathrm{p}}\geq T_{\mathrm{blk}}\) so large that whenever
\(t\geq T_{\mathrm{p}}\) and
\(\diam(M,g(t))\geq\delta_0\sqrt t\), the Type-III estimate on
\([t/2,t]\), Shi's estimates, and
Lemma~\ref{lem:universal-cover-inj} give the uniform lifted geometry used in
Lemma~\ref{lem:high-order-averaged-slice}.  Require also that Bamler's
comparison at time \(t\) be sufficiently small in \(C^{N+5,\sigma}\) for
that lemma; \(C^{N+6}\)-smallness suffices.  If \(r_t\) is the average of
\(t^{-1}g(t)\) in Bamler's fibration, then
\begin{equation}\label{eq:component-prepared-start}
        \BG_N(r_t;\Lambda_*),\qquad
        d_{\TT^2}(r_t)<\theta_{\mathrm{blk}}.
\end{equation}
These choices have the dependence asserted in the theorem and are independent
of \(D_1\).  At \(t=S\), equations
\eqref{eq:component-start-error} and
\eqref{eq:component-prepared-start} show that
\((q_0(1),r_0(1))\) is an admissible stage start in the sense of
Definition~\ref{def:admissible-stage-start}, with constants
\(\Lambda_*,\eta_{\mathrm{blk}},\theta_{\mathrm{blk}}\).

The Type-III estimate also gives a constant \(C_{\mathrm{III}}\), depending
only on \(K\), such that
\begin{equation}\label{eq:component-typeIII-distortion}
 C_{\mathrm{III}}^{-1}g(t_1)\leq g(t_2)\leq C_{\mathrm{III}}g(t_1)
 \qquad
 (T_{\mathrm{p}}\leq t_1\leq t_2\leq100t_1).
\end{equation}
Indeed, \(|\partial_tg|_g=2|\Ric|_g\leq C/t\), and integration gives the
comparison.

\proofstep{Step 2. Iterate all controlled blocks and assemble the gauge.}

If \([S,100S]\not\subset I\), connectedness gives
\(\sup I\leq100S\).  Equations
\eqref{eq:component-start-diameter} and
\eqref{eq:component-typeIII-distortion} then imply
\[
 \diam(M,g(t))\leq CD_1\sqrt S\leq CD_1\sqrt t,
 \qquad t\in I\cap[S,\infty),
\]
so there is nothing more to prove.  Assume henceforth that the first block is
controlled.

Set \(\Phi_0=\Id\).  Inductively suppose that the \(k\)th block is controlled and
that, with
\[
 p_k(\tau)=\Phi_k^*\bigl(T_k^{-1}g(T_k\tau)\bigr),
 \qquad q_k(1)=p_k(1),\qquad r_k(1)=\overline{q_k(1)},
\]
the pair \(\bigl(q_k(1),r_k(1)\bigr)\) is an admissible stage start in the
sense of Definition~\ref{def:admissible-stage-start}.  Since
\eqref{eq:component-large-diameter} holds on
the whole block, Theorem~\ref{thm:one-block-contraction} and
Proposition~\ref{prop:one-block-quantitative} construct the stage and give
\eqref{eq:iteration-contraction}--\eqref{eq:component-restart-jump}, together
with
\[
 d_{\TT^2}(r_{k+1}(1))<\theta_{\mathrm{blk}},
 \qquad \BG_N(r_{k+1}(1);\Lambda_*).
\]
Thus the next start is admissible whenever the next block is controlled.

Let \(\psi_k(\tau)\), \(\psi_k(1)=\Id\), be the stage DeTurck
diffeomorphisms.  Then
\[
        q_k(\tau)=\psi_k(\tau)^*p_k(\tau).
\]
At the endpoint,
\[
 q_{k+1}(1)=100^{-1}q_k(100)
   =(\Phi_k\circ\psi_k(100))^*
       \bigl(T_{k+1}^{-1}g(T_{k+1})\bigr).
\]
Hence the induction continues with
\(\Phi_{k+1}=\Phi_k\circ\psi_k(100)\).  On every controlled block define
\[
 g'(T_k\tau)=T_kq_k(\tau),\qquad
 h(T_k\tau)=T_kr_k(\tau)\quad(1\leq\tau<100),
\]
using the right-continuous restart convention for \(h\), and set
\[
        \Psi(T_k\tau)=\Phi_k\circ\psi_k(\tau).
\]
The endpoint identities make \(g'\) and \(\Psi\) continuous across controlled
switching times, and
\begin{equation}\label{eq:physical-gauge-equivalence}
        g'(t)=\Psi(t)^*g(t).
\end{equation}

\proofstep{Step 3. Control the first full block directly.}

From \eqref{eq:component-start-diameter} and
\eqref{eq:component-typeIII-distortion},
\begin{equation}\label{eq:component-first-block-diameter}
 \diam(M,g(S\tau))\leq CD_1\sqrt S
        \leq CD_1\sqrt{S\tau},
 \qquad 1\leq\tau\leq100.
\end{equation}
The stage estimate \eqref{eq:iteration-stage-stability} makes \(r_0(\tau)\)
uniformly bilipschitz to the pullback \(q_0(\tau)\).  Since the quotient map
for an invariant metric is distance nonincreasing and a metric circle has
length twice its diameter,
\begin{equation}\label{eq:component-first-block-length}
 L(S^1,h_{yy}(S\tau))
 =\sqrt S\,L(S^1,(r_0)_{yy}(\tau))
 \leq CD_1\sqrt S
 \leq CD_1\sqrt{S\tau}.
\end{equation}
Thus both desired geometric bounds hold on the first controlled block without
any hypothesis on its initial vertical energy or quotient length.

\proofstep{Step 4. Create uniform scalar data at the first restart.}

Let
\[
 e_0(\tau)=\mathcal E_{\max}(r_0(\tau)),\qquad
 \rho_0^+=100^{-1}r_0(100).
\]
The energy decay on the first smooth stage gives
\[
 E(\rho_0^+)=100e_0(100)
 \leq\frac{100e_0(1)}{1+99e_0(1)/2}<\frac{200}{99}<3.
\]
By \eqref{eq:component-restart-jump} and
\eqref{eq:component-eta-choice},
\[
 \|r_1(1)-\rho_0^+\|_{C^1(M,\rho_0^+)}
 \leq C_{\mathrm{st}}b_0<\delta_{\mathrm{J}}.
\]
The transition-jump lemma therefore gives
\begin{equation}\label{eq:component-first-restart-energy}
        E_1:=E(r_1(1))\leq4.
\end{equation}
The stage and restart estimates also make \(r_1(1)\) uniformly bilipschitz to
\(100^{-1}q_0(100)\).  By
\eqref{eq:component-first-block-diameter}, the latter has diameter at most
\(CD_1\).  Hence
\begin{equation}\label{eq:component-first-restart-length}
        \ell_1:=L(S^1,(r_1)_{yy}(1))\leq CD_1.
\end{equation}

\proofstep{Step 5. Propagate the scalar energy and length bounds.}

For every controlled stage \(k\geq1\), define
\[
 e_k(\tau)=\mathcal E_{\max}(r_k(\tau)),\qquad E_k=e_k(1),
\]
and
\[
 \ell_k(\tau)=L(S^1,(r_k)_{yy}(\tau)),\qquad \ell_k=\ell_k(1).
\]
Constant scaling gives
\begin{equation}\label{eq:physical-normalized-length}
 L(S^1,h_{yy}(T_k\tau))=\sqrt{T_k}\,\ell_k(\tau),
 \qquad 1\leq\tau<100,
\end{equation}
with the same formula for the left-hand value at \(\tau=100\).
Integrating \eqref{eq:LS-energy-decay} and
\eqref{eq:LS-length-differential} gives
\begin{equation}\label{eq:stage-length-bound}
 e_k(\tau)\leq
 \frac{E_k}{1+\frac12(\tau-1)E_k},\qquad
 \ell_k(\tau)\leq\ell_k
 \left(1+\frac12(\tau-1)E_k\right)^{1/2}.
\end{equation}
In particular,
\begin{equation}\label{eq:end-stage-energy-length}
 e_k(100)\leq\frac{E_k}{1+\frac{99}{2}E_k},\qquad
 \ell_k(100)\leq\ell_k\left(1+\frac{99}{2}E_k\right)^{1/2}.
\end{equation}

Put \(\rho_k^+=100^{-1}r_k(100)\).  By
\eqref{eq:iteration-contraction},
\begin{equation}\label{eq:jump-smallness}
 b_k\leq2^{-k}b_0,
 \qquad C_{\mathrm{st}}b_k<\delta_{\mathrm{J}}.
\end{equation}
The restart estimate gives
\begin{equation}\label{eq:jump-metric-control}
 \|r_{k+1}(1)-\rho_k^+\|_{C^1(M,\rho_k^+)}
 \leq C_{\mathrm{st}}b_k.
\end{equation}
Constant scaling and \eqref{eq:end-stage-energy-length} imply
\[
 E(\rho_k^+)=100e_k(100)<3,
 \qquad L(\rho_k^+)=10^{-1}\ell_k(100).
\]
Applying Lemma~\ref{lem:transition-jump} gives
\begin{equation}\label{eq:jump-length-energy}
 E_{k+1}\leq100e_k(100)+C_{\mathrm{J}}C_{\mathrm{st}}b_k,
 \qquad
 \left|\log\frac{\ell_{k+1}}{10^{-1}\ell_k(100)}\right|
 \leq C_{\mathrm{J}}C_{\mathrm{st}}b_k.
\end{equation}
Since \(b_k\leq b_0\) and \(b_0\) was chosen small, exponentiating the
second estimate and enlarging one constant to \(C_2\) gives
\begin{equation}\label{eq:E-ell-recursion}
 E_{k+1}\leq\frac{100E_k}{1+99E_k/2}+C_2b_k,
 \qquad
 \ell_{k+1}\leq(1+C_2b_k)
 \left(\frac{1+99E_k/2}{100}\right)^{1/2}\ell_k.
\end{equation}
The sequence \((b_k)\) is summable.  Applying
Lemma~\ref{lem:scalar-quotient-recursion} to the shifted sequences beginning
at \(k=1\), with initial data
\eqref{eq:component-first-restart-energy} and
\eqref{eq:component-first-restart-length}, gives
\begin{equation}\label{eq:scalar-uniform-bounds}
        \sup_{k\geq1}E_k<\infty,
        \qquad \sup_{k\geq1}\ell_k<\infty.
\end{equation}
The bounds depend on \(D_1\) only through
\eqref{eq:component-first-restart-length}.

\proofstep{Step 6. Return to physical time on all controlled blocks.}

For \(k\geq1\), equations \eqref{eq:stage-length-bound} and
\eqref{eq:scalar-uniform-bounds} give
\(\ell_k(\tau)\leq C\sqrt\tau\).  Together with
\eqref{eq:physical-normalized-length}, this proves
\eqref{eq:component-length-bound} on every such block; the first block was
handled in \eqref{eq:component-first-block-length}.  The same argument applies
to either one-sided value at a switching time.

Proposition~\ref{prop:one-block-quantitative} gives
\(d_{\TT^2}(r_k(\tau))<\theta_{\mathrm{blk}}\) on every controlled stage, so
the physical retained fibers of \(h(t)\) have diameter \(O(\sqrt t)\).  The
stage estimate \eqref{eq:iteration-stage-stability} makes \(g'(t)\) uniformly
bilipschitz to \(h(t)\).  Applying
Lemma~\ref{lem:base-length-controls-diameter} to the retained fibration,
using the quotient-length and fiber bounds, gives
\[
        \diam(M,g'(t))\leq C\sqrt t
\]
on every controlled block.  Equation
\eqref{eq:physical-gauge-equivalence} and invariance of diameter under
pullback give the same estimate for \(g(t)\).

\proofstep{Step 7. Control a terminal partial block.}

Let \(k_*\) be the first uncontrolled index, if one exists.  The case
\(k_*=0\) was handled at the beginning of Step~2.  If \(k_*>0\), then
\(T_{k_*}\in I\), the preceding full block controls the metric at
\(T_{k_*}\), and connectedness gives
\(\sup I\leq T_{k_*+1}=100T_{k_*}\).  Applying
\eqref{eq:component-typeIII-distortion} from \(T_{k_*}\) extends the same
\(C\sqrt t\) diameter bound to
\(I\cap[T_{k_*},\infty)\).  If no uncontrolled index exists, the controlled
blocks cover \(I\cap[S,\infty)\).  This proves
\eqref{eq:component-diameter-bound} and completes the proof.
\end{proof}

\begin{corollary}[Diameter and circle convergence on a persistent large-diameter tail]
\label{cor:persistent-large-tail-diameter}
Let \(M\) be \(\TT^3\), a compact \(\Nil\) torus bundle with nontrivial
unipotent monodromy, or a compact \(\Sol\) torus bundle with monodromy
\(H\in\SL(2,\ZZ)\) satisfying \(\Tr H>2\), and let \(g(t)\), \(t\geq0\),
be an immortal Ricci flow on \(M\).
Fix a particular application of
Proposition~\ref{input:bamler-fixed-threshold}, with cutoff \(c_0\) and tail
data \(T_B(c_0),\eps_B^{c_0}\), and suppose that
\[
        \diam(M,g(t))\geq c_0\sqrt t
\]
for every sufficiently large \(t\).  Then there are constants
\(C<\infty\) and \(\ell_\infty>0\) such that
\[
        \diam(M,g(t))\leq C\sqrt t
\]
for all sufficiently large \(t\), and
\begin{equation}\label{eq:persistent-tail-circle-limit}
        (M,t^{-1}g(t))\longrightarrow C_{\ell_\infty}
\end{equation}
in the Gromov--Hausdorff sense, where \(C_{\ell_\infty}\) is the metric
circle of length \(\ell_\infty\).
\end{corollary}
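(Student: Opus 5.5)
The plan is to apply Theorem~\ref{thm:large-component-iteration} with \(\delta_0=c_0\) on the unbounded interval \(I=[T',\infty)\), where \(T'\) is chosen so that \(\diam(M,g(t))\geq c_0\sqrt t\) holds on \(I\). Then we upgrade the scalar conclusions through the second half of Lemma~\ref{lem:scalar-quotient-recursion}.

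\textbf{Choosing the start time.} We pick \(S\in I\) with \(S\geq T_{\mathrm p}\), and we need the start error \(b_0<\eta_0\). By Proposition~\ref{input:bamler-fixed-threshold}(\ref{item:bamler-large-slice}), \(S^{-1}g(S)\) is \(\eps_B^{c_0}(S)\)-close in high \(C^m\) to the invariant metric \(S^{-1}g^B_S\). Averaging fixes \(S^{-1}g^B_S\), so Lemma~\ref{lem:averaging-invariant-holder} gives \(b_0\leq C\eps_B^{c_0}(S)\), which is below \(\eta_0\) for \(S\) large. The starting diameter bound \(D_1\) comes from the bilipschitz Bamler comparison: \(\diam(M,g(S))\leq D_1\sqrt S\) with \(D_1\) finite for this single \(S\), and the theorem's constants do not depend on \(D_1\). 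Every block is then controlled, and the theorem gives \(\diam(M,g(t))\leq C\sqrt t\) for \(t\geq S\), together with the recursion \eqref{eq:E-ell-recursion} for all \(k\geq1\) with summable \(b_k\).

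\textbf{Convergence of the normalized length.} Proposition~\ref{prop:one-block-quantitative} gives \(\ell_k=L(S^1,(r_k)_{yy}(1))\geq c_0/100\) for every \(k\), so \(\inf_k\ell_k>0\). Lemma~\ref{lem:scalar-quotient-recursion} then yields \(\ell_k\to\ell_\infty>0\) and \(E_k\to2\). Next we propagate this into each block using two-sided estimates for \(\tau\in[1,100]\).

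For the upper side, \eqref{eq:stage-length-bound} gives
\[
\tau^{-1/2}\ell_k(\tau)\leq \ell_k\bigl((1+\tfrac12(\tau-1)E_k)/\tau\bigr)^{1/2},
\]
and the right side tends to \(\ell_\infty\) uniformly in \(\tau\) because \(E_k\to2\).

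For the lower side, the length is nonincreasing in a controlled way. We write
\[
\tau^{-1/2}\ell_k(\tau)=\bigl(\ell_k(100)/10\bigr)\cdot\bigl(\ell_k(\tau)/\ell_k(100)\bigr)\cdot\bigl(10/\sqrt\tau\bigr).
\]
The jump estimate \eqref{eq:jump-length-energy} gives \(\ell_k(100)/10=\ell_{k+1}(1+o(1))\to\ell_\infty\). Applying \eqref{eq:LS-length-differential} on \([\tau,100]\) gives
\[
\ell_k(100)/\ell_k(\tau)\leq\bigl((1+\tfrac{99}2E_k)/(1+\tfrac12(\tau-1)E_k)\bigr)^{1/2},
\]
which tends to \(10/\sqrt\tau\) uniformly in \(\tau\). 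Combining the two sides,
\[
\sup_{1\leq\tau\leq100}\bigl|\tau^{-1/2}\ell_k(\tau)-\ell_\infty\bigr|\to0.
\]
Equivalently, \(t^{-1/2}L(S^1,h_{yy}(t))\to\ell_\infty\), including both one-sided values at switching times.

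\textbf{Gromov--Hausdorff limit.} Lemma~\ref{lem:retained-fibration-persistence} says Bamler's fibration at each time is fiber-equivalent to \(\pi\). Feeding Bamler's fiber bound \(\eps_B^{c_0}(t)\sqrt t\to0\) (normalized) into Lemma~\ref{lem:isotopic-fibers-force-fixed}, with \(\theta\to0\), shows that the normalized retained fibers satisfy \(d_{\TT^2}(t^{-1}h(t))\to0\). This requires that the closeness \(\|q-r\|_{C^1}\leq C_{\mathrm{st}}b_k\) also tend to zero, which it does since \(b_k\to0\). Then Lemma~\ref{lem:elementary-circle-collapse} shows \((M,t^{-1}h(t))\) is GH-close to a circle of length near \(\ell_\infty\). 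Since \(g'(t)\) and \(h(t)\) are \((1+Cb_k)\)-bilipschitz and \(g'(t)=\Psi(t)^*g(t)\) is isometric to \(g(t)\), we obtain \eqref{eq:persistent-tail-circle-limit}.

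\textbf{Main obstacle.} The hardest step is this last fiber-shrinking upgrade. Lemma~\ref{lem:isotopic-fibers-force-fixed} must be applied with \(\theta_j\to0\) while its hypotheses, namely Bamler closeness and stage closeness, become correspondingly small along the tail. I would handle this with a diagonal choice: for each \(\theta\), take \(k\) large enough that \(C_{\mathrm{st}}b_k\) and \(\eps_B^{c_0}(T_k)\) fall below the corresponding \(\gamma\) and \(\eta\) thresholds.
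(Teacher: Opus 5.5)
Your proposal is correct and follows essentially the same route as the paper. You start the component iteration at a late $S$ on the persistent tail and use the block lower bound $\ell_k\geq c_0/100$ with Lemma~\ref{lem:scalar-quotient-recursion} to get $\ell_k\to\ell_\infty$ and $E_k\to2$. You then squeeze $\tau^{-1/2}\ell_k(\tau)$ between the forward stage bound and the $[\tau,100]$ bound plus the two-sided restart estimate, and finally shrink the retained fibers via fiber-equivalence and Lemma~\ref{lem:isotopic-fibers-force-fixed} before applying Lemma~\ref{lem:elementary-circle-collapse}. One minor point: $D_1=S^{-1/2}\diam(M,g(S))$ is finite trivially, so the Bamler comparison is not needed to produce it.
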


\begin{proof}
Fix an integer \(N\geq2\) and \(\sigma\in(0,1)\).  Choose \(S\) after the
beginning of the persistent large-diameter tail and large enough for the
averaged Bamler slice at \(S\) to satisfy the start condition
\eqref{eq:component-start-error} with \(\delta_0=c_0\); the high-order
comparison also makes the average Riemannian.  Apply
Theorem~\ref{thm:large-component-iteration} after taking this fixed-threshold
application as the standing one, with
\[
        I=[S,\infty),
        \qquad D_1=S^{-1/2}\diam(M,g(S)).
\]
Every block is controlled, and \eqref{eq:component-diameter-bound} gives the
diameter conclusion.

We use the stage notation from the proof of
Theorem~\ref{thm:large-component-iteration}.  Thus \(T_k=100^kS\), the
non-invariant errors satisfy \(b_k\leq2^{-k}b_0\), and, for \(k\geq1\),
\[
 E_k=\mathcal E_{\max}(r_k(1)),
 \qquad
 \ell_k(\tau)=L(S^1,(r_k)_{yy}(\tau)),
 \qquad \ell_k=\ell_k(1).
\]
The quantitative one-block estimate
\eqref{eq:block-quotient-lower}, applied on every stage, gives
\begin{equation}\label{eq:persistent-tail-length-lower}
        \ell_k(\tau)\geq c_0/100,
        \qquad k\geq1,\quad 1\leq\tau\leq100.
\end{equation}
The sequences satisfy \eqref{eq:E-ell-recursion}.  Hence
Lemma~\ref{lem:scalar-quotient-recursion}, together with
\eqref{eq:persistent-tail-length-lower}, gives
\begin{equation}\label{eq:persistent-tail-start-convergence}
        \ell_k\longrightarrow\ell_\infty>0,
        \qquad E_k\longrightarrow2.
\end{equation}
The two-sided restart estimate \eqref{eq:jump-length-energy} and \(b_k\to0\)
then imply
\begin{equation}\label{eq:persistent-tail-end-length}
        10^{-1}\ell_k(100)\longrightarrow\ell_\infty.
\end{equation}

We next propagate the convergence through each smooth stage.  Put
\[
 A_k(\tau)=1+\tfrac12(\tau-1)E_k,
 \qquad
 B_k=1+\tfrac{99}{2}E_k.
\]
The length estimate in \eqref{eq:stage-length-bound} gives
\[
        \ell_k(\tau)\leq\ell_k A_k(\tau)^{1/2}.
\]
Applying the same invariant energy and length inequalities with initial time
\(\tau\) gives
\[
 \ell_k(100)
 \leq \ell_k(\tau)
       \left(1+\tfrac12(100-\tau)e_k(\tau)\right)^{1/2}
 \leq \ell_k(\tau)\left(\frac{B_k}{A_k(\tau)}\right)^{1/2},
\]
where the last inequality uses the energy bound in
\eqref{eq:stage-length-bound}.  Consequently,
\[
 \ell_k(100)\left(\frac{A_k(\tau)}{B_k}\right)^{1/2}
 \leq \ell_k(\tau)
 \leq \ell_k A_k(\tau)^{1/2}.
\]
Equations \eqref{eq:persistent-tail-start-convergence} and
\eqref{eq:persistent-tail-end-length} squeeze the two sides uniformly for
\(1\leq\tau\leq100\), and give
\begin{equation}\label{eq:persistent-tail-uniform-length}
 \sup_{1\leq\tau\leq100}
 \left|\frac{\ell_k(\tau)}{\sqrt\tau}-\ell_\infty\right|
 \longrightarrow0.
\end{equation}

It remains to show that the retained fibers vanish in the same normalization.
Let \(t=T_k\tau\), and pull Bamler's comparison at time \(t\) into the
assembled stage gauge by setting
\[
 \widehat g^B_{k,\tau}
 =T_k^{-1}\Psi(t)^*g_t^B,
 \qquad
 \widehat\pi^B_{k,\tau}=\pi_t^B\circ\Psi(t).
\]
The \(C^1\)-part of Bamler's comparison, the stage estimate
\eqref{eq:iteration-stage-stability}, and the monotonicity of
\(\eps_B^{c_0}\) show, uniformly for \(1\leq\tau\leq100\), that
\begin{equation}\label{eq:persistent-tail-bamler-to-retained}
 \|\widehat g^B_{k,\tau}-r_k(\tau)\|_{C^1(M,r_k(\tau))}
 \longrightarrow0.
\end{equation}
The bilipschitz comparison and Bamler's fiber estimate also give
\begin{equation}\label{eq:persistent-tail-bamler-fibers}
 \sup_{1\leq\tau\leq100}
 \sup_{F\text{ a }\widehat\pi^B_{k,\tau}\text{-fiber}}
 \diam_{\widehat g^B_{k,\tau}}F
 \longrightarrow0.
\end{equation}
Indeed, after scaling by \(T_k^{-1}\), the right side of the original fiber
estimate is at most a fixed multiple of
\(\eps_B^{c_0}(T_k)\sqrt\tau\).
The persistence conclusion in
Lemma~\ref{lem:retained-fibration-persistence} says that every
\(\widehat\pi^B_{k,\tau}\) is fiber-equivalent to the retained fibration.
Fixing an arbitrary \(\theta>0\), apply
Lemma~\ref{lem:isotopic-fibers-force-fixed} with
\(\lambda=c_0/100\), using
\eqref{eq:persistent-tail-length-lower}--\eqref{eq:persistent-tail-bamler-fibers}.
For all sufficiently large \(k\), it gives
\(d_{\TT^2}(r_k(\tau))<\theta\) simultaneously for all
\(\tau\in[1,100]\).  Thus
\begin{equation}\label{eq:persistent-tail-retained-fibers-vanish}
        \sup_{1\leq\tau\leq100}d_{\TT^2}(r_k(\tau))\longrightarrow0.
\end{equation}

Finally, the identity
\[
        t^{-1}\Psi(t)^*g(t)=\tau^{-1}q_k(\tau)
\]
shows that \((M,t^{-1}g(t))\) is isometric to
\((M,\tau^{-1}q_k(\tau))\).  The \(C^0\)-part of
\eqref{eq:iteration-stage-stability}, together with the uniform normalized
diameter bound, implies
\[
 d_{\mathrm{GH}}\bigl((M,\tau^{-1}q_k(\tau)),
                       (M,\tau^{-1}r_k(\tau))\bigr)\longrightarrow0
\]
uniformly in \(\tau\).  Lemma~\ref{lem:elementary-circle-collapse} gives
\[
 d_{\mathrm{GH}}\left((M,\tau^{-1}r_k(\tau)),
        C_{\ell_k(\tau)/\sqrt\tau}\right)
 \leq\frac{d_{\TT^2}(r_k(\tau))}{\sqrt\tau}.
\]
Together with \eqref{eq:persistent-tail-uniform-length} and
\eqref{eq:persistent-tail-retained-fibers-vanish}, this proves
\eqref{eq:persistent-tail-circle-limit} uniformly on every late block, and
hence for the full limit as \(t\to\infty\).
\end{proof}

\section{Descent through finite covers}
\label{sec:finite-cover-descent}

The main construction is performed on a fixed finite orientable torus-bundle
cover, while the theorem concerns the original manifold.  This section shows
how the diameter estimates, collapse statements, smooth
limits, and pointed universal-cover limits obtained on the working cover
transfer to the original manifold through the fixed finite quotient.

The section is organized as follows.  The first subsection, \emph{Circle
limits under finite quotients}, studies a continuous family of finite
quotients whose covering spaces converge to a metric circle: equivariant
compactness produces a limiting cyclic or dihedral action, the possible
quotients are respectively circles or intervals of explicitly related length,
and continuity rules out switching among different quotient types on the
late-time tail.  The second subsection, \emph{Diameter and smooth descent},
first compares the diameters of a manifold and a fixed finite Riemannian
cover, and then states the descent of normalized diameter bounds, point
collapse, circle limits, pointed universal-cover limits, and fixed-coordinate
smooth or exponential convergence.  These results are collected here so that
the Euclidean, Nil, and Sol case arguments can be carried out
entirely on one working cover and descended only once in the final proof.

\subsection{Circle limits under finite quotients}
The following compactness lemma is used when a circle limit on a fixed finite
cover is descended to the original manifold.

\begin{lemma}[Finite quotients of a circle collapse]
\label{lem:finite-quotient-circle-collapse}
Let \(\Gamma\) be a finite group acting on a compact space \(\widehat X\),
and let \(\widehat d_t\), \(t\geq T\), be a family of
\(\Gamma\)-invariant length metrics that is continuous in the uniform norm on
\(\widehat X\times\widehat X\).  Put
\(X_t=(\widehat X,\widehat d_t)/\Gamma\).  Suppose that
\[
        (\widehat X,\widehat d_t)\longrightarrow C_L
\]
in the Gromov--Hausdorff sense, where \(C_L\) is a metric circle of length
\(L>0\).  Then \(X_t\) has a full Gromov--Hausdorff limit.  The limit is
either a metric circle of length \(L/m\) or a closed interval of length
\(L/(2m)\), for some positive integer \(m\) dividing \(|\Gamma|\).  If every
limiting action of \(\Gamma\) on \(C_L\) is orientation preserving, then the
limit is a circle.
\end{lemma}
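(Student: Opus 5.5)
The plan is to prove the lemma by equivariant Gromov--Hausdorff compactness, then classify the finite isometry groups of a circle, and finally use continuity in \(t\) to rule out switching between different limits.

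\textbf{Step 1: subsequential limits.} Take any sequence \(t_j\to\infty\). Since \(\Gamma\) is finite and acts by isometries of \(\widehat d_{t_j}\), the equivariant compactness theorem of Fukaya applies. After passing to a subsequence, \((\widehat X,\widehat d_{t_j},\Gamma)\) converges in the equivariant Gromov--Hausdorff sense to \((C_L,\Gamma_\infty)\). Here \(\Gamma_\infty\) is the image of a homomorphism \(\Gamma\to\mathrm{Isom}(C_L)\cong O(2)\), and it is the limit of almost-isometric actions. Fukaya's theorem also gives \(X_{t_j}\to C_L/\Gamma_\infty\).

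\textbf{Step 2: classification.} A finite subgroup of \(O(2)\) is either cyclic of rotations \(\ZZ_m\) or dihedral \(D_m\). In the cyclic case the quotient is a circle of length \(L/m\). In the dihedral case a fundamental domain is an arc of length \(L/(2m)\), so the quotient is an interval of that length. Since \(\Gamma_\infty\) is a quotient of \(\Gamma\), its order \(m\) or \(2m\) divides \(|\Gamma|\), and hence \(m\) divides \(|\Gamma|\). If every limiting action preserves orientation, only the cyclic case occurs and the limit is a circle.

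\textbf{Step 3: full convergence (the main obstacle).} Subsequential limits might a priori depend on the subsequence. The candidate limits form a discrete set: circles of length \(L/m\) and intervals of length \(L/(2m)\), with \(m\) ranging over the finitely many divisors of \(|\Gamma|\). Any two distinct candidates are at positive Gromov--Hausdorff distance, so some \(\epsilon_0>0\) separates them all. The map \(t\mapsto X_t\) is continuous into GH space, because \(d_{\mathrm{GH}}(X_t,X_s)\le\|\widehat d_t-\widehat d_s\|_\infty\) via the identity on the quotient set.

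Now suppose \(X_t\) had two distinct subsequential limits \(Y\ne Y'\). By compactness, for large \(t\) every \(X_t\) lies within \(\epsilon_0/3\) of some candidate. Note that the limit exists along every sequence, so \(X_t\) eventually stays near the candidate set. The set of large \(t\) is connected and is covered by the disjoint open sets \(\{t: d_{\mathrm{GH}}(X_t,Y'')<\epsilon_0/3\}\), one for each candidate \(Y''\). Hence only one of these sets is nonempty on the tail. This contradicts the existence of both \(Y\) and \(Y'\) as limits, so the full limit exists.

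The delicate points are twofold. First, one must justify that the quotient of the limit equals the limit of the quotients; this is Fukaya's theorem. Second, the "eventually near the candidate set" statement needs care: argue it by contradiction, extracting a sequence bounded away from all candidates and then applying Step 1 to it.
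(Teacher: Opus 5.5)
Your proposal is correct and follows essentially the same route as the paper. Both arguments take subsequential equivariant limits to get $X_{t_j}\to C_L/\rho(\Gamma)$, then classify the image as cyclic or dihedral in $O(2)$ with $m\mid|\Gamma|$, and finally use GH-continuity of $t\mapsto X_t$ together with connectedness of the time tail to rule out switching among the finitely many candidate limits. The one difference is that you cite Fukaya's equivariant compactness theorem, whereas the paper proves that step by hand: it chooses GH approximations and a diagonal subsequence on a countable dense set to build the homomorphism $\rho:\Gamma\to\mathrm{Isom}(C_L)$, then passes the quotient-distance formula $\min_{\gamma}\widehat d_{t_j}(x,\gamma y)$ to the limit.
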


\begin{proof}
We first identify the possible subsequential limits and include the finite-group
compactness argument.  Let \(t_j\to\infty\), and choose
Gromov--Hausdorff \(\varepsilon_j\)-approximations
\[
 f_j:(\widehat X,\widehat d_{t_j})\longrightarrow C_L,
 \qquad \varepsilon_j\longrightarrow0.
\]
Fix a countable dense set \(A=\{a_1,a_2,\ldots\}\subset C_L\).  For every
\(n\) choose \(x_{j,n}\in\widehat X\) with
\(d_{C_L}(f_j(x_{j,n}),a_n)<2\varepsilon_j\).  Since \(C_L\) is compact and
\(\Gamma\times\mathbb N\) is countable, a diagonal subsequence may be chosen
so that
\[
        f_j(\gamma x_{j,n})
\]
converges for every \(\gamma\in\Gamma\) and every \(n\).  Define its limit to
be \(\rho(\gamma)a_n\).  The approximation property of \(f_j\) and the fact
that \(\gamma\) is an isometry of \((\widehat X,\widehat d_{t_j})\) give
\[
 d_{C_L}(\rho(\gamma)a_n,\rho(\gamma)a_m)
 =d_{C_L}(a_n,a_m).
\]
Thus \(\rho(\gamma)\) extends uniquely to a distance-preserving map
of \(C_L\).

We record the convergence property needed to pass the group law and the
quotient metric to the limit.  If \(z_j\in\widehat X\) and
\(f_j(z_j)\to z\in C_L\), then
\begin{equation}\label{eq:finite-group-approximate-equivariance}
        f_j(\gamma z_j)\longrightarrow\rho(\gamma)z.
\end{equation}
Indeed, approximate \(z\) by some \(a_n\).  The approximation property of
\(f_j\), the isometry of \(\gamma\), and the already established convergence
of \(f_j(\gamma x_{j,n})\) show that the two sides of
\eqref{eq:finite-group-approximate-equivariance} are as close as desired.
Applying this property twice and using the exact group relation on
\(\widehat X\) gives
\[
        \rho(\gamma\gamma')=
        \rho(\gamma)\rho(\gamma')
        \qquad(\gamma,\gamma'\in\Gamma).
\]
In particular, \(\rho(\gamma^{-1})\) is the inverse of
\(\rho(\gamma)\), so each extension is an isometry and
\[
        \rho:\Gamma\longrightarrow\operatorname{Isom}(C_L)
\]
is a homomorphism.

If \(f_j(x_j)\to x\) and \(f_j(y_j)\to y\), then finiteness of \(\Gamma\),
\eqref{eq:finite-group-approximate-equivariance}, and the approximation
property give
\begin{align*}
 d_{X_{t_j}}([x_j],[y_j])
 &=\min_{\gamma\in\Gamma}\widehat d_{t_j}(x_j,\gamma y_j)\\
 &\longrightarrow
   \min_{\gamma\in\Gamma}d_{C_L}(x,\rho(\gamma)y)
  =d_{C_L/\rho(\Gamma)}([x],[y]).
\end{align*}
The convergence is uniform over pairs: otherwise a violating sequence of
pairs would have a subsequence whose \(f_j\)-images converge, contradicting
the displayed limit.  Since the maps \(f_j\) are asymptotically onto, the
induced correspondence between the two orbit spaces is asymptotically onto as
well.  Hence
\begin{equation}\label{eq:finite-circle-quotient-subsequence}
        X_{t_j}\longrightarrow C_L/\rho(\Gamma).
\end{equation}

Every finite subgroup of \(\operatorname{Isom}(C_L)\cong O(2)\) is either a
rotation group of order \(m\) or a dihedral group of order \(2m\); here
the case \(m=1\) in the second alternative is the group generated by one
reflection.  In the rotation case, the quotient in
\eqref{eq:finite-circle-quotient-subsequence} is a circle of length \(L/m\).
In the dihedral case, it is a closed interval of length \(L/(2m)\).  In
either case, \(m\) divides \(|\Gamma|\).  Hence only
finitely many isometry classes can occur as subsequential limits; denote their
set by \(\mathcal S\).

It remains to rule out switching among these finitely many possibilities.  On
the fixed orbit set \(\widehat X/\Gamma\), the quotient distance is
\[
 d_t^\Gamma([x],[y])=\min_{\gamma\in\Gamma}
        \widehat d_t(x,\gamma y).
\]
Consequently,
\[
 \bigl\|d_t^\Gamma-d_s^\Gamma\bigr\|_{C^0}
 \leq \bigl\|\widehat d_t-\widehat d_s\bigr\|_{C^0},
\]
so \(t\mapsto X_t\) is continuous in the Gromov--Hausdorff metric.  The
subsequential compactness just proved implies
\[
        \operatorname{dist}_{\mathrm{GH}}(X_t,\mathcal S)\longrightarrow0;
\]
otherwise a sequence staying a fixed positive distance from \(\mathcal S\)
would have a subsequence satisfying
\eqref{eq:finite-circle-quotient-subsequence}.  If \(\mathcal S\) has only one
member, the full-limit assertion follows immediately.  Otherwise, put
\[
 \Delta=\min\bigl\{d_{\mathrm{GH}}(Y,Z):Y,Z\in\mathcal S,
                         \ Y\not\cong Z\bigr\}>0
\]
and choose \(0<r<\Delta/3\).  The Gromov--Hausdorff balls
\(B_{\mathrm{GH}}(Y,r)\), \(Y\in\mathcal S\), are pairwise disjoint.  For all
sufficiently large \(t\), the space \(X_t\) lies in their union.  Since the interval of sufficiently large times is connected and
\(t\mapsto X_t\) is continuous, its image lies
in one of these balls, say \(B_{\mathrm{GH}}(Y_*,r)\).  The choice
\(r<\Delta/3\), together with
\(\operatorname{dist}_{\mathrm{GH}}(X_t,\mathcal S)\to0\), then gives
\(d_{\mathrm{GH}}(X_t,Y_*)\to0\).  This proves the full-limit assertion.
Finally, if every limiting action is orientation preserving, its image is
cyclic, so the selected limit is a circle.
\end{proof}

\subsection{Diameter and smooth descent}
We first prove the elementary comparison used whenever the proof passes to a
fixed finite cover.

\begin{lemma}[Diameter under a finite Riemannian cover]
\label{lem:finite-cover-diameter}
Let
\(\pi_{\mathrm{fin}}:(M_{\mathrm{fin}},g_{\mathrm{fin}})\to(M,g)\) be a
connected, \(d\)-sheeted Riemannian covering of closed connected manifolds.
Then
\[
        \diam(M,g)\leq\diam(M_{\mathrm{fin}},g_{\mathrm{fin}})
        \leq 2d\,\diam(M,g).
\]
\end{lemma}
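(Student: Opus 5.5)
The lower bound is immediate.  Because \(\pi_{\mathrm{fin}}\) is a local isometry, it maps curves to curves of the same length and is therefore distance nonincreasing.  Since it is onto, any two points of \(M\) have preimages whose distance in \(M_{\mathrm{fin}}\) is at least their distance downstairs.  Hence \(\diam(M,g)\leq\diam(M_{\mathrm{fin}},g_{\mathrm{fin}})\).

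For the upper bound, set \(D=\diam(M,g)\) and fix a basepoint \(\hat x_0\) over \(x_0\in M\).  The first step is to show that every point of \(M_{\mathrm{fin}}\) is close to the fiber over \(x_0\).  Given \(\hat y\in M_{\mathrm{fin}}\), take a minimizing geodesic in \(M\) from \(\pi_{\mathrm{fin}}(\hat y)\) to \(x_0\), which has length at most \(D\).  Lifting it starting at \(\hat y\) gives a path of the same length ending at some point of \(F=\pi_{\mathrm{fin}}^{-1}(x_0)\).  So every point lies within \(D\) of \(F\).

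The main step is to bound the distance from \(\hat x_0\) to any other point of \(F\), which has \(d\) points.  Build a graph \(\mathcal G\) on the vertex set \(F\): join \(\hat a,\hat b\in F\) whenever \(d_{g_{\mathrm{fin}}}(\hat a,\hat b)\leq 2D\).  The claim is that \(\mathcal G\) is connected.

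To prove this, suppose \(F=A\sqcup B\) with both parts nonempty and no edges between them.  Define \(U_A\) as the set of points within distance \(D\) of \(A\), and \(U_B\) similarly for \(B\).  By the first step, \(U_A\cup U_B=M_{\mathrm{fin}}\); both sets are closed and nonempty.  Since \(M_{\mathrm{fin}}\) is connected, they must intersect.  A point \(\hat z\) in the intersection is within \(D\) of some \(\hat a\in A\) and of some \(\hat b\in B\), giving \(d(\hat a,\hat b)\leq 2D\), which is an edge between \(A\) and \(B\).  This contradiction shows \(\mathcal G\) is connected.

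A connected graph on \(d\) vertices has graph diameter at most \(d-1\).  Hence every point of \(F\) lies within \(2D(d-1)\) of \(\hat x_0\).

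Combining the two steps, take \(\hat y,\hat y'\in M_{\mathrm{fin}}\).  Each is within \(D\) of \(F\), and any two points of \(F\) are within \(2D(d-1)\) of each other.  Therefore
\[
d(\hat y,\hat y')\leq D+2D(d-1)+D=2dD.
\]

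The only step that needs any care is the connectedness of \(\mathcal G\), and the covering argument above supplies it using nothing beyond the connectedness of \(M_{\mathrm{fin}}\).
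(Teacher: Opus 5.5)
Your proposal is correct and is essentially the paper's argument: you lift minimizing paths to put every point within \(D\) of the fiber, then use connectedness of \(M_{\mathrm{fin}}\) to join any two fiber points by at most \(d-1\) steps of length at most \(2D\). The only differences are cosmetic. You use closed \(D\)-neighborhoods, which removes the \(\varepsilon\), where the paper uses open \((D+\varepsilon)\)-balls and lets \(\varepsilon\downarrow0\). You also spell out the proof that the graph is connected, which the paper simply asserts.
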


\begin{proof}
The first inequality follows because the quotient map is distance
nonincreasing.  For the second, put \(D=\diam(M,g)\), fix \(p\in M\), and
enumerate the fiber
\[
 \pi_{\mathrm{fin}}^{-1}(p)
 =\{p^{\mathrm{fin}}_1,\ldots,p^{\mathrm{fin}}_d\}.
\]
Every point \(x^{\mathrm{fin}}\in M_{\mathrm{fin}}\) lies within distance \(D\) of one of
the \(p^{\mathrm{fin}}_i\): project \(x^{\mathrm{fin}}\) to \(M\), choose a minimizing
path from its projection to \(p\), and lift this path starting at
\(x^{\mathrm{fin}}\).

For \(\varepsilon>0\), the path-connected open balls
\(B(p^{\mathrm{fin}}_i,D+\varepsilon)\) therefore cover the connected space
\(M_{\mathrm{fin}}\).  Their intersection graph is connected.  Hence any two
centers can be joined by a chain of at most \(d-1\) intersecting balls, and
successive centers in the chain are at distance less than
\(2(D+\varepsilon)\).  Two arbitrary points of \(M_{\mathrm{fin}}\) are each
within \(D\) of a center, so
\[
        \diam(M_{\mathrm{fin}},g_{\mathrm{fin}})
        \leq 2D+2(d-1)(D+\varepsilon).
\]
Letting \(\varepsilon\downarrow0\) proves the second inequality.
\end{proof}

\begin{proposition}[Finite-cover descent]
\label{prop:finite-cover-descent}
Let
\[
 \pi_{\mathrm{fin}}:M_{\mathrm{fin}}\longrightarrow M
\]
be a connected, \(d\)-sheeted finite regular cover of closed connected
manifolds, with deck group \(\Gamma\).  Let \(g(t)\), \(t\geq T\), be a
\(C^0\)-continuous family of Riemannian metrics on \(M\), let
\(g_{\mathrm{fin}}(t)=\pi_{\mathrm{fin}}^*g(t)\), and let \(a(t)>0\) be
continuous.  Put
\[
 \widehat g(t)=a(t)g(t),\qquad
 \widehat g_{\mathrm{fin}}(t)=a(t)g_{\mathrm{fin}}(t).
\]
Then the following conclusions hold.
\begin{enumerate}
\item\label{item:finite-cover-diameter-descent}
At every time,
\[
 \diam(M,\widehat g(t))
 \leq \diam(M_{\mathrm{fin}},\widehat g_{\mathrm{fin}}(t))
 \leq 2d\,\diam(M,\widehat g(t)).
\]
Consequently, any \(O(1)\), \(o(1)\), or two-sided positive diameter bound
for either normalized family transfers to the other, with constants depending
only on \(d\).

\item\label{item:finite-cover-point-descent}
If
\[
 (M_{\mathrm{fin}},\widehat g_{\mathrm{fin}}(t))\longrightarrow\pt
\]
in the Gromov--Hausdorff sense, then
\((M,\widehat g(t))\to\pt\).

\item\label{item:finite-cover-circle-descent}
Suppose that
\[
 (M_{\mathrm{fin}},\widehat g_{\mathrm{fin}}(t))\longrightarrow C_L
\]
for a metric circle of length \(L>0\).  Then
\((M,\widehat g(t))\) has a full Gromov--Hausdorff limit.  It is either a
metric circle of length \(L/m\) or a closed interval of length \(L/(2m)\),
for some positive integer \(m\) dividing \(|\Gamma|\).  The limit is a circle
if every limiting action of \(\Gamma\) on \(C_L\) is orientation preserving;
it is a closed interval if a limiting action contains an
orientation-reversing isometry.

\item\label{item:finite-cover-universal-cover}
The manifolds \(M_{\mathrm{fin}}\) and \(M\) have the same universal cover,
and the lifts of \(g_{\mathrm{fin}}(t)\) and \(g(t)\) to it agree.  Hence every
pointed Cheeger--Hamilton limit of one lifted family is the corresponding
limit of the other.

\item\label{item:finite-cover-smooth-descent}
Suppose, for an integer \(m\geq0\), that
\(\widehat g_{\mathrm{fin}}(t)\) converges in a fixed-coordinate
\(C^m\)-norm to a Riemannian metric \(g_{\mathrm{fin},\infty}\).  Then the
limit is \(\Gamma\)-invariant, descends to a metric \(g_\infty\) on \(M\),
and \(\widehat g(t)\to g_\infty\) in \(C^m\).  A uniform exponential rate
on the cover descends with the same exponent.
\end{enumerate}
\end{proposition}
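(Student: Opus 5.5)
The proof handles the five items in order, each reducing to an earlier lemma or to an elementary covering-space fact. Throughout, \(\widehat g_{\mathrm{fin}}(t)=\pi_{\mathrm{fin}}^*\widehat g(t)\), so \(\pi_{\mathrm{fin}}\) is a Riemannian covering at each time, and \(\Gamma\) acts by isometries of \(\widehat g_{\mathrm{fin}}(t)\) with quotient \((M,\widehat g(t))\).

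\textbf{Items (1) and (2).} Item~(1) is Lemma~\ref{lem:finite-cover-diameter} applied at each time to the scaled metrics. The transfer of \(O(1)\), \(o(1)\), or two-sided bounds is immediate, since the constants are \(1\) and \(2d\). For item~(2), GH convergence to a point is equivalent to the diameter tending to zero, and the first inequality of item~(1) gives \(\diam(M,\widehat g(t))\leq\diam(M_{\mathrm{fin}},\widehat g_{\mathrm{fin}}(t))\to0\).

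\textbf{Item (3).} I would apply Lemma~\ref{lem:finite-quotient-circle-collapse} with \(\widehat X=M_{\mathrm{fin}}\) and \(\widehat d_t\) the Riemannian distance of \(\widehat g_{\mathrm{fin}}(t)\). These are \(\Gamma\)-invariant length metrics, and the orbit-space metric \(\min_\gamma\widehat d_t(x,\gamma y)\) is exactly the Riemannian distance of \((M,\widehat g(t))\), because \(\pi_{\mathrm{fin}}\) is a Riemannian covering and paths lift. The hypothesis that needs checking is uniform continuity of \(t\mapsto\widehat d_t\). Continuity of \(a\) and \(C^0\)-continuity of \(g\) give, locally in \(t\), a bilipschitz comparison
\[
(1+\epsilon)^{-1}\widehat g_{\mathrm{fin}}(s)\leq\widehat g_{\mathrm{fin}}(t)\leq(1+\epsilon)\widehat g_{\mathrm{fin}}(s).
\]
This yields \(|\widehat d_t-\widehat d_s|\leq\epsilon'\,\diam\), and the diameter is locally bounded.

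The lemma then gives the full limit and the circle/interval dichotomy. The orientation-preserving case gives a circle, as stated in the lemma. For the converse clause, if some limiting \(\rho(\Gamma)\) contains a reflection, I would argue as follows. By the no-switching part of the proof, the full limit equals \(C_L/\rho(\Gamma)\) for that subsequence. A group containing a reflection is dihedral, so this quotient is an interval.

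\textbf{Item (4).} This is standard: \(\widetilde M\to M_{\mathrm{fin}}\to M\) exhibits the universal cover of \(M_{\mathrm{fin}}\) as that of \(M\). The lifted metrics agree because \(g_{\mathrm{fin}}=\pi_{\mathrm{fin}}^*g\), so the lifted flows are identical and their pointed limits coincide.

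\textbf{Item (5).} Each \(\widehat g_{\mathrm{fin}}(t)\) is \(\Gamma\)-invariant, and \(\gamma^*\) is continuous in any fixed \(C^m\)-norm. Hence \(\gamma^*g_{\mathrm{fin},\infty}=g_{\mathrm{fin},\infty}\), so the limit descends to a metric \(g_\infty\). Fix a background metric on \(M\) and use its pullback on \(M_{\mathrm{fin}}\); this changes \(C^m\)-norms only by fixed constants. Since \(\pi_{\mathrm{fin}}\) is a local isometry for these backgrounds, the estimate
\[
\|\widehat g(t)-g_\infty\|_{C^m(M)}=\|\widehat g_{\mathrm{fin}}(t)-g_{\mathrm{fin},\infty}\|_{C^m(M_{\mathrm{fin}})}
\]
holds up to those constants. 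It therefore transfers convergence and any exponential rate with the same exponent.

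The only step with real content is item~(3), specifically checking the continuity hypothesis and identifying the interval case from an orientation-reversing limiting action.
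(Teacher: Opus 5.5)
Your proposal is correct and follows the paper's proof essentially item by item: Lemma~\ref{lem:finite-cover-diameter} for (1)--(2), Lemma~\ref{lem:finite-quotient-circle-collapse} with the same bilipschitz continuity check and dihedral-quotient argument for (3), covering-space theory for (4), and passing deck-invariance to the limit plus local-isometry norm identification for (5). Your explicit path-lifting identification of the orbit-space metric \(\min_\gamma\widehat d_t(x,\gamma y)\) with the Riemannian distance of \((M,\widehat g(t))\) fills in a step that the paper leaves implicit.
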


\begin{proof}
Part~\ref{item:finite-cover-diameter-descent} is
Lemma~\ref{lem:finite-cover-diameter}, applied after the common constant
scaling by \(a(t)\).  Convergence of a compact metric space to a point is
equivalent to convergence of its diameter to zero, so
part~\ref{item:finite-cover-point-descent} follows from the first inequality.

For part~\ref{item:finite-cover-circle-descent}, let
\(\widehat d_t\) be the distance function of
\(\widehat g_{\mathrm{fin}}(t)\).  The deck group acts isometrically for every
\(t\).  Continuity of the metric family on the compact manifold implies
continuity of \(t\mapsto\widehat d_t\) in the uniform norm: nearby metrics
are uniformly bilipschitz, and their distance functions are therefore
uniformly close.  Lemma~\ref{lem:finite-quotient-circle-collapse} applies.
If a limiting action contains an orientation-reversing isometry, its finite
image in \(\operatorname{Isom}(C_L)\) is dihedral and its quotient is a closed
interval; the full-limit assertion in that lemma rules out a different
quotient type along another sequence.

Part~\ref{item:finite-cover-universal-cover} is immediate from covering-space
theory and the identity between the two lifted metrics.  Finally, every deck
transformation preserves \(\widehat g_{\mathrm{fin}}(t)\).  Passing to the
fixed-coordinate limit shows that it preserves
\(g_{\mathrm{fin},\infty}\), which therefore descends.  Pullback by a fixed
finite covering identifies the local \(C^m\)-norms upstairs and downstairs,
proving part~\ref{item:finite-cover-smooth-descent} and its statement about
rates.
\end{proof}

\section{The Euclidean and Sol cases}
\label{sec:euclidean-sol}
This section completes the long-time analysis in the Euclidean and Sol cases.
On the working cover, the Euclidean case has identity monodromy and is
eventually forced into the uniform flat-stability regime, while the Sol case
has hyperbolic monodromy with positive eigenvalues, which rules out the
almost-flat alternative and
leaves a persistent large-diameter tail to which the block and quotient-length
estimates apply.  The resulting cover-level
conclusions are exponential convergence to a flat metric in the Euclidean
case, and \(O(\sqrt t)\) diameter together with convergence of the compact
normalized flow to a metric circle in the Sol case.

The section is organized as follows.  After stating the two case conclusions,
the first subsection, \emph{The Euclidean case}, proves that almost-flat slices
occur arbitrarily far out, uses diameter normalization and near-flat
compactness to enter a uniform flat-stability neighborhood on a finite torus
cover, and obtains exponential convergence there.  This
descends directly to the working cover.  The second
subsection, \emph{The Sol case}, excludes the almost-flat alternative
by the non-virtual-nilpotence of the bundle group, applies the persistent-tail
corollary to obtain both the matching \(O(\sqrt t)\) upper bound and the full
metric-circle limit, and invokes the Type-III blowdown theorem only for the
universal-cover Sol limit.
The case conclusions are established on the working cover; the original
manifold is reintroduced only in the proof of the main theorem.

\paragraph{\normalfont\itshape Working cover.}
Throughout this section and Section~\ref{sec:nil-bootstrap}, we use the single
\hyperref[par:working-cover-convention]{working-cover convention}:
\(M\) denotes that fixed cover and \(g(t)\) its lifted flow.  The original
manifold is reintroduced only in Section~\ref{sec:main-theorem-proof}, where
the conclusions are descended at once using
Proposition~\ref{prop:finite-cover-descent}.  The only later finite cover is
the stability cover supplied by Proposition~\ref{prop:near-flat-stability-entry}
in the Euclidean proof.  It is automatically regular because the working
manifold there is a three-torus with abelian fundamental group, and it is used
only to enter the flat-stability theorem and descend the resulting flat limit,
not to alter the working monodromy.
We use throughout \(|\Rm_{g(t)}|\leq C/t\), which follows from
\eqref{eq:bamler-typeIII} in dimension three.

\begin{theorem}[Euclidean and Sol long-time limits]
\label{thm:euclidean-sol-long-time-limits}
Let \(g(t)\), \(t\geq0\), be an immortal Ricci flow on the orientable total
space \(M\) of a \(\TT^2\)-bundle over \(S^1\), under the
\hyperref[par:working-cover-convention]{working-cover convention}.  Let
\(H\in\SL(2,\ZZ)\) be its monodromy, as
in Subsection~\ref{subsec:torus-bundle-conventions}.  Assume either that
\(H=I\), so that \(M\cong\TT^3\) is in the Euclidean case, or that
\(\Tr H>2\), so that \(M\) is in the Sol case with positive-eigenvalue
monodromy.  Then the following hold.
\begin{enumerate}
\item If \(M\) has Euclidean type (equivalently, \(H=I\)), then for every
\(m\geq0\), \(g(t)\) converges
exponentially fast in the \(C^m\)-norm of any fixed smooth background metric,
as \(t\to\infty\), to a flat metric on \(M\).
\item If \(M\) has Sol type (equivalently, \(\Tr H>2\) under the
working-cover convention), then there
are constants \(0<c_-<C<\infty\) such that
\[
        c_-\sqrt t\leq \diam(M,g(t))\leq C\sqrt t,
\]
for all sufficiently large \(t\).  In addition, the Gromov--Hausdorff limit of
\((M,t^{-1}g(t))\) is a circle.  For every basepoint
\(\widetilde m\in\widetilde M\), the lifted blowdown flows
\[
        (\widetilde M,\widetilde m,s^{-1}\widetilde g(s\tau)),
        \qquad \tau>0,
\]
converge, as \(s\to\infty\), in the pointed Cheeger--Hamilton sense to a
pointed flow isometric to the Sol expanding soliton solution
\[
        (\RR^3,0,e^{-2z}dx^2+e^{2z}dy^2+4\tau\,dz^2),
        \qquad \tau>0.
\]
\end{enumerate}
\end{theorem}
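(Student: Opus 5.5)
I will treat the two cases separately, working throughout on the fixed working cover.

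\emph{Sol case.}  First I will show that the almost-flat alternative never occurs for large \(t\).  Suppose that for some \(t\geq T_B(c_{\mathrm{af}})\) one had \(\diam(M,g(t))<c_{\mathrm{af}}\sqrt t\).  Then \eqref{eq:bamler-small-is-almost-flat} and Proposition~\ref{input:almost-flat-topology} would make \(\pi_1(M)=\ZZ^2\rtimes_H\ZZ\) virtually nilpotent.  This is false when \(\Tr H>2\), because \(H\) has an eigenvalue \(>1\) and the group therefore has exponential growth.  Hence \(\diam(M,g(t))\geq c_{\mathrm{af}}\sqrt t\) for all large \(t\), which is the lower bound with \(c_-=c_{\mathrm{af}}\).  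Corollary~\ref{cor:persistent-large-tail-diameter}, applied with \(c_0=c_{\mathrm{af}}\), then gives \(\diam(M,g(t))\leq C\sqrt t\) together with Gromov--Hausdorff convergence of \((M,t^{-1}g(t))\) to a circle \(C_{\ell_\infty}\).  With the Type-III bound \(|\Rm|\leq C/t\) from Proposition~\ref{input:bamler-typeIII}, the hypotheses of Proposition~\ref{input:lott-blowdown} hold.  That proposition supplies the universal-cover convergence to the Sol soliton.

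\emph{Euclidean case.}  The plan has three steps.

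\begin{enumerate}
\item \emph{Almost-flat slices occur at arbitrarily late times.}  Suppose instead that \(\diam\geq c_{\mathrm{af}}\sqrt t\) on a tail.  Corollary~\ref{cor:persistent-large-tail-diameter} gives \(\diam=O(\sqrt t)\), so Proposition~\ref{input:lott-blowdown} applies and forces \((M,t^{-1}g(t))\) to converge to a point.  This means \(t^{-1/2}\diam\to0\), contradicting the assumed lower bound.  Hence for every large \(T\) there is a time \(t\geq T\) with \(\sup|\sec|\,\diam^2\) below any prescribed threshold.  To reach arbitrarily small thresholds, I will redo the same argument with an auxiliary cutoff \(\zeta\) in Proposition~\ref{input:bamler-fixed-threshold}.

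\item \emph{Enter the flat-stability neighborhood.}  At such a slice, rescale so that the diameter equals \(1\); the sectional curvature then has tiny sup.  I will prove a near-flat compactness statement, as announced for Subsection~\ref{subsec:euclidean-case}.  Pass to a finite torus cover on which the lifted metric is noncollapsed, namely the cover corresponding to the short-loop lattice.  Take the diameter-normalized limit of such metrics; by Cheeger--Gromov compactness together with Shi smoothing at positive time, it is a flat torus.  After pulling back by diffeomorphisms isotopic to translations and linear maps, the metric then lies in a uniform \(h^{2+\rho}\)-neighborhood of a compact family \(\mathcal F\) of flat metrics.  The curvature derivative bounds needed here come from Shi's estimates on \([t/2,t]\) under the Type-III bound.

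\item \emph{Conclude and descend.}  Proposition~\ref{input:flat-stability} gives exponential \(C^m\)-convergence to a flat metric on that cover.  The flow does not rescale, and a fixed pullback preserves convergence, so we get convergence on the cover in the original coordinates.  Part~\ref{item:finite-cover-smooth-descent} of Proposition~\ref{prop:finite-cover-descent} then descends it to the working \(\TT^3\).
\end{enumerate}

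The main obstacle is step~2.  Collapse may occur in several directions at different rates, so the choice of finite cover has to depend on the time slice.  I will handle this with a compactness argument: the normalized short-basis of \(\pi_1\) varies in a compact set modulo \(\SL(3,\ZZ)\), and finitely many covers of bounded degree suffice to reach uniformly noncollapsed geometry.  The stability neighborhood and the decay rate must then be chosen uniformly over those finitely many covers.
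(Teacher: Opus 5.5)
The gap is in step~2, which you correctly flag as the main obstacle; the rest of the proposal is sound. Your Sol argument and your first Euclidean step match the paper's proof. That first step uses an auxiliary cutoff \(\zeta\), Corollary~\ref{cor:persistent-large-tail-diameter}, and Proposition~\ref{input:lott-blowdown} to get \(\liminf_{t\to\infty}t^{-1/2}\diam(M,g(t))=0\).

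Your claim that finitely many covers of bounded degree suffice to reach uniformly noncollapsed geometry is false. Suppose the diameter-normalized slice \(D^{-2}g(t_0)\) has a noncontractible loop \(\gamma\) of length \(\ell\). In any cover of degree \(d\), pigeonhole on cosets gives a power \(\gamma^k\) with \(1\leq k\leq d\) that lifts to a closed loop. That loop is noncontractible because \(\pi_1(\TT^3)\) is torsion-free, so the lift has injectivity radius at most \(d\ell/2\) somewhere. Nothing bounds \(\ell\) from below. The slice \(t_0\) is chosen only after \(\varepsilon_{\mathrm{flat}}\) and the stability neighborhood are fixed, and a priori the diameter-normalized almost-flat slices may be arbitrarily collapsed. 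With a degree bound fixed in advance, your compactness/contradiction argument therefore fails on slices of small normalized systole, and no neighborhood that is uniform over finitely many covers can be reached. Note also that the cover whose fundamental group is the short-loop lattice keeps the short loops. What you want is a finite-index sublattice containing no short vectors.

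The missing idea is that uniformity is needed over a compact family of flat metrics on one fixed manifold, not over covers.

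\begin{enumerate}
\item Unwrap each short direction by a factor comparable to the inverse of its length. The lift then has bounded diameter and is noncollapsed, even though the degree is unbounded.
\item Every connected finite cover of \(\TT^3\) is a torus. Identify it with a fixed model \(\TT^3_*\), and absorb that identification, together with the compactness diffeomorphisms, into the covering map.
\item The lifted metrics are then close to a compact set \(\mathcal F\) of marked flat metrics on \(\TT^3_*\). These are chosen as representatives, modulo \(\operatorname{GL}(3,\ZZ)\), of a compact subset of flat moduli.
\item Proposition~\ref{input:flat-stability} is uniform on \(\mathcal F\), and the covering degree never enters the stability estimate.
\end{enumerate}

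This is Proposition~\ref{prop:near-flat-stability-entry}; the paper imports its compactness input from \cite[Claim~2.16]{LottSesum} rather than proving it. Descent by part~\ref{item:finite-cover-smooth-descent} of Proposition~\ref{prop:finite-cover-descent} then applies to the single cover produced at \(t_0\), whatever its degree. That cover is regular because \(\pi_1(\TT^3)\) is abelian.
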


\subsection{The Euclidean case}\label{subsec:euclidean-case}

The following proposition is specific to the Euclidean argument.  It packages
the near-flat compactness step and the passage from compactness modulo
diffeomorphisms to the fixed-coordinate stability neighborhood used below.

\begin{proposition}[Near-flat entry into a uniform stability neighborhood]
\label{prop:near-flat-stability-entry}
Fix \(\rho\in(0,1)\).  There exist an integer
\(m_{\mathrm{flat}}\geq2\), a number
\(\varepsilon_{\mathrm{flat}}>0\), a fixed model torus
\(\TT^3_*\), a compact set \(\mathcal F\) of flat metrics on
\(\TT^3_*\), and a uniform \(h^{2+\rho}\)-stability neighborhood
\(\mathcal U_{\mathcal F}\) of \(\mathcal F\), as in
Proposition~\ref{input:flat-stability}, with the following property.

Let \(q\) be a smooth metric on \(\TT^3\), put
\(D=\diam(\TT^3,q)>0\), and suppose that
\begin{equation}\label{eq:near-flat-entry-hypotheses}
 D^{j+2}\sup_{\TT^3}|\nabla^j\Rm_q|_q
 <\varepsilon_{\mathrm{flat}},
 \qquad 0\leq j\leq m_{\mathrm{flat}}.
\end{equation}
Then there is a connected finite covering map
\[
        p_{\mathrm{st}}:\TT^3_*\longrightarrow\TT^3
\]
such that
\begin{equation}\label{eq:near-flat-entry-conclusion}
        p_{\mathrm{st}}^*(D^{-2}q)\in\mathcal U_{\mathcal F}.
\end{equation}
In particular, the Ricci flow starting from the metric in
\eqref{eq:near-flat-entry-conclusion} converges exponentially in fixed
coordinates to a flat metric.
\end{proposition}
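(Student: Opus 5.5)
The plan is a contradiction-and-compactness argument carried out after passing to a controlled finite cover. First normalize by replacing \(q\) with \(D^{-2}q\), so \(\diam=1\) and \(\sup|\nabla^j\Rm|\le\varepsilon_{\mathrm{flat}}\) for \(0\le j\le m_{\mathrm{flat}}\).

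\emph{Step 1: choose a uniformly noncollapsed cover.} For small \(\varepsilon_{\mathrm{flat}}\), Proposition~\ref{input:almost-flat-topology} and the Gromov--Ruh structure show that \((\TT^3,D^{-2}q)\) is close to a flat torus \(\RR^3/\Lambda\). On the universal cover, Lemma~\ref{lem:universal-cover-inj} is not directly available, but the exponential map at a point is a local diffeomorphism on a ball of radius \(\pi/\sqrt{\varepsilon_{\mathrm{flat}}}\gg1\). The deck group then acts by almost-translations whose translation parts form an almost-lattice \(\Lambda\) with covolume and shortest vectors bounded above by \(C\), since the diameter is \(1\). Take a reduced basis \(v_1,v_2,v_3\) of \(\Lambda\) and integers \(n_i\) with \(n_i|v_i|\in[1,2]\) (or \(n_i=1\) if \(|v_i|\ge1\)). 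The sublattice \(\Lambda'=\bigoplus n_i\ZZ v_i\) gives a finite cover whose pulled-back metric has diameter bounded above and injectivity radius bounded below by dimensional constants. Identify that cover with the fixed model \(\TT^3_*=\RR^3/\ZZ^3\) by a diffeomorphism sending the reduced basis \(n_iv_i\) to the standard generators. Its composite with the covering is the desired \(p_{\mathrm{st}}\). Precomposing a covering map with any diffeomorphism of \(\TT^3_*\) is again a covering map, and this freedom is used below.

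\emph{Step 2: define \(\mathcal F\) and argue by contradiction.} Let \(\mathcal F\) be the flat metrics \(\sum G_{ij}dx^idx^j\) on \(\TT^3_*\) with constant Gram matrix \(G\) that is Minkowski-reduced and has eigenvalues in a fixed interval \([c_1,C_1]\) determined by Step~1. This is a compact set of actual tensors, not modulo diffeomorphisms. Proposition~\ref{input:flat-stability} then supplies \(\mathcal U_{\mathcal F}\). Suppose the conclusion fails for a sequence \(q_k\) with \(\varepsilon_{\mathrm{flat}}=1/k\) and \(m_{\mathrm{flat}}\ge3\). The pulled-back metrics \(\hat q_k\) on \(\TT^3_*\) have bounded curvature derivatives, injectivity radius bounded below and bounded diameter. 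Cheeger--Gromov compactness gives diffeomorphisms \(\phi_k\) with \(\phi_k^*\hat q_k\to g_\infty\) in \(C^{2,\rho'}\) for \(\rho'>\rho\), and \(g_\infty\) is flat. Since \(g_\infty\) is flat on a torus, after an affine change it is constant in standard coordinates. By construction of the marking, its lattice basis is the limit of reduced bases, so \(g_\infty\in\mathcal F\). Replacing \(p_{\mathrm{st}}\) by \(p_{\mathrm{st}}\circ\phi_k\) puts \(p_{\mathrm{st}}^*(D^{-2}q_k)\) within \(C^{2,\rho'}\)-distance \(o(1)\) of \(\mathcal F\). Compactness of the inclusion \(C^{2,\rho'}\subset h^{2+\rho}\), together with smoothness of the metrics, then places it in \(\mathcal U_{\mathcal F}\) for large \(k\). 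This is the contradiction. The final sentence of the proposition is then Proposition~\ref{input:flat-stability}.

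\emph{Main obstacle.} The delicate point is Step~1 together with the marking: the covers and diffeomorphisms must be chosen so that the limit is a genuine tensor in a \emph{fixed} compact set of flat metrics on a \emph{fixed} torus, rather than a metric known only up to \(\mathrm{GL}(3,\ZZ)\). I would first control the almost-translation lattice via Gromov's short-basis argument at scale \(1\). I would then obtain \(\phi_k\) from harmonic coordinates that are close to the affine flat coordinates, so that \(\phi_k\) is homotopic to the marking, and absorb any residual \(\mathrm{GL}(3,\ZZ)\) ambiguity by reduction theory. Reduced Gram matrices with bounded eigenvalues form a compact set, which is why \(\mathcal F\) is defined through reduced bases.
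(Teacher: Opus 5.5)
Your proposal is correct in outline and has the same skeleton as the paper: normalize by \(D^{-2}\), pass to a finite cover, argue by compactness and contradiction, and absorb a diffeomorphism of \(\TT^3_*\) into the covering map. Both substantive steps, however, are done differently.

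The paper never constructs the noncollapsed cover. It imports from Claim~2.16 of Lott--\v{S}e\v{s}um, and the remark following it, that diameter-one metrics on \(\TT^3\) whose scale-invariant curvature derivatives tend to zero become, after finite covers and diffeomorphisms, \(C^\alpha\)-close to a compact subset of flat moduli. It then obtains \(\mathcal F\) abstractly, by lifting that compact subset of moduli to marked flat metrics on \(\TT^3_*\); this uses that \(\operatorname{GL}(3,\ZZ)\) acts properly discontinuously and that the quotient map is open.

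You instead build the cover by hand from a reduced basis of the almost-lattice. Rescaling each basis vector to length in \([1,2]\) leaves the orthogonality defect unchanged, so Minkowski's second theorem bounds \(\lambda_1(\Lambda')\) below. You then apply Cheeger--Gromov compactness directly and take \(\mathcal F\) to be the Minkowski-reduced constant Gram matrices with eigenvalues in \([c_1,C_1]\). Your route gives explicit dimensional control of the cover and a concrete \(\mathcal F\). The paper's route is shorter and offloads the collapsing analysis to a result proved for exactly this purpose.

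Two points need tightening.

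\emph{The almost-translation claim in Step~1.} The real content of Step~1 is that the iterates \(\gamma_i^{n_i}\), with \(n_i\sim|v_i|^{-1}\) possibly unbounded, are still almost translations by \(n_iv_i\). This requires the rotational parts of very short deck elements to be small relative to their displacement. That follows neither from the exponential-map picture nor from Proposition~\ref{input:almost-flat-topology}, which is purely topological. It does follow from Ruh's flat connection with parallel torsion. Since \(\pi_1=\ZZ^3\), the associated nilpotent group is \(\RR^3\) and the Bieberbach argument forces trivial holonomy. Hence the deck group acts by translations in the developing map, and comparison along the concatenated loop of length at most about \(n_i|v_i|\leq2\) controls the error. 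State this explicitly, or simply cite the compactness result the paper uses.

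\emph{The marking step.} The marking issue in your last paragraph is not really an obstacle, but the sentence ``by construction of the marking, \(g_\infty\in\mathcal F\)'' is not justified as written, because Cheeger--Gromov diffeomorphisms need not respect your marking. You only need to precompose every \(\phi_k\) with one fixed diffeomorphism carrying \(g_\infty\) to its reduced constant-coefficient representative. That representative lies in \(\mathcal F\) because the diameter and injectivity-radius bounds pass to the flat limit. No harmonic-coordinate homotopy control is needed.
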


\begin{proof}
Claim~2.16 of \cite{LottSesum} gives the following diameter-normalized
compactness statement.  If \(q_i\) are metrics on \(\TT^3\) with diameter one
and \(\|\Rm_{q_i}\|_{C^0}\to0\), then, after passing to connected finite
covers and applying diffeomorphisms of the covering tori, the lifted metrics
are \(C^1\)-close to a compact subset of the moduli space of flat
three-tori.  The paragraph immediately following that claim observes that,
when scale-normalized curvature-derivative bounds are also available, the same
argument gives closeness in any prescribed finite \(C^\alpha\)-topology.

Choose an integer \(\alpha>2+\rho\), and choose
\(m_{\mathrm{flat}}\) large enough for this \(C^\alpha\)-compactness
argument.  To choose representatives, consider the finite-dimensional
space of marked flat metrics on a fixed model torus \(\TT^3_*\).  The
mapping-class group \(\operatorname{GL}(3,\ZZ)\) acts properly discontinuously
on this space, and the quotient map to flat moduli is open.  For each point of
the compact subset of moduli obtained above, choose a lift and a relatively
compact neighborhood of that lift.  Finitely many of the quotient images cover
the compact subset, so the union of the closures of the corresponding
neighborhoods is a compact set \(\mathcal F\) of marked flat metrics that maps
onto it.  Thus \(\mathcal F\) is a compact set of flat metrics on
\(\TT^3_*\) representing every limiting moduli class.  Proposition~\ref{input:flat-stability} gives a uniform open
stability neighborhood \(\mathcal U_{\mathcal F}\) in the fixed
\(h^{2+\rho}\)-topology.

If no \(\varepsilon_{\mathrm{flat}}\) had the stated property, there would be
a sequence of diameter-one metrics for which the left side of
\eqref{eq:near-flat-entry-hypotheses} tends to zero for every
\(0\leq j\leq m_{\mathrm{flat}}\), but for which no finite-cover pullback,
after any identification of the covering torus with \(\TT^3_*\), belongs to
\(\mathcal U_{\mathcal F}\).  The preceding \(C^\alpha\)-compactness gives
finite covers and diffeomorphisms whose pulled-back metrics converge in
\(C^\alpha\), hence in \(h^{2+\rho}\), to \(\mathcal F\).  For large indices
they lie in \(\mathcal U_{\mathcal F}\), a contradiction.

For a general \(q\), rescaling by \(D^{-2}\) makes the diameter one and turns
\eqref{eq:near-flat-entry-hypotheses} into the preceding scale-one
hypotheses.  The compactness argument initially produces a covering map
\(p_0:\TT^3_*\to\TT^3\) and a diffeomorphism
\(\chi:\TT^3_*\to\TT^3_*\) such that
\(\chi^*p_0^*(D^{-2}q)\in\mathcal U_{\mathcal F}\).  Replacing \(p_0\) by
\(p_0\circ\chi\) absorbs the diffeomorphism into the covering map and gives
\eqref{eq:near-flat-entry-conclusion}.  The final assertion is
Proposition~\ref{input:flat-stability}.
\end{proof}

The constants in Proposition~\ref{prop:near-flat-stability-entry} are
independent of the cutoff in any application of
Proposition~\ref{input:bamler-fixed-threshold}.  In the proof that follows, the
analytic tolerance \(\varepsilon_{\mathrm{flat}}\) determines the separate
normalized-diameter parameter \(\delta_{\mathrm{flat}}\); neither the
auxiliary cutoff \(\zeta\) nor the fixed global cutoff
\(c_{\mathrm{af}}\) is changed.

\begin{proof}[Proof of the Euclidean case]
We first prove the scale-invariant statement
\begin{equation}\label{eq:euclidean-diameter-liminf-zero}
        \liminf_{t\to\infty}t^{-1/2}\diam(M,g(t))=0.
\end{equation}
We now make a fresh application of
Proposition~\ref{input:bamler-fixed-threshold}, independent of the
fixed global application with \(c_0=c_{\mathrm{af}}\).  This new application
does not alter \(c_{\mathrm{af}}\), \(T_B(c_{\mathrm{af}})\), or
\(\eps_B^{c_{\mathrm{af}}}\).  Fix an auxiliary cutoff
\(\zeta\in(0,1)\) with \(K\zeta^2<\eps_{\mathrm{af}}\), apply the block
machinery with \(c_0=\zeta\), and denote the corresponding tail data by
\(T_B(\zeta)\) and \(\eps_B^{\zeta}\).  If the \(\zeta\)-small alternative
did not occur at arbitrarily
large times, then on some tail
\[
        \diam(M,g(t))\geq\zeta\sqrt t.
\]
Corollary~\ref{cor:persistent-large-tail-diameter}, applied with
\(c_0=\zeta\), \(T_B(\zeta)\), and \(\eps_B^{\zeta}\), would give
\(\diam(M,g(t))=O(\sqrt t)\).  The Euclidean case of
Proposition~\ref{input:lott-blowdown} would then imply
\[
        (M,t^{-1}g(t))\longrightarrow\pt,
\]
and hence \(t^{-1/2}\diam(M,g(t))\to0\), contradicting the displayed lower
bound.  Thus the \(\zeta\)-small alternative occurs arbitrarily late for every
such \(\zeta\), which proves \eqref{eq:euclidean-diameter-liminf-zero}.

Fix \(\rho\in(0,1)\), and let
\[
 m_{\mathrm{flat}},\qquad \varepsilon_{\mathrm{flat}},\qquad
 \TT^3_*,\qquad \mathcal F,\qquad \mathcal U_{\mathcal F}
\]
be the data in Proposition~\ref{prop:near-flat-stability-entry}.  By the
\hyperref[par:working-cover-convention]{working-cover convention},
\(M\cong\TT^3\) and \(H=I\) already, so no preliminary cover is needed
to reduce the monodromy to the identity.

The Type-III estimate on \([t_0/2,t_0]\) and Shi's derivative estimates give
constants \(C_j(K)\), independent of \(t_0\), such that
\begin{equation}\label{eq:euclidean-small-branch-derivatives}
 \sup_M|\nabla^j\Rm|_{g(t_0)}
 \leq C_j(K)t_0^{-1-j/2},
 \qquad 0\leq j\leq m_{\mathrm{flat}},
\end{equation}
whenever \(t_0\) is sufficiently large.  Choose
\(\delta_{\mathrm{flat}}>0\) so small that
\begin{equation}\label{eq:euclidean-flat-entry-tolerance}
 C_j(K)\delta_{\mathrm{flat}}^{j+2}
 <\varepsilon_{\mathrm{flat}},
 \qquad 0\leq j\leq m_{\mathrm{flat}}.
\end{equation}
By \eqref{eq:euclidean-diameter-liminf-zero}, choose a sufficiently large
\(t_0\) for which, with \(D=\diam(M,g(t_0))\),
\begin{equation}\label{eq:euclidean-flat-entry-slice}
        \frac{D}{\sqrt{t_0}}<\delta_{\mathrm{flat}}.
\end{equation}
This choice is where the analytic tolerance
\(\varepsilon_{\mathrm{flat}}\) enters; it is independent of both the
auxiliary geometric cutoff \(\zeta\) and the fixed global cutoff
\(c_{\mathrm{af}}\).

Combining \eqref{eq:euclidean-small-branch-derivatives},
\eqref{eq:euclidean-flat-entry-tolerance}, and
\eqref{eq:euclidean-flat-entry-slice} gives
\[
 D^{j+2}\sup_M|\nabla^j\Rm|_{g(t_0)}
 <\varepsilon_{\mathrm{flat}},
 \qquad 0\leq j\leq m_{\mathrm{flat}}.
\]
Proposition~\ref{prop:near-flat-stability-entry}, applied directly to
\(q=g(t_0)\), therefore gives a connected finite covering map
\[
        p_{\mathrm{st}}:\TT^3_*\longrightarrow M
\]
such that
\begin{equation}\label{eq:euclidean-fixed-coordinate-entry}
        p_{\mathrm{st}}^*(D^{-2}g(t_0))
        \in\mathcal U_{\mathcal F}.
\end{equation}
The diffeomorphism arising in compactness modulo diffeomorphisms has already
been absorbed into \(p_{\mathrm{st}}\) in that proposition.  Set
\[
 M_{\mathrm{st}}=\TT^3_*,\qquad
 \pi_{\mathrm{st}}=p_{\mathrm{st}},\qquad
 g_{\mathrm{st}}(t)=\pi_{\mathrm{st}}^*g(t).
\]

Define the restarted normalized flow
\begin{equation}\label{eq:euclidean-restarted-normalized-flow}
        \widehat g_{\mathrm{st}}(\tau)
        =D^{-2}g_{\mathrm{st}}(t_0+D^2\tau),
        \qquad \tau\geq0.
\end{equation}
By uniqueness, this is the actual Ricci flow starting from the metric in
\eqref{eq:euclidean-fixed-coordinate-entry}.  By
Proposition~\ref{input:flat-stability},
\(\widehat g_{\mathrm{st}}(\tau)\) converges exponentially in fixed coordinates
to a flat metric \(\widehat g_{\mathrm{st},\infty}\).  The fixed-background
input there is \cite[Theorem~3.7]{GIK}, which is stated for the actual Ricci
flow.  In its proof, Ricci--DeTurck convergence is established first, and
\cite[Proposition~3.6]{GIK} transfers it to the ungauged flow, without a
time-dependent pullback in the conclusion.  Scaling back gives exponential
convergence of \(g_{\mathrm{st}}(t)\) to
\(D^2\widehat g_{\mathrm{st},\infty}\) in every fixed \(C^m\)-norm.

Since the fundamental group of $M$ is abelian, the connected finite cover
\(\pi_{\mathrm{st}}:M_{\mathrm{st}}\to M\) is already regular.
Part~\ref{item:finite-cover-smooth-descent} of
Proposition~\ref{prop:finite-cover-descent} therefore applies directly: the
flat limit \(D^2\widehat g_{\mathrm{st},\infty}\) descends to a flat metric on
\(M\), and \(g(t)\) converges to it exponentially in every fixed
\(C^m\)-norm.
\end{proof}

\subsection{The Sol case}

\begin{proof}[Proof of the Sol case]
Use the fixed global application of
Proposition~\ref{input:bamler-fixed-threshold} with
\(c_0=c_{\mathrm{af}}\), tail time \(T_B(c_{\mathrm{af}})\), and error
\(\eps_B^{c_{\mathrm{af}}}\).  Its almost-flat alternative cannot occur at any sufficiently
large time.  Indeed, if it occurred, then
\eqref{eq:bamler-small-is-almost-flat} and the almost-flat topological theorem,
Proposition~\ref{input:almost-flat-topology}, would imply that
\(\pi_1(M)\) is virtually nilpotent.  This is impossible in the Sol case.
Indeed, \(\pi_1(M)\) has a finite-index subgroup that is a cocompact lattice
in the Lie group \(\Sol\), which has exponential volume growth.  By the
volume-growth comparison for compact quotients \cite{Svarc} and invariance
of growth type under passage to finite-index subgroups, \(\pi_1(M)\)
therefore has exponential growth.  On the
other hand, \(\pi_1(M)=\ZZ^2\rtimes_H\ZZ\) is polycyclic, so if it were
virtually nilpotent then it would have polynomial growth by
\cite[Theorem~4.3(1)]{WolfGrowth}.  Hence it is not virtually nilpotent.

Consequently, the torus-bundle case occurs for every sufficiently large time,
and
\[
        \diam(M,g(t))\geq c_{\mathrm{af}}\sqrt t.
\]
Corollary~\ref{cor:persistent-large-tail-diameter}, applied with
\(c_0=c_{\mathrm{af}}\), gives the matching upper bound and a number
\(\ell_\infty>0\) such that
\[
        (M,t^{-1}g(t))\longrightarrow C_{\ell_\infty}.
\]
Since the working cover has Thurston type \(\Sol\),
Proposition~\ref{input:lott-blowdown} gives the pointed universal-cover
blowdown to the Sol expanding soliton.  Its weaker classification of
subsequential compact limits in the Sol case is not used to obtain the full
circle limit.
\end{proof}

\section{The Nil case}
\label{sec:nil-bootstrap}
The Nil case, with nontrivial unipotent monodromy, is more delicate because
the flow can move
repeatedly between large-diameter and almost-flat regimes.  The late-time axis
is therefore decomposed into connected components of the normalized-diameter
superlevel set.  Theorem~\ref{thm:large-component-iteration} was formulated to
handle exactly such a component: it starts from one sufficiently late Bamler
slice, retains that fibration on all complete blocks in the component, creates
uniform scalar data after the first restart, propagates the quotient-length
bound, and controls a terminal partial block by the global scale-invariant
curvature estimate.  Thus no separate Nil-specific iteration is needed.

\subsection{Large-diameter components and almost-flat gaps}
For a finite large-diameter component, choose a starting time \(S\) just to
the right of its left endpoint.  The short interval before \(S\) is controlled
directly by continuity of the normalized diameter.  The complete geometric
blocks and the terminal partial block are then controlled by
Theorem~\ref{thm:large-component-iteration}; the complementary gaps lie in the
almost-flat alternative.  The schematic decomposition is shown in
Figure~\ref{fig:nil-large-component}.  If the component has no first
uncontrolled block, the controlled blocks continue indefinitely.

\begin{figure}[htbp]
\centering
\begin{tikzpicture}[x=0.95cm,y=1cm,font=\small]
  \draw[-{Latex[length=2mm]},semithick] (0,0)--(11.7,0);
  \foreach \x/\lab in {0.7/$t_I^-$,2.3/$S=T_0$,4.2/$T_1$,6.1/$\cdots$,8.2/$T_{k_*}$,9.5/$t_I^+$,10.6/$T_{k_*+1}$}{
    \draw (\x,-.12)--(\x,.12);
    \node[below=4pt] at (\x,0) {\lab};
  }
  \draw[line width=3pt,densely dotted] (0.7,.22)--(2.3,.22);
  \draw[line width=3pt] (2.3,.22)--(4.2,.22);
  \draw[line width=3pt] (4.2,.22)--(6.1,.22);
  \draw[line width=3pt] (6.1,.22)--(8.2,.22);
  \draw[line width=3pt,densely dashed] (8.2,.22)--(9.5,.22);
  \draw[line width=2pt,densely dotted,gray] (9.5,.22)--(11.5,.22);
  \node[font=\scriptsize,align=center] at (1.5,.72) {initial short\\ interval};
  \node[font=\scriptsize,align=center] at (5.25,.72) {controlled full blocks};
  \node[font=\scriptsize,align=center] at (8.85,.72) {terminal\\ partial block};
  \node[font=\scriptsize,align=center] at (10.5,1.05) {almost-flat\\ gap};
\end{tikzpicture}
\caption{Schematic decomposition of a finite large-diameter component in the
Nil case.  Theorem~\ref{thm:large-component-iteration} begins at \(S=T_0\) and
runs on every full block contained in the component; its diameter conclusion
also covers the terminal partial block.  If \(T_{k_*}\) is the first
uncontrolled start time, connectedness gives
\(t_I^+\leq T_{k_*+1}=100T_{k_*}\), so Type-III distortion controls the
terminal interval.}
\label{fig:nil-large-component}
\end{figure}

\subsection{The global diameter bound and Nil blowdown}

\begin{theorem}[Nil diameter bound and blowdown]
\label{thm:nil-diameter-blowdown}
Let \(g(t)\), \(t\geq0\), be an immortal Ricci flow on the orientable total
space \(M\) of a \(\TT^2\)-bundle over \(S^1\) with nontrivial unipotent
monodromy, under the
\hyperref[par:working-cover-convention]{working-cover convention}.  Then there
is a constant \(C<\infty\) such that
\[
        \diam(M,g(t))\leq C\sqrt t,
\]
for all sufficiently large \(t\).  In addition,
\[
        \lim_{t\to\infty}(M,t^{-1}g(t))=\pt
\]
in the Gromov--Hausdorff sense.  On every compact \(\tau\)-interval in
\((0,\infty)\), the lifted blowdown flows on the universal cover converge as
\(s\to\infty\) in the pointed Cheeger--Hamilton sense to a pointed flow
isometric to
\[
        \left(\RR^3,0,
        \frac{1}{3\tau^{1/3}}
        \left(dx+\frac12 y\,dz-\frac12 z\,dy\right)^2
        +\tau^{1/3}(dy^2+dz^2)
        \right).
\]
\end{theorem}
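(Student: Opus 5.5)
The plan is to prove the diameter bound \(\diam(M,g(t))\leq C\sqrt t\) first and then to invoke Proposition~\ref{input:lott-blowdown}, whose Nil case gives both the point limit and the universal-cover blowdown. We use the fixed global application of Proposition~\ref{input:bamler-fixed-threshold} with \(c_0=c_{\mathrm{af}}\), together with Theorem~\ref{thm:large-component-iteration} with \(\delta_0=c_{\mathrm{af}}\), and let \(T_{\mathrm{p}}\) and \(\eta_0\) be its constants. The set of late times splits into two parts. On the complement of \(\{t\geq T_{\mathrm{p}}: t^{-1/2}\diam(M,g(t))\geq c_{\mathrm{af}}\}\) we already have \(\diam(M,g(t))<c_{\mathrm{af}}\sqrt t\). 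The superlevel set is closed, because the normalized diameter is continuous, so it is a union of closed connected components \(I\). It therefore suffices to find one constant \(C\), independent of \(I\), such that \(\diam(M,g(t))\leq C\sqrt t\) on every component.

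Fix a component \(I\) with left endpoint \(t_I^-\). If \(t_I^->T_{\mathrm p}\), then \(t_I^-\) is a boundary time, and continuity gives \(\diam(M,g(t_I^-))=c_{\mathrm{af}}\sqrt{t_I^-}\). If the component begins at \(T_{\mathrm p}\), there is only one such component, and we handle it by enlarging \(C\). The key choice is the starting time \(S\). By the first paragraph of Step~1 in the proof of Theorem~\ref{thm:large-component-iteration}, for \(t\geq T_{\mathrm p}\) in the large-diameter case the averaged Bamler slice satisfies \eqref{eq:component-prepared-start}. The start error \(b_0\) is controlled by Bamler's \(C^{N}\)-comparison error \(\eps_B^{c_{\mathrm{af}}}(t)\), so after enlarging \(T_{\mathrm p}\) once (depending only on the tail data) the error condition \eqref{eq:component-start-error} holds automatically at every such time. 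We may therefore take \(S=t_I^-\) itself, or any point of \(I\) within a factor \(2\) of it. The hypothesis \eqref{eq:component-start-diameter} then holds with \(D_1=2c_{\mathrm{af}}\), using Type-III distortion \eqref{eq:component-typeIII-distortion} in the second case. This \(D_1\) is uniform in \(I\).

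Theorem~\ref{thm:large-component-iteration} now gives \(\diam(M,g(t))\leq C\sqrt t\) on \(I\cap[S,\infty)\), with \(C\) depending only on \(K,c_{\mathrm{af}},N,\sigma\) and \(D_1\), hence uniform across components. This holds whether \(I\) is bounded, including its terminal partial block, or unbounded. If \(S>t_I^-\), the leftover interval \([t_I^-,S]\) has ratio at most \(2\) and is covered by \eqref{eq:component-typeIII-distortion}. Taking the maximum with \(c_{\mathrm{af}}\) gives the global bound. Proposition~\ref{input:lott-blowdown} in the Nil case then yields \((M,t^{-1}g(t))\to\pt\) and pointed Cheeger--Hamilton convergence of the lifted blowdowns to the Nil soliton.

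The main obstacle I expect is uniformity. One must check that the admissibility of the averaged Bamler slice, namely \(b_0<\eta_0\), Riemannian \(r_0(1)\), and \(\BG_N\), holds at every late large-diameter time, not merely at some time. It must hold at the boundary time \(t_I^-\) itself, where Proposition~\ref{input:bamler-fixed-threshold}(2) does apply, since the inequality there is non-strict. If that pointwise admissibility fails, I would instead pick \(S\) as the first time in \(I\) after \(t_I^-\) with \(S\leq 2t_I^-\), and control \([t_I^-,S]\) by distortion as above.
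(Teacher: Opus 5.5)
Your proposal is correct and follows essentially the same route as the paper. Both arguments enlarge the start time so that the averaged Bamler slice meets \eqref{eq:component-start-error} at every late large-diameter time. Both then apply Theorem~\ref{thm:large-component-iteration} on each component of the late superlevel set with a component-independent \(D_1\), treat the single component touching the initial threshold with its own finite \(D_1\), and conclude with Proposition~\ref{input:lott-blowdown}.

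The differences are cosmetic. The paper picks \(S\in I\) near \(t_I^-\) so that \(t^{-1/2}\diam(M,g(t))\le 2c_{\mathrm{af}}\) on \([t_I^-,S]\), while your choice \(S=t_I^-\) works directly because the component is closed. In your factor-\(2\) fallback, Type-III distortion gives \(D_1=C_{\mathrm{III}}^{1/2}c_{\mathrm{af}}\) rather than \(2c_{\mathrm{af}}\); this is still uniform in \(I\), so nothing else changes.
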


\begin{proof}
Fix \(N\geq2\) and \(\sigma\in(0,1)\), and set
\[
        \mathcal D(t)=t^{-1/2}\diam(M,g(t)).
\]
Let \(\eta_0\) and \(T_{\mathrm{p}}\) be the constants in
Theorem~\ref{thm:large-component-iteration}, applied with the
fixed global application \(c_0=c_{\mathrm{af}}\) and
\(\delta_0=c_{\mathrm{af}}\).  Choose
\(T_*\geq T_{\mathrm{p}}\) so large that, at every torus-bundle slice
\(t\geq T_*\), averaging the parabolically rescaled metric in Bamler's
fibration produces a Riemannian metric whose \(C^{N,\sigma}\) distance from the
rescaled metric is less than \(\eta_0\), as required by
\eqref{eq:component-start-error}.  Define
\[
        \mathcal L=\{t\geq T_*:\mathcal D(t)\geq c_{\mathrm{af}}\}.
\]
On \(\mathcal L^c\), the desired estimate holds with constant \(c_{\mathrm{af}}\), and the
slices are in the almost-flat alternative in
Proposition~\ref{input:bamler-fixed-threshold}.  At every time in
\(\mathcal L\), the torus-bundle comparison is available by the other case of
the same dichotomy.  It remains to control each connected component of
\(\mathcal L\).

Let \(I\) be a nontrivial component with finite left endpoint
\(t_I^->T_*\).  By continuity, \(\mathcal D(t_I^-)=c_{\mathrm{af}}\).  Choose \(S\in I\)
close enough to \(t_I^-\) that
\[
        \mathcal D(t)\leq2c_{\mathrm{af}}
        \qquad\text{for }t\in I\cap[t_I^-,S].
\]
At time \(S\), use the torus-bundle fibration and average in that same
fibration.  By the choice of \(T_*\), the resulting start metric satisfies the
smallness hypothesis of
Theorem~\ref{thm:large-component-iteration}.  Apply that theorem with the fixed global application
\(c_0=c_{\mathrm{af}}\), with \(\delta_0=c_{\mathrm{af}}\) and
\(D_1=2c_{\mathrm{af}}\).  Its constants are uniform over all such
components.  The short interval \([t_I^-,S]\cap I\) is already controlled by
\(\mathcal D\leq2c_{\mathrm{af}}\).  If \(I\) has a finite right endpoint \(t_I^+\), then
\(\mathcal D(t_I^+)=c_{\mathrm{af}}\); the component theorem already includes
that endpoint.  Singleton components also
satisfy \(\mathcal D=c_{\mathrm{af}}\).

There may be one component with left endpoint \(T_*\).  If it is bounded, it
does not affect the large-time estimate.  If it is unbounded, choose one
sufficiently large \(S\) in it and set \(D_1=\mathcal D(S)<\infty\).
Applying Theorem~\ref{thm:large-component-iteration} with the fixed global
application \(c_0=c_{\mathrm{af}}\) at that \(S\) controls the component from
\(S\) onward;
the constant may depend on this
single finite value of \(D_1\).  The bounded interval before \(S\) is
irrelevant.

Combining the estimates on \(\mathcal L\) and \(\mathcal L^c\) gives
\[
        \diam(M,g(t))=O(\sqrt t).
\]
Together with the Type-III curvature estimate, this is precisely the
hypothesis of Proposition~\ref{input:lott-blowdown}.  Its Nil case gives the
Gromov--Hausdorff collapse of \((M,t^{-1}g(t))\) to a point and the pointed
Cheeger--Hamilton blowdown to the Nil expanding soliton displayed in the
statement.
\end{proof}

\section{Proof of the main theorem}
\label{sec:main-theorem-proof}
The purpose of the
section is to return from the fixed working cover to the original manifold and
assemble the Euclidean, Nil, and Sol conclusions into the three
parts of the main theorem.

\begin{proof}[Proof of Theorem~\ref{thm:main}]
For this proof only, write \(M_0\) for the manifold denoted by \(M\) in the
statement.  Apply Proposition~\ref{prop:finite-torus-bundle-covers} once and
fix the finite regular orientable torus-bundle cover
\[
        \pi_{\mathrm{fin}}:M_{\mathrm{fin}}\longrightarrow M_0
\]
supplied by its stronger conclusion.  Let
\(g_{\mathrm{fin}}(t)=\pi_{\mathrm{fin}}^*g(t)\).  By construction,
\((M_{\mathrm{fin}},g_{\mathrm{fin}}(t))\) satisfies the
\hyperref[par:working-cover-convention]{working-cover convention}.  Apply
Proposition~\ref{input:bamler-fixed-threshold} directly to this lifted flow
with the fixed global choice \(c_0=c_{\mathrm{af}}\).
This cover is fixed for the remainder of the proof, and each case conclusion
is descended only after the corresponding cover-level theorem has been
applied.

\medskip\noindent\emph{The Euclidean case.}
The Euclidean conclusion of
Theorem~\ref{thm:euclidean-sol-long-time-limits} gives exponential convergence of
\(g_{\mathrm{fin}}(t)\) to a flat metric.  Part~\ref{item:finite-cover-smooth-descent}
of Proposition~\ref{prop:finite-cover-descent} descends the limit and
all fixed-order exponential estimates to \(M_0\).

The \(C^2\)-convergence estimate gives
\(|\Rm_{g_{\mathrm{fin}}(t)}|=O(e^{-\lambda t})\) for some \(\lambda>0\),
while the diameter remains bounded.  Proposition~\ref{input:lott-blowdown}
therefore gives point collapse of
\((M_{\mathrm{fin}},t^{-1}g_{\mathrm{fin}}(t))\) and the static Euclidean
blowdown on the universal cover.  Parts~\ref{item:finite-cover-point-descent}
and \ref{item:finite-cover-universal-cover} of
Proposition~\ref{prop:finite-cover-descent} give the corresponding
conclusions on \(M_0\).  This proves part~(1).

\medskip\noindent\emph{The Nil case.}
Applying
Theorem~\ref{thm:nil-diameter-blowdown} to \(g_{\mathrm{fin}}(t)\) gives point
collapse on the compact cover and the Nil expanding-soliton blowdown on the
common universal cover.  Parts~\ref{item:finite-cover-point-descent} and
\ref{item:finite-cover-universal-cover} of
Proposition~\ref{prop:finite-cover-descent} descend these conclusions.
This proves part~(2).

\medskip\noindent\emph{The Sol case.}
The Sol conclusion of
Theorem~\ref{thm:euclidean-sol-long-time-limits} gives a metric-circle limit on the
compact cover and the Sol blowdown on the common universal cover.
Parts~\ref{item:finite-cover-circle-descent} and
\ref{item:finite-cover-universal-cover} of
Proposition~\ref{prop:finite-cover-descent} show that the limit on \(M_0\) is
a metric circle or a compact interval of positive length; they also preserve
the universal-cover limit.  This proves part~(3).
\end{proof}

\appendix
\section{Uniform lifted parabolic estimates}
\label{app:parabolic-estimates}

This appendix provides the analytic foundation that makes the main iteration
uniform in the collapsing regime.  All collapse-uniform estimates are formulated on the
universal cover with constants controlled by finite-order lifted geometry,
rather than by the injectivity radius of the compact quotient.  It develops the
linear and quasilinear Schauder theory and establishes the
continuous-dependence, positive-time smoothing, and regular-restart results
invoked in Sections~\ref{sec:finite-time-decay} and~\ref{sec:stage-restart}.

\paragraph{\normalfont\itshape Standard inputs and deductions made here.}
The imported analytic ingredients are the harmonic-radius estimate of
Anderson--Cheeger \cite[Theorem~0.3 and Section~1]{AndersonCheeger}, the
harmonic-coordinate elliptic identity and regularity theorem of
DeTurck--Kazdan \cite[Lemma~4.1 and Theorem~4.5(b)]{DeTurckKazdan}, the scalar
parabolic estimates from Lieberman itemized in
Remark~\ref{rem:lieberman-scalar-versus-system}, and DeTurck's compact-manifold
local existence argument \cite[Theorem and proof, pp.~158--161]{DeTurck}.
The fixed time-slab atlas, its uniform overlap and cutoff bounds, the passage
to scalar-principal tensor systems, the lifted patching estimates, the
finite-order induction, and the collapse-independent restart bounds are the
deductions carried out in this appendix.

The appendix is organized as follows.  The opening unnumbered subsection,
\emph{Estimates used in the body}, displays the precise estimates
imported by the body and the chain from a bounded initial slice to uniform
parabolic geometry, linear control, stage stability, and regular restart.  The
subsection \emph{Parabolic H\"older spaces and lifted bounded geometry} fixes
the Euclidean and intrinsic parabolic norms, the metric-compatible time
derivative, and the uniformly local convention on the universal cover.  The
subsection \emph{From slice geometry to parabolic coordinate systems}
constructs a fixed-coordinate harmonic atlas with uniform overlap and cutoff
bounds, proves equivalence of intrinsic and chart norms, and propagates a
\(\BG_m\) initial-face bound to a full \(\PBG_m\) bound on a finite stage.  The
subsection \emph{Scalar-principal systems and coefficient regularity} derives
the difference equation for two Ricci--DeTurck solutions and gives the
finite derivative counts for its principal, first-order, and zeroth-order
coefficients.  The subsection \emph{Uniform Schauder estimates on lifted
cylinders} proves the required finite-order Cauchy--Dirichlet estimate,
including the initial and artificial boundary faces, and patches the local
estimates to global and interior collapse-independent estimates on the
universal cover.  The subsection \emph{Quasilinear derivative gain} obtains
one positive-time derivative at a time for Ricci--DeTurck systems by
differentiating the equation and applying the linear theory on nested
cylinders.  The final subsection, \emph{Continuation, smoothing, and regular
restart}, closes the uniform continuous-dependence bootstrap, uses a
fixed-quotient continuation criterion only after all collapse-independent
bounds have been established, gains the five endpoint derivatives needed by
the body, and verifies the rescaled averaged restart geometry.  The spaces
\(X_s^m\), \(Y_s^m\), \(\BG_m\), and \(\PBG_m\) were introduced in
Section~\ref{sec:finite-time-decay} and are used here with the precise
conventions given below.

\subsection*{Estimates used in the body}
The body uses the following estimates from the appendix.
All intrinsic norms are evaluated after lifting to the universal cover.
Whenever a coordinate estimate is used, \(\PBG\) provides fixed lifted charts
whose norms are uniformly equivalent to the intrinsic ones; the resulting
constants are therefore independent of collapse of the compact quotient.

\begin{center}
\small
\renewcommand{\arraystretch}{1.15}
\begin{tabular}{@{}>{\raggedright\arraybackslash}p{0.22\textwidth}>{\raggedright\arraybackslash}p{0.31\textwidth}>{\raggedright\arraybackslash}p{0.39\textwidth}@{}}
\textbf{Result} & \textbf{Hypotheses} & \textbf{Conclusion and use in the body}\\ \hline
Lemma~\ref{lem:slice-to-parabolic-geometry} & A stage-start bound \(\BG_m(s(a);\Lambda_0)\), a sectional-curvature bound, and a bounded time interval. & A full parabolic bound \(\PBG_m(s;\Lambda_1;[a,b])\); this provides the fixed lifted charts for every stage.\\
Proposition~\ref{prop:uniform-lifted-linear-schauder} & \(\PBG_m\), a finite tensor system with scalar principal part, and controlled coefficients. & Collapse-independent global and interior Schauder estimates; used for translation differences and endpoint smoothing.\\
Proposition~\ref{prop:uniform-lifted-rdt-stability} & \(\PBG_N\) and a sufficiently small \(C^{N,\sigma}\) initial perturbation. & Existence on the whole stage and a linear \(X_s^N\) stability estimate; used in stage preparation.\\
Proposition~\ref{prop:positive-time-regular-restart} & Stage closeness in \(X_r^N\) and curvature control for the locally \(\TT^2\)-invariant background. & A \(C^{N+5,\sigma}\)-small rescaled endpoint, an averaged restart, and uniform \(\BG_N\); used at every switch.\\
\end{tabular}
\end{center}

The estimates are related by
\[
\begin{gathered}
 \BG_m\text{ at the initial face}
 \xrightarrow{\text{Lemma~\ref{lem:slice-to-parabolic-geometry}}}
 \PBG_m\text{ on the stage},\\[3pt]
 \PBG_m+\text{scalar principal part}
 \xrightarrow{\text{Proposition~\ref{prop:uniform-lifted-linear-schauder}}}
 \text{uniform linear control},\\[3pt]
 \text{uniform linear control}+\text{quasilinear bootstrap}
 \Longrightarrow \text{stage stability and regular restart}.
\end{gathered}
\]
Lemma~\ref{lem:uniform-parabolic-atlas} constructs the fixed-coordinate atlas,
and Lemma~\ref{lem:local-rdt-one-derivative-gain} provides the finite number of
positive-time derivative gains.  The remainder of the appendix proves each
implication displayed above.

\subsection{Parabolic H\"older spaces and lifted bounded geometry}
\label{subsec:lifted-parabolic-estimates}

We first fix the analytic conventions used throughout the paper.  Spatial
H\"older spaces on a time slice are denoted by \(C^{m,\sigma}\).  On a
Euclidean parabolic cylinder \(Q\), put
\[
 d_{\mathrm{p}}((x,t),(y,s))=\max\{|x-y|,|t-s|^{1/2}\}.
\]
For a scalar function, or for a component of a tensor in a fixed frame, set
\[
\begin{gathered}
 {}[v]^{\mathrm{p}}_{\sigma;Q}
 =\sup_{P\ne P'\in Q}\frac{|v(P)-v(P')|}{d_{\mathrm{p}}(P,P')^\sigma},\\[3pt]
 \langle v\rangle_{\theta,t;Q}
 =\sup_{\substack{(x,t),(x,s)\in Q\\t\ne s}}
   \frac{|v(x,t)-v(x,s)|}{|t-s|^{\theta/2}}.
\end{gathered}
\]
For an integer \(m\geq0\) and \(\sigma\in(0,1)\), define
\begin{equation}\label{eq:parabolic-holder-norm-definition}
\begin{split}
 \|v\|_{C^{m+\sigma,(m+\sigma)/2}(Q)}
 ={}&\sum_{|\nu|+2j\leq m}
       \|\partial_x^\nu\partial_t^jv\|_{C^0(Q)}\\
 &+\sum_{|\nu|+2j=m}
       [\partial_x^\nu\partial_t^jv]^{\mathrm{p}}_{\sigma;Q}\\
 &+\sum_{|\nu|+2j=m-1}
       \langle\partial_x^\nu\partial_t^jv\rangle_{1+\sigma,t;Q},
\end{split}
\end{equation}
where the last sum is empty when \(m=0\).  For a finite tensor system, the
norm is the sum of the component norms in the chosen fixed frame.  This is
Lieberman's \(H_{m+\sigma}\)-norm from
\cite[Chapter~IV, Section~1]{Lieberman}, written in the
\(C^{m+\sigma,(m+\sigma)/2}\) notation used here.  Thus one spatial
derivative has parabolic weight one and one time derivative has weight two.

\paragraph{\normalfont\itshape Norm convention.}\label{par:lifted-norm-convention}
The spaces \(X_s^m\) and \(Y_s^m\) are defined intrinsically, before any
bounded-geometry hypothesis is imposed.  Lift \(s(t)\) and a tensor \(U\) to
the universal cover.  For a time-dependent vector field \(V\), define the
metric-compatible covariant time derivative
\begin{equation}\label{eq:metric-compatible-time-derivative}
        \mathcal D_t^sV
        =\partial_tV+\frac12s(t)^{-1}(\partial_ts(t))V,
\end{equation}
and extend \(\mathcal D_t^s\) to arbitrary tensor fields by duality and the
Leibniz rule.  Its time-parallel transport
\(\mathsf T^s_{u\to t}(x)\) is an isometry from the tensor norm induced by
\(s(u)\) at \(x\) to that induced by \(s(t)\).

Set
\begin{equation}\label{eq:intrinsic-holder-radius}
 r_s=\frac18\min\left\{1,
       \inf_{(x,t)\in\widetilde M\times[a,b]}
       \inj_{\widetilde s(t)}(x)\right\}.
\end{equation}
For the fixed smooth family on the compact time interval, the infimum is
positive.  This canonical choice also has a uniform positive lower bound
whenever a \(\PBG\) bound is imposed.  If \(V\) is a time-dependent tensor, let
\begin{align}
 [V]^x_{\alpha;s}
 &=\sup_t\sup_{0<d_{s(t)}(x,y)<r_s}
   \frac{\left|V(x,t)-\mathsf P^{s(t)}_{y\to x}V(y,t)\right|_{s(t)}}
        {d_{s(t)}(x,y)^\alpha},                                    \\
 \langle V\rangle^t_{\alpha;s}
 &=\sup_x\sup_{u\ne t}
   \frac{\left|V(x,t)-\mathsf T^s_{u\to t}(x)V(x,u)\right|_{s(t)}}
        {|t-u|^{\alpha/2}},                                        \\
 [V]^p_{\alpha;s}&=[V]^x_{\alpha;s}+\langle V\rangle^t_{\alpha;s}.
\end{align}
Here \(\mathsf P^{s(t)}_{y\to x}\) denotes parallel transport along the
unique short \(s(t)\)-geodesic.  Replacing the numerical factor \(1/8\) in
\eqref{eq:intrinsic-holder-radius} by any fixed smaller positive factor gives
an equivalent norm.

For an integer \(m\geq0\), define
\begin{equation}
\begin{split}
 \|U\|_{\mathcal C_s^{m+\sigma,(m+\sigma)/2}[a,b]}
 ={}&\sum_{k+2j\leq m}
   \left\|(\nabla^{s(t)})^k(\mathcal D_t^s)^jU\right\|_{C^0_s}\\
 &+\sum_{k+2j=m}
   \left[(\nabla^{s(t)})^k(\mathcal D_t^s)^jU\right]^p_{\sigma;s}\\
 &+\sum_{k+2j=m-1}
   \left\langle(\nabla^{s(t)})^k(\mathcal D_t^s)^jU
   \right\rangle^t_{1+\sigma;s},
\end{split}
\end{equation}
where the last sum is empty when \(m=0\), and \(C^0_s\) denotes the
space-time supremum measured with \(s(t)\).  Taking the supremum on the
universal cover defines the deck-invariant norm.  The spaces in
\eqref{eq:XY-spaces} are
\[
 \|U\|_{X_s^m[a,b]}
 =\|U\|_{\mathcal C_s^{m+\sigma,(m+\sigma)/2}[a,b]},
 \qquad
 \|U\|_{Y_s^m[a,b]}
 =\|U\|_{\mathcal C_s^{m-2+\sigma,(m-2+\sigma)/2}[a,b]}.
\]
Thus they are well defined for every smooth reference flow, even before a
uniform atlas is specified.

For a tensor on the universal cover that need not be deck-invariant, we retain
\[
 \|U\|_{C^{m+\sigma,(m+\sigma)/2}_{\mathrm{uloc}}}
\]
for the supremum of the Euclidean component norms
\eqref{eq:parabolic-holder-norm-definition} in a fixed lifted parabolic atlas.
This notation is used for translation differences only after the required
\(\PBG\) bound has been established.

\subsection{From slice geometry to parabolic coordinate systems}

\begin{lemma}[Uniform parabolic atlas from bounded geometry]
\label{lem:uniform-parabolic-atlas}
Fix \(n\), \(m\geq2\), \(\sigma\in(0,1)\), \(\Lambda\geq1\), and
\(L>0\).  Let \(s(t)\), \(a\leq t\leq b\), be a Ricci flow with
\(b-a\leq L\), and suppose that the inequalities in
\eqref{eq:bounded-lifted-geometry} hold with this \(\Lambda\).  Then there
are \(r_*>0\) and \(A_*<\infty\), depending only on
\(n,m,\sigma,\Lambda,L\), with the following properties.  For every
\((\widetilde x_0,t_0)\), there are harmonic
coordinates for \(\widetilde s(t_0)\) on
\(B_{\widetilde s(t_0)}(\widetilde x_0,4r_*)\).  Keeping the spatial
coordinates fixed, they remain valid on
\[
 B_{\widetilde s(t_0)}(\widetilde x_0,4r_*)
 \times\bigl([t_0-(4r_*)^2,t_0+(4r_*)^2]\cap[a,b]\bigr).
\]
Parabolic estimates may therefore be applied on backward, forward, or
endpoint-truncated subcylinders contained in this set.  In these coordinates
the coefficients of \(\widetilde s\), \(\widetilde s^{-1}\), and the
Christoffel symbols have the finite parabolic H\"older bounds through order
\(m\) needed below, all bounded by \(A_*\).  On overlaps, the transition maps
and their inverses have uniform finite-order parabolic H\"older bounds through
the same order.  Radius-\(r_*/4\) one-sided or truncated subcylinders can be
chosen to cover space-time, while the radius-\(2r_*\) cylinders have uniformly
bounded overlap and admit spatial cutoffs with uniform derivative bounds.
\end{lemma}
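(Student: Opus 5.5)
The plan is to build the atlas one time slice at a time from harmonic coordinates and then use the Ricci flow equation to carry those coordinates through a short time window. First I fix the spatial chart. At the center time \(t_0\), the bounds \(\inj_{\widetilde s(t_0)}\geq\Lambda^{-1}\) and \(|\widetilde\nabla^j\Rm_{\widetilde s(t_0)}|\leq\Lambda\) for \(0\leq j\leq m+3\) feed into the Anderson--Cheeger harmonic-radius estimate \cite{AndersonCheeger}. This gives a radius \(r_1(n,\Lambda)\) on which harmonic coordinates for \(\widetilde s(t_0)\) exist with \(C^{1,\sigma}\) control. In harmonic coordinates the metric satisfies the elliptic identity \(\Delta \widetilde s_{ij}=-2\Ric_{ij}+Q(\widetilde s,\partial\widetilde s)\) of DeTurck--Kazdan \cite{DeTurckKazdan}. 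Bootstrapping that identity with the curvature-derivative bounds upgrades the coefficient bounds to \(C^{m+2,\sigma}\). I then set \(4r_*\leq r_1/2\).

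The second step extends the chart in time with the coordinates held fixed. Along the flow \(\partial_t\widetilde s_{ij}=-2\Ric_{ij}\), and \(\Ric\) together with its covariant derivatives through order \(m+3\) is bounded by \(\Lambda\). Integrating over \(|t-t_0|\leq(4r_*)^2\) gives \(\tfrac12\widetilde s(t_0)\leq\widetilde s(t)\leq 2\widetilde s(t_0)\) after shrinking \(r_*\). Converting covariant bounds into coordinate bounds needs the Christoffel symbols in the fixed coordinates, and these evolve by \(\partial_t\Gamma=\widetilde s^{-1}*\nabla\Ric\). Inducting on the number of derivatives then gives spatial \(C^{m+1}\) bounds on \(\widetilde s(t)\) uniformly on the window. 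Time derivatives are obtained by differentiating the equation: each \(\partial_t\) is traded for two spatial derivatives of curvature. This yields the parabolic H\"older bounds on \(\widetilde s\), \(\widetilde s^{-1}\), and \(\Gamma\) through order \(m\), with constant \(A_*(n,m,\sigma,\Lambda,L)\). The length \(L\) enters only to make the number of time windows finite; this is harmless because everything is local.

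The third step handles overlaps, covering, and cutoffs. Two harmonic charts centered at different times and points differ by a map whose components are \(\widetilde s(t_0)\)-harmonic. Elliptic regularity for the Laplacian, whose coefficients were controlled in the previous step, bounds the transition maps and their inverses in \(C^{m+1,\sigma}\). Because the coordinates do not depend on time, the parabolic norms of the transition maps reduce to these spatial norms. For the covering, I take a maximal \(r_*/8\)-separated set of points in each slice \(t_k\), with the slices spaced \((r_*/8)^2\) apart in time. The radius-\(r_*/4\) one-sided or truncated subcylinders centered at this set then cover space-time. Bishop--Gromov volume comparison (the curvature is bounded below) bounds the number of \(2r_*\)-balls meeting any given one, which gives the uniformly bounded overlap. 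Cutoffs are built as smooth functions of the coordinate \(|x|\), whose derivatives are controlled by the chart bounds.

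The main obstacle is the second step. Ricci flow does not preserve harmonicity, so the fixed coordinates stop being harmonic for \(t\neq t_0\). All control at later times therefore comes from integrating the evolution equations, and the derivative count has to close. Bounding \(m\) parabolic orders of \(\Gamma\) requires \(\nabla^{m+1}\Ric\) (plus time derivatives, which cost further spatial orders), and this is exactly why the hypothesis includes \(m+3\) curvature derivatives. I would check the count first by a direct induction on \(k+2j\).
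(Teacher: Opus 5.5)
Your proposal is correct and follows the paper's proof essentially step for step. Both arguments take harmonic coordinates at a center time via Anderson--Cheeger and the DeTurck--Kazdan bootstrap, freeze the spatial coordinates on a window of length comparable to \(r_*^2\) and obtain coefficient bounds by differentiating \(\partial_t\widetilde s=-2\Ric\) in the fixed chart with induction on parabolic weight, control transition maps by harmonicity of the other chart and interior elliptic estimates, use separated nets on finitely many time levels with volume comparison for the overlap count, and pull back Euclidean cutoffs. The one point to tighten is the derivative count you flagged: spatial \(C^{m+1}\) control of \(\widetilde s\) only gives \(\Gamma\in C^m\), so to get the H\"older seminorms of \(\Gamma\) through parabolic order \(m\) you should carry the induction for \(\widetilde s\) to parabolic weight \(m+2\), as the paper does, which the \(m+3\) curvature derivatives in the hypothesis readily allow.
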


\begin{proof}
Choose \(p>2n\) so large that
\(1-2n/p>\sigma\).  The local \(L^{1,p}\) harmonic-radius estimate of
Anderson--Cheeger
\cite[Theorem~0.3 and Section~1, pp.~268--274]{AndersonCheeger}, applied on
the universal cover after rescaling on the injectivity-radius scale, gives a
radius \(r_h>0\) on which harmonic coordinates have uniform ellipticity and
uniform \(W^{1,p}\)-bounds.  Qualitative existence and regularity of the
harmonic coordinate functions are given by
\cite[Lemma~1.2 and Corollary~1.4]{DeTurckKazdan}; the radius and uniform
constants used here come from the Anderson--Cheeger estimate and the
quantitative bootstrap below.  In harmonic coordinates the metric satisfies
the uniformly elliptic system
\[
 s^{ab}\partial_a\partial_b s_{ij}
 =-2\Ric_{ij}(s)+\mathcal Q_{ij}(s^{-1},\partial s).
\]
The first interior \(W^{2,p/2}\)-estimate gives uniform
\(C^{1,\alpha}\)-control for some \(\alpha\in(\sigma,1)\).  The bounds for
\(\nabla^j\Rm\), converted inductively to coordinate bounds using the already
controlled Christoffel symbols, and repeated interior elliptic estimates for
this system then bootstrap the center-time coefficients through the finite
order needed here.  This is the quantitative harmonic-coordinate regularity
argument of
\cite[Lemma~4.1 and Theorem~4.5(b), pp.~254--256]{DeTurckKazdan}.  All of
these estimates are on the universal cover and depend only on
\(n,m,\sigma,\Lambda\).

Since \(|\Ric(s)|_s\leq n|\Rm(s)|_s\) and
\(\partial_t s=-2\Ric(s)\), the curvature bound gives
\(|\partial_t s|_s\leq 2n\Lambda\).  Choose \(r_*<r_h/16\) and put
\(\delta_*=(4r_*)^2\), decreasing \(r_*\) so that
\(e^{2n\Lambda\delta_*}\leq4/3\).  It follows that
\[
 e^{-2n\Lambda|t-t_0|}s(t_0)\leq s(t)\leq
 e^{2n\Lambda|t-t_0|}s(t_0).
\]
Thus a harmonic chart chosen once at time \(t_0\) on the
\(s(t_0)\)-ball of radius \(8r_*\) is kept fixed as a spatial chart throughout
\([t_0-\delta_*,t_0+\delta_*]\cap[a,b]\); the metric comparison ensures that
all cylinders in the statement remain inside its original spatial domain.
No time-dependent coordinate change is made.

For completeness, put \(\ell_*=(r_*/4)^2\) and divide \([a,b]\) into
at most \(2+L/\ell_*\) closed time intervals \(J_\nu\), each of length at
most \(\ell_*\), with center time \(t_\nu\).  At each level choose a maximal
\(r_*/8\)-separated set in the universal cover and harmonic charts of radius
\(8r_*\) at its points.  The radius-\(r_*/8\) balls at the center time cover
space.  Keep every chart fixed on
\(I_\nu=[t_\nu-\delta_*,t_\nu+\delta_*]\cap[a,b]\).  Since
\(J_\nu\subset I_\nu\), the products of the covering balls with \(J_\nu\)
are covered by a fixed number of radius-\(r_*/4\) backward, forward, or
endpoint-truncated parabolic cylinders in these fixed charts.  They therefore
cover space-time; call them the selected subcylinders.

Curvature bounds, the injectivity-radius lower bound, volume comparison, and
the disjointness of the radius-\(r_*/16\) balls about the chosen centers give a
uniform spatial overlap bound for the radius-\(2r_*\) cylinders.  Only a
uniformly bounded number of the intervals \(I_\nu\) meet any given time, so the
space-time overlap is uniform as well.  The spatial families may be infinite
on the universal cover, but they are uniformly locally finite; none of these
constants involves the compact quotient.

It remains to establish the coefficient bounds in these fixed coordinates.  At
\(t_\nu\) they follow from the preceding harmonic-coordinate bootstrap.  On
\(I_\nu\), differentiate \(\partial_t s=-2\Ric(s)\) in the fixed coordinates.
More explicitly, the Ricci-flow evolution and commutation formulas used in
Shi's derivative estimates \cite{Shi} express, for \(j\geq1\),
\(\partial_x^\mu\partial_t^j s\) in terms of lower-weight coordinate
derivatives of \(s\) and \(\nabla^q\Rm(s)\) with
\(q\leq |\mu|+2j-2\).  The center-time harmonic-coordinate bounds, induction
on \(|\mu|+2j\), and Gr\"onwall for \(j=0\) therefore give
\[
 \sup |\partial_x^\mu\partial_t^j s_{kl}|\leq C
 \qquad\text{whenever }|\mu|+2j\leq m+2.
\]
The same bounds hold for \(s^{-1}\), and the formula for the Christoffel
symbols gives the required bounds for them.  The bounds through parabolic
weight \(m+2\) provide the extra spatial and time derivatives needed in
\eqref{eq:parabolic-holder-norm-definition}; Euclidean mean-value inequalities
then give its H\"older seminorms.

It remains to justify the corresponding bounds for transition maps.  The
time projection of each selected subcylinder lies in one of the intervals
\(J_\nu\).  If two selected subcylinders overlap, their associated intervals
are equal or adjacent, so their center times differ by at most
\(2\ell_*<\delta_*\); each of the two fixed charts is therefore valid at the
center time of the other.  Let \(x\) be a chart chosen at time \(t_\nu\) and
\(y\) one chosen at time \(t_\mu\).  On a uniform neighborhood of such an
overlap, each component of \(y\circ x^{-1}\) satisfies, in the
\(x\)-coordinates, the scalar equation
\[
        \Delta_{\widetilde s(t_\mu)}y^\alpha=0,
\]
because the \(y\)-coordinates are harmonic for \(\widetilde s(t_\mu)\).
The coefficients of this equation in the \(x\)-chart are uniformly elliptic
and have the finite-order bounds already proved.  The radius-\(8r_*\) chart
domains and the radius-\(2r_*\) enlargements of the selected subcylinders leave
a uniform interior margin, and the harmonic-coordinate normalization gives a
uniform \(C^0\)-bound for the coordinate functions.  Interior elliptic
Schauder estimates, iterated a finite number of times, therefore give uniform
spatial H\"older bounds for \(y\circ x^{-1}\) through the required order on the
overlap.
Interchanging \(x\) and \(y\) gives the same bounds for the inverse transition
map.  Since both coordinate systems are kept fixed in time, these transition
maps have zero time derivatives, and the spatial estimates are precisely the
required parabolic H\"older bounds.  Thus the overlap-map bounds are not an
additional compactness theorem: they follow from scalar interior elliptic
regularity on the uniform interior margins just constructed.

Finally, no quotient-dependent partition-of-unity estimate is used.  Choose
one fixed Euclidean cutoff on the model chart, rescale it by the already fixed
radius \(r_*\), and pull it back through each harmonic chart.  The coefficient
bounds above then give uniform spatial derivative bounds for these cutoffs.
This proves every assertion of the lemma, with constants independent of
collapse downstairs.
\end{proof}

\begin{lemma}[Equivalence of intrinsic and fixed-chart norms]
\label{lem:intrinsic-fixed-chart-equivalence}
Fix \(n\), \(m\geq2\), \(\sigma\in(0,1)\), \(\Lambda\geq1\), and \(L>0\).
Suppose that
\(\PBG_m(s;\Lambda;[a,b])\) holds and \(b-a\leq L\).  Let
\(\|\cdot\|_{X_{s,{\mathrm{fc}}}^m[a,b]}\) and
\(\|\cdot\|_{Y_{s,{\mathrm{fc}}}^m[a,b]}\) denote the suprema of the Euclidean
component norms in the fixed lifted atlas supplied by
Lemma~\ref{lem:uniform-parabolic-atlas}.  Then there is
\(C=C(n,m,\sigma,\Lambda,L)\) such that every deck-invariant tensor \(U\)
satisfies
\begin{align}
 C^{-1}\|U\|_{X_s^m[a,b]}
 &\leq \|U\|_{X_{s,{\mathrm{fc}}}^m[a,b]}
 \leq C\|U\|_{X_s^m[a,b]},                                      \label{eq:X-intrinsic-fixed-equivalence}\\
 C^{-1}\|U\|_{Y_s^m[a,b]}
 &\leq \|U\|_{Y_{s,{\mathrm{fc}}}^m[a,b]}
 \leq C\|U\|_{Y_s^m[a,b]}.                                      \label{eq:Y-intrinsic-fixed-equivalence}
\end{align}
The analogous comparison holds for every spatial
\(C^{k,\sigma}\)-norm with \(0\leq k\leq m\).  For tensors on the universal
cover that are not deck-invariant, the same local comparison holds with the
fixed-chart side interpreted as the uniformly local norm.
\end{lemma}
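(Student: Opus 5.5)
The plan is to compare the two norms chart by chart, using only the coefficient bounds of Lemma~\ref{lem:uniform-parabolic-atlas}. Both norms are suprema of local quantities, but over different regions: the intrinsic norm takes suprema over pairs at \(s(t)\)-distance less than \(r_s\), while the chart norm uses the selected subcylinders. Under \(\PBG_m\), the radius \(r_s\) in \eqref{eq:intrinsic-holder-radius} is bounded below by \(\Lambda^{-1}/8\). So I will first shrink both scales to a common radius \(r_1=c(n,m,\sigma,\Lambda,L)\min\{r_*,r_s\}\). Replacing a H\"older radius by a smaller fixed fraction changes seminorms only by a factor controlled by lower-order sup norms, which are already part of both norms. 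This step needs the triangle inequality along a chain of boundedly many short steps, together with the isometry of parallel transport and of \(\mathsf T^s\). After this reduction, every pair of points entering either seminorm lies in one chart of the atlas, with a uniform interior margin.

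Inside a single fixed chart \(x\), I will write \((\nabla^{s(t)})^k(\mathcal D_t^s)^jU\) in components as \(\partial_x^\nu\partial_t^iU\) with \(|\nu|+2i=k+2j\), plus lower-weight terms. The coefficients of those lower terms are polynomials in \(s\), \(s^{-1}\), the Christoffel symbols, \(\partial_t s=-2\Ric\), and their derivatives of total parabolic weight at most \(m\). The atlas lemma bounds all of these in the Euclidean parabolic H\"older norm by \(A_*\). The relation can be inverted triangularly by induction on weight, so the sup-norm parts of the two norms are comparable. Pointwise tensor norms and Euclidean component norms are also comparable, since \(s\) and \(s^{-1}\) are bounded in the chart.

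The H\"older parts are the main obstacle. For the spatial seminorm, I will compare \(\mathsf P^{s(t)}_{y\to x}V(y)\) with the coordinate difference \(V(y)-V(x)\). Parallel transport along the short geodesic solves a linear ODE whose coefficients are Christoffel symbols, so it differs from the identity matrix by \(O(A_*d(x,y))\). Since \(\sigma<1\), the error term is controlled by \(|V|_{C^0}\), and the distance \(d_{s(t)}\) is bilipschitz to the Euclidean distance. For the time seminorm, \(\mathsf T^s_{u\to t}\) likewise differs from the identity by \(O(|t-u|)\) in fixed coordinates, because the defining ODE has the coefficient \(\tfrac12 s^{-1}\partial_t s\), which is bounded. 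This also handles the \(\langle\cdot\rangle_{1+\sigma,t}\) terms, whose exponent \((1+\sigma)/2\) is still below \(1\). The mixed parabolic distance \(d_{\mathrm p}\) is then controlled by the sum of the spatial and time pieces. The constants depend only on \(n,m,\sigma,\Lambda,L\) through \(A_*\) and \(r_*\), so taking suprema over all charts gives \eqref{eq:X-intrinsic-fixed-equivalence} and \eqref{eq:Y-intrinsic-fixed-equivalence}. Spatial \(C^{k,\sigma}\)-norms follow in the same way with no time variable.

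For tensors on \(\widetilde M\) that are not deck-invariant, nothing above used deck invariance. Every estimate is chart-local on the universal cover, so the same comparison holds with the uniformly local supremum.
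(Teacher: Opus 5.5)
Your proposal is correct and follows essentially the same route as the paper: a chart-local triangular conversion between covariant and coordinate space-time derivatives using the atlas coefficient bounds, then a comparison of spatial parallel transport and the time transport of \(\mathcal D_t^s\) with componentwise identification up to lower-order errors, and finally a supremum over the uniformly locally finite atlas. Your explicit reduction to a common H\"older radius and your ODE estimates for the two transports spell out details that the paper's proof leaves implicit; for separated pairs, the isometry of the transports already gives the bound \(2\sup|V|\) directly, so the chain argument is not needed.
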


\begin{proof}
In the atlas of Lemma~\ref{lem:uniform-parabolic-atlas}, the metric and inverse
metric, the Christoffel symbols, the coefficients of the time connection
\(\mathcal D_t^s\), the transition maps, and the cutoffs have uniform
finite-order parabolic H\"older bounds.  Covariant space-time derivatives are
therefore linear combinations of coordinate derivatives of the same or lower
parabolic order, with uniformly controlled coefficients; the converse follows
by solving the same triangular relations for the coordinate derivatives.
Parallel transport along the short spatial geodesics and the time transport
of \eqref{eq:metric-compatible-time-derivative} differ from componentwise
identification by uniformly controlled lower-order terms.  The spatial and
temporal H\"older seminorms are consequently equivalent in both directions.
Taking suprema over the uniformly locally finite atlas proves
\eqref{eq:X-intrinsic-fixed-equivalence}--
\eqref{eq:Y-intrinsic-fixed-equivalence}; the proof is local and is therefore
unchanged for a tensor that is not deck-invariant.
\end{proof}

\begin{lemma}[Slice-to-parabolic bounds]
\label{lem:slice-to-parabolic-geometry}
Fix \(n,m\), \(\sigma\in(0,1)\), and \(\Lambda_0,K_0,L>0\), with
\(m\geq2\) and \(\Lambda_0\geq1\).  There is \(\Lambda_1<\infty\), depending only on
these data, with the
following property.  Let \(s(t)\), \(a\leq t\leq b\), be a Ricci flow on a
compact \(n\)-manifold, with \(b-a\leq L\), such that
\[
        \BG_m(s(a);\Lambda_0),
        \qquad
        \sup_{M\times[a,b]}|\sec_{s(t)}|\leq K_0.
\]
Then
\[
        \PBG_m(s;\Lambda_1;[a,b]).
\]
The constant is independent of the injectivity radius of the compact
quotient.
\end{lemma}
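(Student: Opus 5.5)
The goal is to verify the three ingredients of \(\PBG_m(s;\Lambda_1;[a,b])\) in turn: the lifted injectivity-radius bound, the curvature-derivative bounds through order \(m+3\) on all of \([a,b]\), and the atlas control. The atlas control will then follow from Lemma~\ref{lem:uniform-parabolic-atlas}, since that lemma only needs the first two ingredients together with \(b-a\leq L\).

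\emph{Curvature derivatives.} At the initial face, \(\BG_m(s(a);\Lambda_0)\) gives \(|\widetilde\nabla^j\Rm|\leq\Lambda_0\) for \(0\leq j\leq m+3\). I will propagate these forward with the modified Shi estimates that use initial derivative bounds (the local-in-time version of \cite{Shi}). The lifted flow \(\widetilde s\) is complete and satisfies \(|\Rm|\leq CK_0\) on \([a,b]\). For each \(j\) I will apply the maximum principle to a quantity of the form
\[
 |\nabla^j\Rm|^2+A_j\sum_{i<j}|\nabla^i\Rm|^2 .
\]
Because the \(j\)-th derivative is controlled at \(t=a\), no factor of \(t^{-j/2}\) appears. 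Induction on \(j\) then gives bounds depending only on \(\Lambda_0,K_0,L,j,n\).

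The maximum principle must be applied on the noncompact universal cover. I will avoid this by working downstairs: every quantity is deck-invariant, so it descends to the compact quotient, where the maximum principle is immediate. This descent is exactly why the bounds are independent of collapse.

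\emph{Injectivity radius.} The initial lower bound \(\inj_{\widetilde s(a)}\geq\Lambda_0^{-1}\) needs to be propagated. Since \(|\partial_t s|_s\leq 2nCK_0\), the metrics \(\widetilde s(t)\) and \(\widetilde s(a)\) are \(e^{CK_0L}\)-bilipschitz. Balls of radius \(r\) for \(\widetilde s(t)\) therefore contain \(\widetilde s(a)\)-balls of comparable radius. Those have volume at least \(c(\Lambda_0,K_0)r^n\) by the initial injectivity bound and Bishop--Gromov (or Croke), and the volume form changes by at most a factor \(e^{CK_0L}\). Hence every \(\widetilde s(t)\)-ball of a fixed radius has a uniform volume lower bound. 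Combining this with \(|\sec|\leq K_0\), the Cheeger--Gromov--Taylor estimate \cite{CGT} yields a uniform lower bound for \(\inj_{\widetilde s(t)}\).

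\emph{Conclusion and main obstacle.} Enlarging \(\Lambda\) to cover both estimates and then applying Lemma~\ref{lem:uniform-parabolic-atlas} produces the atlas and gives \(\PBG_m\). I expect the main obstacle to be the short-time part of the derivative propagation. There one must confirm that the version of Shi's estimate with initial derivative bounds holds through order \(m+3\) with constants depending only on the listed data, and that its maximum-principle argument passes cleanly through descent to the compact quotient. I would first write out the \(j=1\) case explicitly, as a template for the induction.
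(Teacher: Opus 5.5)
Your proposal is correct and follows the paper's proof. Both arguments propagate the initial curvature-derivative bounds by the maximum principle on the compact quotient, obtain the lifted injectivity radius from bilipschitz volume comparison plus Cheeger--Gromov--Taylor, and then build the atlas with Lemma~\ref{lem:uniform-parabolic-atlas}. The only difference is that you run the initial-data maximum-principle estimate on all of \([a,b]\), which is legitimate because \(b-a\leq L\) lets Gr\"onwall close with \(L\)-dependent constants, whereas the paper uses it only on a short initial interval and switches to Shi's interior estimates afterward.
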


\begin{proof}
All curvature estimates may be lifted to the universal cover; since the
quantities being estimated are deck-invariant, the maximum-principle arguments
can be carried out on the compact quotient.  The curvature bound gives
\(|\Ric|\leq c_nK_0\), and hence
\begin{equation}\label{eq:slice-to-parabolic-metric-comparison}
 e^{-c_nK_0L}s(a)\leq s(t)\leq e^{c_nK_0L}s(a)
 \qquad (a\leq t\leq b).
\end{equation}

We first establish the curvature-derivative bounds at the lower time face.  For
\(j\geq1\), the standard differentiated curvature evolution equation has the
schematic maximum-principle form
\[
\begin{aligned}
 (\partial_t-\Delta)|\nabla^j\Rm|^2
 \leq{}&-|\nabla^{j+1}\Rm|^2
      +C_j|\Rm|\,|\nabla^j\Rm|^2\\
 &+C_j\!\sum_{\substack{p+q=j\\p,q<j}}
          |\nabla^p\Rm|\,|\nabla^q\Rm|\,|\nabla^j\Rm|.
\end{aligned}
\]
Starting with the bounds in \(\BG_m(s(a);\Lambda_0)\), induction on \(j\) and the
maximum principle give uniform estimates for
\(|\nabla^j\Rm|\), \(0\leq j\leq m+3\), on a fixed initial interval whose
length depends only on \(n,m,\Lambda_0,K_0\).  On the part of \([a,b]\) separated
from \(a\) by half that interval, Shi's derivative estimates \cite{Shi},
applied to the curvature bound on \([a,b]\), give the same finite collection
of estimates.
If \(b-a\) is shorter than the fixed initial interval, the first argument
already covers the whole flow.  Thus
\begin{equation}\label{eq:slice-to-parabolic-curvature-derivatives}
        \sup_{M\times[a,b]}|\nabla^j\Rm_{s(t)}|\leq A_1,
        \qquad 0\leq j\leq m+3,
\end{equation}
for a constant \(A_1\) depending only on the stated data.

It remains to make the lifted injectivity estimate uniform in time.  At time
\(a\), the injectivity and curvature bounds in \(\BG_m(s(a);\Lambda_0)\) give
numbers \(\rho,v_0>0\), depending only on \(n,\Lambda_0\), such that
\[
 \operatorname{vol}_{\widetilde s(a)}
 B_{\widetilde s(a)}(\widetilde x,r)\geq v_0r^n
 \qquad(0<r\leq\rho).
\]
For example, one may take a fixed fraction of \(\Lambda_0^{-1}\) for \(\rho\) and
use normal-coordinate volume comparison.  Put \(A=e^{c_nK_0L}\).  The metric
comparison \eqref{eq:slice-to-parabolic-metric-comparison} gives
\[
 B_{\widetilde s(a)}(\widetilde x,A^{-1/2}\rho)
 \subset B_{\widetilde s(t)}(\widetilde x,\rho),
 \qquad
 d\mu_{\widetilde s(t)}\geq A^{-n/2}d\mu_{\widetilde s(a)}.
\]
Thus the \(\widetilde s(t)\)-volume of the radius-\(\rho\) ball is at least
\(v_0A^{-n}\rho^n\), uniformly for \(t\in[a,b]\).  Together with the
sectional-curvature bound, the Cheeger--Gromov--Taylor estimate
\cite[Section~4]{CGT} gives
\begin{equation}\label{eq:slice-to-parabolic-injectivity}
        \inf_{\widetilde M\times[a,b]}
        \inj_{\widetilde s(t)}\geq i_1>0,
\end{equation}
where \(i_1\) again depends only on the fixed data.

Equations \eqref{eq:slice-to-parabolic-curvature-derivatives} and
\eqref{eq:slice-to-parabolic-injectivity} give the curvature and
injectivity-radius part of \(\PBG_m\).  Applying
Lemma~\ref{lem:uniform-parabolic-atlas} and increasing one constant to include
its radius, coefficient, and time-H\"older bounds gives
\(\PBG_m(s;\Lambda_1;[a,b])\).  This also makes explicit that the coordinate control
at the initial time comes from the propagated initial curvature derivatives,
whereas the later-time control comes from Shi's estimates.
\end{proof}

\subsection{Scalar-principal systems and coefficient regularity}
We use the Ricci--DeTurck equation \eqref{eq:rdt} and the parabolic spaces
in \eqref{eq:XY-spaces}, both introduced in
Section~\ref{sec:finite-time-decay}.

\begin{lemma}[Difference equation for two Ricci--DeTurck solutions]
\label{lem:rdt-difference-system}
Let \(s(t)\) be a Ricci-flow background and let \(v_0(t)\) and \(v_1(t)\)
be two positive definite solutions of the Ricci--DeTurck equation relative to
\(s(t)\) on the same time interval.  Put
\[
        U=v_1-v_0,
        \qquad v_\vartheta=v_0+\vartheta(v_1-v_0),
        \quad 0\leq\vartheta\leq1,
\]
and assume that every \(v_\vartheta\) is positive definite.  With \(\nabla\)
the connection of \(s(t)\), the difference satisfies a homogeneous linear
system
\begin{equation}\label{eq:rdt-difference-system}
        \partial_tU-P^{pq}\nabla_p\nabla_qU
        =B^p\nabla_pU+CU,
\end{equation}
where the second-order part is diagonal in the tensor components: in a fixed
frame, every component \(U^A\) is acted on by the same operator
\(P^{pq}\nabla_p\nabla_q\), and no second derivative of a different component
occurs.  More precisely, for
\[
        \mathcal Q_s(v)=-2\Ric(v)+\Lie_{W(v,s)}v,
\]
one has the structural formula
\begin{equation}\label{eq:rdt-structural-form}
\begin{split}
        \mathcal Q_s(v)
        ={}&v^{pq}\nabla_p\nabla_qv
        +v^{-1}*v^{-1}*\nabla v*\nabla v \\
        &\quad+\mathcal R(\Rm(s),v,v^{-1}),
\end{split}
\end{equation}
and the mean-value identity
\begin{equation}\label{eq:mean-value-linearization}
\begin{split}
        \partial_tU
        &=\mathcal Q_s(v_1)-\mathcal Q_s(v_0)\\
        &=\int_0^1D\mathcal Q_s|_{v_\vartheta}[U] \,d\vartheta.
\end{split}
\end{equation}
Differentiating \eqref{eq:rdt-structural-form} gives
\begin{equation}\label{eq:linearized-rdt-structure}
        D\mathcal Q_s|_{v_\vartheta}[Z]
        =v_\vartheta^{pq}\nabla_p\nabla_qZ
          +\mathcal B_\vartheta^p\nabla_pZ+\mathcal C_\vartheta Z,
\end{equation}
with
\begin{equation}\label{eq:linearized-coefficient-structure}
\begin{split}
        \mathcal B_\vartheta
        &=v_\vartheta^{-1}*v_\vartheta^{-1}*\nabla v_\vartheta,\\
        \mathcal C_\vartheta
        &=v_\vartheta^{-1}*v_\vartheta^{-1}*\nabla^2v_\vartheta
          +v_\vartheta^{-1}*v_\vartheta^{-1}*v_\vartheta^{-1}
             *\nabla v_\vartheta*\nabla v_\vartheta\\
        &\qquad+\mathcal R_1(\Rm(s),v_\vartheta,v_\vartheta^{-1}).
\end{split}
\end{equation}
Here \(*\) denotes universal tensor contractions, and \(\mathcal R\) and
\(\mathcal R_1\) are algebraic in the displayed arguments; in particular,
\(\mathcal R_1\) contains no derivatives of \(v_\vartheta\).  The
coefficients in \eqref{eq:rdt-difference-system} are
\[
 P^{pq}=\int_0^1v_\vartheta^{pq}\,d\vartheta,
 \qquad B^p=\int_0^1\mathcal B_\vartheta^p\,d\vartheta,
 \qquad C=\int_0^1\mathcal C_\vartheta\,d\vartheta.
\]
Consequently, uniform metric equivalence and finite-order bounds for
\(v_0,v_1\), and \(s\) give the corresponding ellipticity and H\"older bounds
for \(P,B,C\).
\end{lemma}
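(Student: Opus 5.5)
The plan is to derive the structural formula \eqref{eq:rdt-structural-form} first, then linearize it, and finally apply the fundamental theorem of calculus along the segment \(v_\vartheta\).

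\textbf{Step 1: the structural formula.} Write the Ricci--DeTurck operator in terms of the background connection \(\nabla=\nabla^{s}\). Put \(\Gamma(v)-\Gamma(s)=\tfrac12 v^{-1}*\nabla v\), which is a tensor. Expanding \(\Ric(v)\) by the standard formula for the curvature of \(v\) relative to \(s\) gives:
\begin{itemize}
\item a second-order term \(-\tfrac12 v^{pq}(\nabla_p\nabla_q v_{ij}+\nabla_i\nabla_j v_{pq}-\nabla_p\nabla_i v_{qj}-\nabla_p\nabla_j v_{iq})\);
\item quadratic terms \(v^{-1}*v^{-1}*\nabla v*\nabla v\);
\item terms \(v*v^{-1}*\Rm(s)\), which arise from commuting covariant derivatives and from \(\Ric(s)\).
\end{itemize}
The DeTurck vector field is \(W^k=v^{ij}(\Gamma^k_{ij}(v)-\Gamma^k_{ij}(s))=v^{-1}*v^{-1}*\nabla v\). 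In \(\Lie_W v=\nabla_iW_j+\nabla_jW_i+(\text{torsion-free correction } v^{-1}*\nabla v*W)\), its derivative contributes exactly the second-order terms that cancel the three non-Laplacian second derivatives above. This is DeTurck's classical cancellation, and it leaves the diagonal principal part \(v^{pq}\nabla_p\nabla_q v\). Every other term is either quadratic in \(\nabla v\) with two inverse factors, or algebraic in \(\Rm(s),v,v^{-1}\). Checking this cancellation, including the precise contraction pattern, is the only place with real content; I would follow the computation in Shi's paper, adapted to a time-dependent background. No derivatives of \(\Rm(s)\) appear, because only \(\nabla^2 v\) and \(\nabla v\) occur.

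\textbf{Step 2: linearize.} Differentiate \eqref{eq:rdt-structural-form} at \(v_\vartheta\) in the direction \(Z\), using \(D(v^{-1})[Z]=-v^{-1}Zv^{-1}\).
\begin{itemize}
\item The term \(v^{pq}\nabla_p\nabla_q v\) gives \(v_\vartheta^{pq}\nabla_p\nabla_q Z\) plus \(v^{-1}*v^{-1}*\nabla^2 v_\vartheta * Z\), a zeroth-order contribution to \(\mathcal C\).
\item The quadratic term gives \(v^{-1}*v^{-1}*\nabla v*\nabla Z\), which contributes to \(\mathcal B\), plus \(v^{-1}*v^{-1}*v^{-1}*\nabla v*\nabla v*Z\), which contributes to \(\mathcal C\).
\item The algebraic term gives \(\mathcal R_1(\Rm(s),v,v^{-1})Z\).
\end{itemize}
This yields \eqref{eq:linearized-rdt-structure}--\eqref{eq:linearized-coefficient-structure}. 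The principal part acts by the same scalar operator on each component, because it came from \(v^{pq}\nabla_p\nabla_q\) applied to the whole tensor.

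\textbf{Step 3: mean value.} Since \(v_\vartheta\) is positive definite for every \(\vartheta\), the map \(\vartheta\mapsto\mathcal Q_s(v_\vartheta)\) is \(C^1\). Hence
\[
\mathcal Q_s(v_1)-\mathcal Q_s(v_0)=\int_0^1 D\mathcal Q_s|_{v_\vartheta}[U]\,d\vartheta ,
\]
and subtracting the two equations gives \eqref{eq:mean-value-linearization}. Integrating the coefficients in \(\vartheta\) gives \(P,B,C\).

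\textbf{Step 4: coefficient bounds.} Uniform equivalence of \(v_0\) and \(v_1\) to \(s\) gives uniform bounds on \(v_\vartheta\) and \(v_\vartheta^{-1}\), and hence on \(P\). The ellipticity of \(P\) follows because it is an average of positive definite inverse matrices, each uniformly comparable to \(s^{-1}\). Finite-order H\"older bounds for \(B\) and \(C\) then follow from products and compositions of H\"older functions, with the derivative count read off \eqref{eq:linearized-coefficient-structure}: \(P\) uses no derivatives of \(v\), \(B\) uses one, and \(C\) uses two together with \(\Rm(s)\).
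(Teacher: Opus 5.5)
Your proposal is correct and follows the paper's proof essentially step for step: the structural formula from DeTurck's cancellation, the fundamental theorem of calculus along \(v_\vartheta\), termwise differentiation using \(D(v^{-1})[Z]=-v^{-1}Zv^{-1}\), and integration of the coefficients in \(\vartheta\). Your additions go slightly beyond what the paper writes but are consistent with it: the explicit cancellation in Step~1, and the convexity argument giving ellipticity of \(P\) in Step~4. The only slip is cosmetic: the \(W*\nabla v\) correction in \(\Lie_W v\) comes from \(\nabla^s\) not being \(v\)-compatible, not from torsion.
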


\begin{proof}
Formula \eqref{eq:rdt-structural-form} is the usual covariant DeTurck
calculation \cite{DeTurck}; all second derivatives of the unknown metric occur
in its scalar principal term.  The fundamental theorem of calculus gives
\eqref{eq:mean-value-linearization}.  Differentiating the principal term gives
\[
 D\bigl(v^{pq}\nabla_p\nabla_qv\bigr)[Z]
 =v^{pq}\nabla_p\nabla_qZ
  -(v^{-1}Zv^{-1})^{pq}\nabla_p\nabla_qv,
\]
while
\[
\begin{split}
 D\bigl(v^{-1}*v^{-1}*\nabla v*\nabla v\bigr)[Z]
  ={}&v^{-1}*v^{-1}*\nabla v*\nabla Z\\
     &+v^{-1}*v^{-1}*v^{-1}*Z*\nabla v*\nabla v.
\end{split}
\]
The curvature term varies only algebraically in \(Z\).  This proves
\eqref{eq:linearized-rdt-structure}--\eqref{eq:linearized-coefficient-structure};
integrating in \(\vartheta\) proves \eqref{eq:rdt-difference-system} and the
last assertion.
\end{proof}

\begin{remark}[What is imported from Lieberman]
\label{rem:lieberman-scalar-versus-system}
The scalar analytic inputs are the interpolation inequality
\cite[Proposition~4.2, p.~49]{Lieberman}, the interior Schauder estimate
\cite[Theorem~4.9, p.~59]{Lieberman}, and the second-order
Cauchy--Dirichlet estimate \cite[Theorem~4.28, p.~77]{Lieberman}.  The
initial--lateral corner compatibility condition for a cylindrical domain is
stated explicitly in \cite[p.~78]{Lieberman}.
The finite-order version for the scalar-principal systems used here is proved
in Lemma~\ref{lem:finite-order-cauchy-dirichlet}, following the spatial
differentiation procedure indicated in \cite[Exercise~4.5(b),
p.~84]{Lieberman}.  For a localized cylinder
\(Q^+=\Omega\times[a,a+\theta]\), the \emph{artificial spatial boundary}
means \(\partial\Omega\times[a,a+\theta]\); it is introduced by
localization and is not a boundary of \(M\).  Theorem~4.28 is used below only
after localization to such a fixed smooth spatial cylinder.

A localization point used throughout is worth recording here.  Each spatial
cutoff is chosen identically zero in a collar of the artificial spatial
boundary.  After subtraction of an extension of the initial trace, the
localized unknown has zero data on both the initial and lateral faces, while
the cutoff-generated forcing vanishes to all orders at their intersection.
Hence the standard Cauchy--Dirichlet corner compatibility conditions are
automatic; no additional compatibility condition is imposed on the original
Cauchy data away from this artificial corner.

The passage from the scalar estimates to the finite tensor systems used below
is carried out in this paper.  In a fixed frame, \emph{scalar principal part}
means that the same uniformly parabolic operator
\(\partial_t-a^{ij}\partial_i\partial_j\) acts componentwise, with the
coefficient matrix \(a^{ij}\) carrying no tensor-component indices.
\end{remark}

\subsection{Uniform Schauder estimates on lifted cylinders}

\begin{lemma}[Finite-order Cauchy--Dirichlet estimate for scalar-principal systems]
\label{lem:finite-order-cauchy-dirichlet}
Fix \(m\geq2\) and \(\sigma\in(0,1)\).  Let
\(Q_i^+=\Omega_i\times[a,a+\theta]\), where
\(\Omega_1\Subset\Omega_2\) are smooth Euclidean domains.  Suppose that a
smooth finite-dimensional vector-valued function \(Z\) satisfies on
\(Q_2^+\)
\[
 \partial_tZ-a^{ij}\partial_i\partial_jZ
 =\mathsf B^i\partial_iZ+\mathsf CZ+G,
\]
where \(a^{ij}=a^{ji}\) is uniformly parabolic and
\(\partial_t-a^{ij}\partial_i\partial_j\) acts componentwise with the same
coefficient matrix on every component; only \(\mathsf B^i\) and
\(\mathsf C\) may couple components.  The coefficients
\(a^{ij},\mathsf B^i,\mathsf C\) have uniformly bounded
\(C^{m-2+\sigma,(m-2+\sigma)/2}\)-norms.  Assume that \(Z=0\) on the initial
face, and that \(Z\) and \(G\) vanish in a fixed spatial collar of
\(\partial\Omega_2\times[a,a+\theta]\).  Then
\[
 \|Z\|_{C^{m+\sigma,(m+\sigma)/2}(Q_1^+)}
 \leq C\left(
   \|G\|_{C^{m-2+\sigma,(m-2+\sigma)/2}(Q_2^+)}
   +\|Z\|_{C^0(Q_2^+)}\right),
\]
where \(C\) depends only on the displayed coefficient and parabolicity bounds,
\(m,\sigma,\theta\), the rank, and the fixed domains \(\Omega_1\Subset
\Omega_2\).
\end{lemma}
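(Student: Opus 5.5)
The argument is an induction on \(m\), anchored by the scalar second-order Cauchy--Dirichlet estimate \cite[Theorem~4.28]{Lieberman}. Higher orders come from differentiating the system in the spatial directions, in the spirit of \cite[Exercise~4.5(b)]{Lieberman}. Since \(Z\) vanishes on the initial face and in a collar of the lateral face, we may extend \(Z\) by zero. This places us on a fixed smooth cylinder \(\Omega_2'\times[a,a+\theta]\), where \(\Omega_2'\Subset\Omega_2\) is the complement of the collar, with zero Cauchy--Dirichlet data. The compatibility conditions at the corner hold to all orders, because \(Z\) and \(G\) vanish identically near the lateral face, as recorded in Remark~\ref{rem:lieberman-scalar-versus-system}.

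\textbf{Base case \(m=2\).} Treat each component \(Z^A\) as a scalar solution of
\[
\partial_tZ^A-a^{ij}\partial_i\partial_jZ^A=F^A,
\qquad F^A:=(\mathsf B^i\partial_iZ+\mathsf CZ+G)^A.
\]
The principal operator is the same for every component, so no coupling enters at second order. Theorem~4.28 on the whole cylinder \(\Omega_2'\times[a,a+\theta]\) then gives
\[
\|Z\|_{2+\sigma}\le C\bigl(\|\mathsf B\partial Z\|_\sigma+\|\mathsf CZ\|_\sigma+\|G\|_\sigma\bigr).
\]
The first-order term satisfies \(\|\mathsf B\partial Z\|_\sigma\le C\|Z\|_{1+\sigma}\), and the zeroth-order term is bounded the same way. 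The interpolation inequality \cite[Proposition~4.2]{Lieberman} gives \(\|Z\|_{1+\sigma}\le\epsilon\|Z\|_{2+\sigma}+C_\epsilon\|Z\|_{C^0}\). Absorbing the \(\epsilon\)-term yields the \(m=2\) estimate on all of \(\Omega_2'\), which is stronger than the one claimed on \(\Omega_1\). The constant depends only on the coefficient bounds, \(\theta\), the rank, and the domain, as required.

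\textbf{Inductive step.} Assume the estimate at order \(m-1\), and let the coefficients lie in \(C^{m-2+\sigma}\). Differentiate the equation in a spatial direction \(x_k\). The function \(Z_k=\partial_kZ\) satisfies a system with the same principal part,
\[
\partial_tZ_k-a^{ij}\partial_i\partial_jZ_k=\mathsf B^i\partial_iZ_k+\mathsf CZ_k+G_k,
\]
\[
G_k=\partial_kG+(\partial_ka^{ij})\partial_i\partial_jZ+(\partial_k\mathsf B^i)\partial_iZ+(\partial_k\mathsf C)Z.
\]
The function \(Z_k\) still has zero initial data and still vanishes in a (slightly thinner) collar. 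We estimate \(G_k\) in \(C^{m-3+\sigma}\). The term \(\partial_kG\) is bounded by \(\|G\|_{m-2+\sigma}\). Since \(\partial a\in C^{m-3+\sigma}\), the remaining terms are bounded by the coefficient norms times \(\|Z\|_{m-1+\sigma}\). The order-\((m-1)\) estimate, applied to \(Z\) with the original coefficients, controls \(\|Z\|_{m-1+\sigma}\) by \(\|G\|_{m-3+\sigma}+\|Z\|_{C^0}\). Applying the order-\((m-1)\) estimate to \(Z_k\) (with \(\|Z_k\|_{C^0}\le\|Z\|_{1+\sigma}\)) then bounds all spatial derivatives of \(Z\) up to total spatial order \(m\), in the appropriate parabolic Hölder norm.

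To finish the step, control the mixed and time derivatives appearing in \eqref{eq:parabolic-holder-norm-definition}. We do this by solving the equation for \(\partial_tZ\) and differentiating: \(\partial_tZ\) equals \(a\partial^2Z+\mathsf B\partial Z+\mathsf CZ+G\), and this expression lies in \(C^{m-2+\sigma}\) by the bounds already obtained. Iterating the substitution handles \(\partial_t^j\partial_x^\nu Z\) with \(|\nu|+2j=m\), and also the temporal seminorms \(\langle\cdot\rangle_{1+\sigma,t}\). The domains shrink \(\Omega_1\Subset\cdots\Subset\Omega_2'\) once per step, which is harmless because \(m\) is fixed.

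\textbf{Main obstacle.} The delicate point is the boundary faces. Differentiating in \(x_k\) preserves zero initial data and lateral vanishing only because the collar hypothesis makes \(Z\) vanish identically near the lateral face, not just on it. Taking time derivatives would spoil zero initial data, since \(\partial_tZ(a)=G(a)\neq0\) in general. This is why only spatial derivatives are taken and time regularity is recovered from the equation itself. I would also verify carefully that Lieberman's Theorem~4.28 constant is uniform with respect to the fixed domain and the \(C^\sigma\) coefficient bounds, and that it applies up to \(t=a\) with zero data without a further compatibility condition beyond the corner conditions already secured.
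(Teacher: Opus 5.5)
Your argument is correct, but it is organized differently from the paper's.

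\textbf{The paper's route.} The paper localizes with a spatial cutoff and applies Lieberman's Theorem~4.28 componentwise. This gives a second-order estimate carrying an \(\varepsilon\)-multiple of the top norm on a larger domain, which the paper removes by a nested-radius iteration. It then runs a single induction over the spatial order \(\ell\le m-2\). All \(\partial^\alpha Z\) with \(|\alpha|=\ell\) are collected into one system \(V_\ell\); \(\partial a\) goes into the first-order coupling, and \(\partial^2a\) and \(\partial\mathsf B\) into the zeroth-order coupling. As a result, the forcing \(H_\ell\) contains only derivatives of \(Z\) of order at most \(\ell-1\). Only the second-order estimate is applied at each level, on a shrinking chain \(D_\ell\). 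The mixed time derivatives are recovered from the equation at the end.

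\textbf{Your route.} You use the collar hypothesis directly. Since \(Z\), \(\partial_kZ\), \(G\), and \(G_k\) all vanish near \(\partial\Omega_2\), the Cauchy--Dirichlet estimate applies on the whole cylinder, so you need neither a cutoff nor a radius iteration. You then induct on \(m\) itself. You feed \(\partial_kZ\) back into the order-\((m-1)\) lemma and treat \((\partial_ka^{ij})\partial_i\partial_jZ\) as forcing, controlled by the order-\((m-1)\) estimate for \(Z\).

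\textbf{What each buys.} Your route is shorter for the lemma as stated; the paper itself remarks at the end of its proof that using the outer boundary gives the same conclusion. The paper's localized second-order step is the form it reuses in Proposition~\ref{prop:uniform-lifted-linear-schauder}. There the cutoff-generated terms are genuine, and vanishing near the artificial boundary is produced by localization rather than assumed.

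\textbf{One correction.} Your induction must carry the \emph{global} estimate on all of \(Q_2^+\), which your base case already provides, and nothing should shrink. Your remark about domains shrinking once per step is inconsistent with this and would break the argument. If the order-\((m-1)\) statement controlled \(Z\) only on a smaller domain, it would not bound \(\|G_k\|_{C^{m-3+\sigma,(m-3+\sigma)/2}}\) on the domain where you apply the estimate to \(\partial_kZ\). Moreover, an intermediate domain \(\Omega'\Subset\Omega_2\) need not satisfy the collar hypothesis needed to reapply the lemma, since \(Z\) need not vanish near \(\partial\Omega'\).

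Two smaller points. \(\partial_kZ\) vanishes in the same open collar as \(Z\), not a thinner one. You can also work on \(\Omega_2\) itself rather than on the complement of the collar, which keeps the constant dependent only on the fixed domains. With these adjustments the argument is complete.
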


The componentwise second-order estimate and interpolation inequality are
imported from Lieberman; the nested-domain absorption and the finite-order
spatial-differentiation induction for the coupled lower-order terms are the
arguments supplied here.

\begin{proof}
Write \(I=[a,a+\theta]\) and
\(L_0=\partial_t-a^{ij}\partial_i\partial_j\).  We first record the
localized second-order estimate that will be used at every induction step.
Suppose that a finite system \(V\), with zero initial trace, satisfies
\[
 L_0V=\mathsf A^i\partial_iV+\mathsf DV+H
\]
on \(\Omega''\times I\), where
\(\Omega'\Subset\Omega''\Subset\Omega_2\) and the displayed coefficients
have uniform \(C^{\sigma,\sigma/2}\)-bounds.  Multiply by a spatial cutoff
which is one on \(\Omega'\) and zero in a collar of
\(\partial\Omega''\).  The localized unknown has zero data on the initial
face and on the artificial spatial boundary.  The scalar estimate
\cite[Theorem~4.28, p.~77]{Lieberman}, applied componentwise, together with
\cite[Proposition~4.2, p.~49]{Lieberman}, gives, for every
\(\varepsilon>0\),
\begin{equation}\label{eq:localized-cauchy-dirichlet-step}
\begin{split}
 \|V\|_{C^{2+\sigma,1+\sigma/2}(\Omega'\times I)}
 \leq{}&C_\varepsilon\left(
       \|H\|_{C^{\sigma,\sigma/2}(\Omega''\times I)}
       +\|V\|_{C^0(\Omega''\times I)}\right)\\
 &+\varepsilon
   \|V\|_{C^{2+\sigma,1+\sigma/2}(\Omega''\times I)}.
\end{split}
\end{equation}
Here the terms involving derivatives of the cutoff, as well as the
first- and zeroth-order couplings, are estimated by interpolation;
\(C_\varepsilon\) also depends on the fixed separation of \(\Omega'\) from
\(\partial\Omega''\).

For clarity, the last term in
\eqref{eq:localized-cauchy-dirichlet-step} can be removed without assuming an
outer a priori estimate.  Choose a smoothly nested family \(\Omega(r)\),
\(r_0\leq r<r_*\), with
\(\Omega_1\Subset\Omega(r_0)\) and
\(\overline{\Omega(r_*)}\Subset\Omega_2\), and set
\[
 \Phi(r)=\|V\|_{C^{2+\sigma,1+\sigma/2}(\Omega(r)\times I)},
 \qquad
 A=\|H\|_{C^{\sigma,\sigma/2}(Q_2^+)}+\|V\|_{C^0(Q_2^+)}.
\]
Fix \(\varepsilon>0\).  Keeping track of cutoff derivatives in
\eqref{eq:localized-cauchy-dirichlet-step} gives, for \(r<R<r_*\),
\[
        \Phi(r)\leq C_\varepsilon(R-r)^{-p}A
                    +\varepsilon\Phi(R)
\]
for a fixed \(p\).  With
\(r_j=r_*-(r_*-r_0)\tau^j\), choose
\(\varepsilon\tau^{-p}<1/2\) and iterate.  Then
\[
 \Phi(r_0)
 \leq CA\sum_{j=0}^{J-1}(\varepsilon\tau^{-p})^j
       +\varepsilon^J\Phi(r_J).
\]
The last term tends to zero as \(J\to\infty\), since \(V\) is smooth on the
fixed compact cylinder \(\overline{\Omega(r_*)}\times I\).  Thus
\eqref{eq:localized-cauchy-dirichlet-step} yields the second-order estimate
with no outer top-order term.

We now perform the finite-order induction.  Choose smooth domains
\[
 \Omega_1\Subset D_{m-2}\Subset D_{m-3}\Subset\cdots
 \Subset D_0\Subset D_{-1}\Subset\Omega_2,
\]
with the evident interpretation when \(m=2\).  For
\(0\leq\ell\leq m-2\), let \(V_\ell\) be the finite vector consisting of all
\(\partial^\alpha Z\) with \(|\alpha|=\ell\).  Differentiating the equation
spatially and collecting equal-order derivatives gives
\begin{equation}\label{eq:differentiated-cauchy-dirichlet-system}
 L_0V_\ell
 =\mathsf B_\ell^i\partial_iV_\ell+\mathsf C_\ell V_\ell+H_\ell.
\end{equation}
The first-order coefficient \(\mathsf B_\ell\) is assembled from
\(\mathsf B\) and \(\partial a\).  The zeroth-order coefficient
\(\mathsf C_\ell\) is assembled from \(\mathsf C\), \(\partial\mathsf B\),
and, when \(\ell\geq2\), \(\partial^2a\).  All remaining commutators are in
\(H_\ell\).  Schematically,
\begin{equation}\label{eq:cauchy-dirichlet-commutators}
\begin{split}
 H_\ell={}&\partial^\ell G
 +\sum_{3\leq |\beta|\leq\ell}
       \partial^\beta a*\partial^{\ell-|\beta|+2}Z\\
 &+\sum_{2\leq |\beta|\leq\ell}
       \partial^\beta\mathsf B*\partial^{\ell-|\beta|+1}Z
 +\sum_{1\leq |\beta|\leq\ell}
       \partial^\beta\mathsf C*\partial^{\ell-|\beta|}Z,
\end{split}
\end{equation}
where each sum includes the usual multiindex splittings and is omitted when
its lower limit exceeds \(\ell\).  In particular, every derivative of \(Z\)
in \(H_\ell\) has order at most \(\ell-1\).  The assumed
\(C^{m-2+\sigma,(m-2+\sigma)/2}\)-bounds therefore give uniform
\(C^{\sigma,\sigma/2}\)-bounds for \(\mathsf B_\ell\) and
\(\mathsf C_\ell\).

For \(\ell=0\), the localized second-order estimate on
\(D_0\Subset D_{-1}\) gives
\[
 \|V_0\|_{C^{2+\sigma,1+\sigma/2}(D_0\times I)}
 \leq C\left(
   \|G\|_{C^{\sigma,\sigma/2}(Q_2^+)}+\|Z\|_{C^0(Q_2^+)}\right).
\]
Suppose inductively that, for every \(0\leq j\leq\ell-1\), we have proved
\[
 \|V_j\|_{C^{2+\sigma,1+\sigma/2}(D_j\times I)}
 \leq C\left(
   \|G\|_{C^{j+\sigma,(j+\sigma)/2}(Q_2^+)}
   +\|Z\|_{C^0(Q_2^+)}\right).
\]
Formula
\eqref{eq:cauchy-dirichlet-commutators}, the parabolic product estimates, and
the induction hypothesis then give
\[
 \|H_\ell\|_{C^{\sigma,\sigma/2}(D_{\ell-1}\times I)}
 +\|V_\ell\|_{C^0(D_{\ell-1}\times I)}
 \leq C\left(
   \|G\|_{C^{\ell+\sigma,(\ell+\sigma)/2}(Q_2^+)}
   +\|Z\|_{C^0(Q_2^+)}\right).
\]
Applying the localized second-order estimate to
\eqref{eq:differentiated-cauchy-dirichlet-system} on
\(D_\ell\Subset D_{\ell-1}\) advances the induction.  At every step the
localized unknown has zero initial trace and vanishes on the artificial
spatial boundary.  The full induction uses only the time-jet identities
through \(r_m=\lfloor m/2\rfloor\); after \(\ell\) spatial differentiations,
only those with \(\ell+2j\leq m\) occur.  Since the cutoff is zero in a spatial
collar, the localized unknown, forcing, and every jet in this finite recursion
vanish at the artificial corner.  Hence the required compatibility identities
are automatic.  The original hypotheses that \(Z\) and \(G\) vanish near
\(\partial\Omega_2\) give the same conclusion if the outer boundary itself is
used.

At \(\ell=m-2\) we have controlled all spatial derivatives of \(Z\) through
order \(m\), with the required parabolic H\"older seminorms.  The remaining
mixed derivatives are recovered recursively from
\[
 \partial_tZ=a^{ij}\partial_i\partial_jZ
              +\mathsf B^i\partial_iZ+\mathsf CZ+G.
\]
Indeed, if \(|\nu|+2j\leq m\), apply
\(\partial_x^\nu\partial_t^{j-1}\) to this identity.  Every coefficient
derivative that occurs has parabolic weight at most \(m-2\), while every
derivative of \(Z\) on the right either has fewer time derivatives or has
already been controlled by the spatial induction.  Induction on \(j\),
together with the parabolic product estimates, gives the sup norms, the
\(\sigma\)-seminorms at weight \(m\), and the time seminorms at weight
\(m-1\) appearing in
\eqref{eq:parabolic-holder-norm-definition}.  Restriction from
\(D_{m-2}\) to \(\Omega_1\) proves the stated estimate.  This is the
finite-system version of the spatial differentiation procedure indicated in
\cite[Exercise~4.5(b), p.~84]{Lieberman}.
\end{proof}

\begin{proposition}[Uniform lifted linear Schauder estimate]
\label{prop:uniform-lifted-linear-schauder}
Fix \(m\geq2\) and \(\sigma\in(0,1)\).  Let \(s(t)\), \(a\leq t\leq b\),
be a Ricci flow satisfying \(\PBG_m(s;\Lambda;[a,b])\) for some \(\Lambda\geq1\).
Let \(E\) be a fixed finite-rank tensor bundle and consider,
on the universal cover and in the charts of
Lemma~\ref{lem:uniform-parabolic-atlas}, a system
\begin{equation}\label{eq:general-scalar-principal-system}
 \mathcal P U
 =\partial_tU-a^{ij}\nabla_i\nabla_jU-B^i\nabla_iU-CU=F.
\end{equation}
Assume that the second-order part is diagonal in the bundle components and
uses the same coefficient matrix \(a^{ij}\) on every component.  For fixed
constants \(0<\lambda_-\leq\lambda_+<\infty\), suppose that
\[
 \lambda_- s^{ij}\xi_i\xi_j\leq a^{ij}\xi_i\xi_j
 \leq\lambda_+ s^{ij}\xi_i\xi_j,
\]
and that the local
\(C^{m-2+\sigma,(m-2+\sigma)/2}\)-norms of \(a,B,C\) are bounded by
\(A_0\).

Every smooth bounded solution on the universal cover satisfies, for
\(\theta>0\) with \(a+\theta\leq b\),
\begin{equation}\label{eq:uniform-interior-linear-schauder}
\begin{aligned}
 \|U\|_{C^{m+\sigma,(m+\sigma)/2}_{\mathrm{uloc}}
          (\widetilde M\times[a+\theta,b],s)}
 \leq C_\theta\bigl(&
       \|U\|_{C^0(\widetilde M\times[a,b],s)}\\
       &+\|F\|_{C^{m-2+\sigma,(m-2+\sigma)/2}_{\mathrm{uloc}}}\bigr).
\end{aligned}
\end{equation}
For the global estimate, assume in addition that \(\mathcal P\) is
deck-invariant, equivalently that the coefficient fields \(a,B,C\) descend to
\(M\).  If \(U\) and \(F\) are deck-invariant and descend to tensors \(u\)
and \(f\) on \(M\), with \(u(a)\in C^{m,\sigma}\) and
\(f\in Y_s^m[a,b]\), then
\begin{equation}\label{eq:uniform-global-linear-schauder}
 \|u\|_{X_s^m[a,b]}
 \leq C\left(
      \|u(a)\|_{C^{m,\sigma}(M,s(a))}
      +\|f\|_{Y_s^m[a,b]}\right).
\end{equation}
The constants depend only on
\(m,\sigma,\Lambda,\lambda_-,\lambda_+,A_0\), on \(\theta\) in the interior
estimate, and on an upper bound for \(b-a\).  The interior and initial-face
estimates are based, respectively, on Lieberman's interior Schauder theorem
\cite[Theorem~4.9, p.~59]{Lieberman} and his Cauchy--Dirichlet Schauder theorem
\cite[Theorem~4.28, p.~77]{Lieberman}.  All atlas-radius, overlap, and cutoff
constants, fixed-coordinate ellipticity constants, localized initial-face
boundary-chart constants, and coefficient H\"older constants are uniform over
the admissible family specified by \(\PBG_m\) and the displayed bounds.  In
particular, the constants are independent of collapse of the compact quotient.
\end{proposition}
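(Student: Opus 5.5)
The plan is to reduce both estimates to the local results already available: Lemma~\ref{lem:finite-order-cauchy-dirichlet} near the initial face, Lieberman's interior Schauder theorem \cite[Theorem~4.9]{Lieberman} away from it, and the chart-by-chart patching provided by Lemma~\ref{lem:uniform-parabolic-atlas}.

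\emph{Step 1: writing the system in the charts.} Fix a chart of the atlas of Lemma~\ref{lem:uniform-parabolic-atlas} on the universal cover and a coordinate frame of \(E\). In that chart \(\mathcal P\) becomes \(\partial_tU-a^{ij}\partial_i\partial_jU-\mathsf B^i\partial_iU-\mathsf CU\). Here \(\mathsf B\) and \(\mathsf C\) absorb the Christoffel symbols of \(s(t)\) and their first derivatives. The principal matrix \(a^{ij}\) is still the same for every component, because the covariant Hessian differs from \(\partial_i\partial_j\) only by first-order terms. \(\PBG_m\) bounds the Christoffel symbols through parabolic order \(m\), so \(\mathsf B\) and \(\mathsf C\) have \(C^{m-2+\sigma,(m-2+\sigma)/2}\)-norms bounded by a constant depending on \(A_0\) and \(\Lambda\). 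The ellipticity constants transfer from \(s^{ij}\) to coordinates with a loss controlled by the atlas bounds.

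\emph{Step 2: the interior estimate.} On each backward cylinder of radius \(r_*\) whose top time lies in \([a+\theta,b]\), I apply Lieberman's interior estimate componentwise on nested cylinders. The coupling terms \(\mathsf B\partial U+\mathsf CU\) are moved to the right-hand side and controlled by interpolation \cite[Proposition~4.2]{Lieberman}. The leftover top-order term is then removed by the same nested-radius iteration \(\Phi(r)\le C_\varepsilon(R-r)^{-p}A+\varepsilon\Phi(R)\) used in the proof of Lemma~\ref{lem:finite-order-cauchy-dirichlet}. Higher orders come from spatial differentiation, exactly as in that proof.

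This yields a local \(C^{m+\sigma,(m+\sigma)/2}\)-bound by \(\|U\|_{C^0}+\|F\|\) on the outer cylinder, with constants depending only on the atlas data. Since \(\theta\) may be smaller than \(r_*^2\), I shrink the cylinder radius to \(\min\{r_*,\sqrt\theta\}/2\), which makes the constant depend on \(\theta\). Taking the supremum over the uniformly locally finite atlas gives \eqref{eq:uniform-interior-linear-schauder}. No compactness of the quotient enters.

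\emph{Step 3: the global estimate.} Near \(t=a\), I take a cutoff \(\chi\) from the atlas, vanishing in a collar of the chart boundary. I also take a smooth extension \(\Xi\) of the initial trace \(u(a)\) into the cylinder, for instance by solving a constant-coefficient heat equation in the chart. Its \(C^{m+\sigma,(m+\sigma)/2}\)-norm is bounded by \(\|u(a)\|_{C^{m,\sigma}}\).

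The function \(Z=\chi(U-\Xi)\) then has zero initial trace and vanishes near the artificial boundary. It satisfies a system of the same type, with forcing
\[
\chi F-\chi\mathcal P\Xi+[\mathcal P,\chi](U-\Xi).
\]
Lemma~\ref{lem:finite-order-cauchy-dirichlet} bounds \(Z\) by this forcing in \(C^{m-2+\sigma,(m-2+\sigma)/2}\) plus \(\|Z\|_{C^0}\). The commutator contains only first derivatives of \(U\), so it is handled by interpolation together with the same nested absorption.

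Away from the initial face, Step 2 applies with \(\theta=r_*^2/16\). Combining the two regimes, every uniformly local norm is bounded by \(\|U\|_{C^0}+\|u(a)\|_{C^{m,\sigma}}+\|F\|\). Deck invariance and Lemma~\ref{lem:intrinsic-fixed-chart-equivalence} then convert this into \(X_s^m\) and \(Y_s^m\) norms.

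\emph{Step 4: removing the \(C^0\) term.} This is the main obstacle. Because \(U\) descends to the compact \(M\) and the principal part is scalar, I argue as in Proposition~\ref{prop:uniform-translation-difference}. For \(\phi=|u|_s^2\) one has
\[
(\partial_t-a^{pq}\nabla_p\nabla_q)\phi\le C\phi+C|f|^2,
\]
after absorbing the gradient cross-term with \(-c|\nabla u|^2\); the constant uses only the \(C^0\)-bounds on \(B\), \(C\), and \(\partial_t s\). The maximum principle on \(M\) then gives
\[
\|u\|_{C^0}\le C\bigl(\|u(a)\|_{C^0}+\|f\|_{C^0}\bigr),
\]
with \(C\) depending only on \(b-a\) and not on collapse. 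The delicate points are the bookkeeping of the cutoff commutators and of the trace extension at the corner. Both are covered by the collar-vanishing observation in Remark~\ref{rem:lieberman-scalar-versus-system}.
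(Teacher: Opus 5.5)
Your proposal is correct and follows the paper's proof essentially step for step. The interior bound comes from componentwise localized Lieberman interior estimates, with interpolation, nested-radius absorption and the spatial-differentiation induction. Near \(t=a\), a cutoff and an extension of the initial trace feed into the finite-order Cauchy--Dirichlet lemma with collar-vanishing compatibility. The \(C^0\) term is removed by the maximum principle for \(|u|_s^2\) on the compact quotient. The only cosmetic difference is your heat-equation extension \(\Xi\) of \(u(a)\): the paper uses the simpler time-independent extension \(E(x,t)=\chi(x)u_0(x)\), whose parabolic \(C^{m+\sigma,(m+\sigma)/2}\)-norm is trivially bounded by \(\|u(a)\|_{C^{m,\sigma}}\).
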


\begin{proof}
Fix one lifted parabolic chart, choose concentric cylinders
\(Q_0\Subset Q_1\Subset Q_2\) whose radii are fixed fractions of the atlas
radius, and choose a spatial cutoff \(\chi\) supported in \(Q_2\) with
\(\chi=1\) on \(Q_1\).  In the fixed tensor frame,
\eqref{eq:general-scalar-principal-system} reads
\[
 \partial_tU^A-a^{ij}\partial_i\partial_j U^A
 =\mathsf B^{iA}{}_D\partial_iU^D+\mathsf C^A{}_DU^D+F^A,
\]
where \(A,D\in\{1,\ldots,\operatorname{rank}E\}\), repeated \(D\)-indices
are summed, and the terms from the reference connection have been included in
\(\mathsf B\) and \(\mathsf C\).  The base-order interior estimate follows by
the same localization, interpolation, and radius-iteration argument used to
remove the outer top-order term in the proof of
Lemma~\ref{lem:finite-order-cauchy-dirichlet}: apply the scalar interior
estimate \cite[Theorem~4.9, p.~59]{Lieberman} componentwise to \(\chi U^A\),
estimate the first- and zeroth-order couplings and the cutoff terms with
\cite[Proposition~4.2, p.~49]{Lieberman}, and iterate over concentric
cylinders.  This gives
\[
 \|U\|_{C^{2+\sigma,1+\sigma/2}(Q_0)}
 \leq C\left(
   \|F\|_{C^{\sigma,\sigma/2}(Q_2)}+\|U\|_{C^0(Q_2)}\right).
\]

For the higher-order estimate, repeat the spatial-differentiation
induction in the proof of
Lemma~\ref{lem:finite-order-cauchy-dirichlet}, with the interior estimate above
in place of the localized Cauchy--Dirichlet estimate.  On a fixed chain of
nested cylinders, collect the components \(\partial^\alpha U^A\),
\(|\alpha|=\ell\), into one finite system.  Differentiation preserves the
scalar principal part; the terms with one derivative on \(a^{ij}\) become
first-order couplings, and the terms linear in \(\partial^\alpha U\) become
zeroth-order couplings.  The remaining commutators contain derivatives of
\(F\) of order at most \(\ell\) and derivatives of \(U\) of order at most
\(\ell+1\).  The coefficient bounds, the induction hypothesis, and the same
interpolation and radius-iteration argument therefore advance the induction
through \(\ell=m-2\).  The last paragraph of the proof of
Lemma~\ref{lem:finite-order-cauchy-dirichlet}, applied to the present equation,
then recovers the mixed time derivatives of parabolic weight at most \(m\).
This proves the full \(C^{m+\sigma,(m+\sigma)/2}\)-bound on \(Q_0\).
The argument up to this point uses only fixed-coordinate uniform parabolicity,
the stated coefficient bounds, and the cylinder separations, and therefore
proves the local estimate
\eqref{eq:local-nested-cylinder-schauder}.  The \(\PBG_m\) hypothesis is used
next only to make these local data uniform over the lifted atlas.

Taking the supremum over the atlas of
Lemma~\ref{lem:uniform-parabolic-atlas} proves
\eqref{eq:uniform-interior-linear-schauder}.  This part of the argument does
not require \(U\) to descend to the compact quotient.

Suppose now that \(\mathcal P\), \(U\), and \(F\) are deck-invariant.  The
equation for
\(|u|_s^2\), the coefficient bounds, the scalar maximum principle on the
compact quotient, and Gr\"onwall's inequality give
\[
 \|u\|_{C^0(M\times[a,b],s)}
 \leq C\left(\|u(a)\|_{C^0(M,s(a))}+\|f\|_{C^0}\right).
\]
For cylinders meeting the initial time slice \(t=a\), write
\(Q_i^+=Q_i\cap\{t\geq a\}\), choose the spatial sections \(\Omega_i\) to be
concentric Euclidean balls, choose the same type of spatial cutoff \(\chi\),
and put \(L_0=\partial_t-a^{ij}\partial_i\partial_j\) in the fixed frame.
The artificial spatial boundary is the one defined in
Remark~\ref{rem:lieberman-scalar-versus-system}.  If
\(u_0^A(x)=U^A(x,a)\), define the time-independent extension
\[
        E^A(x,t)=\chi(x)u_0^A(x),
        \qquad Z^A=\chi U^A-E^A.
\]
Then \(Z=0\) on the initial face \(t=a\) and on the artificial spatial
boundary, and its equation is
\begin{equation}\label{eq:initial-face-localized-system}
\begin{split}
 L_0Z^A={}&\chi F^A
 +\chi\mathsf B^{iA}{}_D\partial_iU^D
 +\chi\mathsf C^A{}_DU^D
 +[L_0,\chi]U^A
 +a^{ij}\partial_i\partial_jE^A.
\end{split}
\end{equation}
Using \(\chi U=Z+E\), rewrite the two coupling terms as
\[
 \chi\mathsf B^i\partial_iU
 =\mathsf B^i\partial_iZ+\mathsf B^i\partial_iE
   -\mathsf B^i(\partial_i\chi)U,
 \qquad
 \chi\mathsf CU=\mathsf CZ+\mathsf CE.
\]
Thus the terms involving \(Z\) have the first- and zeroth-order form in
Lemma~\ref{lem:finite-order-cauchy-dirichlet}.  Denote by \(\mathcal G\) the
sum of the remaining terms, which form its forcing.
The remainder of the initial-face argument is the finite-order induction
from the proof of Lemma~\ref{lem:finite-order-cauchy-dirichlet}, with two
points requiring comment.  First, every term involving \(E\)
is bounded by \(\|u(a)\|_{C^{m,\sigma}}\).  Second, derivatives of the
cutoff terms in \(\mathcal G\) contain derivatives of \(U\) of order at most
\(\ell+1\) at the \(\ell\)-th spatial-differentiation step.  This is one
order below the
\(C^{\ell+2+\sigma,(\ell+2+\sigma)/2}\)-norm being estimated.
Lieberman's interpolation inequality
\cite[Proposition~4.2, p.~49]{Lieberman} therefore bounds these terms by an
arbitrarily small multiple of the corresponding norm on the next larger
cylinder, plus a constant times \(\|U\|_{C^0}\); the nested-radius iteration
used in the proof of Lemma~\ref{lem:finite-order-cauchy-dirichlet} absorbs the
former term.

The compatibility check is also the same as in that proof.  With
\(r_m=\lfloor m/2\rfloor\), the required identities are
\((\partial_t^jZ)(\cdot,a)|_{\partial\Omega_2}=0\) for
\(0\leq j\leq r_m\), where the higher time jets are generated recursively from
\eqref{eq:initial-face-localized-system}; after \(\ell\) spatial
differentiations, only those with \(\ell+2j\leq m\) are used.  Since \(\chi\),
and hence \(E\), \(Z\), and \(\mathcal G\), vanish in a full collar of the
artificial boundary, every jet in this recursion vanishes there.  Thus all
required corner identities are automatic.  Repeating the induction in the
proof of Lemma~\ref{lem:finite-order-cauchy-dirichlet}, using the componentwise
Cauchy--Dirichlet estimate \cite[Theorem~4.28, p.~77]{Lieberman} for its
localized second-order step and then recovering the mixed time derivatives
from the equation as in the lemma's final paragraph gives
\[
\begin{aligned}
 \|U\|_{C^{m+\sigma,(m+\sigma)/2}(Q_1^+)}
 \leq C\bigl(&\|F\|_{C^{m-2+\sigma,(m-2+\sigma)/2}(Q_2^+)}
       +\|u(a)\|_{C^{m,\sigma}}\\
       &+\|U\|_{C^0(Q_2^+)}\bigr).
\end{aligned}
\]
Taking the supremum of these initial-face and interior estimates and using
the preceding maximum-principle bound for the \(C^0\)-term proves
\eqref{eq:uniform-global-linear-schauder}.  No sum over
quotient charts occurs, which is why the constants are uniform under
collapse.
\end{proof}

\begin{remark}
\label{rem:local-nested-cylinder-schauder}
The proof also gives the following purely local estimate.  Let
\(Q_0\Subset Q_2\) be fixed nested coordinate cylinders in one chart, with a
fixed positive spatial separation and lower-time gap; their upper time faces
may agree.  Suppose that, in a fixed tensor frame on \(Q_2\), the system is
\[
 \partial_tU^A-a^{ij}\partial_i\partial_jU^A
 =\mathsf B^{iA}{}_D\partial_iU^D+\mathsf C^A{}_DU^D+F^A,
\]
where \(a^{ij}\) is common to all components and is uniformly parabolic with
fixed-coordinate constants \(\lambda_-\) and \(\lambda_+\), and the
\(C^{m-2+\sigma,(m-2+\sigma)/2}\)-norms of
\(a,\mathsf B,\mathsf C\) are at most \(A_1\).  Then
\begin{equation}\label{eq:local-nested-cylinder-schauder}
 \|U\|_{C^{m+\sigma,(m+\sigma)/2}(Q_0)}
 \leq C\left(
      \|U\|_{C^0(Q_2)}
      +\|F\|_{C^{m-2+\sigma,(m-2+\sigma)/2}(Q_2)}\right).
\end{equation}
Here \(C\) depends only on
\(m,\sigma,\lambda_-,\lambda_+,A_1\), the rank and dimension, and the fixed
cylinder-separation data.  This local estimate requires neither
\(\PBG_m\), a covering space, nor deck invariance.
\end{remark}

\subsection{Quasilinear derivative gain}

\begin{lemma}[Local one-derivative gain for Ricci--DeTurck systems]
\label{lem:local-rdt-one-derivative-gain}
Fix an integer \(k\geq2\) and \(\sigma\in(0,1)\).  Let \(Q\) and \(Q'\) be
concentric coordinate cylinders in a fixed spatial chart, with \(Q'\) having
smaller spatial radius and a positive time gap from the lower time face of
\(Q\); the two cylinders may share their upper time face.  Let \(r(t)\) be a
smooth reference metric on \(Q\), and let \(q(t)\) be a smooth positive
definite solution of the Ricci--DeTurck equation relative to \(r(t)\).  In a
fixed tensor frame this equation has the scalar-principal form
\begin{equation}\label{eq:abstract-local-quasilinear-rdt}
 \partial_tq^A-q^{ij}\partial_i\partial_jq^A
 =\mathcal F^A\bigl(x,t,q,q^{-1},\partial q;
                 r,r^{-1},\partial r,\partial^2r\bigr),
\end{equation}
where \(\mathcal F\) is a universal smooth expression and contains no second
spatial derivatives of \(q\).  Suppose, in the fixed coordinates, that
\[
\begin{gathered}
 \lambda I\leq(q_{ij})\leq\Lambda I,
 \qquad \|r^{-1}\|_{C^0(Q)}\leq A,\\
 \|q\|_{C^{k+\sigma,(k+\sigma)/2}(Q)}
 +\|r\|_{C^{k+1+\sigma,(k+1+\sigma)/2}(Q)}\leq A.
\end{gathered}
\]
Then
\begin{equation}\label{eq:local-rdt-one-derivative-gain}
 \|q\|_{C^{k+1+\sigma,(k+1+\sigma)/2}(Q')}
 \leq C,
\end{equation}
where \(C\) depends only on the displayed bounds, the rank and dimension, and
the fixed spatial and lower-time separations between \(Q'\) and \(Q\).
Consequently, the estimate is uniform in any bounded-geometry atlas with fixed
nested-cylinder ratios.
\end{lemma}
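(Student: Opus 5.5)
The plan is to differentiate the equation once in space and treat the result as a linear scalar-principal system. The local estimate of Remark~\ref{rem:local-nested-cylinder-schauder} then applies with \(m=k+1\), and we bootstrap. Fix an intermediate cylinder \(Q''\) with \(Q'\Subset Q''\Subset Q\), splitting the spatial and lower-time separations in half.

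\emph{Step 1: view \(q\) itself as solving a linear system.} Freeze coefficients in \eqref{eq:abstract-local-quasilinear-rdt}. Put \(a^{ij}=q^{ij}\), which is common to all components and uniformly parabolic by \(\lambda I\leq q\leq\Lambda I\). Put \(F^A=\mathcal F^A(x,t,q,q^{-1},\partial q;r,r^{-1},\partial r,\partial^2 r)\), with \(\mathsf B=\mathsf C=0\). The hypothesis \(q\in C^{k+\sigma,(k+\sigma)/2}\) gives \(a\in C^{k+\sigma,(k+\sigma)/2}\), hence \(a\in C^{k-1+\sigma}\). The forcing \(F\) depends on \(\partial q\in C^{k-1+\sigma}\) and on \(\partial^2r\in C^{k-1+\sigma}\), the latter because \(r\in C^{k+1+\sigma}\). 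The bounds on \(r^{-1}\) and on \(q^{-1}\) (from \(\lambda\)) make \(\mathcal F\) a smooth function of bounded arguments. The parabolic product and composition estimates therefore give \(\|F\|_{C^{k-1+\sigma,(k-1+\sigma)/2}(Q)}\leq C\). This is exactly the \(m-2=k-1\) regularity required in \eqref{eq:local-nested-cylinder-schauder} with \(m=k+1\).

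\emph{Step 2: apply the local Schauder estimate.} Remark~\ref{rem:local-nested-cylinder-schauder} with \(m=k+1\), coefficient bound \(A_1\) controlling \(\|a\|_{C^{k-1+\sigma}}\), and nested cylinders \(Q'\Subset Q\) gives
\[
\|q\|_{C^{k+1+\sigma,(k+1+\sigma)/2}(Q')}\leq C\bigl(\|q\|_{C^0(Q)}+\|F\|_{C^{k-1+\sigma,(k-1+\sigma)/2}(Q)}\bigr)\leq C.
\]
This is \eqref{eq:local-rdt-one-derivative-gain}. The constant depends only on \(\lambda,\Lambda,A\), the rank and dimension, and the separation data, as claimed. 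Uniformity over a bounded-geometry atlas follows because these are the only inputs.

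The only delicate point is derivative counting: the \(m\)-th order estimate must not require \(a\) at the same order as the unknown. The remark needs only \(a\in C^{m-2+\sigma}=C^{k-1+\sigma}\), which is strictly weaker than what we have, so no circularity arises.

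If one preferred not to rely on the remark with \(B=C=0\), the fallback is the alternative route. Differentiate \eqref{eq:abstract-local-quasilinear-rdt} once in \(x\). The vector \(V=\partial q\) then satisfies a scalar-principal system: \(\partial q^{ij}\) multiplies \(\partial^2 q\), which becomes a first-order coupling on \(V\), and \(\partial_x\mathcal F\) contributes \(\partial^2 q=\partial V\) terms linearly. Applying the estimate at order \(m=k\) to this system yields the same conclusion.

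I expect the main obstacle to be verifying that the time-Hölder parts of \(F\) at weight \(k-1\) are controlled. This reduces to the composition estimate in parabolic Hölder spaces \eqref{eq:parabolic-holder-norm-definition}, which is standard once \(\partial q\) and \(\partial^2 r\) lie in \(C^{k-1+\sigma,(k-1+\sigma)/2}\).
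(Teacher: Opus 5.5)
Your argument is correct, but it is organized differently from the paper's.

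\emph{Your route.} You freeze the quasilinear coefficient \(a^{ij}=q^{ij}\) and put all of \(\mathcal F\) into the forcing, with \(\mathsf B=\mathsf C=0\). You then invoke Remark~\ref{rem:local-nested-cylinder-schauder} once, at order \(m=k+1\). This works because \(\mathcal F\) involves only first derivatives of \(q\). The hypotheses \(q\in C^{k+\sigma,(k+\sigma)/2}\) and \(r\in C^{k+1+\sigma,(k+1+\sigma)/2}\) therefore already place the forcing in \(C^{k-1+\sigma,(k-1+\sigma)/2}\). The remark asks of \(a\) only the weaker \(C^{k-1+\sigma,(k-1+\sigma)/2}\)-regularity. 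The time-seminorm issue you flag is settled by the Banach-algebra property of these parabolic H\"older spaces, together with the fact that \(\partial_x\) lowers parabolic order by exactly one in the norm \eqref{eq:parabolic-holder-norm-definition}.

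\emph{The paper's route.} The paper instead differentiates the equation \(k-1\) times and collects \(V_\alpha=\partial^\alpha q\), \(|\alpha|=k-1\), into one scalar-principal system. The terms linear in \(\partial V\) come from \(\partial q^{ij}\) and from the \(\partial q\)-dependence of \(\mathcal F\); these, and the terms linear in \(V\), become the couplings \(\mathsf B\) and \(\mathsf C\). The paper applies the remark only at base order \(m=2\), with \(C^{\sigma,\sigma/2}\) data, and then recovers the mixed time derivatives of weight \(\le k+1\) from the equation.

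\emph{Comparison.} Your version is shorter. It hides the spatial-differentiation induction and the time-derivative recovery inside the higher-order form of the remark, which rests on the induction in Lemma~\ref{lem:finite-order-cauchy-dirichlet}. The paper's version needs only the base-order estimate and makes the derivative count explicit: \(q\) through order \(k\) and \(r\) through order \(k+1\). That explicit count is what is quoted when the lemma is used in Proposition~\ref{prop:uniform-translation-difference} and in the five-step bootstrap of Proposition~\ref{prop:positive-time-regular-restart}.

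Two minor points. Your intermediate cylinder \(Q''\) is never used. Your fallback route would still have to recover the weight-\((k+1)\) time derivatives of \(q\) itself from the equation, but your main route does not need this.
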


\begin{proof}
The bound on \(r^{-1}\), together with the assumed H\"older bound on \(r\),
controls \(r^{-1}\) in
\(C^{k+1+\sigma,(k+1+\sigma)/2}(Q)\).  Indeed, differentiate
\(rr^{-1}=I\) and apply the parabolic H\"older product estimates.  In
particular, the reference Christoffel symbols and their derivatives through
parabolic order \(k\) are uniformly bounded.  The ellipticity bound on \(q\)
similarly controls \(q^{-1}\) to the order supplied by the hypotheses.

Let \(|\alpha|=k-1\), and collect the tensors
\(V_\alpha^A=\partial^\alpha q^A\) into one finite system.  Spatially
differentiating \eqref{eq:abstract-local-quasilinear-rdt} and moving the term
in which all derivatives remain on \(q^A\) to the left gives
\begin{equation}\label{eq:differentiated-local-quasilinear-rdt}
 (\partial_t-q^{ij}\partial_i\partial_j)V_\alpha^A
 =\mathsf B_{\alpha\beta}^{iA}{}_D\partial_iV_\beta^D
  +\mathsf C_{\alpha\beta}^{A}{}_DV_\beta^D
  +G_\alpha^A.
\end{equation}
Here \(\beta\) ranges over the multiindices of length \(k-1\).  The
first-order terms include those in which one derivative falls on \(q^{ij}\)
and the highest derivatives arising from the dependence of \(\mathcal F\) on
\(\partial q\).  The terms linear in an undifferentiated \(V_\beta\) are
placed in \(\mathsf C\).  After all terms linear in \(V_\beta\) or \(\partial V_\beta\) have been
placed in \(\mathsf C\) or \(\mathsf B\), every factor in \(G_\alpha\)
contains a derivative of \(q\) of order at most \(k-1\), or a derivative of
\(r\) of order at most \(k+1\).  The hypotheses, the preceding control
of both inverse metrics, and the parabolic H\"older product estimates
therefore give
\[
 \|\mathsf B\|_{C^{\sigma,\sigma/2}(Q_1)}
 +\|\mathsf C\|_{C^{\sigma,\sigma/2}(Q_1)}
 +\|G\|_{C^{\sigma,\sigma/2}(Q_1)}\leq C
\]
on any fixed intermediate cylinder \(Q'\subset Q_1\subset Q\).

Equation \eqref{eq:differentiated-local-quasilinear-rdt} is a finite system
with the same scalar principal operator on every component.  Applying the
purely local estimate \eqref{eq:local-nested-cylinder-schauder} from
Remark~\ref{rem:local-nested-cylinder-schauder} gives
\(V\in C^{2+\sigma,1+\sigma/2}(Q')\), with a bound depending only on the
stated data; its \(C^0\)-term is already controlled by the assumed
\(C^{k+\sigma,(k+\sigma)/2}\)-norm of \(q\).  Thus all spatial derivatives of
\(q\) through order \(k+1\) are bounded with the required H\"older control.
The equation and its differentiated forms then recover the mixed time
derivatives of parabolic weight at most \(k+1\).  This proves
\eqref{eq:local-rdt-one-derivative-gain}.
\end{proof}

\subsection{Continuation, smoothing, and regular restart}
\label{subsec:appendix-continuation-restart}

\begin{lemma}[Fixed-quotient Ricci--DeTurck continuation]
\label{lem:fixed-quotient-rdt-continuation}
Let \(M\) be compact, let \(g_0\) be a fixed smooth metric, and let \(s(t)\)
be a fixed smooth background family on \([a,b]\).  Given
\(c>0\), \(C<\infty\), and \(\sigma\in(0,1)\), there is
\(\tau_M>0\) with the following property.  For every \(t_0\in[a,b]\) and
every smooth metric \(v_0\) satisfying
\[
        c g_0\leq v_0\leq Cg_0,
        \qquad \|v_0\|_{C^{2,\sigma}(M,g_0)}\leq C,
\]
the Ricci--DeTurck equation relative to \(s(t)\), with initial value
\(v(t_0)=v_0\), has a unique solution on
\([t_0,t_0+\tau_M]\cap[a,b]\).  The number \(\tau_M\) may depend on the
fixed quotient, atlas, background family, and displayed bounds; no
collapse-uniformity is asserted.
\end{lemma}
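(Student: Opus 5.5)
The plan is to reduce the lemma to DeTurck's compact-manifold local existence argument \cite[pp.~158--161]{DeTurck}, keeping track of which quantities its existence time depends on. Fix the quotient $M$ and its finite atlas, the metric $g_0$, and the smooth background $s(t)$ on $[a,b]$. Relative to a fixed background, the Ricci--DeTurck equation is a quasilinear, strictly parabolic system with scalar principal part $v^{pq}\nabla_p\nabla_q v$, as in Lemma~\ref{lem:rdt-difference-system}. Its lower-order terms depend smoothly on $v$, $v^{-1}$, $\nabla v$, and on $s$ together with its derivatives through order two. The $s$-dependent coefficients are uniformly bounded on the compact set $M\times[a,b]$.

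\textbf{Existence.} I would obtain the solution from a contraction mapping in a ball of $C^{2+\sigma,1+\sigma/2}(M\times[t_0,t_0+\tau])$ centred at the time-independent extension of $v_0$. For $w$ in this ball, freeze the principal coefficients as $w^{pq}$ and put the remaining terms on the right-hand side as a forcing. Solve the resulting linear system using the linear Schauder theory on the compact quotient; Proposition~\ref{prop:uniform-lifted-linear-schauder} suffices, with constants allowed to depend on the atlas. The quantities entering the argument are the ellipticity bounds $c,C$, the $C^{2,\sigma}$ bound on $v_0$, and the bounds on $s$. Each of these is uniform in $t_0\in[a,b]$ and in the admissible $v_0$. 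The standard estimates then show that the map sends the ball into itself and contracts, for $\tau\le\tau_M$, where $\tau_M$ depends only on these data. The mechanism is that the deviation of $w$ from $v_0$ is controlled by a positive power of $\tau$ in lower H\"older norms.

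\textbf{Uniqueness and positivity.} Uniqueness in this class follows by applying Lemma~\ref{lem:rdt-difference-system} to two solutions and then the scalar maximum principle to $|U|^2$, exactly as in Step~2 of Proposition~\ref{prop:uniform-translation-difference}. Shrinking $\tau_M$ keeps the solution within $\tfrac12 c\,g_0\le v\le 2C g_0$, so it remains a metric throughout. Smoothness of $v$ comes from interior bootstrapping via Lemma~\ref{lem:local-rdt-one-derivative-gain}.

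\textbf{Main obstacle.} The only delicate point is the initial-face regularity, because $v_0$ is controlled only in $C^{2,\sigma}$. The contraction therefore has to be carried out in the space with exponents $2+\sigma$, without assuming any higher compatibility. I would handle this by working with the Cauchy--Dirichlet estimate at order $m=2$ (Lemma~\ref{lem:finite-order-cauchy-dirichlet} with $m=2$) and smoothing only at positive times. No collapse-uniformity is claimed, so compactness of $M$ can be used freely throughout the argument.
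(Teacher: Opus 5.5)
Your proposal is correct and follows the same route as the paper. The paper invokes DeTurck's contraction argument \cite{DeTurck} in a fixed finite atlas and observes that the two-sided metric bound and the $C^{2,\sigma}$ bound keep the coefficients in a bounded set, so one lifespan $\tau_M$ works for every restart slice; your version spells that argument out, with the key mechanism being that $w-v_0$ vanishes at $t_0$ and is $O(\tau^{\varepsilon})$ in $C^{\sigma,\sigma/2}$, and your uniqueness proof via Lemma~\ref{lem:rdt-difference-system} and the maximum principle is a slightly more explicit substitute for the paper's remark that ``the same argument gives uniqueness on overlaps.''
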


\begin{proof}
In a fixed finite atlas, the equation has scalar strongly parabolic principal
part
\[
        \partial_t v_{ij}=v^{pq}\partial_p\partial_qv_{ij}
        +F_{ij}(x,t,v,\partial v;s,\partial s,\partial^2s).
\]
The two-sided metric bound gives a common parabolicity constant, while the
\(C^{2,\sigma}\)-bound and the fixed smooth background place the coefficients
and nonlinearities in a bounded set with uniform H\"older and local Lipschitz
bounds.  The contraction argument in DeTurck's local existence proof
\cite[Theorem and proof, pp.~158--161]{DeTurck} therefore gives a lifespan
bounded below by one number \(\tau_M\) for every such restart slice.  The same
argument gives uniqueness on overlaps.
\end{proof}

\begin{proposition}[Uniform lifted Ricci--DeTurck continuous dependence]
\label{prop:uniform-lifted-rdt-stability}
Fix \(N\geq2\) and \(\sigma\in(0,1)\).  Let \(s(t)\), \(a\leq t\leq b\),
be a Ricci flow satisfying \(\PBG_N(s;\Lambda;[a,b])\) for some \(\Lambda\geq1\).
There are constants \(\eta_{\mathrm{cd}}>0\) and
\(C_{\mathrm{cd}}<\infty\), depending only on these data and an upper bound for
\(b-a\), with the following property.

\medskip
\noindent\textbf{Hypotheses.}
If \(v(a)\) is a smooth Riemannian metric and
\[
 \|v(a)-s(a)\|_{C^{N,\sigma}(M,s(a))}<\eta_{\mathrm{cd}},
\]

\medskip
\noindent\textbf{Conclusion.}
Then the Ricci--DeTurck flow \(v(t)\) relative to \(s(t)\) exists on
\([a,b]\) and
\begin{equation}
 \|v-s\|_{X_s^N[a,b]}
 \leq C_{\mathrm{cd}}\|v(a)-s(a)\|_{C^{N,\sigma}(M,s(a))}.
\end{equation}
The constants are uniform under collapse of the quotient.
\end{proposition}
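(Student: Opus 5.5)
The plan is a standard perturbative fixed-point argument for the difference \(e=v-s\). We run it in the intrinsic space \(X_s^N[a,b]\), which \(\PBG_N\) makes uniformly equivalent to fixed-chart norms by Lemma~\ref{lem:intrinsic-fixed-chart-equivalence}, so all constants are independent of collapse. Combined with a continuation argument, this gives existence on the whole interval.

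\textbf{Step 1: the equation for \(e\).} Since \(s\) solves \eqref{eq:rdt} relative to itself, Lemma~\ref{lem:rdt-difference-system} with \(v_0=s\), \(v_1=v\) gives
\[
\partial_te-P^{pq}\nabla_p\nabla_qe=B^p\nabla_pe+Ce .
\]
Because \(\nabla s=0\), formula \eqref{eq:linearized-coefficient-structure} gives
\[
P=\int_0^1 (s+\vartheta e)^{-1}\,d\vartheta,\qquad B=O(\nabla e),\qquad C=\mathcal R_1(\Rm(s),\dots)+O(\nabla^2e)+O(|\nabla e|^2).
\]
So as long as \(\|e\|_{X_s^N}\le\eta\) is small, \(P\) is uniformly elliptic. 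Moreover the coefficients \(a=P\), \(B\), \(C\) have \(C^{N-2+\sigma,(N-2+\sigma)/2}\)-norms bounded by a constant \(A_0\) that depends only on \(\Lambda\) and \(\eta\). These bounds use \(\|e\|_{X^N}\) and \(\PBG_N\), which controls \(\Rm(s)\) through the needed order.

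\textbf{Step 2: linear estimate and bootstrap.} The global estimate \eqref{eq:uniform-global-linear-schauder} of Proposition~\ref{prop:uniform-lifted-linear-schauder} applies with \(F=0\) and coefficients that descend to \(M\). It gives, on any subinterval \([a,t_1]\) where the solution exists and \(\|e\|_{X_s^N[a,t_1]}\le\eta\),
\[
\|e\|_{X_s^N[a,t_1]}\le C_0\|e(a)\|_{C^{N,\sigma}} ,
\]
with \(C_0\) depending only on \(\Lambda\), \(\eta\), and \(b-a\). Here \(\eta\) is fixed first (say \(\eta=\tfrac12\) of the ellipticity threshold), and then we set \(\eta_{\mathrm{cd}}=\eta/(2C_0)\). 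A continuity argument closes the bootstrap. The quantity \(t\mapsto\|e\|_{X_s^N[a,t]}\) is continuous for the smooth solution and starts below \(\eta/2\). Wherever it is at most \(\eta\), it is actually at most \(C_0\eta_{\mathrm{cd}}<\eta/2\), so it never reaches \(\eta\). Continuity of the \(X^N[a,t]\)-norm in \(t\) near \(t=a\) needs a small check. The compatibility at the initial face is used only through the smooth data, and Proposition~\ref{prop:uniform-lifted-linear-schauder} already handles the localized corner.

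\textbf{Step 3: existence on all of \([a,b]\).} Let \(t_*\) be the maximal existence time. On \([a,t_*)\) the bootstrap gives a uniform \(C^{N,\sigma}\bound\), with \(N\ge2\), and two-sided metric bounds \(\tfrac12 s\le v\le 2s\). The quotient \(M\) is compact and \(s\) is a fixed smooth family, so these are bounds in a fixed finite atlas with some quotient-dependent constants. Lemma~\ref{lem:fixed-quotient-rdt-continuation} then gives a fixed lifespan \(\tau_M\) from any restart time, which contradicts \(t_*<b\). Smoothness of the continued solution follows by parabolic regularity. This is the only place where quotient-dependent constants enter, and they affect existence only, never the estimate.

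The main obstacle is Step 1: keeping the constant \(A_0\) bounded purely in terms of \(\|e\|_{X^N}\le\eta\), and not in terms of higher norms. The coefficient \(C\) contains \(\nabla^2e\), so its \(C^{N-2+\sigma}\)-norm uses exactly \(N\) derivatives of \(e\), which matches \(X^N\) with no loss. The background terms need \(\nabla^{N-2}\Rm(s)\), which \(\PBG_N\) supplies. The concern to check is that the initial-face estimate \eqref{eq:uniform-global-linear-schauder} is applied with coefficient norms taken on \([a,t_1]\) only, which is fine since all constants depend only on an upper bound for the interval length.
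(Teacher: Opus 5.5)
Your argument is correct, but your linearization is not the paper's.

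\textbf{The paper's route.} It linearizes at the fixed background. With $u=v-s$ it writes $\mathcal L_s u=\mathcal N_s(u)$. Here $\mathcal L_s=\partial_t-\Delta_{L,s}$ is the Lichnerowicz heat operator of $s$, and $\mathcal N_s(u)$ is a quadratic remainder of the form $u*\nabla^2u+\nabla u*\nabla u+\Rm(s)*u*u$. The paper then applies the global estimate of Proposition~\ref{prop:uniform-lifted-linear-schauder} to this fixed operator. It uses the Banach-algebra property of the parabolic H\"older spaces to get $\|\mathcal N_s(u)\|_{Y^N_s}\le C\|u\|_{X^N_s}^2$, and closes the bootstrap by absorbing the quadratic term.

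\textbf{Your route.} You put the nonlinearity into the coefficients, via the mean-value system of Lemma~\ref{lem:rdt-difference-system} with $v_0=s$ and $v_1=v$. The equation for $e$ is then homogeneous, and the estimate is linear from the start. This is the same device the paper uses for translation differences and in Step~3 of the regular-restart proof.

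\textbf{What each buys.} The paper's constant $C_{\mathrm{lin}}$ depends only on the background through $\PBG_N$, and the solution enters only through an explicit quadratic term. Yours needs no quadratic remainder estimate and no absorption step. The price is a Schauder constant $C_0$ that depends on the bootstrap radius $\eta$ through the coefficient bounds. That is harmless: as you observe, the zeroth-order coefficient involves exactly $\nabla^2e$, so its $C^{N-2+\sigma,(N-2+\sigma)/2}$-norm is controlled by $\|e\|_{X^N_s}$ with no derivative loss. The existence step via Lemma~\ref{lem:fixed-quotient-rdt-continuation} is the same in both proofs.

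\textbf{The check near $t=a$.} You leave this as a ``small check,'' and your parameter choice does not yet cover it. As $t\downarrow a$, the quantity $\|e\|_{X^N_s[a,t]}$ does not tend to $\|e(a)\|_{C^{N,\sigma}}$. It tends to the size of the whole initial jet, namely the norms of $(\mathcal D_t^s)^j e|_{t=a}$ in $C^{N-2j,\sigma}$ for $2j\le N$; the temporal H\"older seminorms drop out in the limit because their exponents are below one. The paper bounds this jet by $C_{\mathrm{init}}\|e(a)\|_{C^{N,\sigma}}$, by evaluating the equation and its time derivatives at $t=a$. You need the same estimate. You must then choose $\eta_{\mathrm{cd}}$ small relative to $C_{\mathrm{init}}$ as well as $C_0$, so that your continuity argument really starts below $\eta/2$.
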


\begin{proof}
\proofstep{Step 1. Derive the quadratic perturbation estimate.}

Put \(u=v-s\) and write
\(\mathcal L_s=\partial_t-\Delta_{L,s}\), where the Lichnerowicz Laplacian on
symmetric two-tensors is
\[
 (\Delta_{L,s}u)_{ij}=s^{pq}\nabla_p\nabla_q u_{ij}
 +2R_{ipqj}(s)u^{pq}-R_i{}^p(s)u_{pj}-R_j{}^p(s)u_{ip}.
\]
All contractions and covariant derivatives in this formula use \(s\).  For the
spatial
Ricci--DeTurck operator
\[
        \mathcal Q_s(w)=-2\Ric(w)+\Lie_{W(w,s)}w,
\]
Taylor's formula at \(s\) gives
\begin{equation}
 \mathcal L_su=\mathcal N_s(u),
 \qquad
 \mathcal N_s(u)=\int_0^1(1-\vartheta)
 D^2\mathcal Q_s|_{s+\vartheta u}[u,u] \,d\vartheta.
\end{equation}
Every term in \(\mathcal N_s(u)\) is at least quadratic and has schematic form
\[
        u*\nabla^2u+\nabla u*\nabla u+\Rm(s)*u*u.
\]
The parabolic H\"older spaces in \eqref{eq:XY-spaces} are Banach algebras.
Consequently, on every interval \([a,t]\) contained in the existence interval
and on which \(\|u\|_{X_s^N[a,t]}\) is sufficiently small,
\begin{equation}\label{eq:stage-remainder-bound}
        \|\mathcal N_s(u)\|_{Y_s^N[a,t]}
        \leq C\|u\|_{X_s^N[a,t]}^2.
\end{equation}
Proposition~\ref{prop:uniform-lifted-linear-schauder}, applied to
\(\mathcal L_s\) on the same interval, yields
\begin{equation}
 \|u\|_{X_s^N[a,t]}
 \leq C_{\mathrm{lin}}\|u(a)\|_{C^{N,\sigma}}
      +C\|u\|_{X_s^N[a,t]}^2.
\end{equation}
We also record the initial time derivatives needed to start the bootstrap.
Put \(\varepsilon_0=\|u(a)\|_{C^{N,\sigma}(M,s(a))}\).  For
\(\varepsilon_0\) below a fixed uniform threshold, the equation recursively
gives
\[
 \sum_{j=0}^{\lfloor N/2\rfloor}
 \left\|\left. (\mathcal D_t^s)^ju\right|_{t=a}
       \right\|_{C^{N-2j,\sigma}(M,s(a))}
 \leq C_{\mathrm{init}}\varepsilon_0.
\]
To see this, evaluate \(\mathcal L_su=\mathcal N_s(u)\) at \(t=a\), then
differentiate it in time and substitute the already determined lower time
derivatives.  At parabolic weight at most \(N\), this recursion uses spatial
derivatives of \(u(a)\) through order \(N\) only.  The background
coefficients are controlled by \(\PBG_N\), and every resulting term vanishes
when \(u(a)=0\).  The H\"older product estimates therefore give the displayed
linear bound on a sufficiently small fixed neighborhood.  Passing from
ordinary to metric-compatible time derivatives uses only the same controlled
background coefficients.  Enlarge \(C_{\mathrm{init}}\) to include the
fixed-chart norm-equivalence constants, and increase
\(C_{\mathrm{lin}}\) so that \(C_{\mathrm{lin}}\geq\max\{C_{\mathrm{init}},1\}\).

Choose \(\delta>0\) below this initial-data threshold and so that every tensor
\(u\) with \(\|u\|_{X_s^N[a,t]}\leq\delta\) makes \(s+u\) positive definite and keeps the
Ricci--DeTurck system uniformly parabolic, and also so that
\(C\delta\leq1/4\).  This \(\delta\) is the \emph{bootstrap radius}.
On the fixed compact quotient,
Lemma~\ref{lem:fixed-quotient-rdt-continuation} gives a unique short-time
solution.  The prescribed smooth time-dependent background enters only as a
known coefficient in that fixed-atlas quasilinear system.

Let \([a,t_*]\) be any interval on which
\(\|u\|_{X_s^N[a,t_*]}\leq\delta\).  If
\(\|u(a)\|_{C^{N,\sigma}}\leq\delta/(4C_{\mathrm{lin}})\), then the preceding
inequality and \(\|u\|^2\leq\delta\|u\|\) give
\[
 \|u\|_{X_s^N[a,t_*]}
 \leq \frac{\delta}{4}+\frac14\|u\|_{X_s^N[a,t_*]},
 \qquad\text{hence}\qquad
 \|u\|_{X_s^N[a,t_*]}\leq\frac{\delta}{3}.
\]
We call this strict improvement from \(\delta\) to \(\delta/3\) the
\emph{bootstrap estimate}.

\proofstep{Step 2. Close the bootstrap and continue to the final time.}

We now make the continuation argument quantitative.  Let
\([a,T_{\max})\), with \(T_{\max}\leq b\), be the maximal existence interval.
For the smooth local solution, the purely temporal H\"older seminorms in
\(X_s^N[a,t]\) tend to zero as \(t\downarrow a\): their time exponents are
\(\sigma/2\) and \((1+\sigma)/2\), both strictly less than one.  The sup norms
and spatial H\"older parts converge to those of the initial time derivatives.
Consequently, the initial-jet estimate in Step~1 gives
\[
 \limsup_{t\downarrow a}\|u\|_{X_s^N[a,t]}
 \leq C_{\mathrm{init}}\varepsilon_0\leq\delta/4.
\]
Thus the bootstrap condition holds on a short initial interval.  The length
of this interval may depend on higher derivatives of the fixed smooth
initial metric; no collapse-uniform lower bound for it is needed.  Let
\(T_*\) be the supremum of the times up to which the norm is at most
\(\delta\).  The estimate just proved gives the strict bound \(\delta/3\) on
\([a,T_*]\); continuity of the parabolic H\"older norm on smooth compact
subintervals then extends the bootstrap condition past \(T_*\).  Hence
\(T_*=T_{\max}\).
Consequently,
\begin{equation}\label{eq:continuous-dependence-bootstrap-all-times}
        \|u\|_{X_s^N[a,t]}\leq\delta/3,
        \qquad a<t<T_{\max}.
\end{equation}
In particular, every slice \(v(t)=s(t)+u(t)\) is uniformly positive definite,
and its lifted \(C^{N,\sigma}\)-norm is bounded by a constant depending only
on the data in the proposition.

The remaining continuation argument is made on the fixed compact quotient;
no collapse-uniform restart time is needed.  Choose a finite smooth coordinate
atlas on this quotient and a fixed smooth background metric \(g_0\).  The
intrinsic lifted estimate
\eqref{eq:continuous-dependence-bootstrap-all-times}, restricted to finitely
many lifted coordinate neighborhoods and compared with this fixed atlas,
gives constants \(c_M>0\) and \(C_M<\infty\) such that
\begin{equation}\label{eq:fixed-quotient-continuation-bound}
        c_M g_0\leq v(t)\leq C_M g_0,
        \qquad
        \|v(t)\|_{C^{N,\sigma}(M,g_0)}\leq C_M,
        \qquad a<t<T_{\max}.
\end{equation}
Here the constants may depend on the particular compact quotient and its
chosen atlas; this is harmless, since all collapse-independent estimates have
already been obtained in
\eqref{eq:continuous-dependence-bootstrap-all-times}.  The coefficients of the
known background \(s(t)\) have bounded fixed-atlas norms on \([a,b]\).
Therefore, the Ricci--DeTurck equation is a uniformly strongly parabolic
quasilinear system on this fixed compact manifold, with a uniform
\(C^{2,\sigma}\)-bound on its solution slices.

Lemma~\ref{lem:fixed-quotient-rdt-continuation}, applied with the bounds in
\eqref{eq:fixed-quotient-continuation-bound}, gives a number \(\tau_M>0\),
independent of the restart time \(t_0<T_{\max}\), for which the solution with
initial value \(v(t_0)\) exists on
\([t_0,t_0+\tau_M]\cap[a,b]\).  Only the \(C^{2,\sigma}\)-bound and uniform
parabolicity enter this fixed-quotient lifespan; the higher finite-order bounds
are then propagated by differentiating the system and applying the local
parabolic estimates already established above.

If \(T_{\max}<b\), choose \(t_0<T_{\max}\) so close to \(T_{\max}\) that
\(t_0+\tau_M>T_{\max}\).  The restarted solution extends past
\(T_{\max}\), and uniqueness for the strongly parabolic Ricci--DeTurck system
identifies it with the original solution on their overlap, contradicting
maximality.  Thus \(T_{\max}=b\).  Choosing
\(t_0\in(b-\tau_M,b)\) and restarting on \([t_0,b]\) gives the endpoint
\(t=b\).  Again uniqueness identifies the two solutions on their overlap, so
there is no gluing loss at the limiting time.

\proofstep{Step 3. Obtain the linear global stability estimate.}

Finally, on \([a,b]\), combine
\eqref{eq:continuous-dependence-bootstrap-all-times} with
\eqref{eq:stage-remainder-bound} and the linear estimate.  Since
\(C\delta\leq1/4\), the quadratic term can be absorbed, giving
\[
 \|u\|_{X_s^N[a,b]}
 \leq 2C_{\mathrm{lin}}\|u(a)\|_{C^{N,\sigma}}.
\]
Taking \(\eta_{\mathrm{cd}}=\delta/(4C_{\mathrm{lin}})\) and
\(C_{\mathrm{cd}}=2C_{\mathrm{lin}}\) proves the proposition.  The usual DeTurck ODE
then identifies this solution with a pullback of the corresponding Ricci flow.
\end{proof}

The preceding results give the continuous-dependence estimate needed in the body.  We finish by proving the positive-time smoothing and restart
statement used at each switching time.

\begin{proof}[Proof of Proposition~\ref{prop:positive-time-regular-restart}]
\proofstep{Step 1. Create high-order parabolic geometry for the background.}

We work on the universal cover.  From
\eqref{eq:regular-restart-curvature}, local \(\TT^2\)-invariance, and
Lemma~\ref{lem:universal-cover-inj}, the metrics \(\widetilde r(\tau)\) have
a uniform injectivity-radius lower bound on \([10,100]\).  Shi's estimates
give bounds for all curvature derivatives through the finite order needed
below on \([10,100]\).  Hence, after increasing a constant \(\Lambda_{\mathrm{sm}}\) depending only on the
fixed data,
\[
        \PBG_{N+5}(r;\Lambda_{\mathrm{sm}};[10,100])
\]
holds.  The norm equivalence in
Lemma~\ref{lem:intrinsic-fixed-chart-equivalence} therefore converts
\eqref{eq:regular-restart-low-order-closeness} into the uniform order-\(N\)
fixed-chart bound on \([10,100]\) used in the bootstrap below.

\proofstep{Step 2. Gain five derivatives for the Ricci--DeTurck metric.}

Choose \(\beta_{\mathrm{reg}}\) initially so small that
\eqref{eq:regular-restart-low-order-closeness} makes \(q\) uniformly
bilipschitz to \(r\) and keeps the Ricci--DeTurck equation uniformly
parabolic.  In the universal-cover charts for \(r\), the equation for
\(q\) has the quasilinear scalar-principal form
\eqref{eq:abstract-local-quasilinear-rdt}.  We now give the finite bootstrap
that will
be used below.

Choose once and for all six nested spatial cylinders in each fixed chart,
with radii decreasing by a uniform amount.  For \(j=0,\ldots,5\), put
\(I_j=[10+2j,100]\).  We claim inductively that
\begin{equation}
 \|q\|_{C^{N+j+\sigma,(N+j+\sigma)/2}_{\mathrm{uloc}}
          (\widetilde M\times I_j,r)}\leq C_j.
\end{equation}
For \(j=0\), this follows from
\eqref{eq:regular-restart-low-order-closeness} and the
\(\PBG_{N+5}\)-bound for \(r\).  Suppose it holds for some \(j<5\).  Apply
Lemma~\ref{lem:local-rdt-one-derivative-gain} with \(k=N+j\), using the
\(j\)-th spatial cylinder as the outer cylinder and the next one as the inner
cylinder.  The two units of temporal room between \(I_j\) and \(I_{j+1}\)
provide the required separation from the lower time face; backward cylinders
allow the estimate to include \(\tau=100\).

The hypotheses of the lemma are the induction hypothesis, uniform
ellipticity of \(q\), and the \(\PBG_{N+5}\)-bound for \(r\).  In particular,
Lemma~\ref{lem:uniform-parabolic-atlas} gives a uniform coordinate
\(C^0\)-bound for \(r^{-1}\) on every outer cylinder, so the reference
nondegeneracy assumption is satisfied at all five steps.  More explicitly,
differentiating the equation \(N+j-1\) times uses derivatives of \(q\) of
order at most \(N+j\) and derivatives of \(r\) of order at most
\(N+j+1\leq N+5\).  The lemma therefore gives the asserted
\(C^{N+j+1+\sigma,(N+j+1+\sigma)/2}\)-bound on \(I_{j+1}\).  This proves the
induction.  After the five steps,
\[
 \|q\|_{C^{N+5+\sigma,(N+5+\sigma)/2}_{\mathrm{uloc}}
          (\widetilde M\times[20,100],r)}\leq C_5.
\]
The argument is carried out entirely in the uniform universal-cover atlas, so
its constants do not involve the injectivity radius of the collapsed quotient.
For related local derivative estimates, see
\cite[Appendix~A.2, ``Local derivative estimates'']{Bamler-Kleiner}.
Lemma~\ref{lem:local-rdt-one-derivative-gain} provides the precise finite-order
estimate used here.

\proofstep{Step 3. Smooth the difference by the homogeneous linear system.}

Set \(u=q-r\).  Since \(q\) and \(r\) solve the Ricci--DeTurck equation
with the same background \(r\),
Lemma~\ref{lem:rdt-difference-system}, applied on the universal cover, gives a
homogeneous system
\[
        \partial_\tau u-P^{ij}\nabla_i\nabla_j u
        =B^i\nabla_i u+Cu
\]
with scalar principal part.  The preceding regularity of \(q\) and \(r\)
gives the coefficient bounds required by
Proposition~\ref{prop:uniform-lifted-linear-schauder} with \(m=N+5\).
Its interior estimate, applied with room at the lower time face, yields
\begin{equation}\label{eq:regular-restart-interior-smoothing}
 \|q-r\|_{X_r^{N+5}[30,100]}
 \leq C\|q-r\|_{C^0(M\times[1,100],r)}.
\end{equation}

\proofstep{Step 4. Rescale, average, and verify the restart geometry.}

The fixed factor-\(100\) rescaling changes the spatial H\"older norm by a factor depending
only on \(N\) and \(\sigma\).  Evaluating
\eqref{eq:regular-restart-interior-smoothing} at \(\tau=100\) gives the
first term on the left-hand side of
\eqref{eq:regular-restart-high-order-estimate}.  Since \(\rho^+\) is
invariant, Lemma~\ref{lem:averaging-invariant-holder} and
\[
        r^+(1)-\rho^+
        =\overline{q^+(1)-\rho^+}
\]
give the second term.

It remains to verify the uniform geometry of the restart.  Shi's estimates at
\(\tau=100\), followed by the fixed rescaling, bound
\(|\widetilde\nabla^j\Rm_{\widetilde\rho^+}|\) for
\(0\leq j\leq N+3\).  Lemma~\ref{lem:universal-cover-inj}, applied to the
invariant metric \(r(100)\) and then rescaled, gives a uniform lower bound for
\(\inj_{\widetilde\rho^+}\).  Thus \(\BG_N(\rho^+;\Lambda')\) holds for a uniform \(\Lambda'\).  By decreasing
\(\beta_{\mathrm{reg}}\) once more,
\eqref{eq:regular-restart-high-order-estimate} makes \(r^+(1)\) a sufficiently
small \(C^{N+5,\sigma}\)-perturbation of \(\rho^+\).  The formulas for the
curvature tensor and its covariant derivatives then give uniform bounds through
order \(N+3\), while positivity follows from the \(C^0\)-part.  Finally,
\(r^+(1)\) is invariant, so Lemma~\ref{lem:universal-cover-inj} gives its
uniform lifted injectivity-radius lower bound.  Enlarging one constant gives
\(\BG_N(\rho^+;\Lambda_{\mathrm{reg}})\) and
\(\BG_N(r^+(1);\Lambda_{\mathrm{reg}})\), proving the proposition.
\end{proof}

This completes the uniform analytic estimates used in the body of the paper.

\clearpage
\section{Source guide for the external inputs}
\label{app:source-guide}
\addtocontents{toc}{\protect\setcounter{tocdepth}{1}}
Section~\ref{sec:external-inputs} states the external results in the forms used
by the block argument.  This appendix records precisely what is quoted, what
is a harmless reformulation, and what uniformization is supplied in the
present paper.

\subsection*{The Type-III blowdown theorem}
Proposition~\ref{input:lott-blowdown} is the portion of
\cite[Theorem~1.2]{Lott10} used here, restricted to the three geometries under
consideration and written with the two hypotheses displayed in
\eqref{eq:lott-input-hypotheses}.  It asserts only compact blowdown information
and the lifted limits.  In the Euclidean and Nil cases, the compact conclusion
is full convergence to a point.  In the Sol case, the quoted compact conclusion
only classifies any Gromov--Hausdorff subsequential limit as a circle or
interval; no quantitative lower length bound, precompactness, or uniqueness of
the compact limit is imported.  The full metric-circle limit on
the working cover is instead proved in
Corollary~\ref{cor:persistent-large-tail-diameter}: the scalar recursion and the
positive quotient-length lower bound give convergence of \(\ell_k\) and
\(E_k\to2\), the two-sided restart estimate propagates the length limit through
each block, and the comparison with Bamler's fibers makes the retained fibers
vanish.  Proposition~\ref{prop:finite-cover-descent} then gives a full circle or
interval limit of positive length on the original manifold.  Exponential
convergence of the unrescaled flow in the Euclidean case is a later consequence
of near-flat compactness and Proposition~\ref{input:flat-stability}, not part of
this quoted input.

\subsection*{The almost-flat theorem}
Proposition~\ref{input:almost-flat-topology} is the three-dimensional
topological consequence of the Gromov--Ruh almost-flat theorem
\cite{GromovAlmostFlat,Ruh}.  No analytic conclusion is added: it is used only
to turn the scale-invariant small-curvature condition into virtual nilpotence
of the fundamental group.

\subsection*{Flat stability and gauge transfer}
For one flat background, \cite[Theorem~3.7]{GIK} gives exponential stability
for the Ricci flow.  In its proof, the corresponding Ricci--DeTurck
convergence is established first, and \cite[Proposition~3.6]{GIK} is then used
to transfer this convergence to the ungauged Ricci flow, including convergence
of the relevant diffeomorphisms.  The compact-family version in
Proposition~\ref{input:flat-stability} is not a
verbatim quotation: it follows by taking a finite subcover of a compact set of
actual flat representatives and then taking the minimum decay rate and maximum
constant among the finitely many fixed-background estimates.  The proposition
is stated for smooth initial metrics in the little-H\"older neighborhood.
Its all-orders conclusion is obtained by positive-time parabolic estimates in
DeTurck gauge and convergence of the associated diffeomorphisms; higher-order
constants may also depend on higher norms of the smooth initial metric, while
the decay rate is uniform on the compact family.  When near-flat compactness
is initially stated modulo diffeomorphisms, the relevant
diffeomorphism must first be applied to obtain actual representatives on the
fixed torus; Proposition~\ref{prop:near-flat-stability-entry} absorbs it into
the finite covering map.

\subsection*{Invariant energy and quotient length}
The length inequality in
Proposition~\ref{input:invariant-energy-length} is
\cite[Lemma~3.7]{LottSesum}, and the reciprocal energy inequality is the
integrated form of \cite[Lemma~3.3]{LottSesum}.  The sentence treating
\(E(t_0)=0\) only records the zero solution of the same maximum-principle
inequality, so no reciprocal expression is used at vanishing energy.

\subsection*{The global scale-invariant curvature bound}
Bamler's \cite[Corollary~1.2]{Bamler} gives the estimate in
Proposition~\ref{input:bamler-typeIII} for all sufficiently large times.
Increasing \(K\), if necessary, over the compact initial time interval gives
the stated bound for every \(t>0\).  Pulling the flow back to a finite cover
leaves the sectional-curvature estimate and its constant unchanged.

\subsection*{The fixed-threshold Bamler alternative}
The initial normalization required by \cite[Theorems~1.1 and~1.4]{Bamler}
is obtained by a constant parabolic rescaling.  Choose \(a>0\) sufficiently
large that \(\widehat g(0)=a g(0)\) satisfies the normalized initial
conditions, and set
\begin{equation}\label{eq:bamler-initial-rescaling}
 \widehat g(u)=a g(u/a),\qquad
 u=at>0,\qquad u^{-1}\widehat g(u)=t^{-1}g(t).
\end{equation}
Regard \(\widehat g\) as a flow with no surgeries, apply the cited theorem,
and scale back.  The normalized metric, its comparison norms, and the
normalized fiber diameters are unchanged; a tail time \(\widehat T\) becomes
\(\widehat T/a\), and an error function \(\widehat\varepsilon(u)\) becomes
\(\widehat\varepsilon(at)\).  This justifies using the theorem for an
arbitrary immortal flow on the working cover.

Proposition~\ref{input:bamler-fixed-threshold} is a fixed-threshold
reformulation, not a verbatim quotation.  Theorem~1.4(b)--(d) of
\cite{Bamler} supplies the topology-specific global alternatives and a single
time-dependent error function \(\varepsilon(t)\to0\).  The detailed
construction of the invariant comparison metrics, together with the
high-order closeness and fiber-diameter estimates, is given in
\cite[Section~4.4, especially Proposition~4.9]{BamlerD}, using
\cite{BamlerA,BamlerB,BamlerC}.  In the whole-manifold torus-bundle cases,
these fiberwise local actions constitute the twisted torus-bundle invariance
fixed in Subsection~\ref{subsec:torus-bundle-conventions}; no globally defined
untwisted \(\TT^2\)-action is being asserted when the monodromy is nontrivial.
In the Sol case, Proposition~\ref{prop:finite-torus-bundle-covers} and the
working-cover convention ensure \(H\in\SL(2,\ZZ)\) and \(\Tr H>2\).
Its positive eigenvalues give a real logarithm \(A=\log H\) with trace zero;
\(\RR^2\rtimes_{e^{tA}}\RR\) is isomorphic to the connected Lie group
\(\Sol\), and \(\ZZ^2\rtimes_H\ZZ\) is a cocompact lattice in it.
The working torus bundle is therefore a Solv manifold in the direct scope of
\cite[Theorem~1.4(d)]{Bamler}.  The base double cover is taken before the
core refinement, whose monodromy is a power of \(H_0^2\); no later cover or
descent of Bamler's comparison metric is invoked.  Apply the cited results
directly to the flow on this fixed working cover.  After fixing a cutoff \(c_0\in(0,1)\) with
\(Kc_0^2<\eps_{\mathrm{af}}\), choose the associated tail time
\(T_B(c_0)\) so that
\[
        \varepsilon(u)\leq c_0/2
        \qquad\text{for every }u\geq T_B(c_0).
\]
In the flat \(\TT^3\) and small-Nil branches, the normalized diameter is then
eventually below this fixed threshold; in the remaining branches, the cited
alternatives give the invariant torus-bundle comparison whenever the diameter
is at least \(c_0\sqrt t\).  Replacing the error by its nonincreasing envelope
\[
        \eps_B^{c_0}(t)=\sup_{u\geq t}\varepsilon(u)
\]
only weakens the metric and fiber estimates.  Moreover, since
\(\eps_B^{c_0}(t)\geq\varepsilon(t)\), every integer \(m\) with
\(0\leq m<\bigl(\eps_B^{c_0}(t)\bigr)^{-1}\) also satisfies
\(m<\varepsilon(t)^{-1}\), so the available derivative order still suffices.
This gives the diameter-defined, tail-uniform
formulation in Proposition~\ref{input:bamler-fixed-threshold}.  In every
application, the working cover is selected once according to the
\hyperref[par:working-cover-convention]{working-cover convention}; the
fixed-threshold alternative is invoked only on that cover and never on an
arbitrary torus bundle representing the Euclidean, Nil, or Sol geometry.
In particular, a negative-trace hyperbolic bundle is handled through its
initial working cover, not by applying Proposition~\ref{input:bamler-fixed-threshold}
directly to that bundle.  The fixed
global application
uses \(c_0=c_{\mathrm{af}}\), with tail data
\(T_B(c_{\mathrm{af}}),\eps_B^{c_{\mathrm{af}}}\); the Euclidean proof makes
one separate application with \(c_0=\zeta\), \(T_B(\zeta)\), and
\(\eps_B^{\zeta}\), without altering the fixed global application.

\addtocontents{toc}{\protect\setcounter{tocdepth}{2}}
\clearpage
\section{Dependency trees}
\label{app:dependencies}
This appendix is a navigational aid rather than an additional part of the
proof.  Tree~I traces the lifted geometric and analytic inputs through
finite-time decay, fibration comparison, and stage preparation to the
quantitative one-block estimates and hence to the qualitative block
contraction theorem.  Tree~II combines the block results with the locally
\(\TT^2\)-invariant energy--length estimates to obtain iteration on a connected
large-diameter interval.  On a persistent tail it also records the quantitative
quotient-length lower bound, retained-fibration equivalence, transition control,
and elementary circle collapse used to prove convergence of the normalized
quotient length and the full circle limit.  Tree~III takes those
outputs as inputs rather than re-expanding their internal block dependencies;
it records the application of Bamler's alternative on the fixed torus-bundle
cover, then combines cover topology and finite-cover descent with the
Euclidean, Nil, and Sol arguments to
obtain the main theorem.

Arrows run from a dependency to a result that uses it.  When present,
an edge label lists only the named parameters passed from the source vertex to
the target vertex; an arrow carrying no such parameter is left unlabeled.
White boxes denote results whose dependencies are
expanded in the displayed tree.  Light-gray boxes denote inputs carried in
from earlier sections or earlier trees, including external results; the main
theorem is shown in darker gray.  The trees show only the principal
nonredundant dependencies; definitions and remarks are omitted, and a numbered
result appears at most once in each diagram.  Blue headings indicate that the
entire box is an internal hyperlink to the corresponding numbered statement.
\clearpage
\phantomsection
\pdfbookmark[2]{Tree I: Analytic estimates, fibration comparison, and quantitative one-block control}{bookmark:dependency-tree-I}
\noindent\makebox[\textwidth][c]{\DependencyTreeI}%

\clearpage
\phantomsection
\pdfbookmark[2]{Tree II: Componentwise iteration and the persistent-tail circle limit}{bookmark:dependency-tree-II}
\noindent\makebox[\textwidth][c]{\DependencyTreeII}%

\clearpage
\phantomsection
\pdfbookmark[2]{Tree III: Long-time limits and assembly of the main theorem}{bookmark:dependency-tree-III}
\noindent\makebox[\textwidth][c]{\DependencyTreeIII}%
\clearpage
\section{Parameters and persistent notation}
\label{app:symbols}
This appendix is a reference guide for the quantitative data carried through
the proof.  It collects only symbols that persist across major statements or
section boundaries, making the hypotheses and conclusions of the stage and
block constructions easier to track; notation local to a single proof is
deliberately excluded. The appendix is not used as an
additional mathematical input.

The table lists Greek-letter symbols first and Roman-letter symbols second,
in alphabetical order within each group.  Upper- and lower-case variants are
grouped by their base letter; numerical or symbolic subscripts precede
alphabetic subscripts, which are ordered without regard to case.  Related
symbols share an entry under the first symbol shown.  For each entry, the
second column links to the statement, equation, subsection, or anchored
paragraph where the symbol is introduced or fixed, and the third column
describes the role that the symbol continues to play later in the paper.  The
table is restricted to structural constants, maps, metrics, state variables,
and scales that cross statement or section boundaries; local coordinate
variables, tensor indices, cutoff functions, and every symbol used only inside
one proof are omitted.  It is intended both as a reading aid for the constant
dependencies and as a compact reference for the persistent data in the stage
and block constructions.

\begingroup
\scriptsize
\hbadness=10000
\renewcommand{\arraystretch}{1.12}
\setlength{\LTleft}{0pt}
\setlength{\LTright}{0pt}
\begin{longtable}{@{}>{\raggedright\arraybackslash}p{0.20\textwidth}>{\raggedright\arraybackslash}p{0.25\textwidth}>{\raggedright\arraybackslash}p{0.49\textwidth}@{}}
\textbf{Symbol} & \textbf{Defined or fixed in} & \textbf{Meaning}\\ \hline
\endfirsthead
\textbf{Symbol} & \textbf{Defined or fixed in} & \textbf{Meaning}\\ \hline
\endhead
\multicolumn{3}{@{}l}{\textit{Greek-letter parameters and maps}}\\[2pt]
\(\alpha\) & Theorem~\ref{thm:noninvariant-decay} & Prescribed factor \(0<\alpha<1\) contracting the non-invariant part on \([10,100]\) relative to its initial \(C^{N,\sigma}\)-norm.\\
\(\beta_*\) & Lemma~\ref{lem:stage-stability} and the joint proof of Theorem~\ref{thm:one-block-contraction} and Proposition~\ref{prop:one-block-quantitative} & Stage-closeness target in the preparation lemma, chosen in the block proof below the decay, fibration-comparison, and restart thresholds.\\
\(\beta_{\mathrm{fib}}\) & Lemmas~\ref{lem:pointwise-fixed-fiber} and~\ref{lem:retained-fibration-persistence} & Closeness threshold produced by the pointwise comparison and propagated as the stage-closeness input in the retained-fibration lemma.\\
\(\beta_{\mathrm{orb}}\) & Theorem~\ref{thm:noninvariant-decay} & Short-orbit diameter threshold used in the zero-average \(C^0\)-estimate.\\
\(\beta_{\mathrm{pert}}\) & Theorem~\ref{thm:noninvariant-decay} & Independent smallness threshold for the Ricci--DeTurck perturbation in the lifted parabolic estimates.\\
\(\beta_{\mathrm{reg}}\) & Proposition~\ref{prop:positive-time-regular-restart} and Lemma~\ref{lem:stage-stability} & Smallness threshold for positive-time regular restart, carried into stage preparation and the block parameter choice.\\
\(\gamma\) & Lemma~\ref{lem:isotopic-fibers-force-fixed} & Relative \(C^1\)-closeness threshold in fiber-equivalent comparison; chosen from the quotient-length lower bound and the target fixed-fiber diameter, and reused in Lemma~\ref{lem:pointwise-fixed-fiber} and Corollary~\ref{cor:persistent-large-tail-diameter}.\\
\(\Gamma\) & Lemma~\ref{lem:finite-quotient-circle-collapse} and Proposition~\ref{prop:finite-cover-descent} & Fixed finite group acting isometrically on the covering metrics; in finite-cover descent it is the deck group, with \(|\Gamma|=d\). Its limiting action on the circle need not be faithful.\\
\(\delta_0\) & Definition~\ref{def:parabolically-rescaled-large-block} and Theorem~\ref{thm:large-component-iteration} & Lower bound for the normalized diameter on a rescaled large block or connected large-diameter interval; it is at least the cutoff in the fixed-threshold application being used.\\
\(\delta_{\mathrm{flat}}\) & Equations~\eqref{eq:euclidean-flat-entry-tolerance}--\eqref{eq:euclidean-flat-entry-slice} & Normalized-diameter tolerance chosen from \(\varepsilon_{\mathrm{flat}}\); it is independent of both the auxiliary cutoff \(\zeta\) and the fixed global cutoff \(c_{\mathrm{af}}\).\\
\(\delta_{\mathrm{J}}\) & Lemma~\ref{lem:transition-jump} & Upper threshold for the relative \(C^1\)-jump, depending only on the chosen reference energy bound \(E_0\). The component iteration uses \(E_0=3\).\\
\(\eps_{\mathrm{af}}\) & \hyperref[par:almost-flat-threshold]{Almost-flat threshold paragraph} & Dimensional almost-flat threshold used to choose every admissible cutoff \(c_0\), including the fixed global cutoff \(c_{\mathrm{af}}(K)\).\\
\(\varepsilon_{\mathrm{av}}\) & Lemma~\ref{lem:high-order-averaged-slice} & High-order smallness threshold under which averaging preserves bounded lifted geometry.\\
\(\eps_B^{c_0}(t)\) & Proposition~\ref{input:bamler-fixed-threshold} and the \hyperref[par:standing-fixed-threshold-application]{standing fixed-threshold application} & Nonincreasing comparison error attached to the application with cutoff \(c_0\); different cutoffs have separately labeled error functions.\\
\(\eps_{\mathrm{fib}}\) & Lemmas~\ref{lem:pointwise-fixed-fiber} and~\ref{lem:retained-fibration-persistence} & Comparison-metric and comparison-fiber threshold produced by the pointwise lemma and used uniformly throughout a retained-fibration block.\\
\(\varepsilon_{\mathrm{flat}}\) & Proposition~\ref{prop:near-flat-stability-entry} & Analytic smallness threshold for entering the uniform fixed-coordinate flat-stability neighborhood; it is independent of the geometric cutoff used in Bamler's alternative.\\
\(\zeta,T_B(\zeta),\eps_B^{\zeta}\) & Proof of the Euclidean case in Theorem~\ref{thm:euclidean-sol-long-time-limits} & Auxiliary cutoff, with \(K\zeta^2<\eps_{\mathrm{af}}\), and its separately labeled tail data. The liminf argument allows every such \(\zeta\), without changing the fixed global application.\\
\(\eta\) & Lemma~\ref{lem:isotopic-fibers-force-fixed} and Definition~\ref{def:admissible-stage-start} & In fiber comparison, a threshold for comparison-fiber diameter, chosen with \(\gamma\). In the admissible-start definition, the same letter denotes a generic non-invariant error threshold; these are distinct uses.\\
\(\eta_0\) & Theorem~\ref{thm:large-component-iteration} & Initial non-invariant error threshold for iteration on a connected large-diameter interval; it depends only on \(K,\delta_0,N,\sigma\) and is independent of \(D_1\).\\
\(\eta_{\mathrm{blk}}\) & Theorem~\ref{thm:one-block-contraction} & Start-and-restart error threshold for the block contraction.\\
\(\eta_{\mathrm{cd}}\) & Proposition~\ref{prop:uniform-lifted-rdt-stability} & Initial perturbation threshold for uniform Ricci--DeTurck continuous dependence.\\
\(\eta_{\mathrm{st}}\) & Lemma~\ref{lem:stage-stability} and Equation~\eqref{eq:stage-eta-choice} & Stage-start error threshold for the prescribed target \(\beta_*\), also satisfying \(C_1\eta_{\mathrm{st}}\leq\tfrac12\); the block threshold \(\eta_{\mathrm{blk}}\) is chosen below it.\\
\(\theta\) & Lemmas~\ref{lem:pointwise-fixed-fiber} and~\ref{lem:retained-fibration-persistence} & Preliminary fixed-fiber target produced by the pointwise comparison and retained strictly throughout one large block.\\
\(\theta_{\mathrm{blk}}\) & Theorem~\ref{thm:one-block-contraction} and Proposition~\ref{prop:one-block-quantitative} & Fixed-fiber target for the block iteration, reproduced by every completed block.\\
\(\theta_{\max}\) & Lemmas~\ref{lem:pointwise-fixed-fiber} and~\ref{lem:retained-fibration-persistence} & Prescribed upper target for the fixed-fiber diameter; in the block proof it is set to \(\beta_{\mathrm{orb}}^{\mathrm{dec}}/4\).\\
\(\lambda\) & Theorem~\ref{thm:noninvariant-decay} & Positive lower bound for the quotient-circle length in the finite-time decay theorem.\\
\(\Lambda_*\) & Lemma~\ref{lem:stage-stability} and Theorem~\ref{thm:one-block-contraction} & Uniform \(\BG_N\)-bound, chosen as \(\max\{\Lambda_{\mathrm{reg}},\Lambda_{\mathrm{init}}\}\), depending only on \(K,N,\sigma\). It controls the first averaged slice and is reproduced at every restart.\\
\(\lambda_{\mathrm{p}}\) & Lemmas~\ref{lem:pointwise-fixed-fiber} and~\ref{lem:retained-fibration-persistence} & Quotient-length lower threshold in the pointwise comparison and the retained-fibration conclusion; the block construction sets \(\lambda_{\mathrm{p}}=\delta_0/100\).\\
\(\Lambda_{\mathrm{reg}}\) & Proposition~\ref{prop:positive-time-regular-restart} & Uniform \(\BG_N\)-bound for the rescaled locally \(\TT^2\)-invariant endpoint and the averaged restart.\\
\(\pi,\pi_B,\pi_{\mathrm{cyc}},\pi_{\mathrm{fin}}\) & Proposition~\ref{input:bamler-fixed-threshold}, Section~\ref{sec:collapse-inputs}, and the \hyperref[par:metric-gauge-legend]{metric-and-gauge notation} & The retained fibration; the pulled-back Bamler fibration \(\pi_B(\tau)\), denoted \(\pi_t^B\) in physical time; the infinite cyclic covering map; and the fixed finite regular covering map, respectively.\\
\(\rho\) & Propositions~\ref{input:flat-stability} and~\ref{prop:near-flat-stability-entry} & Exponent \(0<\rho<1\) in the little-H\"older space \(h^{2+\rho}\) used for flat stability. It is unrelated to the invariant metric called \(\rho\) in Lemma~\ref{lem:transition-jump}.\\
\(\rho^+,\rho_k^+\) & Proposition~\ref{prop:positive-time-regular-restart} and Theorem~\ref{thm:large-component-iteration} & Rescaled invariant endpoint: \(\rho^+=100^{-1}r(100)\) generically and \(\rho_k^+=100^{-1}r_k(100)\) on stage \(k\).\\
\(\sigma\) & \hyperref[par:standing-regularity-convention]{Standing regularity convention} and Appendix~\ref{app:parabolic-estimates} & H\"older exponent \(0<\sigma<1\), fixed once for the stage and block constructions and used in the spatial and parabolic norms.\\
\(\tau\) & Definition~\ref{def:parabolically-rescaled-large-block} & Parabolically rescaled time: the ambient block \(p\) is defined on \([1/2,101]\), while the stage families \(q\) and \(r\) are used on \([1,100]\).\\
\(\Phi_k\) & Theorem~\ref{thm:large-component-iteration} & Stage-start pullback diffeomorphism, with \(p_k(\tau)=\Phi_k^*(T_k^{-1}g(T_k\tau))\) and \(\Phi_{k+1}=\Phi_k\circ\psi_k(100)\).\\
\(\psi_k,\Psi\) & Theorem~\ref{thm:large-component-iteration} & Stage diffeomorphisms satisfying \(q_k=\psi_k^*p_k\) and \(\psi_k(1)=\Id\), and the assembled map \(\Psi(T_k\tau)=\Phi_k\circ\psi_k(\tau)\); hence \(g^{\prime}=\Psi^*g\).\\[4pt]

\multicolumn{3}{@{}l}{\textit{Roman-letter parameters and notation}}\\[2pt]
\(\mathcal A,\ol U,U^\perp\) & Subsection~\ref{subsec:torus-averaging-collapse} and Lemmas~\ref{lem:averaging-invariant-holder} and~\ref{lem:averaging-parabolic-holder} & Averaging projection for the fixed local \(\TT^2\)-structure, the average \(\ol U=\mathcal A U\), and the non-invariant part \(U^\perp=U-\mathcal A U\); the two lemmas give the fixed-time and parabolic H\"older bounds.\\
\(b,b_k\) & Equation~\eqref{eq:stage-start-data}, Proposition~\ref{prop:one-block-quantitative}, and Theorem~\ref{thm:large-component-iteration} & First numerical admissibility coordinate: the non-invariant start error on one stage and at the start of stage \(k\); the block estimate gives \(b_{k+1}\leq b_k/2\).\\
\(\BG_m(s;\Lambda)\) & Section~\ref{sec:finite-time-decay} & Scale-one bounded geometry on the universal cover: \(\inj_{\widetilde s}\geq\Lambda^{-1}\) and \(|\widetilde\nabla^j\Rm_{\widetilde s}|\leq\Lambda\) for \(0\leq j\leq m+3\), where \(m\geq2\), \(\Lambda\geq1\). This is a separate admissibility condition.\\
\(c_0\) & Proposition~\ref{input:bamler-fixed-threshold} and the \hyperref[par:standing-fixed-threshold-application]{standing fixed-threshold application} & Generic cutoff of the particular Bamler application used throughout one block or component iteration.\\
\(c_{\mathrm{af}}=c_{\mathrm{af}}(K)\) & The fixed global application following Proposition~\ref{input:bamler-fixed-threshold} & Fixed global cutoff used for the global almost-flat/torus-bundle decomposition; in the generic block machinery it is the specialization \(c_0=c_{\mathrm{af}}\).\\
\(C_{\mathrm{cd}}\) & Proposition~\ref{prop:uniform-lifted-rdt-stability} & Linear continuous-dependence constant for one Ricci--DeTurck comparison.\\
\(C_{\mathrm{fib}}\) & Lemma~\ref{lem:different-fibrations-small-diameter} & Constant, equal to \(B+1\) in the proof, controlling total diameter when two short-fiber fibrations are fiber-inequivalent.\\
\(C_{\mathrm{J}}\) & Lemma~\ref{lem:transition-jump} & Constant depending only on the reference energy bound \(E_0\), controlling a one-sided energy increase and the absolute logarithmic length change. The component iteration uses \(E_0=3\).\\
\(C_L\) & Lemma~\ref{lem:finite-quotient-circle-collapse} and Corollary~\ref{cor:persistent-large-tail-diameter} & Metric circle of circumference \(L>0\). In the working-cover limit \(L=\ell_\infty\); its diameter is \(L/2\).\\
\(C_{\mathrm{reg}}\) & Proposition~\ref{prop:positive-time-regular-restart} & Constant in the high-order positive-time smoothing estimate.\\
\(C_{\mathrm{st}}\) & Lemma~\ref{lem:stage-stability} & Stagewise linear stability constant, also controlling the high-order endpoint jump.\\
\(d,d_k\) & Equation~\eqref{eq:stage-start-data}, Theorem~\ref{thm:one-block-contraction}, and Theorem~\ref{thm:large-component-iteration} & Second numerical admissibility coordinate: the retained-fiber ambient diameter at a generic stage start and at the start of stage \(k\). This use of \(d\) is unrelated to the covering degree below.\\
\(d\) (cover degree) & Lemma~\ref{lem:finite-cover-diameter} and Proposition~\ref{prop:finite-cover-descent} & Positive integer degree of the fixed finite cover; for a regular cover, \(d=|\Gamma|\). This is not the stage-start fiber diameter denoted by \(d\).\\
\(D_1\) & Theorem~\ref{thm:large-component-iteration} & Initial upper bound for the normalized diameter at the chosen start of a connected large-diameter interval.\\
\(\mathcal D_t^s\) & Equation~\eqref{eq:metric-compatible-time-derivative} & Metric-compatible covariant time derivative used in the intrinsic parabolic H\"older norms.\\
\(d_{\TT^2}(r)\) & Subsection~\ref{subsec:torus-averaging-collapse} & Supremum of the ambient diameters of the fixed torus fibers in the metric space \((M,r)\).\\
\(E(t),e_k(\tau),E_k\) & Proposition~\ref{input:invariant-energy-length}, Lemma~\ref{lem:scalar-quotient-recursion}, and Theorem~\ref{thm:large-component-iteration} & Maximum vertical energy of an invariant flow, its stagewise profile \(e_k(\tau)\), and the start values \(E_k=e_k(1)\) created after the first restart and propagated by the scalar recursion; on a persistent tail, \(E_k\to2\).\\
\(g(t)\) & Theorem~\ref{thm:main} and the \hyperref[par:working-cover-convention]{working-cover convention} & Immortal Ricci flow in physical time. The same notation is used for its lift while the block construction is carried out on the working cover, denoted simply by \(M\).\\
\(g'(t)\) & Theorem~\ref{thm:large-component-iteration} & Continuous, piecewise smooth DeTurck-gauge pullback of the original physical-time flow on the controlled blocks.\\
\(g_{\mathrm{fin}}(t)\) & Sections~\ref{sec:collapse-inputs}, \ref{sec:finite-cover-descent}, and~\ref{sec:main-theorem-proof} & Pullback of the original Ricci flow to the fixed finite regular working cover.\\
\(g_{\RDT}\) & Theorem~\ref{thm:noninvariant-decay} & Ricci--DeTurck solution relative to an invariant background in the finite-time decay theorem.\\
\(g_t^B\) & Proposition~\ref{input:bamler-fixed-threshold} & Bamler's locally \(\TT^2\)-invariant comparison metric at physical time \(t\).\\
\(H\) & \hyperref[par:working-cover-convention]{Working-cover convention} and Subsection~\ref{subsec:torus-bundle-conventions} & Monodromy matrix on the fixed working cover: \(I\) in the Euclidean case, nontrivial unipotent in the Nil case, and hyperbolic with \(\Tr H>2\) in the Sol case.\\
\(h(t)\) & Theorem~\ref{thm:large-component-iteration} & Right-continuous invariant comparison family: \(h(T_k\tau)=T_kr_k(\tau)\) for \(1\leq\tau<100\), with the averaged restart at \(T_{k+1}\). It is a Ricci flow on each smooth piece, but may jump at switching times.\\
\(h_B(\tau)\) & \hyperref[par:metric-gauge-legend]{Metric-and-gauge notation} and the \hyperref[par:rescaled-bamler-comparison]{rescaled-block comparison paragraph} & Bamler comparison metric on a rescaled block, possibly for a time-dependent comparison fibration.\\
\(I\) & Theorem~\ref{thm:large-component-iteration} & Connected time interval on which the normalized diameter stays above \(\delta_0\); it may be an unbounded tail or a finite component with a terminal partial block.\\
\(i_1,i_2,i_B\) & Lemma~\ref{lem:universal-cover-inj} & Universal-cover injectivity-radius constants for \(B^{-1}h\leq g\leq Bh\), with \(h\) invariant. In Tree~I, \(i_1\) and \(i_2\) are the cases \(B=1\) and \(B=2\), respectively: they supply the invariant-metric input to Proposition~\ref{prop:positive-time-regular-restart} and the comparison-metric input to Lemma~\ref{lem:stage-stability}. For \(|\sec_g|\leq\Lambda\), the lower bound is \(i_B\Lambda^{-1/2}\). The \(i_1\) used locally in the proof of Lemma~\ref{lem:slice-to-parabolic-geometry} is a separate constant with the dependences stated there.\\
\(K\) & Proposition~\ref{input:bamler-typeIII} & Global scale-invariant curvature constant obtained by enlarging Bamler's eventual constant over the compact initial interval.\\
\(K_{\mathrm{st}}\) & Lemma~\ref{lem:stage-stability} & Uniform sectional-curvature constant for the invariant reference flow on one stage.\\
\(L(S^1,r_{S^1})\) & Subsection~\ref{subsec:one-dimensional-estimates} & Length of the quotient circle for an invariant metric.\\
\(\ell_\infty\) & Lemma~\ref{lem:scalar-quotient-recursion} and Corollary~\ref{cor:persistent-large-tail-diameter} & Positive limiting normalized quotient length on a persistent tail: \(\ell_k(\tau)/\sqrt\tau\to\ell_\infty\) uniformly on \([1,100]\). It is the circumference of the working-cover circle limit.\\
\(\ell_k(\tau),\ell_k\) & Theorem~\ref{thm:large-component-iteration} and Lemma~\ref{lem:scalar-quotient-recursion} & Quotient-circle length along stage \(k\) and its start value \(\ell_k=\ell_k(1)\), introduced for \(k\geq1\) after the first restart and then propagated by the scalar recursion; it is not an admissibility coordinate, and on a persistent tail \(\ell_k\) converges.\\
\(m\) (circle quotient) & Lemma~\ref{lem:finite-quotient-circle-collapse} and Proposition~\ref{prop:finite-cover-descent} & Order of the rotation subgroup of the limiting circle action. The quotient has circumference \(L/m\) in the cyclic case and interval length \(L/(2m)\) in the dihedral case; \(m\) divides \(|\Gamma|\).\\
\(M_{\mathrm{cyc}},M_{\mathrm{fin}},\widetilde M\) & Sections~\ref{sec:collapse-inputs}, \ref{sec:finite-time-decay}, and~\ref{sec:finite-cover-descent} & Infinite cyclic cover, fixed finite regular working cover, and universal cover; tensors on the cyclic cover carry the subscript \(\mathrm{cyc}\), while universal-cover lifts carry tildes.\\
\(m_{\mathrm{flat}}\) & Proposition~\ref{prop:near-flat-stability-entry} & Finite curvature-derivative order used in the scale-invariant near-flat entry criterion.\\
\(N\) & \hyperref[par:standing-regularity-convention]{Standing regularity convention} & Integer spatial regularity order \(N\geq2\), fixed once for the stage and block constructions.\\
\(p(\tau)\) & Definition~\ref{def:parabolically-rescaled-large-block} & Pulled-back parabolic rescaling \(\Phi^*(S^{-1}g(S\tau))\) of the original flow.\\
\(p_{\mathrm{st}}\) & Proposition~\ref{prop:near-flat-stability-entry} & Connected finite covering map that absorbs the compactness diffeomorphism and places the diameter-normalized metric in \(\mathcal U_{\mathcal F}\).\\
\(\PBG_m(s;\Lambda;[a,b])\) & Section~\ref{sec:finite-time-decay} and Equation~\eqref{eq:bounded-lifted-geometry} & The \(\BG_m\) injectivity and curvature-derivative bounds at every time, together with a fixed-coordinate uniformly locally finite parabolic atlas whose radius, coefficients, overlaps, and cutoffs are uniformly controlled.\\
\(q(\tau),q_k(\tau)\) & Lemma~\ref{lem:stage-stability} and Theorem~\ref{thm:large-component-iteration} & Ricci--DeTurck metric on one stage and on stage \(k\).\\
\(q^+(1)\) & Proposition~\ref{prop:positive-time-regular-restart} & Rescaled Ricci--DeTurck endpoint \(q^+(1)=100^{-1}q(100)\), which becomes the next stage's initial metric.\\
\(r(\tau),r_k(\tau)\) & Lemma~\ref{lem:stage-stability} and Theorem~\ref{thm:large-component-iteration} & Invariant Ricci flow starting from the averaged stage-start metric.\\
\(r^+(1)\) & Proposition~\ref{prop:positive-time-regular-restart} & Averaged restart metric \(r^+(1)=\overline{q^+(1)}\), used as the next invariant stage start.\\
\(r_{\mathrm D}(\tau)\) & Equation~\eqref{eq:two-deturck-gauges} and Lemma~\ref{lem:stage-stability} & Auxiliary Ricci--DeTurck flow with background \(p\) and initial metric \(r(1)\), used only to construct and control the invariant Ricci flow \(r\).\\
\(S,T_k\) & Definition~\ref{def:parabolically-rescaled-large-block} and Theorem~\ref{thm:large-component-iteration} & Physical start time \(T_0=S\) of the first rescaled block and the geometric sequence \(T_k=100^kS\) of later block starts.\\
\(\TT^3_*,\mathcal F,\mathcal U_{\mathcal F}\) & Propositions~\ref{input:flat-stability} and~\ref{prop:near-flat-stability-entry} & Fixed model torus, compact family of actual flat representatives on it, and the associated uniform fixed-coordinate stability neighborhood for smooth initial metrics in the Euclidean near-flat entry argument.\\
\(T_B(c_0)\) & Proposition~\ref{input:bamler-fixed-threshold} and the \hyperref[par:standing-fixed-threshold-application]{standing fixed-threshold application} & Tail threshold attached to the application with cutoff \(c_0\); for example, the fixed global and Euclidean applications use \(T_B(c_{\mathrm{af}})\) and \(T_B(\zeta)\).\\
\(T_{\mathrm{blk}}\) & Theorem~\ref{thm:one-block-contraction} & Late start threshold depending on \(K,\delta_0,N,\sigma\) and, possibly, the chosen tail data \(T_B(c_0),\eps_B^{c_0}\). The geometric and analytic block thresholds do not acquire this tail-data dependence.\\
\(T_{\mathrm{p}}\) & Theorem~\ref{thm:large-component-iteration} & Late start threshold for component iteration, depending on \(K,\delta_0,N,\sigma\) and, possibly, the selected Bamler tail data, but not on \(D_1\).\\
\(X_s^N[a,b]\) & Equation~\eqref{eq:XY-spaces}, Paragraph~\ref{par:lifted-norm-convention}, and Lemma~\ref{lem:averaging-parabolic-holder} & Intrinsic lifted solution norm \(\mathcal C_s^{N+\sigma,(N+\sigma)/2}\). Averaging is contractive when \(s(t)\) is invariant under the fixed torus structure.\\
\(X_{s,{\mathrm{fc}}}^N[a,b]\) & Lemmas~\ref{lem:intrinsic-fixed-chart-equivalence} and~\ref{lem:averaging-parabolic-holder} & Fixed-chart uniformly local solution norm, uniformly equivalent to \(X_s^N\) under \(\PBG\). For \(s(t)\) invariant under the fixed torus structure and satisfying that bound, averaging is uniformly bounded.\\
\(Y_s^N[a,b]\) & Equation~\eqref{eq:XY-spaces}, Paragraph~\ref{par:lifted-norm-convention}, and Lemma~\ref{lem:averaging-parabolic-holder} & Intrinsic lifted forcing norm \(\mathcal C_s^{N-2+\sigma,(N-2+\sigma)/2}\). Averaging is contractive when \(s(t)\) is invariant under the fixed torus structure.\\
\(Y_{s,{\mathrm{fc}}}^N[a,b]\) & Lemmas~\ref{lem:intrinsic-fixed-chart-equivalence} and~\ref{lem:averaging-parabolic-holder} & Fixed-chart uniformly local forcing norm, uniformly equivalent to \(Y_s^N\) under \(\PBG\). For \(s(t)\) invariant under the fixed torus structure and satisfying that bound, averaging is uniformly bounded.\\
\end{longtable}
\endgroup

\end{document}